\theoremstyle{plain}
    \newtheorem{theorem}{Theorem}[section]
    \newtheorem*{theorem*}{Theorem}
    \newtheorem{lemma}[theorem]{Lemma}
    \newtheorem{proposition}[theorem]{Proposition}
    \newtheorem{corollary}[theorem]{Corollary}
\theoremstyle{definition}
    \newtheorem{remark}{Remark}[section]
\numberwithin{equation}{section}
\renewcommand{\r}{\right}
\newcommand{\eps}{\varepsilon}
\newcommand{\odd}{{\mathord{\rm odd}}}
\newcommand{\1}{\mathbbm{1}}
\newcommand{\C}{\mathbb{C}}
\newcommand{\N}{\mathbb{N}}
\newcommand{\R}{\mathbb{R}}
\DeclareMathOperator{\im}{Im}
\DeclareMathOperator{\re}{Re}
\DeclareMathOperator{\Span}{span}
\begin{document}

\title[Threshold odd solutions to NLS in 1d]{Threshold odd solutions to the nonlinear Schr\"{o}dinger equation in one dimension}
\author[S. Gustafson]{Stephen Gustafson}
\address[S. Gustafson]{University of British Columbia, 1984 Mathematics Rd., Vancouver, Canada V6T1Z2.}
\email{gustaf@math.ubc.ca}
\author[T. Inui]{Takahisa Inui}
\address[T. Inui]{Department of Mathematics, Graduate School of Science, Osaka University, Toyonaka, Osaka, Japan 560-0043.
\newline 
University of British Columbia, 1984 Mathematics Rd., Vancouver, Canada V6T1Z2.}
\email{inui@math.sci.osaka-u.ac.jp 
}
\date{\today}
\date{\today}
\keywords{nonlinear Schr\"{o}dinger equation, odd functions, global dynamics, threshold}
\subjclass[2020]{35Q55,37K40 etc.}
\maketitle

\begin{abstract}
We consider odd solutions to the Schr\"{o}dinger equation with the $L^2$-supercritical power type nonlinearity in one dimensional Euclidean space. It is known that the odd solution scatters or blows up if its action is less than twice as that of the ground state. In the present paper, we show that the odd solutions with the action as twice as that of the ground state scatter or blow up. 
\end{abstract}

\setcounter{tocdepth}{1}
\tableofcontents


\section{Introduction}

\subsection{Background}

We consider the following nonlinear Schr\"{o}dinger equation in one dimensional Euclidean space:
\begin{align}
\label{NLS}
\tag{NLS}
	\begin{cases}
	i\partial_t u+ \partial_x^2 u+|u|^{p-1}u=0, & (t,x) \in I \times \mathbb{R},
	\\
	u(0,x)=u_0(x), & x \in \mathbb{R}, 
	\end{cases}
\end{align}
where $I$ denotes a time interval and $p>5$. 
It is known that the Cauchy problem of \eqref{NLS} is locally well-posed in the energy space $H^1(\mathbb{R})$. That is, there exist $T>0$ and a unique solution $u \in C((-T,T):H^1(\mathbb{R}))$ if $u_0 \in H^1(\mathbb{R})$. Moreover, the energy and mass are conserved by the flow, where the energy $E$ and mass $M$ are defined by 
\begin{align*}
	E(u):= \frac{1}{2} \|\partial_x u\|_{L^2}^2 - \frac{1}{p+1} \|u\|_{L^{p+1}}^{p+1}, \quad
	M(u):=  \| u\|_{L^2}^2.
\end{align*}
We also have blow-up alternative. Namely, the $\dot{H}^{1}$-norm of the solution diverges at the maximal existence time if the time is finite. See \cite{GiVe79} and \cite{Caz03,LiPo15} for these results. 
In this paper, we are interested in the global behavior of the solution. 

Since the work by Kenig and Merle \cite{KeMe06}, many researchers have investigated the global dynamics of the nonlinear Schr\"{o}dinger equations below the ground state. See \cite{HoRo08,DHR08,HoRo10} for 3d cubic NLS. For the one dimensional NLS, i.e. \eqref{NLS}, Akahori and Nawa \cite{AkNa13}, where they in fact treated the equations with $L^2$-super and energy-subcritical nonlinearity in all dimensions, determined the global behavior of the solutions below the ground state.
The ground state is the solution with the formula $u(t,x)=e^{it}Q(x)$, where $Q(x)$ is the radial positive solution of the elliptic equation 
\begin{align}
\label{elliptic}
	-\partial_x^2 Q + Q -Q^{p}=0
\end{align} 
and it is explicitly given by
\begin{align}
\label{exp}
	Q(x)= \left( \frac{p+1}{2}\right)^{\frac{1}{p-1}}  \left\{\cosh \left(\frac{p-1}{2} x \right) \right\}^{-\frac{2}{p-1}}
\end{align}
in the one dimensional case. 
They showed that if the initial data satisfies the energy and mass condition $E(u_0)M(u_0)^{\sigma} < E(Q)M(Q)^\sigma$, where $\sigma=(p-3)/(p-5)$, then the behavior of the solution is determined by the sign of the virial functional $K$ at initial time, where the functional is defined by
\begin{align*}
	K(u):= \|\partial_x u \|_{L^2}^2-\frac{p-1}{2(p+1)}\| u \|_{L^{p+1}}^{p+1}.
\end{align*}
Precisely, they showed the following: if $K(u_0)\geq 0$, then the solution is global and scatters in both time directions, i.e. there exist $u_{\pm} \in H^1(\mathbb{R})$ such that 
\begin{align*}
	\| u(t) -e^{it\partial_x^2}u_{\pm}\|_{H^1} \to 0 \text{ as } t \to \pm \infty.
\end{align*}
If $K(u_0)< 0$, then the solution blows up in finite time or grows up at infinite time. In addition, if the variance of the solution is finite, i.e. $\|xu_0\|_{L^2} < \infty$, then the solution blows up in finite time in both time directions. (See also Fang, Xie, and Cazenave \cite{FXC11}  for the scattering result.)

The global behavior of the solutions with $E(u_0)M(u_0)^{\sigma} = E(Q)M(Q)^\sigma$, which are called threshold solutions, was also investigated by Duyckaerts and Roudenko \cite{DuRo10} for 3d cubic NLS. Recently, for one dimensional NLS, Campos, Farah, and Roudenko \cite{CFR20} studied them, where they treated $L^2$-supercritical NLS, including the energy-critical one, in general dimensions (see also \cite{DuMe09}). They showed the following theorems in the one dimensional case:

\begin{theorem*}[\cite{CFR20}]
There exist radial (even) solutions $Q^+$ and $Q^-$ to \eqref{NLS} on at least $[0,\infty)$ such that 
\begin{itemize}
\item $M(Q^{\pm})=M(Q)$ and $E(Q^{\pm})=E(Q)$,
\item $Q^{\pm}$ satisfies 
\begin{align*}
	\|Q^{\pm}(t) - e^{it}Q\|_{H^1} \leq Ce^{-ct} \text{ for all } t>0
\end{align*}
for some constants $C,c>0$,
\item $K(Q^{+}(0))<0$ and $Q^{+}$ blows up in finite negative time,
\item $K(Q^{-}(0))> 0$ and $Q^{-}$ scatters backward in time.
\end{itemize}
\end{theorem*}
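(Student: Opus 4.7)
The plan is to construct $Q^\pm$ as exponentially small perturbations of the standing wave $e^{it}Q$ along an unstable eigenmode of the linearized flow, following the scheme of Duyckaerts--Merle and Duyckaerts--Roudenko. Writing $u(t,x) = e^{it}(Q(x)+h(t,x))$ and splitting $h = h_1 + ih_2$, the system for $(h_1,h_2)$ has linear part
\[
\mathcal{L} = \begin{pmatrix} 0 & L_- \\ -L_+ & 0 \end{pmatrix}, \qquad L_+ := -\partial_x^2 + 1 - pQ^{p-1}, \quad L_- := -\partial_x^2 + 1 - Q^{p-1}.
\]
Since $p>5$ is $L^2$-supercritical, the Vakhitov--Kolokolov condition fails and $\mathcal{L}$ acquires a pair of simple real eigenvalues $\pm e_0$ (with $e_0>0$) having exponentially decaying even eigenfunctions $\mathcal{Y}_\pm$, while the remainder of $\spec(\mathcal{L})$ lies on $i\mathbb{R}$. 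This spectral picture, together with the null space of $\mathcal{L}$ generated by the phase and scaling symmetries, is a standard consequence of the explicit form \eqref{exp} of $Q$.

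Next, I would build a formal even approximate solution $h^a_N(t) = \sum_{k=1}^N a^k e^{-k e_0 t}Z_k$ with $Z_1 = \mathcal{Y}_+$ and higher profiles determined inductively from equations $(\mathcal{L}+k e_0)Z_k = F_k(Z_1,\dots,Z_{k-1})$, each solvable thanks to the spectral gap; by construction $h^a_N$ satisfies \eqref{NLS} modulo an error of order $e^{-(N+1)e_0 t}$. Fixing $N$ large, I would look for an exact solution of the form $h = h^a_N + r$ by contraction in the Banach space
\[
X_T := \bigl\{ r \in C([T,\infty); H^1_{\text{even}}) : \|r\|_{X_T} := \sup_{t\geq T} e^{(N+\tfrac{1}{2})e_0 t}\|r(t)\|_{H^1} < \infty \bigr\},
\]
restricted by a spectral projection to directions transverse to $\mathcal{Y}_+$. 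For $T$ sufficiently large the nonlinear map is a contraction on a small ball, producing a unique $r$; the two sign choices $a = \pm 1$ yield two distinct solutions, labelled $Q^\pm$ (the labelling is fixed a posteriori by the sign of $K$). The bound $\|Q^\pm(t)-e^{it}Q\|_{H^1} \lesssim e^{-e_0 t}$ follows immediately from the definition of $X_T$ and $\|h^a_N(t)\|_{H^1} \lesssim e^{-e_0 t}$.

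To determine the dynamical behavior, expand $K(Q+h) = \langle K'(Q), h_1\rangle + O(\|h\|_{H^1}^2)$ using $K(Q)=0$. A direct computation from $\mathcal{L}\mathcal{Y}_+ = e_0 \mathcal{Y}_+$ and \eqref{elliptic} yields $\langle K'(Q), \re\mathcal{Y}_+\rangle \neq 0$, so $K(Q^\pm(t)) \sim c\cdot a\cdot e^{-e_0 t}$ as $t\to\infty$ for some nonzero $c$; labelling $\pm$ so that $K(Q^+) < 0$ and $K(Q^-) > 0$ for large $t$, and invoking variational rigidity (the only $H^1$ function with $E(u) M(u)^\sigma = E(Q)M(Q)^\sigma$ and $K(u)=0$ is, up to symmetries, $Q$) to preclude a sign change at finite time, the sign propagates down to $t=0$. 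The Akahori--Nawa dichotomy \cite{AkNa13}, applied on the negative-time side for $Q^+$ and the positive-time side for $Q^-$, finishes the proof, with a modest threshold refinement ruling out grow-up of $Q^+$ at $t\to -\infty$ via compactness of the critical element at the threshold level.

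The main obstacle is the contraction step, specifically justifying the needed linear estimates for the projected linearized propagator $e^{t\mathcal{L}}$ on the even subspace transverse to $\mathcal{Y}_+$ and the symmetry null modes. This requires a careful verification of simplicity of $\pm e_0$, absence of embedded eigenvalues and resonances on the imaginary continuous spectrum, and a Strichartz or Kato-smoothing framework for $\mathcal{L}$ in one dimension, where intrinsic dispersive decay is weaker than in higher dimensions. A workable alternative is to exploit the strong exponential weights in $X_T$ together with the rapid spatial decay of the potentials in $L_\pm$ to absorb errors directly at the $H^1$ level, bypassing full dispersive machinery.
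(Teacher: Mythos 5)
First, note that the paper does not prove this statement at all: it is quoted from \cite{CFR20} as background, so the relevant comparison is with the construction in \cite{DuRo10} and \cite{CFR20}. Your architecture matches theirs (unstable eigenvalues $\pm e_0$ of the linearization, approximate solutions $\sum_k a^k e^{-ke_0 t}Z_k$, fixed point in an exponentially weighted space), and the step you flag as the "main obstacle" is in fact not one: the literature never proves dispersive estimates for $e^{t\mathcal{L}}$, but instead writes the Duhamel formula with the free propagator, uses free Strichartz estimates, and absorbs both the potential terms and the $O(e^{-(N+1)e_0 t})$ error by taking $N$ large against the weight $e^{(N+1/2)e_0 t}$ --- i.e.\ your "workable alternative" is the actual proof, though it still has to be written out.

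The genuine gaps are elsewhere. First, you invoke the Akahori--Nawa below-threshold dichotomy \cite{AkNa13} to conclude that $Q^+$ blows up and $Q^-$ scatters backward in time, but $Q^{\pm}$ satisfy $E(Q^{\pm})M(Q^{\pm})^{\sigma}=E(Q)M(Q)^{\sigma}$ exactly, so they are not below the threshold and that dichotomy simply does not apply; the backward-time dynamics are the hard part of \cite{CFR20}. For $Q^-$ one needs the full threshold concentration-compactness and rigidity argument (not a "modest refinement"), and for $Q^+$ one needs finite variance --- which requires proving exponential decay of $Q^+$ in \emph{space}, not merely exponential convergence to $e^{it}Q$ in time --- before the virial convexity argument yields finite-time blow-up. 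Second, the nondegeneracy $\langle K'(Q),\re\mathcal{Y}_+\rangle\neq 0$ is asserted but not proved; the standard route is to note that the eigenvalue equation forces $\Phi(\mathcal{Y}_+)=0$ and then use the coercivity of $\Phi$ under the orthogonality conditions (the analogue of Lemma \ref{lem3.1}) to rule out $\mathcal{Y}_+$ satisfying all of them, from which the required pairing is nonzero. Without these points the second half of the theorem (the third and fourth bullets) remains unproved.
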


\begin{theorem*}[\cite{CFR20}]
Let $u_0$ satisfy $E(u_0)M(u_0)^{\sigma} = E(Q)M(Q)^\sigma$ and $u$ be the solution to \eqref{NLS}. Then we have the following: 
\begin{enumerate}
\item If $K(u_0)>0$, then $u$ is global and either $u$ scatters in both time directions or $u=Q^{-}$ up to the symmetries of the equation. 
\item If $K(u_0)=0$, then $u=Q$ up to the symmetries of the equation. 
\item If $K(u_0)<0$ and $\|xu_0\|_{L^2}<\infty$, then either $u$ blows up in finite both time or $u=Q^{+}$ up to the symmetries of the equation. 
\end{enumerate}
\end{theorem*}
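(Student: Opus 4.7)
The plan is to follow the Duyckaerts--Merle / Duyckaerts--Roudenko classification scheme in three stages.

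\textbf{Construction of $Q^\pm$.} First I would linearize \eqref{NLS} about $e^{it}Q$ by writing $u=e^{it}(Q+h)$ with $h=h_1+ih_2$, producing the system $\partial_t h_1 = L_- h_2$, $\partial_t h_2 = -L_+ h_1$, where $L_+ = -\partial_x^2 + 1 - pQ^{p-1}$ and $L_- = -\partial_x^2 + 1 - Q^{p-1}$. A spectral analysis using oscillation theory and the explicit formula \eqref{exp} should yield, for $p>5$, a single pair of simple real eigenvalues $\pm e_0$ with exponentially decaying eigenfunctions $\mathcal{Y}_\pm$, the remaining spectrum lying on the imaginary axis with generalized kernel spanned by the phase, translation, Galilean, and scaling directions. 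A Lyapunov--Perron contraction in a Strichartz-type space with temporal weight $e^{e_0 t}$ then constructs special solutions $Q^\pm(t) = e^{it}Q \pm e^{(i-e_0)t}\mathcal{Y}_+ + O(e^{-2e_0 t})$ on $[T_0,\infty)$, which are extended backward by the nonlinear flow. One checks directly that $E(Q^\pm)=E(Q)$, $M(Q^\pm)=M(Q)$, and that the sign in the leading correction controls $\sign K(Q^\pm(0))$.

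\textbf{Modulation and ejection.} For a general threshold solution $u$, set $\delta(t) := \inf_{\theta,y}\norm{u(t)-e^{i\theta}Q(\cdot-y)}_{H^1}$. A variational argument (Gagliardo--Nirenberg saturated at the threshold $E(u_0)M(u_0)^\sigma = E(Q)M(Q)^\sigma$) directly yields statement (2) when $K(u_0)=0$ and gives coercivity near the orbit of $Q$. On intervals where $\delta(t)$ is small I decompose $u(t,x) = e^{i\theta(t)}[Q(x-y(t)) + h(t,x-y(t))]$ with $h$ orthogonal to the generalized kernel, and let $a_\pm(t)$ denote the coordinates of $h$ in the unstable/stable eigendirections. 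The modulation equations produce $a_\pm' = \pm e_0 a_\pm + O(\norm{h}_{H^1}^2)$ with coercive equivalence $\norm{h}_{H^1}^2 \ceq a_+^2 + a_-^2$. An ejection lemma then gives a clean dichotomy in each time direction: either $\delta(t)$ stays small forever, in which case the ODE for $(a_+,a_-)$ forces $\norm{h(t)}_{H^1} \cleq e^{-e_0 |t|}$; or $\delta(t)$ leaves a fixed small ball at some finite time, after which $\sign K(u(t))$ becomes constant and equal to $\sign K(u_0)$.

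\textbf{Rigidity and end states.} In the stay-small alternative, a second contraction argument in the exponentially weighted space identifies $u$ with $Q^+$ or $Q^-$ up to the symmetries of \eqref{NLS}. In the escape alternative with $K(u_0)>0$, I would run the Kenig--Merle concentration-compactness/rigidity scheme at threshold: any hypothetical nonscattering critical element is almost-periodic modulo symmetries, and a localized virial combined with the ejection dichotomy forces it to coincide with $e^{it}Q$, contradicting $K>0$. In the escape alternative with $K(u_0)<0$ and finite variance, the virial identity $\partial_t^2 \norm{xu}_{L^2}^2 = 8K(u)$ together with a quantitative bound $K(u(t)) \leq -c<0$ (from ejection plus variational coercivity on the set $\{K<0\}$) gives finite-time blowup in both time directions. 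I expect the main obstacle to be the rigidity/uniqueness step: the weighted estimates for the non-self-adjoint operator $\mathcal{L}$ must be sharp enough to recover \emph{all} exponentially-decaying solutions, not merely produce one; the threshold Kenig--Merle rigidity is likewise delicate because the usual coercivity of $K$ degenerates exactly at the ground-state level.
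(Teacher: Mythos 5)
This theorem is not proved in the paper at all: it is quoted verbatim from Campos--Farah--Roudenko \cite{CFR20} (building on Duyckaerts--Roudenko \cite{DuRo10}) as background for the odd-symmetry result that the paper actually establishes. So there is no in-paper proof to compare against; what you have written is a reconstruction of the strategy of the cited reference, and as such it is a faithful and essentially correct outline: linearization $\partial_t h_1 = L_- h_2$, $\partial_t h_2 = -L_+ h_1$ with a simple pair of real eigenvalues $\pm e_0$ in the $L^2$-supercritical regime, Lyapunov--Perron construction of $Q^\pm$, the variational identification of the $K=0$ case with $Q$, modulation/ejection near the orbit, and the split into convergence-to-the-orbit versus escape.

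One point of logic deserves care. In the escape alternative with $K(u_0)>0$ you say the concentration-compactness/rigidity scheme "forces it to coincide with $e^{it}Q$, contradicting $K>0$." As stated this conflates two sub-cases. A nonscattering forward-global threshold solution with $K>0$ is \emph{not} in general a contradiction --- the theorem explicitly allows $u=Q^-$. The actual argument is: nonscattering at the exact threshold forces (via the profile decomposition, in which only one profile can carry the full mass--energy) a precompact trajectory modulo symmetries; the localized virial then gives $\liminf_{t\to\infty}\delta(t)=0$; from a time where $\delta$ is small one applies your dichotomy, and the "stay" branch yields exponential convergence and hence $u=Q^-$ by the uniqueness step, while the "re-escape" branch is what produces the contradiction (a one-pass/no-return argument against $\liminf\delta=0$). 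Since you treat the stay-small branch separately and correctly identify it with $Q^\pm$, the overall structure closes up, but the sentence as written would mislead a reader into thinking case (1) always ends in scattering. The other delicate points you flag --- sharpness of the weighted estimates needed to classify \emph{all} exponentially converging solutions, and the degeneration of coercivity exactly at the threshold --- are indeed where the bulk of the work in \cite{DuRo10,CFR20} lies.
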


The special solutions $Q^{\pm}$, which converge to the ground state, appear at the threshold, though we do not have them below the ground state. 

Now, let us assume that $u_0$ is an odd function. Then the solution $u$ is also odd. In this case, 
the second author determined the global behavior of the solutions below the ground state as follows:

\begin{theorem*}[{\cite[Theorem 1.2]{Inui17}}]
Let $u_0$ be odd and $u$ be the solution to \eqref{NLS}. 
Assume that $u_0$ satisfies $E(u_0)M(u_0)^{\sigma} < 2^{1+\sigma} E(Q)M(Q)^\sigma$. Then we have the following. 
\begin{enumerate}
\item If $K(u_0)\geq 0$, then $u$ is global and scatters in both time directions. 
\item If $K(u_0)<0$, then either $u$ blows up in finite time or grows up at infinite time. 
\end{enumerate}
\end{theorem*}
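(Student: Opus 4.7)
The plan is to reduce the odd problem on $\mathbb{R}$ to the sub-ground-state scattering theory of Akahori--Nawa via a variational argument exploiting the fact that an odd function ``carries'' twice the mass-energy of its half-line restriction. For any odd $u\in H^1(\mathbb{R})$ one has the halving identities $M(u)=2\|u\|_{L^2(0,\infty)}^2$, $E(u)=2E_+(u)$, and $K(u)=2K_+(u)$, where $E_+$ and $K_+$ are the obvious half-line functionals. Using this I would first establish the variational identity
\begin{equation*}
l_{\odd}:=\inf\{E(u)M(u)^\sigma: u\in H^1(\mathbb{R}),\,u\text{ odd},\,K(u)\le 0,\,u\neq 0\}=2^{1+\sigma}E(Q)M(Q)^\sigma,
\end{equation*}
with the infimum not attained: the lower bound is a Nehari--Pohozaev rescaling argument applied on the half-line, and the upper bound comes from testing with $Q(\cdot-n)-Q(\cdot+n)$, whose cross terms decay like $e^{-cn}$. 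As a consequence, the flow preserves the two sub-threshold components
\begin{equation*}
\mathcal{A}^{\pm}:=\{u\text{ odd}: E(u)M(u)^\sigma<l_{\odd},\,\pm K(u)>0\}\cup\{0\},
\end{equation*}
and a standard scaling argument yields uniform coercivity $\pm K(u)\ge \delta\|\partial_x u\|_{L^2}^2$ on each component.

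For part (1), invariance of $\mathcal{A}^+$ combined with energy conservation and coercivity gives a uniform $H^1$ bound, hence globality. For scattering I would run the Kenig--Merle reduction inside the odd class: if scattering failed, a minimal odd non-scattering solution $u_c$ would exist whose trajectory is precompact in $H^1$ modulo scaling (oddness removes the translation symmetry). A linear profile decomposition performed in the odd subspace forces each profile either (i) to be odd and centered at the origin, or (ii) to come in a mirror pair concentrated at $\pm x_n^j$ with $|x_n^j|\to\infty$. Pythagorean decomposition of $M$, $E$, $K$ together with the strict sub-threshold assumption excludes (i) entirely (a single odd profile would already saturate $l_{\odd}$), and allows at most one pair of type (ii). Each component of this pair has $E\cdot M^\sigma<E(Q)M(Q)^\sigma$ and hence scatters by Akahori--Nawa; a standard nonlinear profile plus long-time perturbation argument then yields scattering of $u_c$, a contradiction.

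For part (2), invariance of $\mathcal{A}^-$ keeps $K(u(t))\le -\delta\|\partial_x u(t)\|_{L^2}^2$ throughout the life span. If $\|xu_0\|_{L^2}<\infty$, the classical virial identity $\frac{d^2}{dt^2}\|xu(t)\|_{L^2}^2=8K(u(t))$ drives the nonnegative variance to become negative in finite time, giving blow-up. Without finite variance I would use a truncated virial $V_R(t)=\int\phi_R(x)|u(t)|^2\,dx$ with a radial cutoff $\phi_R$; oddness eliminates the lower-order boundary contributions, and an Ogawa--Tsutsumi type estimate combined with the coercivity of $-K$ yields $V_R''(t)\le -c<0$ provided $\|\partial_x u(t)\|_{L^2}$ stays bounded, which contradicts $V_R\ge 0$. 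Hence either $T^*<\infty$ or $\|\partial_x u(t)\|_{L^2}\to\infty$ along a sequence.

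The main obstacle is the concentration-compactness step: one must set up a profile decomposition adapted to the odd symmetry, verify that profiles either sit at the origin in odd form or come in mirror pairs, and check that the Pythagorean decomposition of $K$ (as well as $M$ and $E$) respects the pairing so that the doubling in the odd threshold $l_{\odd}=2^{1+\sigma}E(Q)M(Q)^\sigma$ exactly matches the doubling coming from a mirror pair, allowing each half-profile to fall strictly below the Akahori--Nawa ground-state threshold and thus to scatter.
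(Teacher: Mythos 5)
This theorem is not proved in the present paper; it is quoted from \cite{Inui17}, whose proof is a Kenig--Merle scheme adapted to odd functions (odd profile decomposition, reduction of escaping mirror pairs to the Akahori--Nawa sub-ground-state theory, and a rigidity step for a compact critical element). Your outline follows the same general route, but it contains a genuine logical gap at the decisive step. You assert that the Pythagorean expansion ``excludes (i) entirely (a single odd profile would already saturate $l_{\odd}$).'' This is false: a single odd profile $\psi^1$ centered at the origin satisfies $E(\psi^1)M(\psi^1)^{\sigma}\leq\lim_n E(u_0^n)M(u_0^n)^{\sigma}<2^{1+\sigma}E(Q)M(Q)^{\sigma}$, i.e.\ it lies \emph{strictly below} the odd threshold, not at it, so nothing excludes it. Such a profile is precisely the critical element of the Kenig--Merle induction, and whether it scatters is exactly the statement you are trying to prove; discarding it makes your argument circular. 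The correct closure is: define $\delta_c$ as the infimum of $EM^{\sigma}$ over non-scattering odd solutions with $K\geq 0$; if $\delta_c<2^{1+\sigma}E(Q)M(Q)^{\sigma}$, the profile decomposition plus your (correct) treatment of escaping mirror pairs via Akahori--Nawa forces exactly one profile, odd and centered at the origin, yielding a critical element with a trajectory precompact in $H^1$ with \emph{bounded} concentration center (an escaping center would split the solution into two halves each below $E(Q)M(Q)^{\sigma}$, which scatter); this compact object is then annihilated by a localized virial/rigidity argument using the uniform coercivity $K(u(t))\geq\delta>0$. Your proposal omits this rigidity step entirely, and the parenthetical claims ``precompact modulo scaling'' and ``oddness removes the translation symmetry'' are both inaccurate (there is no scaling parameter in this intercritical $H^1$ setting, and the boundedness of the mirror-pair center must be proved, not assumed).

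Two smaller points. First, your variational quantity $\inf\{E(u)M(u)^{\sigma}:K(u)\leq 0,\ u\ \text{odd},\ u\neq 0\}$ equals $-\infty$ (test with $\mu Q(\cdot-n)-\mu Q(\cdot+n)$ for large $\mu$, which has $K<0$ and $E\to-\infty$); the characterization must be set up on the Nehari-type manifold $K=0$, or via the action $S_{\omega}$ as in Section~\ref{sec2.1} of this paper, before you can deduce invariance of the two components and the coercivity bounds. Second, part (2) of your outline (truncated virial with an Ogawa--Tsutsumi-type estimate, leading to the blow-up/grow-up dichotomy) is consistent with the cited result and is essentially the standard argument.
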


\begin{remark}
In Theorem 1.2 in \cite{Inui17}, the second author gave the condition in terms of the action and the frequency $\omega$ instead of the energy and mass condition $E(u_0)M(u_0)^{\sigma} < 2^{1+\sigma} E(Q)M(Q)^\sigma$. As seen in Section \ref{sec2.1} below, the formulation in \cite{Inui17} 
is equivalent to the energy and mass condition. 
\end{remark}

Since the ground state $e^{it}Q$ and the special solutions $Q^{\pm}$ are even, the assumption of odd symmetry excludes them. That is why we can determine the global dynamics 
of odd solutions above the ground state. 

In the odd case, the energy and mass condition $E(u_0)M(u_0)^{\sigma} <2^{1+\sigma} E(Q)M(Q)^\sigma$ is related to a minimization problem restricted to odd functions 
just as the relation $E(u_0)M(u_0)^{\sigma} <E(Q)M(Q)^\sigma$ is. 
Roughly, the condition comes from a minimizing sequence of odd functions $\{\varphi_n\}_{n \in \mathbb{N}}=\{Q(\cdot -n) - Q(\cdot + n)\}_{n \in \mathbb{N}}$ with $E(\varphi_n)M(\varphi_n)^\sigma \to 2^{1+\sigma} E(Q)M(Q)^\sigma$ as $n \to \infty$. Moreover, the minimizer is not attained. (See Section \ref{sec2.1} below for more details.) Therefore, we expect that on the threshold $E(u_0)M(u_0)^{\sigma} =2^{1+\sigma} E(Q)M(Q)^\sigma$ 
there are no odd solutions like the ground state and the special solutions. 
%
%
In the present paper, we will show that odd solutions at the threshold scatter or blow up. 

\begin{remark}
Odd solutions for \eqref{NLS} can also be regarded as solutions of NLS on half line $[0,\infty)$ with Dirichlet zero boundary condition at the origin. Thus, the result in the paper holds for NLS on half line with this Dirichlet zero boundary. The same goes for NLS on the star graph, which is a metric graph connecting half lines, with Dirichlet zero boundary
condition. 
\end{remark}


\subsection{Main result}

We use $H_{\odd}^{1}(\mathbb{R})$ to denote the set of odd functions in $H^1(\mathbb{R})$. We obtain the following result: 

\begin{theorem}
\label{thm1.2}
Let $u_0\in H_{\odd}^{1}(\mathbb{R})$. Assume $E(u_0)M(u_0)^{\sigma} = 2^{1+\sigma}E(Q)M(Q)^\sigma$. Then we have the following:
\begin{enumerate}
\item If $K(u_0)>0$, then the solution scatters in both time directions. 
\item In addition, we assume $\|x u_0\|_{L^2} < \infty$. If $K(u_0)<0$, then the solution blows up in finite time in 
both time directions.  
\end{enumerate}
\end{theorem}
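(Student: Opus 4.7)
The plan is to extend the concentration--compactness framework of Kenig--Merle and Duyckaerts--Roudenko to the threshold level $2^{1+\sigma} E(Q) M(Q)^\sigma$ for odd data, exploiting the variational fact, recalled in Section \ref{sec2.1}, that the associated minimization problem on $H^1_{\odd}(\R)$ admits no minimizer.

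For part (1), suppose the scattering claim fails. By the standard Kenig--Merle machinery one extracts a minimal forward-non-scattering odd solution $u_c$ at the double-threshold with $K(u_c(t)) > 0$, whose forward orbit is precompact in $H^1(\R)$ modulo spatial translation. A linear profile decomposition applied to a minimizing sequence, combined with the odd symmetry, forces the surviving configuration to be either (a) a single odd profile concentrating near the origin, or (b) an anti-symmetric pair of bumps with centers at $\pm x_n$, $|x_n| \to \infty$. Alternative (a) is excluded by the non-attainment of the odd infimum. In alternative (b), each bump must have action $E(Q) M(Q)^\sigma$ and therefore, by the Campos--Farah--Roudenko classification, equals $e^{i\cdot}Q$ or $Q^-$ up to the symmetries of the equation (a profile scattering in both directions would make $u_c$ scatter, and $Q^+$ is ruled out by $K(u_c) > 0$). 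Since $Q^-(t) \to e^{it} Q$ exponentially as $t \to +\infty$, in either case $u_c(t)$ for large forward times is close to
\[
e^{i\theta_+(t)} Q(\cdot - x(t)) - e^{i\theta_-(t)} Q(\cdot + x(t)),
\]
with $x(t) \to \infty$. Rigidity then follows from a modulation analysis tracking the two phases and the separation parameter $x(t)$ independently, combined with the spectral coercivity of the linearized operator around $Q$ on the symplectic-orthogonal complement of its symmetry manifold, and a localized virial identity adapted to the two-center geometry. The inter-bump interaction is exponentially small in $x(t)$, so each bump evolves essentially as an isolated ground state, and the positivity of $K(u_c)$ together with the modulation equations yields the contradiction.

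For part (2), with $K(u_0) < 0$ and $\|x u_0\|_{L^2} < \infty$, the virial identity
\[
\frac{d^2}{dt^2} \|xu(t)\|_{L^2}^2 = 8 K(u(t))
\]
is the starting point. Because $u(t)$ remains on the double-threshold surface, and $K(u(t)) = 0$ on an odd function would force the odd infimum to be attained, $K$ stays strictly negative throughout the existence interval; a profile-decomposition compactness argument then upgrades this to a uniform bound $K(u(t)) \le -\delta < 0$, and integrating twice yields finite-time blow-up in both time directions.

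The principal obstacle is the rigidity step in case (1): ruling out odd solutions that shadow a separating pair of ground states for all forward time. The coupled modulation equations for the two bumps (two phases, two scales, and the separation) must be set up and closed simultaneously, the exponentially small interaction terms carefully estimated, and the localized virial weighted so as to track the two centers separately while exploiting $K(u_c) > 0$ to drive the contradiction.
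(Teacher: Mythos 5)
Your overall architecture (profile decomposition, exclusion of multi-profile configurations by sub-threshold scattering, reduction to a solution shadowing an anti-symmetric pair of separating bumps, modulation plus virial) matches the paper's, but three steps as written would fail. First, in part (1) you cannot exclude alternative (a) --- a single odd profile concentrated near the origin, i.e.\ a compact non-scattering solution with bounded concentration parameter --- by ``non-attainment of the odd infimum.'' Non-attainment only rules out odd functions with $K=0$ at the threshold action; the compact limit profile here satisfies $K>0$ and is not a candidate minimizer. The paper must work to kill this case: a localized virial identity with fixed radius gives $\frac{1}{T}\int_0^T\mu(t)\,dt\to0$, hence $\mu(t_n)\to0$ along a sequence, and the modulation estimate $e^{-2y(t)}\lesssim\mu(t)$ then forces the concentration parameter to diverge, contradicting its boundedness. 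Second, your rigidity step for alternative (b) does not identify the mechanism that closes the modulation system. The decisive point in the paper is the \emph{sign} of the soliton interaction for odd configurations, $S(Q(\cdot-y)-Q(\cdot+y))-2S(Q)\approx +e^{-2y}>0$ (Lemma \ref{key}); combined with conservation of $S$ and the coercivity of the linearized operator this yields $e^{-2y}+\|h\|_{H^1}^2\lesssim\mu(t)^2$, which is what controls the separation parameter. Saying only that ``the inter-bump interaction is exponentially small'' misses this: for the even superposition the interaction has the opposite sign, the argument collapses, and the conclusion is in fact false (Nguy\~{\^e}n's two-peak solution). Also, by oddness there is one phase and one separation parameter, not two independent phases.

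Third, part (2) as proposed is incorrect at the threshold: there is no uniform bound $K(u(t))\le-\delta<0$. Such a bound is available strictly below the threshold (from the variational gap), but at the threshold $K(u(t))$ can and, under the contradiction hypothesis, does tend to $0$; compactness of the orbit does not prevent this (compare $Q^{+}$, for which $K(Q^{+}(t))\to K(Q)=0$). The paper's actual blow-up proof assumes the solution is forward-global and derives $|J'(t)|^2\lesssim |K(u(t))|^2 J(t)$ from a sharpened Gagliardo--Nirenberg argument (Lemma \ref{lem2.4}); together with concavity of $J$ this yields $J'>0$ and $\int_t^\infty|\mu(s)|\,ds\lesssim e^{-ct}$. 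Then $|y'(t)|\lesssim\mu(t)$ forces $y(t)$ to converge to a finite limit, while $e^{-2y(t)}\lesssim\mu(t)\to0$ forces $y(t)\to\infty$ --- a contradiction. Your ``integrate $K\le-\delta$ twice'' shortcut bypasses exactly the difficulty that makes the threshold case nontrivial. Finally, a smaller issue: at the threshold there is no minimization over energy levels, so no ``minimal non-scattering solution'' needs to be extracted; any non-scattering threshold solution is already compact because all profiles in a $J\ge2$ decomposition fall strictly below the threshold.
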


\begin{remark}
$K(u_0)=0$ is not possible under the assumption $u_0\in H_{\odd}^{1}(\mathbb{R})$ and $E(u_0)M(u_0)^{\sigma} = 2^{1+\sigma}E(Q)M(Q)^\sigma$
(see Section \ref{sec2.1}).
\end{remark}

\begin{remark}
The assumption of oddness is essential. Indeed, Nguy\~{\^{e}}n \cite{Ngu19} obtained a "two peak" solution $u$ such that 
\begin{align*}
\|u(t) - e^{i\gamma(t)}\sum_{k=1}^2 Q(\cdot - x_k(t))\|_{H^1} \lesssim t^{-1}	
\end{align*}
for all $t>0$, where $x_1=-x_2$ and $|x_1(t)-x_2(t)|= 2(1+o(1)) \log t$ as $t \to \infty$. This solution satisfies $E(u_0)M(u_0)^{\sigma} = 2^{1+\sigma}E(Q)M(Q)^\sigma$. We note that the solitons have the same sign, so $u$ is not odd.
\end{remark}

\subsection{Idea of proof}

Our proof of the scattering result is based on the contradiction argument by Duyckaerts, Landoulsi, and Roudenko \cite{DLR20}, where they consider 3d cubic NLS outside an obstacle (see also \cite{MMZ21} and \cite{ArIn21}). If the statement fails, combining a modulation argument and a concentration compactness argument, we construct a critical element at the threshold which has the following compactness property: There exists $x(t)$ such that $u(t,\cdot) - \{\psi(t, \cdot-x(t))-\psi(t, - \cdot -x(t))\}$ converges to $0$ for some $\psi$. If $x(t)$ goes to infinity, then the assumption on the virial functional gives us a contradiction, and thus $x(t)$ must be bounded. However, the boundedness of $x(t)$ implies a contradiction to a modulation argument.  

For the blow-up result, we apply the argument by Duyckaerts and Roudenko \cite{DuRo10} (see also \cite{CFR20}). If the solution is global, the finite variance and the negativity of the virial functional imply that the modulation parameter $y$ is bounded. However, this gives a contradiction. 

The main difficulty appears in the modulation argument. For NLS with potential as in \cite{MMZ21,ArIn21}, we consider the linearization around the ground state $Q$ and thus we can control the translation parameter $y$ easily. On the other hand, in our setting, we need to consider the linearization around $Q(\cdot -y) -Q(\cdot+y)$. Therefore, it is more difficult to control the translation parameter $y$. This is similar to \cite{DLR20}. They overcame this difficulty by using the result for NLS on $\mathbb{R}^3$. In our proof, the following plays a very important role: 
\begin{align*}
	S(Q(\cdot -y) -Q(\cdot+y)) -2S(Q) \approx e^{-2y},
\end{align*}
where $S=E+M$ (see Lemma \ref{key}). Since this is positive, we can control the translation parameter $y$. (Since $S(Q(\cdot -y)+Q(\cdot+y)) -2S(Q) \approx -e^{-2y}$, the oddness is very important.)
By using this, we can control the modulation parameters.

\subsection{Notation}



We define 
\begin{align*}
	\langle f , g \rangle :=\re \int_{\mathbb{R}} f(x)\overline{g(x)} dx.
\end{align*}

We use following notation: for $y\geq 0$, 
\begin{align*}
	\mathcal{T}_{y} f(x) &:=f(x-y),
	\\
	\mathcal{R}_{y} f(x) &:=f(x-y)-f(-x-y)=
	\mathcal{T}_{y} f(x) - (\mathcal{T}_{y} f) (-x).
\end{align*}
The function $\mathcal{R}_{y} f$ is an odd function even when $f$ is neither odd nor even. 
For an even function $f$, we have
\begin{align*}
	\mathcal{R}_{y} f(x) =\mathcal{T}_{y} f(x) - \mathcal{T}_{-y} f(x) =f(x-y)-f(x+y).
\end{align*}

We define a smooth and even cut-off function $\chi_R: \mathbb{R} \to [0,1]$ by
\begin{align*}
	\chi_{1}(x):= 
	\begin{cases}
	1 & (|x|>1)
	\\
	0 & (|x|<1/2)
	\end{cases},
	\quad 
	\chi_{R}(x):=\chi_1\left( \frac{x}{R}\right)
\end{align*}
and we use 
\begin{align*}
	\chi_{R}^c(x):=1-\chi_{R}(x), \quad 
	\chi_{R}^{+}(x):=\1_{(0,\infty)}(x)\chi_{R}(x), \quad 
	\chi_{R}^{-}(x):=\1_{(-\infty,0)}(x)\chi_{R}(x),
\end{align*}
where $\1_{A}$ denotes the characteristic function on a set $A$. 
We define
\begin{align*}
	\mathcal{G}_{R,y}f(x):=\chi_{R}^{+}(x)\mathcal{T}_{y}f(x) 
	- \chi_{R}^{-}(x)\mathcal{T}_{-y}f(x).
\end{align*}
We set 
\begin{align*}
	\mu_{\omega} (u) &:= 2\|\partial_x Q_{\omega}\|_{L^2}^2 - \|\partial_x u\|_{L^2}^2,
\end{align*}
where $Q_\omega(x) = \omega^{\frac{1}{p-1}} Q (\sqrt{\omega} x)$,
and we denote $\mu=\mu_1$ for simplicity. 

We use the notation $A\lesssim B$ if there exists a positive constant $C$ such that $A \leq C B$. The notation $A\approx B$ means that both $A \lesssim B$ and $A\gtrsim B$ hold.


\section{Preliminaries}

\subsection{Variational argument}
\label{sec2.1}

In this section, we consider the variational structure for odd functions.

\subsubsection{Minimizing problem for odd functions}
Let $\omega>0$. 
We define the action by
\begin{align*}
	S_{\omega}(f)
	&:=E(f)+\frac{\omega}{2}M(f)
	=\frac{1}{2}\|\partial_x f \|_{L^2}^2
	+\frac{\omega}{2}\|f \|_{L^2}^2
	-\frac{1}{p+1}\|f \|_{L^{p+1}}^{p+1}.
\end{align*}
We consider the following minimizing problems:
\begin{align*}
	l_{\omega}&:=\inf \{S_{\omega}(f): f \in H^1(\mathbb{R})\setminus\{0\}, K(f)=0\},
	\\
	l_{\omega}^{{\rm odd}}&:=\inf \{S_{\omega}(f): f \in H_{{\rm odd}}^1(\mathbb{R})\setminus\{0\}, K(f)=0\}.
\end{align*}
It is known that $l_{\omega}$ is attained by the ground state $Q_{\omega}$, i.e. $l_{\omega}=S_{\omega}(Q_{\omega})$, where
we recall $Q_{\omega}(x)=\omega^{\frac{1}{p-1}} Q(\sqrt{\omega} x)$, and $Q_{\omega}$ is the radial positive solution to the elliptic equation $-\partial_x^2 Q_{\omega} + \omega Q_{\omega} -Q_{\omega}^{p}=0$. We note $Q=Q_1$ and set  $S:=S_{1}$. 

We prove that $l_{\omega}^{\odd}=2l_{\omega}$ and there is no minimizer. 

\begin{lemma}
We have $l_{\omega}^{{\rm odd}}=2 l_{\omega}(=2S_{\omega}(Q_{\omega}))$. 
\end{lemma}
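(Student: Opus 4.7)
The plan is to establish the two inequalities $l_\omega^{\odd} \leq 2 l_\omega$ and $l_\omega^{\odd} \geq 2 l_\omega$ separately, with the upper bound constructed by a two-bump test family and the lower bound by cutting at the origin.

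For the upper bound, I would use the natural odd test family $\varphi_n := \mathcal{R}_n Q_\omega = Q_\omega(\cdot - n) - Q_\omega(\cdot + n)$. Because $Q_\omega$ decays exponentially, every cross term in $\|\partial_x \varphi_n\|_{L^2}^2$, $\|\varphi_n\|_{L^2}^2$, and $\|\varphi_n\|_{L^{p+1}}^{p+1}$ is $O(e^{-cn})$, so $S_\omega(\varphi_n) = 2 S_\omega(Q_\omega) + O(e^{-cn})$ and $K(\varphi_n) = 2 K(Q_\omega) + O(e^{-cn}) = O(e^{-cn})$. Since $\varphi_n$ need not satisfy $K = 0$ exactly, I would rescale to $\lambda_n \varphi_n$ with $\lambda_n > 0$ uniquely determined by
\begin{align*}
\lambda_n^{p-1} = \frac{2(p+1)\,\|\partial_x \varphi_n\|_{L^2}^2}{(p-1)\,\|\varphi_n\|_{L^{p+1}}^{p+1}},
\end{align*}
which guarantees $K(\lambda_n \varphi_n) = 0$ and forces $\lambda_n \to 1$. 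Continuity of $S_\omega$ in the amplitude then yields $S_\omega(\lambda_n \varphi_n) \to 2 S_\omega(Q_\omega)$, proving $l_\omega^{\odd} \leq 2 l_\omega$.

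For the lower bound, the key structural observation is that restricting an odd $H^1$ function to a half-line preserves $H^1$ regularity. Given any nontrivial $f \in H_{\odd}^{1}(\R)$ with $K(f) = 0$, continuity of $H^1$-functions in one dimension combined with oddness forces $f(0) = 0$, so $g := \1_{(0,\infty)} f$ lies in $H^1(\R)$ with weak derivative $\1_{(0,\infty)} \partial_x f$ (no Dirac contribution at the origin, precisely because $f(0)=0$). Because $f^2$, $|f|^{p+1}$, and $|\partial_x f|^2$ are all even, each of the three terms defining $S_\omega$ and $K$ is exactly halved in passing from $f$ to $g$; in particular $K(g) = \tfrac{1}{2} K(f) = 0$ and $g \not\equiv 0$ by oddness of $f$. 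The definition of $l_\omega$ then gives $S_\omega(g) \geq l_\omega$, hence $S_\omega(f) = 2 S_\omega(g) \geq 2 l_\omega$. Combining with the upper bound completes the proof.

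The argument is largely routine, and the only delicate point is verifying that the sharp cutoff $g = \1_{(0,\infty)} f$ really belongs to $H^1(\R)$, which depends crucially on $f(0) = 0$ coming from oddness; without it, a boundary Dirac mass would appear in $\partial_x g$. I do not anticipate any other obstacle. (The non-attainment statement alluded to in the surrounding discussion would follow by noting that equality in the lower bound would make $g$ a minimizer of $l_\omega$, hence a translate of $Q_\omega$, which is incompatible with $\supp g \subset [0, \infty)$.)
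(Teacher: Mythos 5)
Your proposal is correct and follows essentially the same route as the paper: the upper bound via the two-bump family $\mathcal{R}_n Q_\omega$ rescaled by $\lambda_n \to 1$ to restore $K=0$, and the lower bound by cutting an admissible odd function at the origin, using that $\1_{(0,\infty)}f \in H^1(\mathbb{R})$ with $\partial_x(\1_{(0,\infty)}f)=\1_{(0,\infty)}\partial_x f$ (the paper verifies this in its appendix, exactly as you flag). The only cosmetic difference is that you apply the cutting argument directly to every admissible $f$ rather than to a minimizing sequence, which is a trivially equivalent formulation.
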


\begin{proof}
Define a sequence $\{\varphi_n\}$ of odd functions by $\varphi_n(x) := \mathcal{R}_{n}Q_{\omega}(x)$. Then $\{\varphi_n\}$ satisfies $S_{\omega}(\varphi_n) \to 2 S_{\omega}(Q_{\omega})$ and $K(\varphi_n)\to 0$. 
Take a parameter $\lambda_n$ such that $K(\lambda_n \varphi_n)=0$. Then $\lambda_n \to 1$ and thus $ S_{\omega}(\lambda_n \varphi_n) \to 2S(Q_{\omega})$. Therefore, we get $l_{\omega}^{{\rm odd}} \leq 2 S_{\omega}(Q_{\omega})=2l_{\omega}$. 

On the other hand, taking a minimizing sequence $\{\varphi_n\}$ of $l^{\rm odd}$
We note that $\1_{(0,\infty)}\varphi_n \in H^1(\mathbb{R})$ and $\partial_x(\1_{(0,\infty)}\varphi_n)=\1_{(0,\infty)}\partial_x \varphi_n$. Then, it holds that
\begin{align*}
	&2S(\1_{(0,\infty)}\varphi_n)=S(\1_{(0,\infty)}(\cdot)\varphi_n(\cdot)+\1_{(0,\infty)}(-\cdot)\varphi(-\cdot)) = S(\varphi_n)\to l_{\omega}^{\rm odd},
	\\
	&2K(\1_{(0,\infty)}\varphi_n)=K(\1_{(0,\infty)}\cdot)\varphi_n(\cdot)+\1_{(0,\infty)}(-\cdot)\varphi_n(-\cdot)) =K(\varphi_n)=0.
\end{align*}
This means that $l_{\omega}^{\odd}/2 \geq l_{\omega}$. 
\end{proof}

\begin{lemma}
There is no minimizer of $l_{\omega}^{\odd}$. 
\end{lemma}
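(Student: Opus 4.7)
The plan is to argue by contradiction: assume that $\varphi \in H_{\odd}^{1}(\R) \setminus \{0\}$ attains $l_{\omega}^{\odd}$, so $S_{\omega}(\varphi) = 2 S_{\omega}(Q_{\omega})$ and $K(\varphi) = 0$. The strategy is to ``cut'' $\varphi$ at the origin, produce a minimizer of the unrestricted problem $l_{\omega}$, and then invoke the rigidity of ground state minimizers to reach a contradiction with the support property of the cut.

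First, set $\psi := \1_{(0,\infty)} \varphi$. Since $\varphi \in H^{1}(\R) \hookrightarrow C(\R)$ and $\varphi$ is odd, we have $\varphi(0) = 0$, so the distributional derivative of $\psi$ is just $\1_{(0,\infty)} \partial_{x}\varphi \in L^{2}(\R)$, showing $\psi \in H^{1}(\R)$. Moreover, $\psi \not\equiv 0$ because $\varphi$ is nontrivial and odd. Next, by exactly the splitting already used in the previous lemma,
\begin{align*}
    \|\partial_{x}\varphi\|_{L^{2}}^{2} = 2\|\partial_{x}\psi\|_{L^{2}}^{2}, \quad
    \|\varphi\|_{L^{2}}^{2} = 2\|\psi\|_{L^{2}}^{2}, \quad
    \|\varphi\|_{L^{p+1}}^{p+1} = 2\|\psi\|_{L^{p+1}}^{p+1},
\end{align*}
which gives $S_{\omega}(\psi) = \tfrac{1}{2} S_{\omega}(\varphi) = S_{\omega}(Q_{\omega}) = l_{\omega}$ and $K(\psi) = \tfrac{1}{2} K(\varphi) = 0$. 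Hence $\psi$ attains $l_{\omega}$.

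Now I invoke the classical characterization of the minimizers of $l_{\omega}$: any such minimizer equals $e^{i\theta} Q_{\omega}(\cdot - x_{0})$ for some $\theta \in \R$, $x_{0} \in \R$. (In one dimension this uses only ODE uniqueness for the positive solution to $-\partial_{x}^{2} Q_{\omega} + \omega Q_{\omega} = Q_{\omega}^{p}$, combined with the standard Lagrange multiplier argument showing that a constrained minimizer solves the Euler--Lagrange equation after a harmless rescaling, which is trivial here since $K(\psi) = 0$ already fixes the scaling.) But the explicit formula \eqref{exp} shows $|Q_{\omega}(\cdot - x_{0})| > 0$ on all of $\R$, whereas $\psi$ vanishes identically on $(-\infty, 0)$, a contradiction.

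The only step requiring real care is the verification that $\psi \in H^{1}(\R)$, which could fail if the cutoff at $0$ introduced a delta distribution; but this is handled precisely by the continuity of $H^{1}$ functions in one dimension together with $\varphi(0) = 0$. Once this is in place, the rest is a direct combination of the symmetry-based splittings of $S_{\omega}$ and $K$ with the standard rigidity of the one-dimensional ground state, and I do not anticipate any further serious obstacle.
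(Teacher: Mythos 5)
Your proof is correct and follows essentially the same route as the paper: cut the putative minimizer at the origin, use the oddness-based splitting of $S_{\omega}$ and $K$ to show $\1_{(0,\infty)}f$ attains $l_{\omega}$, and contradict the everywhere-positivity of $e^{i\theta}Q_{\omega}(\cdot-x_{0})$. Your explicit verification that the truncation stays in $H^{1}$ (via $f(0)=0$ from oddness and continuity) is the content the paper defers to its appendix lemma, so nothing is missing.
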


\begin{proof}
Suppose that $f \in H_{\odd}^{1}(\mathbb{R})$ is a minimizer, i.e. $S_{\omega}(f)=l_{\omega}^{\odd}$ and $K(f)=0$. 
Then we have
\begin{align*}
	&2S(\1_{(0,\infty)} f)=S(\1_{(0,\infty)}(\cdot)f(\cdot)-\1_{(0,\infty)}(-\cdot)f(-\cdot))=S(f) = l_{\omega}^{\rm odd}=2l_{\omega},
	\\
	&2K(\1_{(0,\infty)}f)=K(\1_{(0,\infty)}(\cdot)f(\cdot)-\1_{(0,\infty)}(-\cdot)f(-\cdot))=K(f)=0.
\end{align*}
The uniqueness of the minimizer of $l_{\omega}$ implies that $\1_{(0,\infty)}f=e^{i\theta}Q_{\omega}(\cdot-y)$. Obviously, this is a contradiction. 
\end{proof}
\subsubsection{Energy and mass condition}

In this section, we prove that the action condition $S_{\omega}(f)\leq 2S_{\omega}(Q_{\omega})$ is equivalent to the energy and mass condition $E(f)M(f)^{\sigma} \leq 2^{1+\sigma}E(Q)M(Q)^{\sigma}$ for odd functions $f$. 

By properties of the ground state, we have the following:
\begin{lemma}[The Pohozaev identity]
We have 
\begin{align*}
	\frac{1}{p+3}\| Q\|_{L^2}^2 = \frac{1}{p-1}\|\partial_x Q\|_{L^2}^2 = \frac{1}{2(p+1)}\| Q\|_{L^{p+1}}^{p+1} 
\end{align*}
and, in particular, 
\begin{align*}
	M(Q)= \frac{2(p+3)}{p-5} E(Q).
\end{align*}
\end{lemma}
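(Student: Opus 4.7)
The plan is to derive the two independent scalar identities from the elliptic equation $-\partial_x^2 Q + Q - Q^p = 0$ by testing against two natural multipliers, and then combine them linearly.

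First I would multiply the elliptic equation by $Q$ and integrate over $\mathbb{R}$ to obtain the Nehari identity
\[
\|\partial_x Q\|_{L^2}^2 + \|Q\|_{L^2}^2 = \|Q\|_{L^{p+1}}^{p+1}.
\]
Second, I would multiply by the Pohozaev multiplier $x\,\partial_x Q$ and integrate. Using $-\partial_x^2 Q \cdot x\partial_x Q = -\frac{x}{2}\partial_x\bigl((\partial_x Q)^2\bigr)$, $Q\cdot x\partial_x Q = \frac{x}{2}\partial_x(Q^2)$, and $Q^p\cdot x\partial_x Q = \frac{x}{p+1}\partial_x(Q^{p+1})$, a single integration by parts (the boundary terms vanish by the explicit exponential decay of $Q$ in \eqref{exp}) yields
\[
\tfrac{1}{2}\|\partial_x Q\|_{L^2}^2 - \tfrac{1}{2}\|Q\|_{L^2}^2 + \tfrac{1}{p+1}\|Q\|_{L^{p+1}}^{p+1} = 0.
\]

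Adding and subtracting these two scalar relations gives a $2\times 2$ linear system in $\|\partial_x Q\|_{L^2}^2$ and $\|Q\|_{L^2}^2$ with right-hand side proportional to $\|Q\|_{L^{p+1}}^{p+1}$; solving it produces $\|Q\|_{L^2}^2 = \tfrac{p+3}{2(p+1)}\|Q\|_{L^{p+1}}^{p+1}$ and $\|\partial_x Q\|_{L^2}^2 = \tfrac{p-1}{2(p+1)}\|Q\|_{L^{p+1}}^{p+1}$, which is precisely the stated chain of equalities.

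For the consequence $M(Q) = \tfrac{2(p+3)}{p-5}E(Q)$, I would substitute the two resolved expressions into $E(Q) = \tfrac{1}{2}\|\partial_x Q\|_{L^2}^2 - \tfrac{1}{p+1}\|Q\|_{L^{p+1}}^{p+1}$ to get $E(Q) = \tfrac{p-5}{4(p+1)}\|Q\|_{L^{p+1}}^{p+1}$ and then take the ratio with $M(Q) = \|Q\|_{L^2}^2$. There is no real obstacle here; the only mildly delicate point is justifying the vanishing of boundary terms in the Pohozaev computation, which is immediate from the explicit formula \eqref{exp} giving exponential decay of $Q$ and $\partial_x Q$ at infinity. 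The hypothesis $p>5$ is not needed for the Pohozaev identities themselves, but it is exactly what makes the $E(Q)/M(Q)$ relation have the positive coefficient recorded in the statement.
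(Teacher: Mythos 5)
Your proof is correct: the Nehari identity (testing against $Q$) and the Pohozaev identity (testing against $x\partial_x Q$) combine exactly as you describe to give $\|Q\|_{L^2}^2 = \tfrac{p+3}{2(p+1)}\|Q\|_{L^{p+1}}^{p+1}$ and $\|\partial_x Q\|_{L^2}^2 = \tfrac{p-1}{2(p+1)}\|Q\|_{L^{p+1}}^{p+1}$, whence $E(Q)=\tfrac{p-5}{4(p+1)}\|Q\|_{L^{p+1}}^{p+1}$ and the stated ratio. The paper states this lemma without proof as a standard property of the ground state, and your argument is precisely the standard derivation it implicitly invokes (one could equally verify it by direct computation from the explicit formula \eqref{exp}).
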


\begin{lemma}[Scaling of the ground state]
We have 
\begin{align*}
	M(Q_{\omega}) = \omega^{-\frac{p-5}{2(p-1)}}M(Q),
	\ 
	E(Q_{\omega}) = \omega^{\frac{p+3}{2(p-1)}}E(Q).
\end{align*}
\end{lemma}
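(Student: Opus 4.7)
The plan is a direct change of variables in each integral defining $M$ and $E$. Starting from the definition $Q_{\omega}(x) = \omega^{\frac{1}{p-1}} Q(\sqrt{\omega}\, x)$ and substituting $y = \sqrt{\omega}\, x$ (so $dx = \omega^{-1/2}\, dy$), I would compute the three homogeneous pieces one at a time.

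First, for the mass, I would write
\begin{align*}
M(Q_\omega) = \int_{\mathbb{R}} \omega^{\frac{2}{p-1}} Q(\sqrt{\omega}\, x)^2 \, dx
= \omega^{\frac{2}{p-1} - \frac{1}{2}} M(Q),
\end{align*}
and then simplify the exponent $\frac{2}{p-1} - \frac{1}{2} = \frac{5 - p}{2(p-1)} = -\frac{p-5}{2(p-1)}$, which gives the first identity. Next, for the kinetic part, differentiating introduces an extra factor $\sqrt{\omega}$ per derivative, so
\begin{align*}
\|\partial_x Q_\omega\|_{L^2}^2 = \omega^{\frac{2}{p-1} + 1 - \frac{1}{2}} \|\partial_x Q\|_{L^2}^2 = \omega^{\frac{p+3}{2(p-1)}} \|\partial_x Q\|_{L^2}^2.
\end{align*}
For the nonlinear term, the same substitution yields
\begin{align*}
\|Q_\omega\|_{L^{p+1}}^{p+1} = \omega^{\frac{p+1}{p-1} - \frac{1}{2}} \|Q\|_{L^{p+1}}^{p+1} = \omega^{\frac{p+3}{2(p-1)}} \|Q\|_{L^{p+1}}^{p+1}.
\end{align*}
The key observation is that both terms entering $E$ carry the same power $\omega^{\frac{p+3}{2(p-1)}}$ — this is precisely the $L^2$-supercritical scaling identity — so they combine cleanly to give $E(Q_\omega) = \omega^{\frac{p+3}{2(p-1)}} E(Q)$.

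There is no real obstacle here; the only point requiring care is bookkeeping the exponents arising from the $\omega^{\frac{1}{p-1}}$ amplitude factor raised to the appropriate power and the $\omega^{-1/2}$ Jacobian, together with the $\omega^{1/2}$ factors coming from $\partial_x$. Matching these to the stated $-\frac{p-5}{2(p-1)}$ and $\frac{p+3}{2(p-1)}$ is a one-line arithmetic check in each case.
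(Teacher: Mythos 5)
Your computation is correct and is exactly the direct change-of-variables argument the paper leaves implicit (the lemma is stated without proof as a consequence of the explicit scaling $Q_{\omega}(x)=\omega^{\frac{1}{p-1}}Q(\sqrt{\omega}x)$). All three exponent checks, including the coincidence that the kinetic and potential terms both scale by $\omega^{\frac{p+3}{2(p-1)}}$, are accurate.
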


\begin{proposition}
For $f \in H_{\odd}^{1}(\mathbb{R})$, the following are equivalent:
\begin{enumerate}
\item $E(f)M(f)^{\sigma} \leq 2^{1+\sigma}E(Q)M(Q)^{\sigma}$.
\item There exists $\omega>0$ such that $S_{\omega}(f)\leq 2S_{\omega}(Q_{\omega})$. 
\end{enumerate}
\end{proposition}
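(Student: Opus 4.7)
The plan is to recast condition (2) as $\inf_{\omega > 0} \phi(\omega) \leq 0$ for $\phi(\omega) := S_\omega(f) - 2 S_\omega(Q_\omega)$, compute this infimum explicitly, and check that it is $\leq 0$ precisely when (1) holds. Oddness plays no role; it is only the context in which the equivalence gets used later.

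Set $a := (p+3)/(2(p-1))$ and $\beta := (p-5)/(2(p-1))$, so the scaling lemma reads $E(Q_\omega) = \omega^a E(Q)$ and $M(Q_\omega) = \omega^{-\beta} M(Q)$. The key identity is $a + \beta = 1$, which makes both terms in $S_\omega(Q_\omega)$ scale with the common power $\omega^a$; combined with the Pohozaev identity, this gives
\[
2 S_\omega(Q_\omega) = \omega^a \bigl(2 E(Q) + M(Q)\bigr) = \tfrac{4(p-1)}{p-5} E(Q) \, \omega^a =: c\, \omega^a.
\]
Consequently $\phi(\omega) = E(f) + \tfrac{\omega}{2} M(f) - c\, \omega^a$. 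For $f \not\equiv 0$ one has $M(f) > 0$, and since $p > 5$ forces $a \in (0,1)$, the function $\phi$ is strictly convex on $(0,\infty)$ with $\phi'(0^+) = -\infty$ and $\phi(\omega) \to +\infty$ as $\omega \to \infty$. Hence $\phi$ attains its infimum at a unique critical point $\omega_\ast$ solving $\phi'(\omega_\ast) = 0$.

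Solving the first-order condition and using Pohozaev once more in the form $M(Q) = \tfrac{2(p+3)}{p-5} E(Q)$ yields $\omega_\ast^\beta = 2 M(Q)/M(f)$. Substituting $\tfrac{\omega_\ast}{2} M(f) = a c\, \omega_\ast^a$ back into $\phi$ collapses the answer to
\[
\phi(\omega_\ast) = E(f) - 2 E(Q)\, \omega_\ast^a = E(f) - \frac{2^{1+\sigma} E(Q) M(Q)^\sigma}{M(f)^\sigma}, \qquad \sigma = a/\beta.
\]
Therefore $\phi(\omega_\ast) \leq 0$ is exactly condition (1), and since $\phi(\omega_\ast) = \inf_{\omega > 0} \phi(\omega)$, this is equivalent to (2); the case $f \equiv 0$ is trivial. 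The only real difficulty is the algebraic bookkeeping: once the identity $a + \beta = 1$ is used to align the scalings of $E(Q_\omega)$ and $\omega M(Q_\omega)$, and Pohozaev collapses $2 E(Q) + M(Q)$ into a single multiple of $E(Q)$, the convex one-parameter minimization produces the right-hand side $2^{1+\sigma} E(Q) M(Q)^\sigma$ of (1) without further work.
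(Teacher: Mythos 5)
Your argument is correct, and it is in substance the dual formulation of the paper's proof rather than a different one. The paper fixes the convex curve $E=F(M):=2^{1+\sigma}E(Q)M(Q)^{\sigma}M^{-\sigma}$ and observes, via the Pohozaev identity and the scaling of $Q_{\omega}$, that the lines $E=-\tfrac{\omega}{2}M+2S_{\omega}(Q_{\omega})$ are exactly its tangent lines (touching at $M=2M(Q_{\omega})$), so that $(M(f),E(f))$ lies on or below the curve iff it lies on or below some tangent line; you instead fix the point and minimize $\phi(\omega)=S_{\omega}(f)-2S_{\omega}(Q_{\omega})$ over $\omega$, finding the minimizer $\omega_{*}$ with $M(f)=2M(Q_{\omega_{*}})$ and the minimum value $E(f)-2^{1+\sigma}E(Q)M(Q)^{\sigma}M(f)^{-\sigma}$. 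The two computations are Legendre-dual and use the same two inputs (Pohozaev and the scaling law); yours has the merit of being fully explicit where the paper's is a one-line ``simple calculation,'' and your convexity discussion ($a\in(0,1)$, $\phi'(0^{+})=-\infty$, $\phi\to+\infty$) correctly justifies that the infimum is attained, which is what converts ``$\inf\phi\leq 0$'' into ``there exists $\omega$.'' One by-product worth flagging: your computation forces $\sigma=a/\beta=(p+3)/(p-5)$, the standard scale-invariant mass--energy exponent $(1-s_c)/s_c$ for $d=1$; the value $\sigma=(p-3)/(p-5)$ written in the paper's introduction appears to be a typo, and the equivalence would fail if that value were taken literally, since the curve $F$ would then no longer be the envelope of the lines $E=-\tfrac{\omega}{2}M+2S_{\omega}(Q_{\omega})$.
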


\begin{proof}
For a positive number $M>0$, let $F(M):=2^{1+\sigma}E(Q)M(Q)^{\sigma}M^{-\sigma}$. 
The function $F$ is strictly convex on $(0,\infty)$. By a simple calculation (using the Pohozaev identity and the scaling property), we see that $E=-\frac{\omega}{2} M + 2S_{\omega}(Q_{\omega})$ is a tangent line of $F$ at $M=2\omega^{-\frac{p-5}{2(p-1)}}M(Q)=M(Q_{\omega})$. This implies the result. 
\end{proof}

As a corollary, we see the following: 
\begin{corollary}
\label{cor2.6.0}
For $f \in H_{\odd}^{1}(\mathbb{R})$, the following are equivalent: 
\begin{enumerate}
\item $E(f)M(f)^{\sigma} = 2^{1+\sigma}E(Q)M(Q)^{\sigma}$.
\item There exists $\omega>0$ such that $E(f)=2E(Q_{\omega})$ and $M(f)=2M(Q_{\omega})$.
\end{enumerate}
\end{corollary}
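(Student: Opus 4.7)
The plan is to derive this corollary as the \emph{equality case} of the preceding proposition, exploiting the strict convexity of the curve $F(M) = 2^{1+\sigma}E(Q)M(Q)^\sigma M^{-\sigma}$ that was established there. For each $\omega>0$, the proof of the proposition exhibits the line $E = -\tfrac{\omega}{2}M + 2S_\omega(Q_\omega)$ as a tangent to the graph of $F$, and a short check shows that the tangency point is $(M,E) = (2M(Q_\omega),\,2E(Q_\omega))$, which in particular lies on the graph. As $\omega$ runs over $(0,\infty)$, these tangency points sweep out the entire graph of $F$, and this bijective correspondence is what will force (1) and (2) to be equivalent.

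For the direction (2) $\Rightarrow$ (1), I will simply substitute the hypotheses into the product $E(f)M(f)^\sigma$ and invoke the scale-invariance
\begin{align*}
E(Q_\omega)M(Q_\omega)^\sigma = E(Q)M(Q)^\sigma,
\end{align*}
which is immediate from the Scaling Lemma since the $\omega$-exponents cancel by the specific value of $\sigma$. This gives $E(f)M(f)^\sigma = 2\cdot 2^\sigma E(Q_\omega)M(Q_\omega)^\sigma = 2^{1+\sigma}E(Q)M(Q)^\sigma$ in one line.

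For (1) $\Rightarrow$ (2), the key observation is that the map $\omega \mapsto M(Q_\omega) = \omega^{-(p-5)/(2(p-1))}M(Q)$ is a continuous strictly decreasing bijection from $(0,\infty)$ onto $(0,\infty)$, because $p>5$. Since $E(f)M(f)^\sigma = 2^{1+\sigma}E(Q)M(Q)^\sigma > 0$ forces $M(f) > 0$, I will select the unique $\omega > 0$ with $M(f) = 2M(Q_\omega)$. Solving the equality for $E(f)$ and applying the scale-invariance once more yields
\begin{align*}
E(f) = \frac{2^{1+\sigma}E(Q)M(Q)^\sigma}{(2M(Q_\omega))^\sigma} = \frac{2\,E(Q)M(Q)^\sigma}{M(Q_\omega)^\sigma} = 2E(Q_\omega),
\end{align*}
completing the argument.

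I do not anticipate a serious obstacle: once the tangent-line/convex-envelope picture from the preceding proposition is in hand, the corollary reduces to reading off the unique tangency point lying above a prescribed mass. The only verification required is the cancellation of $\omega$-powers in the invariant $E(Q_\omega)M(Q_\omega)^\sigma$, which is a direct computation from the Scaling Lemma.
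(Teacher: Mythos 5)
Your argument is correct and is essentially the route the paper intends: the corollary is read off as the equality case of the preceding proposition's tangent-line picture, and your direct two-way verification via the scale invariance $E(Q_\omega)M(Q_\omega)^{\sigma}=E(Q)M(Q)^{\sigma}$, together with the bijectivity of $\omega\mapsto M(Q_\omega)$, supplies exactly the computation the paper leaves implicit. One caveat: the cancellation of $\omega$-powers you invoke holds for $\sigma=(p+3)/(p-5)$ (the standard scale-invariant exponent, which is also what the tangency computation and the truth of the corollary itself require), not for the value $\sigma=(p-3)/(p-5)$ printed in the paper's introduction, which appears to be a typo rather than a flaw in your reasoning.
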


\subsubsection{Reduction by the scaling}

As seen in Corollary \ref{cor2.6.0}, to prove the main result, we may assume that $M(u_0)=2M(Q_{\omega})$ and $E(u_0)=2E(Q_{\omega})$ for some $\omega>0$. By using the scaling invariance of the equation \eqref{NLS}, we may also assume that $M(u_0)=2M(Q)$ and $E(u_0)=2E(Q)$. We prove the sufficiency in this section. 

\begin{theorem}
\label{thm4.7}
Let $u_0 \in H_{\odd}^{1}(\mathbb{R})$ satisfy $M(u_0)=2M(Q)$ and $E(u_0)=2E(Q)$. Then we have
\begin{enumerate}
\item If $K(u_0)> 0$, then the solution scatters in both time directions.
\item In addition, we assume finite variance. 
If $K(u_0)<0$, then the solution blows up in finite positive and negative time.
\end{enumerate}
\end{theorem}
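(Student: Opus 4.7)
The plan is to handle the scattering and blow-up parts separately, both as arguments by contradiction built around a modulation analysis near the two-bump odd profile $\mathcal{R}_{y}Q(x)=Q(x-y)-Q(x+y)$. The crucial algebraic input that makes the odd problem tractable is the forthcoming Lemma \ref{key}, which gives
\[
S(\mathcal{R}_{y}Q)-2S(Q)\approx e^{-2y}>0\qquad (y\gg 1).
\]
The positivity of this quantity, specific to the odd setting, is what will let us control the translation parameter $y$ throughout the analysis.

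\textbf{(1) Scattering.} I would first run a concentration–compactness reduction in the spirit of \cite{DLR20}: if scattering fails for some odd datum with $K(u_{0})>0$ at the threshold, then a \emph{critical element} $u_{c}$ exists---odd, global, at the same level, and with precompact orbit modulo translation. Oddness forces this compactness to take the symmetric two-bump form
\[
u_{c}(t,x)=\psi(t,x-x(t))-\psi(t,-x-x(t))+o_{H^{1}}(1),\qquad x(t)\geq 0.
\]
I would then split into two cases according to the behaviour of $x(t)$. If $x(t)\to\infty$, the two humps separate; a truncated virial identity based on the cut-offs $\chi_{R}^{\pm}$ decouples the two halves into near-solitons and produces a lower bound $K(u_{c}(t))\geq c>0$ that is incompatible with the compactness, a contradiction. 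If instead $x(t)$ stays bounded, then $u_{c}$ is spatially localised and the modulation analysis near $\mathcal{R}_{y(t)}Q$, driven by Lemma \ref{key}, yields exponential convergence to some $\mathcal{R}_{y_{\infty}}Q$, producing a nontrivial odd minimiser of $l_{\omega}^{\odd}$ and contradicting the non-attainment result proved earlier in this section. Both time directions are handled identically.

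\textbf{(2) Blow-up.} Assume, for contradiction, that $u$ with $K(u_{0})<0$ and $\|xu_{0}\|_{L^{2}}<\infty$ is global in positive time. Conservation of the threshold level together with the variational characterisation yields $K(u(t))<0$ for all $t\geq 0$. Glassey's identity
\[
\frac{d^{2}}{dt^{2}}\|xu(t)\|_{L^{2}}^{2}=8K(u(t))
\]
rules out $\liminf_{t\to\infty}K(u(t))<0$, so $K(u(t))\to 0^{-}$. Then $u(t)$ is driven toward the odd constrained variational set and a modulation decomposition gives
\[
u(t)=e^{i\theta(t)}\mathcal{R}_{y(t)}Q+\varepsilon(t),\qquad \|\varepsilon(t)\|_{H^{1}}\to 0.
\]
Following the scheme of \cite{DuRo10,CFR20} and exploiting Lemma \ref{key} to expand $S(u(t))$ around $\mathcal{R}_{y(t)}Q$, I would show that the finite-variance bound forces $y(t)$ to stay bounded; a further refinement of the modulation estimates then extracts a limit profile of $Q^{+}$-type which, being odd, is excluded by the non-attainment of $l_{\omega}^{\odd}$, giving the contradiction. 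Negative time is handled by time reversal.

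\textbf{Main obstacle.} The substantive difficulty is that $\mathcal{R}_{y}Q$ is \emph{not} an exact solitary wave: the linearisation of $S_{\omega}$ at $\mathcal{R}_{y}Q$ carries a near-zero eigendirection of size $O(e^{-2y})$ coming from the relative translation of the two humps, which is not isolated by any natural spectral projection. Standard orthogonality conditions therefore fail to pin down $y$. The mechanism that replaces them is the sign of $S(\mathcal{R}_{y}Q)-2S(Q)$: being positive and of order $e^{-2y}$, it effectively supplies the missing coercivity and controls $y$. The even analogue $Q(\cdot-y)+Q(\cdot+y)$ produces $S(\cdot)-2S(Q)\approx -e^{-2y}$, with the wrong sign, which is precisely why the argument requires oddness.
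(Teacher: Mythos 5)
Your high-level strategy coincides with the paper's (it is the one announced in the introduction): a contradiction argument built on concentration compactness and modulation around $\mathcal{R}_{y}Q$ for scattering, and the Duyckaerts--Roudenko virial/modulation scheme for blow-up, with Lemma \ref{key} ($S(\mathcal{R}_{y}Q)-2S(Q)\approx e^{-2y}>0$) as the device that controls the translation parameter. However, several of the mechanisms you describe are not the ones that work, and the genuinely hard steps are missing. In the scattering part, your claim that separation of the humps ($x(t)\to\infty$) ``produces a lower bound $K(u_c(t))\geq c>0$'' is backwards: by compactness the single profile $\psi$ then satisfies $M(\psi)=M(Q)$ and $E(\psi)=E(Q)$, and if $\mu(t_n)\not\to 0$ one also gets $\|\partial_x\psi\|_{L^2}<\|\partial_x Q\|_{L^2}$, so by the threshold classification of \cite{CFR20} the single-bump solution scatters in one time direction and long-time perturbation contradicts non-scattering of $u$; the correct conclusion is the equivalence $X(t_n)\to\infty\Leftrightarrow\mu(t_n)\to0$ (Lemma \ref{lem5.6}), i.e.\ $K\to 0$, not $K\gtrsim 1$. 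The boundedness of $X(t)$ is then obtained not from a single virial computation but from a bootstrap combining three estimates: $\int_{t_1}^{t_2}\mu\lesssim(1+\sup|X|)(\mu(t_1)+\mu(t_2))$ from the truncated virial with $R\sim\sup|X|$, the Lipschitz bound $|X(\tau_2)-X(\tau_1)|\lesssim\int_{\tau_1}^{\tau_2}\mu$ coming from $|y'|\lesssim\mu$ (Lemmas \ref{lem5.7}--\ref{lem5.8}), and the equivalence just mentioned; none of these appears in your sketch. In the bounded case you also omit the source of smallness of $\mu$ (the localized virial identity gives $T^{-1}\int_0^T\mu\,dt\to0$), and the asserted ``exponential convergence to $\mathcal{R}_{y_\infty}Q$'' has no mechanism behind it; the actual contradiction is that $\mu(t_n)\to0$ forces $y(t_n)\to\infty$ through the quantitative form $e^{-2y}+\|h\|_{H^1}^2\lesssim\mu^2$ (Corollary \ref{cor4.8}) of Lemma \ref{key}, which is incompatible with boundedness of $X$.

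For blow-up, Glassey's identity alone gives $J''=8K<0$, hence $J'>0$ and $\int_0^\infty|K|<\infty$ along the lines you indicate, i.e.\ $\liminf|\mu|=0$; it does not give $K(u(t))\to0^-$, nor the quantitative decay needed to control $y(t)$. The missing ingredient is Lemma \ref{lem2.4}, the refined Cauchy--Schwarz inequality $|J'(t)|^2\lesssim\mu(t)^2J(t)$ proved from the sharp odd Gagliardo--Nirenberg constant; it yields $J'\lesssim e^{-ct}$, hence $\int_t^\infty|\mu|\lesssim e^{-ct}$, then the upgrade from $\liminf|\mu|=0$ to $\mu(t)\to0$ via the continuity of $\rho$ and $|\rho'|\lesssim\mu$ (Lemma \ref{lem4.10}), and finally the convergence of $y(t)$ from $|y'|\lesssim\mu$. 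The contradiction is then immediate: $y(t)\to y_\infty$ gives $e^{-2y(t)}\to e^{-2y_\infty}>0$, while $e^{-2y}\lesssim\mu^2\to0$. Your alternative ending, extracting ``a limit profile of $Q^+$-type excluded by the non-attainment of $l_\omega^{\odd}$,'' does not parse: $Q^+$ satisfies $K(Q^+(0))<0$, so it is not a competitor for the minimization over $\{K=0\}$ and non-attainment of $l_\omega^{\odd}$ says nothing about it; the exclusion really does come from the sign in Lemma \ref{key}, not from the variational non-attainment.
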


If the above Theorem \ref{thm4.7} holds, we get the main result, Theorem \ref{thm1.2}, by a scaling argument.

\begin{proof}[Proof of Theorem \ref{thm1.2} from Theorem \ref{thm4.7}]
Let $u_0 \in H_{\odd}^{1}(\mathbb{R})$ satisfy $E(u_0)M(u_0)^{\sigma}=2^{1+\sigma}E(Q)M(Q)^{\sigma}$ and $K(u_0)> 0$. 
We have $M(u_0)>0$ by assumption. Thus, there exists $\omega>0$ such that $M(u_0)=\omega^{\frac{p-5}{2(p-1)}}M(Q)=M(Q_{\omega})$. Then since we also have
\begin{align*}
	2^{1+\sigma}E(Q)M(Q)^{\sigma} 
	= 2^{1+\sigma} \omega^{-\frac{p+3}{2(p-1)}}E(Q)\{\omega^{\frac{p-5}{2(p-1)}}M(Q)\}^{\sigma}
	=2^{1+\sigma}E(Q_{\omega})M(Q_{\omega})^{\sigma},
\end{align*}
we get
\begin{align*}
	E(u_0)=2E(Q_{\omega})=2\omega^{-\frac{p+3}{2(p-1)}}E(Q).
\end{align*}
Therefore, we obtain
\begin{align*}
	\omega^{\frac{p+3}{2(p-1)}}E(u_0)=2E(Q)
	\text{ and }
	\omega^{-\frac{p-5}{2(p-1)}}M(u_0)=M(Q).
\end{align*}
Let $u_{0,\omega^{-1}}(x):=\omega^{-1/(p-1)}u_{0}(\omega^{-1/2}x)$. Then we get
\begin{align*}
	E(u_{0,\omega^{-1}})=2E(Q)
	\text{ and }
	M(u_{0,\omega^{-1}})=M(Q).
\end{align*}
We also have
\begin{align*}
	K(u_{0,\omega^{-1}})=\omega^{\frac{p+3}{2(p-1)}}K(u_0)> 0
\end{align*}
By Theorem \ref{thm4.7}, we find that the solution $u_{\omega^{-1}}$ with the initial data $u_{0,\omega^{-1}}$ scatters. 
Since the equation \eqref{NLS} is invariant under this scaling, the behavior of the solution $u$ with $u(0)=u_0$  is same as that of  $u_{\omega^{-1}}$. Thus $u$ scatters. This argument also works for the case that $K$ is  negative. 
\end{proof}


Thus, it is enough to consider the case of $\omega=1$. 


\subsubsection{The virial functional and variational argument}

We will show the following in two ways:
\begin{proposition}
\label{prop4.8}
Let $f\in H_{\odd}^{1}(\mathbb{R})$. 
Assume $M(f)= 2M(Q)$ and $E(f)=2 E(Q)$. 
Then the following are equivalent:
\begin{enumerate}
\item $K(f)> 0$.
\item $\|f\|_{L^2}^{2\sigma} \|\partial_x f\|_{L^2}^{2} < 2^{1+\sigma}\|Q\|_{L^2}^{2\sigma} \|\partial_x Q\|_{L^2}^{2} $.
\item $\mu(f)> 0$.
\end{enumerate}
\end{proposition}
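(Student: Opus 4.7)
The plan is to prove the two equivalences (2)$\iff$(3) and (1)$\iff$(3) by direct computation, with both hinging on the normalization $M(f)=2M(Q)$, $E(f)=2E(Q)$. For (2)$\iff$(3), I would simply substitute $\|f\|_{L^2}^2 = 2\|Q\|_{L^2}^2$ into the inequality in (2): the left-hand side becomes $2^{\sigma} \|Q\|_{L^2}^{2\sigma}\|\partial_x f\|_{L^2}^2$, and after canceling the common factor $2^{\sigma}\|Q\|_{L^2}^{2\sigma}$ the statement is exactly $\|\partial_x f\|_{L^2}^2 < 2\|\partial_x Q\|_{L^2}^2$, which is $\mu(f)>0$.

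The substantive step is the equivalence (1)$\iff$(3), for which I plan to derive the clean affine identity
\[
K(f) \;=\; \frac{p-5}{4}\,\mu(f),
\]
valid whenever $E(f)=2E(Q)$; because $p>5$, this gives the desired equivalence instantly. To produce the identity, I would solve $E(f)=2E(Q)$ for $\|f\|_{L^{p+1}}^{p+1}$ in terms of $\|\partial_x f\|_{L^2}^2$ and $E(Q)$, substitute into the definition of $K(f)$, and finally eliminate $E(Q)$ using the Pohozaev identity $E(Q)=\frac{p-5}{2(p-1)}\|\partial_x Q\|_{L^2}^2$. The algebra is only a few lines.

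I do not expect any serious obstacle here; the only subtlety is that the proposition asks for \emph{strict} inequalities throughout, so I must check that the boundary case $K(f)=0$ (equivalently $\mu(f)=0$) cannot occur. But $K(f)=0$ combined with $S(f)=E(f)+\tfrac{1}{2} M(f) = 2S(Q)=l^{\odd}$ would exhibit $f$ as a minimizer of $l^{\odd}$, contradicting the non-attainment lemma proved just above.

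For the second proof announced in the text (``in two ways''), I would recast the equivalence (1)$\iff$(3) variationally using the mass- and oddness-preserving scaling $f_\lambda(x)=\lambda^{1/2}f(\lambda x)$. Since $K(f)=\frac{d}{d\lambda}S(f_\lambda)\bigl|_{\lambda=1}$ and $\lambda\mapsto S(f_\lambda)$ has a unique maximizer $\lambda_\ast$ at which $K(f_{\lambda_\ast})=0$, the sign of $\lambda_\ast-1$ records the sign of $K(f)$; comparing $S(f_{\lambda_\ast})\geq l^{\odd}=2S(Q)$ (strict by non-attainment) with the explicit expression for $S(f_{\lambda_\ast})$ obtained from $K(f_{\lambda_\ast})=0$ then translates directly into the sign of $\mu(f)$.
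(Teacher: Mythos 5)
Your main argument coincides with the paper's: (2)$\Leftrightarrow$(3) by substituting $\|f\|_{L^2}^2=2\|Q\|_{L^2}^2$, and (1)$\Leftrightarrow$(3) via the affine identity $K(f)=\frac{p-5}{4}\mu(f)$ valid under $E(f)=2E(Q)$, which is exactly Lemma \ref{lem2.11} (the paper derives it from $K=\frac{p-1}{2}E-\frac{p-5}{4}\|\partial_x f\|_{L^2}^2$ and the Pohozaev identity; your route through eliminating $\|f\|_{L^{p+1}}^{p+1}$ is the same algebra). Two remarks. First, the strictness check is unnecessary for the equivalence itself: an affine identity with positive slope transfers strict inequalities automatically, so ruling out $K(f)=0$ is not needed here (the paper does exclude it, via non-attainment of $l_{\omega}^{\odd}$, but only for the remark following Theorem \ref{thm1.2}). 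Second, your proposed alternative argument genuinely differs from the paper's second route, which instead proves (1)$\Leftrightarrow$(2) directly from the sharp odd Gagliardo--Nirenberg constant $C_{GN}^{\odd}=2^{-\frac{p-1}{2}}C_{GN}$ (Lemmas \ref{GN} and \ref{lem4.9}); and as sketched your scaling argument only yields one implication. From $K(f_{\lambda_*})=0$ one gets $S(f_{\lambda_*})=\frac{p-5}{2(p-1)}\lambda_*^2\|\partial_x f\|_{L^2}^2+\frac12\|f\|_{L^2}^2$, so the strict bound $S(f_{\lambda_*})>2S(Q)$ reads $\lambda_*^2\|\partial_x f\|_{L^2}^2>2\|\partial_x Q\|_{L^2}^2$; combined with $\lambda_*<1$ when $K(f)<0$ this gives $\mu(f)<0$, but when $K(f)>0$ one has $\lambda_*>1$ and the inequality points the wrong way to conclude $\mu(f)>0$ --- closing that direction forces you back to the affine identity or to the Gagliardo--Nirenberg route. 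Since your first proof is complete, this gap affects only the optional second argument.
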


The equivalence of (2) and (3) holds by $M(f)= 2M(Q)$. The equivalence of (1) and (3) follows from $E(f)=2 E(Q)$:

\begin{lemma}
\label{lem2.11}
If $E(f)=2E(Q)$, we have $K(f) = \frac{p-5}{4} \mu(f)$.
\end{lemma}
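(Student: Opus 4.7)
The plan is a direct algebraic manipulation exploiting the two identities: the definitional relation between $E$, $K$ and the $L^{p+1}$-norm, and the Pohozaev identity that ties $\|Q\|_{L^{p+1}}^{p+1}$ to $\|\partial_x Q\|_{L^2}^2$. The only input beyond arithmetic is the constraint $E(f)=2E(Q)$, which is used to eliminate the nonlinear term $\|f\|_{L^{p+1}}^{p+1}$ appearing inside $K(f)$.

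First I would use the definition of the energy to solve for the nonlinear term,
\begin{align*}
\tfrac{1}{p+1}\|f\|_{L^{p+1}}^{p+1} = \tfrac{1}{2}\|\partial_x f\|_{L^2}^2 - E(f),
\end{align*}
and substitute this into $K(f)=\|\partial_x f\|_{L^2}^2-\tfrac{p-1}{2(p+1)}\|f\|_{L^{p+1}}^{p+1}$. Combining the $\|\partial_x f\|_{L^2}^2$ terms gives
\begin{align*}
K(f) = -\tfrac{p-5}{4}\|\partial_x f\|_{L^2}^2 + \tfrac{p-1}{2} E(f).
\end{align*}

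Next I would plug in the hypothesis $E(f)=2E(Q)$ and use the Pohozaev identity (the preceding lemma) in the form $E(Q)=\tfrac{p-5}{2(p-1)}\|\partial_x Q\|_{L^2}^2$, which follows by expressing $\|Q\|_{L^{p+1}}^{p+1}=\tfrac{2(p+1)}{p-1}\|\partial_x Q\|_{L^2}^2$ and inserting it into the definition of $E(Q)$. This converts the $E(f)$-term into $\tfrac{p-5}{2}\|\partial_x Q\|_{L^2}^2$, and one immediately reads off
\begin{align*}
K(f) = \tfrac{p-5}{4}\bigl(2\|\partial_x Q\|_{L^2}^2 - \|\partial_x f\|_{L^2}^2\bigr) = \tfrac{p-5}{4}\mu(f),
\end{align*}
as desired.

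There is no real obstacle here; the statement is a linear identity among the quadratic quantities once $\|f\|_{L^{p+1}}^{p+1}$ has been traded for $\|\partial_x f\|_{L^2}^2$ and $E(f)$. The only thing to watch is bookkeeping with the constants $\tfrac{p-5}{4}$ versus $\tfrac{5-p}{4}$ when moving the $(1-(p-1)/4)$ coefficient. Note in particular that the mass $M(f)$ never enters this computation, which is why the assumption $M(f)=2M(Q)$ is not needed for the present lemma (it is only the energy normalization that matters).
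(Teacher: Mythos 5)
Your proof is correct and follows essentially the same route as the paper: both eliminate the $L^{p+1}$-term to arrive at the identity $K(f)=\frac{p-1}{2}E(f)-\frac{p-5}{4}\|\partial_x f\|_{L^2}^2$ and then invoke the Pohozaev identity in the form $E(Q)=\frac{p-5}{2(p-1)}\|\partial_x Q\|_{L^2}^2$. Your observation that only the energy normalization (and not $M(f)=2M(Q)$) is needed is consistent with how the lemma is stated in the paper.
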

\begin{proof}
We have
\begin{align*}
	K(f) &= \frac{p-1}{2} E(f) -\frac{p-5}{4} \|\partial_x f\|_{L^2}^2
	\\
	&= \frac{p-1}{2} 2E(Q) -\frac{p-5}{4} \|\partial_x f\|_{H}^2
	\\
	&= \frac{p-5}{4} \left(2\|\partial_x Q\|_{L^2}^2 -\|\partial_x f\|_{L^2}^2\right)
	\\
	&= \frac{p-5}{4} \mu(f),
\end{align*}
by the Pohozaev identity $E(Q)=\frac{p-5}{2(p-1)}\|\partial_x Q\|_{L^2}^2$.
\end{proof}

We also give a direct proof of the equivalence of (1) and (2) by using the best constant of the Gagliardo-Nirenberg inequality for odd functions. The best constant will be also used later.

\begin{lemma}[The Gagliardo-Nirenberg inequality for odd functions]
\label{GN}
We have
\begin{align*}
	\|f\|_{L^{p+1}}^{p+1}
	\leq C_{GN}^{\odd} \|f\|_{L^2}^{\frac{p+3}{2}} \|\partial_x f \|_{L^2}^{\frac{p-1}{2}} 
\end{align*}
for any odd functions $f \in H^1(\mathbb{R})$
and 
\begin{align*}
	C_{GN}^{\odd}:= \sup \left\{ \frac{\|f\|_{L^{p+1}}^{p+1}}{ \|f\|_{L^2}^{\frac{p+3}{2}} \|\partial_x f \|_{L^2}^{\frac{p-1}{2}} }: f\in H_{\odd}^{1}(\mathbb{R}) \setminus \{0\} \right\}=2^{-\frac{p-1}{2}}C_{GN}
\end{align*}
\end{lemma}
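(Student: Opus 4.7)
My plan is to reduce the odd Gagliardo--Nirenberg inequality to the classical sharp inequality on $\mathbb{R}$ (with optimizer $Q$) by restriction to the half-line, and then verify sharpness using the translated-ground-state sequence $\mathcal{R}_n Q$ already exploited in the proof of $l_\omega^{\odd}=2 l_\omega$.

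First I would observe that any $f \in H^1_{\odd}(\mathbb{R})$ satisfies $f(0)=0$ by the one-dimensional Sobolev embedding $H^1 \hookrightarrow C$, so $g := \1_{(0,\infty)} f$ lies in $H^1(\mathbb{R})$ with weak derivative $\partial_x g = \1_{(0,\infty)} \partial_x f$. Oddness of $f$ then gives the exact identities
\begin{align*}
    \|f\|_{L^q}^q = 2\|g\|_{L^q}^q \quad (q \in \{2, p+1\}), \qquad \|\partial_x f\|_{L^2}^2 = 2\|\partial_x g\|_{L^2}^2.
\end{align*}
Substituting these into the classical sharp Gagliardo--Nirenberg inequality $\|g\|_{L^{p+1}}^{p+1} \leq C_{GN}\|g\|_{L^2}^{(p+3)/2}\|\partial_x g\|_{L^2}^{(p-1)/2}$ and collecting the resulting powers of $2$ yields $\|f\|_{L^{p+1}}^{p+1} \leq 2^{-(p-1)/2} C_{GN} \|f\|_{L^2}^{(p+3)/2}\|\partial_x f\|_{L^2}^{(p-1)/2}$, so $C_{GN}^{\odd} \leq 2^{-(p-1)/2} C_{GN}$.

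For the matching lower bound, I would test the ratio with $\varphi_n := \mathcal{R}_n Q = Q(\cdot - n) - Q(\cdot + n)$. By the explicit formula for $Q$, this function decays exponentially, so all cross terms between $Q(\cdot - n)$ and $Q(\cdot + n)$ appearing in the $L^2$, $L^{p+1}$ and $\dot{H}^1$ norms are of order $e^{-cn}$ for some $c>0$. Hence $\|\varphi_n\|_{L^q}^q \to 2\|Q\|_{L^q}^q$ for $q \in \{2, p+1\}$ and $\|\partial_x \varphi_n\|_{L^2}^2 \to 2\|\partial_x Q\|_{L^2}^2$ as $n \to \infty$. Because $Q$ saturates the classical inequality, evaluating the odd Gagliardo--Nirenberg ratio on $\{\varphi_n\}$ in the limit gives exactly $2^{-(p-1)/2} C_{GN}$, matching the upper bound.

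The only real work is careful bookkeeping of the powers of $2$ in the upper-bound step; the key conceptual input is that oddness together with $f(0)=0$ lets us split $f$ cleanly into its half-line pieces and reduce to the classical sharp problem, while in the sharpness step the exponential decay of $Q$ makes the interaction terms in $\mathcal{R}_n Q$ negligible. Non-attainment of the supremum, if one wants to record it, follows either from the non-attainment of $l_\omega^{\odd}$ proved just above or from the equality case in the classical sharp inequality (whose only extremizers are translates/scalings of $Q$, which are not odd).
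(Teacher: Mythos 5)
Your proposal is correct and follows essentially the same route as the paper: the upper bound $C_{GN}^{\odd}\leq 2^{-\frac{p-1}{2}}C_{GN}$ via restriction to the half-line (using $f(0)=0$ and the identity $\partial_x(\1_{(0,\infty)}f)=\1_{(0,\infty)}\partial_x f$, recorded in the paper's appendix), and the matching lower bound by testing with $\mathcal{R}_nQ$ and using that $Q$ optimizes the classical inequality. The power-of-two bookkeeping checks out, so no further comment is needed.
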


\begin{proof}
By the usual Gagliardo-Nirenberg inequality, we have $C_{GN}^{\odd}\leq C_{GN}$. 
We show $C_{GN}^{\odd}\geq 2^{-\frac{p-1}{2}}C_{GN}$. 
Applying the Gagliardo-Nirenberg inequality to $\varphi_{n}:=\mathcal{T}_{n}Q-\mathcal{T}_{-n}Q$, we get
\begin{align*}
	\|\varphi_{n}\|_{L^{p+1}}^{p+1}
	\leq C_{GN}^{\odd} \|\varphi_{n}\|_{L^2}^{\frac{p+3}{2}} \|\partial_x \varphi_{n} \|_{L^2}^{\frac{p-1}{2}} 
\end{align*}
Taking the limit $n \to \infty$, we obtain
\begin{align*}
	\|Q\|_{L^{p+1}}^{p+1}
	\leq 2^{\frac{p-1}{2}}C_{GN}^{\odd} \|Q\|_{L^2}^{\frac{p+3}{2}} \|\partial_xQ \|_{L^2}^{\frac{p-1}{2}}
\end{align*}
Therefore, we have $C_{GN}^{\odd}\geq 2^{-\frac{p-1}{2}}C_{GN}$. 

Next, we show $2^{-\frac{p-1}{2}}C_{GN} \geq C_{GN}^{\odd}$. Let $\{f_n\}\subset H_{\odd}^{1}\setminus\{0\}$ satisfy
\begin{align*}
	 \frac{\|f_n\|_{L^{p+1}}^{p+1}}{ \|f_n\|_{L^2}^{\frac{p+3}{2}} \|\partial_x f_n \|_{L^2}^{\frac{p-1}{2}} } \to C_{GN}^{\odd}
\end{align*}
We have
\begin{align*}
	(\text{LHS})=2^{-\frac{p-1}{2}}\frac{\|\1_{(0,\infty)} f_n\|_{L^{p+1}}^{p+1}}{ \|\1_{(0,\infty)} f_n\|_{L^2}^{\frac{p+3}{2}} \|\partial_x(\1_{(0,\infty)}  f_n) \|_{L^2}^{\frac{p-1}{2}} }
	\leq 2^{-\frac{p-1}{2}} C_{GN}
\end{align*}
Since the left hand side goes to $(C_{GN}^{\odd})^{-1}$, we get $2^{-\frac{p-1}{2}}C_{GN} \geq C_{GN}^{\odd}$. 
This completes the proof.
\end{proof}

\begin{lemma}
\label{lem4.9}
Let $f\in H_{\odd}^1(\mathbb{R})$. 
If $\|f\|_{L^2}^{2\sigma} \|\partial_x f\|_{L^2}^2 \leq 2^{1+\sigma}\|Q\|_{L^2}^{2\sigma} \|\partial_x Q\|_{L^2}^2$, then we have $K(f) \geq 0$.
Moreover, if $M(f)^{\sigma} E(f) \leq 2^{1+\sigma}M(Q)^{\sigma}E(Q)$ and $K(f) \geq 0$, then we have $\|f\|_{L^2}^{2\sigma} \|\partial_x f\|_{L^2}^2 \leq 2^{1+\sigma}\|Q\|_{L^2}^{2\sigma} \|\partial_x Q\|_{L^2}^2$. 
\end{lemma}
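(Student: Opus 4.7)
The plan is to derive both implications from the odd Gagliardo-Nirenberg inequality of Lemma \ref{GN} together with the Pohozaev identity, in direct parallel with the standard sub-ground-state argument; the sole twist is that the improved constant $C_{GN}^{\odd}=2^{-(p-1)/2}C_{GN}$ is exactly what produces the factor $2^{1+\sigma}$ on the right-hand side of the hypotheses.

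For the first implication, I would start from Lemma \ref{GN} and factor
\begin{equation*}
 K(f) \;\geq\; \|\partial_x f\|_{L^2}^{2}\left(1-\frac{p-1}{2(p+1)}\,C_{GN}^{\odd}\,\|f\|_{L^2}^{(p+3)/2}\|\partial_x f\|_{L^2}^{(p-5)/2}\right).
\end{equation*}
Because $Q$ attains the usual Gagliardo-Nirenberg constant, the same factorization applied to $Q$ combined with the Pohozaev identity $\|Q\|_{L^{p+1}}^{p+1}=\tfrac{2(p+1)}{p-1}\|\partial_x Q\|_{L^2}^{2}$ identifies
\begin{equation*}
 \frac{p-1}{2(p+1)}\,C_{GN}\,\|Q\|_{L^2}^{(p+3)/2}\|\partial_x Q\|_{L^2}^{(p-5)/2}=1.
\end{equation*}
Substituting this into the bracket, using $C_{GN}^{\odd}=2^{-(p-1)/2}C_{GN}$, and raising the resulting ratio to the power $4/(p-5)>0$ rewrites the non-negativity of the bracket as precisely the hypothesis $\|f\|_{L^2}^{2\sigma}\|\partial_x f\|_{L^2}^{2}\leq 2^{1+\sigma}\|Q\|_{L^2}^{2\sigma}\|\partial_x Q\|_{L^2}^{2}$, whence $K(f)\geq 0$.

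For the converse, assuming $K(f)\geq 0$ I would use the defining inequality $\|f\|_{L^{p+1}}^{p+1}\leq \tfrac{2(p+1)}{p-1}\|\partial_x f\|_{L^2}^{2}$ in the definition of the energy to obtain the standard lower bound
\begin{equation*}
 E(f)\;\geq\;\frac{p-5}{2(p-1)}\|\partial_x f\|_{L^2}^{2}.
\end{equation*}
Pohozaev gives the matching equality $E(Q)=\tfrac{p-5}{2(p-1)}\|\partial_x Q\|_{L^2}^{2}$, and multiplying the lower bound by $M(f)^{\sigma}$ and invoking the hypothesis $M(f)^{\sigma}E(f)\leq 2^{1+\sigma}M(Q)^{\sigma}E(Q)$ yields the desired conclusion in one line.

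There is no analytic obstacle here; the entire argument is a comparison with $Q$ through Lemma \ref{GN} and Pohozaev. The only care required is exponent bookkeeping, specifically verifying that the odd-improvement factor $2^{-(p-1)/2}$ lines up with the factor $2^{1+\sigma}$ appearing in the statement when one raises the Gagliardo-Nirenberg inequality to the $(p-5)/4$-th power.
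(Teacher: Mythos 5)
Your proposal is correct and follows essentially the same route as the paper: the forward implication via the odd Gagliardo--Nirenberg inequality of Lemma \ref{GN}, the identification $\frac{(p-1)}{2(p+1)}C_{GN}\|Q\|_{L^2}^{(p+3)/2}\|\partial_x Q\|_{L^2}^{(p-5)/2}=1$ from the Pohozaev identity and the sharpness of $C_{GN}$ at $Q$, and the converse via $E(f)\geq E(f)-\tfrac{2}{p-1}K(f)=\tfrac{p-5}{2(p-1)}\|\partial_x f\|_{L^2}^2$. The exponent bookkeeping you flag indeed works out (note $\sigma=(p+3)/(p-5)$, so $(1+\sigma)\cdot\tfrac{p-5}{4}=\tfrac{p-1}{2}$ matches the factor $2^{-(p-1)/2}$ in $C_{GN}^{\odd}$).
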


\begin{proof}
We assume that $\|f\|_{L^2}^{\sigma} \|\partial_x f\|_{L^2} \leq 2^{\frac{1+\sigma}{2}}\|Q\|_{L^2}^{\sigma} \|\partial_x Q\|_{L^2}$.
This means that $\|f\|_{L^2}^{\frac{p+3}{2}} \|\partial_x f\|_{L^2}^{\frac{p-5}{2}} \leq 2^{\frac{p-1}{2}} \|Q\|_{L^2}^{\frac{p+3}{2}} \|\partial_x Q\|_{L^2}^{\frac{p-5}{2}}$. 
By the Gagliardo-Nirenberg inequality for odd functions and the assumption, we have
\begin{align*}
	K(f)
	&\geq \|\partial_x f\|_{L^2}^2  -\frac{(p-1)C_{GN}^{\odd}}{2(p+1)} \|f\|_{L^2}^{\frac{p+3}{2}} \|\partial_x f \|_{L^2}^{\frac{p-1}{2}} 
	\\
	&=\|\partial_x f\|_{L^2}^2 \left(1 - \frac{(p-1)C_{GN}^{\odd}}{2(p+1)} \|f\|_{L^2}^{\frac{p+3}{2}} \|\partial_x f\|_{L^2}^{\frac{p-5}{2}}  \right)
	\\
	&\geq \|\partial_x f\|_{L^2}^2 \left(1 - \frac{(p-1)C_{GN}^{\odd}}{2(p+1)} 2^{\frac{p-1}{2}} \|Q\|_{L^2}^{\frac{p+3}{2}} \|\partial_x Q\|_{L^2}^{\frac{p-5}{2}}  \right)
\end{align*}
By the Pohozaev identity, we get
\begin{align*}
	\frac{(p-1)C_{GN}^{\odd}}{2(p+1)} 2^{\frac{p-1}{2}} \|Q\|_{L^2}^{\frac{p+3}{2}} \|\partial_x Q \|_{L^2}^{\frac{p-5}{2}}  =1.
\end{align*}
Therefore, we have $K(f) \geq  0$. 

Next, assume $K(f)\geq 0$. By the Pohozaev identity and $2^{1+\sigma}M(Q)^{\sigma} E(Q) \geq M(f)^{\sigma}E(f)$, we obtain
\begin{align}
\label{eq1}
	\frac{p-5}{2(p-1)}2^{1+\sigma} M(Q)^{\sigma} \|\partial_x Q\|_{L^2}^2 
	=2^{1+\sigma} M(Q)^{\sigma} E(Q) \geq M(f)^{\sigma}E(f). 
\end{align}
By the assumption, we have
\begin{align}
\label{eq1.2}
	\notag
	E(f) \geq 
	E(f) - \frac{2}{p-1}K(f)
	&=\frac{p-5}{2(p-1)} \|\partial_x f \|_{L^2}^2.
\end{align}
Therefore, combining this with \eqref{eq1}, we get
\begin{align*}
	\frac{p-5}{2(p-1)}2^{1+\sigma} M(Q)^{\sigma} \|\partial_x Q \|_{L^2}^2  
	\geq  \frac{p-5}{2(p-1)} M(f)^{\sigma} \|f\|_{H}^2.
\end{align*}
This completes the proof. 
\end{proof}

Lemma \ref{lem4.9} includes the equivalence between (1) and (2) in Proposition \ref{prop4.8}.


%

The following corollary immediately holds. 

\begin{corollary}
Let $f\in H_{\odd}^{1}(\mathbb{R})$. 
Assume $M(f)= 2M(Q)$ and $E(f)=2 E(Q)$. Then the following are equivalent:
\begin{enumerate}
\item $K(f) <  0$.
\item $\|f\|_{L^2}^{2\sigma} \|\partial_x f\|_{L^2}^{2} > 2^{1+\sigma}\|Q\|_{L^2}^{2\sigma} \|\partial_x Q\|_{L^2}^{2} $.
\item $\mu(f)< 0$.
\end{enumerate}
\end{corollary}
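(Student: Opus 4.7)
The plan is to mimic the proof of Proposition \ref{prop4.8}, which establishes the ``$>0$'' version of the same equivalences, and simply reverse the strict inequalities. Under the assumptions $M(f)=2M(Q)$ and $E(f)=2E(Q)$, I expect no new ideas beyond those already in Lemma \ref{lem2.11}, Lemma \ref{lem4.9}, and the definition of $\mu$.

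First I would prove the equivalence (2) $\Leftrightarrow$ (3). Since $M(f)=2M(Q)$ we have $\|f\|_{L^2}^{2\sigma}=2^{\sigma}\|Q\|_{L^2}^{2\sigma}$, so (2) is equivalent to $\|\partial_x f\|_{L^2}^{2}>2\|\partial_x Q\|_{L^2}^{2}$, which by the definition $\mu(f)=2\|\partial_x Q\|_{L^2}^{2}-\|\partial_x f\|_{L^2}^{2}$ is exactly $\mu(f)<0$.

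Next I would prove (1) $\Leftrightarrow$ (3). By Lemma \ref{lem2.11}, the assumption $E(f)=2E(Q)$ forces the identity $K(f)=\frac{p-5}{4}\mu(f)$, and since $p>5$ the factor $\frac{p-5}{4}$ is strictly positive. Hence $K(f)<0$ if and only if $\mu(f)<0$. Combined with the previous paragraph, the three conditions are equivalent.

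Since the only ingredient beyond Proposition \ref{prop4.8} is to read the sign-reversed version of Lemma \ref{lem2.11}, there is essentially no obstacle; the one point worth noticing is that the $p>5$ hypothesis of the paper is what allows us to go from $\mu(f)<0$ to $K(f)<0$ with the same strict inequality. If one instead wanted to avoid invoking Lemma \ref{lem2.11} and argue directly as in Lemma \ref{lem4.9}, the main step would be to combine the Pohozaev identity with the odd Gagliardo--Nirenberg inequality (Lemma \ref{GN}) to show that (1) forces $\|f\|_{L^2}^{2\sigma}\|\partial_x f\|_{L^2}^2>2^{1+\sigma}\|Q\|_{L^2}^{2\sigma}\|\partial_x Q\|_{L^2}^2$; but this longer route is redundant given that Lemma \ref{lem2.11} already pins down $K$ and $\mu$ up to a positive multiplicative constant under the present constraints.
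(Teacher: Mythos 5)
Your proposal is correct and matches the paper's intended argument: the paper derives this corollary ``immediately'' from the same ingredients as Proposition \ref{prop4.8}, namely the identity $K(f)=\frac{p-5}{4}\mu(f)$ from Lemma \ref{lem2.11} (with $p>5$ making the constant positive) and the direct computation that $M(f)=2M(Q)$ reduces (2) to $\mu(f)<0$. Nothing further is needed.
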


\subsubsection{Invariance of the potential-well sets by the flow}

\begin{proposition}
\label{prop2.13}
Let $u_0 \in H_{\odd}^1(\mathbb{R})$ satisfy $M(u_0)=2M(Q)$ and $E(u_0)=2E(Q)$ and $u(t)$ be a solution to \eqref{NLS} with $u(0)=u_0$. Then we have the following.
\begin{enumerate}
\item If $K(u_0)> 0$, then the solution $u(t)$ exists globally in both time directions and $K(u(t))>0$ for all $t \in \mathbb{R}$. 
\item If $K(u_0)<0$, then the solution $u(t)$ satisfies $K(u(t))<0$ 
while the solution exists. 
\end{enumerate}
\end{proposition}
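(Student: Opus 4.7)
The plan is to combine the conservation of energy, mass, and odd symmetry along the flow with the non-existence of a minimizer for $l_{\omega}^{\odd}$ established earlier in this subsection, in order to lock the sign of $K(u(t))$ to that of $K(u_0)$.

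First, I would observe that odd symmetry is preserved by \eqref{NLS}, since the equation commutes with the reflection $x \mapsto -x$ and solutions are unique. Together with conservation of mass and energy, this yields $u(t) \in H_{\odd}^1(\mathbb{R}) \setminus \{0\}$ with $M(u(t)) = 2M(Q)$ and $E(u(t)) = 2E(Q)$, hence
\[
S(u(t)) = E(u(t)) + \tfrac{1}{2}M(u(t)) = 2E(Q) + M(Q) = 2S(Q) = l_{1}^{\odd},
\]
for every $t$ in the maximal interval of existence. Next, I would rule out that $K(u(t_0)) = 0$ can occur at any time $t_0$: if it did, then $u(t_0)$ would attain $l_{1}^{\odd}$ while satisfying the constraint $K = 0$, contradicting the non-existence of a minimizer for $l_{1}^{\odd}$ proven above. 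Because the map $t \mapsto K(u(t))$ is continuous (as $u \in C(I; H^1(\mathbb{R}))$ and $K$ is continuous on $H^1(\mathbb{R})$) and never vanishes, its sign is constant throughout the existence interval and equal to $\sign K(u_0)$.

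For part (1), combining this sign propagation with Lemma \ref{lem2.11} yields $\mu(u(t)) > 0$, which is exactly $\|\partial_x u(t)\|_{L^2}^2 < 2\|\partial_x Q\|_{L^2}^2$, for all $t$ in the existence interval. Together with the conserved $L^2$-norm this gives a uniform $H^1$ bound on the solution, so the blow-up alternative recalled in the introduction forces the solution to be global in both time directions. For part (2), the sign-propagation argument already delivers $K(u(t)) < 0$ throughout the existence interval, and no further step is required.

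The only delicate point is the exclusion $K(u(t)) \neq 0$; everything else is continuity and conservation. The reason the exclusion works uniformly in $t$ (and not just at $t = 0$) is that oddness, mass, and energy are simultaneously conserved, so at any hypothetical $t_0$ with $K(u(t_0)) = 0$ the function $u(t_0)$ would lie in the admissible class of $l_{1}^{\odd}$ and realize the infimum, which is precisely the scenario excluded by the non-existence-of-minimizers lemma.
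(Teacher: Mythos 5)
Your proposal is correct and follows essentially the same route as the paper: oddness and conservation of mass and energy force $S(u(t))=2S(Q)=l_1^{\odd}$, so any time with $K(u(t_0))=0$ would produce a minimizer of $l_1^{\odd}$, contradicting the non-existence lemma; continuity then propagates the sign of $K$, and in case (1) the equivalence $K>0\Leftrightarrow\mu>0$ gives the uniform $H^1$ bound needed for the blow-up alternative. No gaps.
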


\begin{proof}
Since $u_0$ is odd, the solution $u(t)$ is also odd. 
We consider the case of $K(u_0)> 0$. Suppose that there exists a time $t_*$ such that $K(u(t_*))\leq 0$. Then by the continuity of the flow, there exists a time $t_0$ such that $K(u(t_0))=0$. This and the assumption $M(u_0)=2M(Q)$ and $E(u_0)=2E(Q)$ mean that $u(t_0)$, which is not the zero function, is a minimizer of $l^{\odd}$. However, this contradicts the non-existence of the minimizer. Therefore we have $K(u(t)) > 0$ for all $t$ in the existence interval. 
We get an a priori bound by Proposition \ref{prop4.8}, and the blow-up alternative implies that the solution exists globally in both time directions. In the case of $K(u_0)< 0$, we get the result in the same way. 
\end{proof}

\begin{remark}
By Propositions \ref{prop4.8} and \ref{prop2.13}, solutions $u(t)$ with  $M(u_0)=2M(Q)$, $E(u_0)=2E(Q)$ and $K(u_0)>0$ are uniformly bounded in $H^1(\mathbb{R})$. 
\end{remark}



\subsection{Lemmas}

We collect some useful lemmas such as the estimate of the ground state, long time perturbation and linear profile decomposition. 


\subsubsection{Estimate for the ground state}
By the explicit formula \eqref{exp} of the ground state $Q$, we have $Q(x)\approx e^{-|x|}$. Thus, the following estimates hold by direct calculations: 

\begin{lemma}
\label{lem2.14}
Let $\alpha,\beta>0$ and $y>0$. We have
\begin{align*}
	\int_{\mathbb{R}} \mathcal{T}_{y}Q(x)^{\alpha} \mathcal{T}_{-y}Q(x)^{\beta} dx
	\approx 
	\begin{cases}
	(1+y)  \exp \left( - 2\alpha   y \right),
	& \text{ if } \alpha=\beta,
	\\
	\exp \left( -2\min\{\alpha,\beta\} y \right),
	& \text{ if } \alpha \neq \beta.
	\end{cases}
\end{align*}
\end{lemma}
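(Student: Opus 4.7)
The plan is to reduce the integral to a pure exponential integral using the pointwise bound $Q(x)\approx e^{-|x|}$, which follows immediately from the explicit formula \eqref{exp} (both the upper and lower bound). Since $\mathcal{T}_y Q(x)^\alpha \approx e^{-\alpha|x-y|}$ and $\mathcal{T}_{-y}Q(x)^\beta \approx e^{-\beta|x+y|}$ with constants depending only on $\alpha,\beta,p$, it suffices to estimate
\[
I(\alpha,\beta,y):=\int_{\mathbb{R}} e^{-\alpha|x-y|-\beta|x+y|}\,dx
\]
from above and below, and then check that $I(\alpha,\beta,y)$ matches the claimed asymptotic.

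First I would split the real line into the three regions $\{x<-y\}$, $\{-y<x<y\}$, and $\{x>y\}$, which is the natural decomposition for writing $|x\pm y|$ as signed affine functions of $x$. On the outer regions the integrand equals $e^{(\alpha+\beta)x+(\beta-\alpha)y}$ and $e^{-(\alpha+\beta)x+(\alpha-\beta)y}$ respectively, and integrating gives $\frac{1}{\alpha+\beta}e^{-2\alpha y}$ and $\frac{1}{\alpha+\beta}e^{-2\beta y}$. On the middle region the integrand is $e^{(\alpha-\beta)x-(\alpha+\beta)y}$; for $\alpha\neq\beta$ integration yields $\frac{1}{\alpha-\beta}\bigl(e^{-2\beta y}-e^{-2\alpha y}\bigr)$, whereas for $\alpha=\beta$ the integrand is constant in $x$ and integration produces the factor $2y\,e^{-2\alpha y}$ responsible for the $(1+y)$ in the statement.

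Summing the three pieces in the case $\alpha\neq\beta$ and collecting the coefficients of $e^{-2\alpha y}$ and $e^{-2\beta y}$ separately gives
\[
I(\alpha,\beta,y)=\frac{2}{\beta^2-\alpha^2}\bigl(\beta\,e^{-2\alpha y}-\alpha\,e^{-2\beta y}\bigr),
\]
which is comparable to $e^{-2\min\{\alpha,\beta\}y}$ for all $y>0$ (the sign of $\beta^2-\alpha^2$ and the dominant exponential agree, and the non-dominant exponential is controlled by it). In the case $\alpha=\beta$ the sum becomes $\bigl(\tfrac{1}{\alpha}+2y\bigr)e^{-2\alpha y}$, which is comparable to $(1+y)e^{-2\alpha y}$ on $y>0$. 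Multiplying through by the constants coming from the two-sided bound on $Q$ gives the required two-sided estimate on the original integral.

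The proof is essentially bookkeeping, so the only point that needs care is the middle region: it contributes the $(1+y)$ enhancement when $\alpha=\beta$ (the integrand is exactly constant there), but in the case $\alpha\neq\beta$ its contribution must be combined with the outer pieces so that the coefficient of the dominant exponential remains strictly positive and bounded away from zero uniformly in $y$. This is what the algebraic identity above makes transparent, and it is the only step I would want to write out in full.
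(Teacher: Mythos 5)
Your proposal is correct and is precisely the "direct calculation from $Q(x)\approx e^{-|x|}$" that the paper's one-line proof invokes without writing out: the three-region splitting at $x=\pm y$, the exact evaluation of each piece, and the check that the coefficient of the dominant exponential stays positive are all right. Nothing further is needed.
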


\begin{proof}
This follows from $Q(x)\approx e^{-|x|}$ and direct calculations. 
\end{proof}

\begin{lemma}
\label{lem2.15}
Let $\alpha,\beta\geq 0$. Let $y>0$. If $\alpha \neq 0$ or $\beta\neq 0$, then we have
\begin{align*}
	\int_{0}^{\infty} \mathcal{T}_{y}Q(x)^{\alpha} \mathcal{T}_{-y}Q(x)^{\beta} dx
	\approx 
	\begin{cases}
	(1+y) \exp \left(  -2\alpha y \right), & \text{ if } \alpha=\beta,
	\\
	\exp \left\{  -(\min\{\alpha,\beta\}+\beta) y \right\}, & \text{ if } \alpha \neq \beta.
	\end{cases}
\end{align*}
\end{lemma}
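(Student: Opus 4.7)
The plan is to mimic the proof of Lemma \ref{lem2.14} by exploiting the pointwise estimate $Q(x) \approx e^{-|x|}$ coming from the explicit formula \eqref{exp}, which gives $Q(x)^\alpha \approx e^{-\alpha|x|}$ for $\alpha\geq 0$ (with the convention that $Q^0\equiv 1$). Substituting the upper and lower bounds, it suffices to show the corresponding asymptotics for the model integral
\[
I(\alpha,\beta,y) := \int_{0}^{\infty} e^{-\alpha|x-y|}\, e^{-\beta(x+y)}\, dx,
\]
where I have used $x+y>0$ on the domain to simplify $|x+y|=x+y$.

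The key simplification is that, unlike in Lemma \ref{lem2.14}, we only integrate over $(0,\infty)$, so only one of the two possible sign changes in $|x-y|$ occurs, namely at $x=y$. I would split $I(\alpha,\beta,y)=I_1+I_2$ where $I_1$ is the integral on $(0,y)$ (on which $|x-y|=y-x$) and $I_2$ is the integral on $(y,\infty)$ (on which $|x-y|=x-y$). On $(0,y)$ the integrand becomes $e^{-(\alpha+\beta)y + (\alpha-\beta)x}$, giving $I_1 = y\,e^{-2\alpha y}$ if $\alpha=\beta$, and $I_1 = (\alpha-\beta)^{-1}\bigl(e^{-2\beta y}-e^{-(\alpha+\beta)y}\bigr)$ otherwise. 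On $(y,\infty)$ the integrand becomes $e^{(\alpha-\beta)y-(\alpha+\beta)x}$, and since at least one of $\alpha,\beta$ is positive we have $\alpha+\beta>0$ and obtain $I_2 = (\alpha+\beta)^{-1}e^{-2\beta y}$.

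Combining the two pieces then yields the claim after comparing exponential rates. When $\alpha=\beta$, the sum is $\bigl(y+\tfrac{1}{2\alpha}\bigr) e^{-2\alpha y} \approx (1+y)e^{-2\alpha y}$, uniformly for $y>0$. When $\alpha\neq\beta$, one compares $e^{-2\beta y}$ against $e^{-(\alpha+\beta)y}$: if $\beta<\alpha$ then $-2\beta > -(\alpha+\beta)$ and the term $e^{-2\beta y}$ dominates (matching $\min\{\alpha,\beta\}+\beta = 2\beta$), while if $\beta>\alpha$ the term $e^{-(\alpha+\beta)y}$ dominates (matching $\min\{\alpha,\beta\}+\beta = \alpha+\beta$). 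In either case the asymptotic agrees with the stated rate $\exp\{-(\min\{\alpha,\beta\}+\beta)y\}$, and the signs of the coefficients in $I_1$ automatically work out so that the sum $I_1+I_2$ is bounded above and below by a positive constant times this quantity.

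The only subtle point is to check that no cancellation causes the asymptotic to degenerate: this is why the hypothesis $\alpha\neq 0$ or $\beta\neq 0$ is needed (to ensure $I_2$ converges and gives a nontrivial rate), and why in the case $\alpha\neq\beta$ the signs in $I_1$ yield a nonnegative contribution of the correct order rather than a cancellation with $I_2$. All other details are direct calculations of elementary exponential integrals, exactly in the spirit of the proof of Lemma \ref{lem2.14}.
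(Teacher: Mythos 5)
Your proposal is correct and follows exactly the route the paper intends (the paper omits the proof of this lemma, but its proof of the analogous Lemma \ref{lem2.14} is precisely ``$Q(x)\approx e^{-|x|}$ and direct calculation'', which is what you carry out): replace $Q$ by $e^{-|x|}$, split at $x=y$, and compute the two elementary exponential integrals. The only place where a reader must pause is the lower bound in the case $\beta>\alpha$, where $I_1$ degenerates as $y\to 0$ and $I_2$ alone is too small as $y\to\infty$, but since both terms are nonnegative the sum is bounded below by whichever piece is comparable to $e^{-(\alpha+\beta)y}$ on the relevant range of $y$, so your conclusion stands.
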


\subsubsection{Virial identity and its localization}

For a solution $u(t)$, we define 
\begin{align*}
	J(u(t))&=J_{\infty}(u(t)):=\int_{\mathbb{R}} |x|^2 |u(t,x)|^2 dx.
\end{align*}
Then we have
\begin{align*}
	J'(u(t))&=2\im \int_{\mathbb{R}} x \overline{u(t,x)} \partial_x u(t,x) dx,
	\\
	J''(u(t))&=8K(u(t)).
\end{align*}

Let $\varphi$ be an even function in $C_{0}^{\infty}(\mathbb{R})$ satisfying
\begin{align*}
	\varphi(x) :=
	\begin{cases}
	x^2, & (|x|<1),
	\\
	0, & (|x|>2).
	\end{cases}
\end{align*}
For a solution $u(t)$, we set
\begin{align*}
	J_R(u(t))&:= \int_{\mathbb{R}} R^2 \varphi\left( \frac{x}{R}\right) |u(t,x)|^2 dx.
\end{align*}
Then, we have
\begin{align*}
	J_R'(u(t))&=2\im \int_{\mathbb{R}} R (\partial_x \varphi)\left( \frac{x}{R}\right) \overline{u(t,x)} \partial_x u(t,x) dx
\end{align*}
and 
\begin{align*}
	J_R''(u(t))&=4\int_{\mathbb{R}} (\partial_x^2 \varphi)\left( \frac{x}{R}\right) \left\{ |\partial_x u(t,x)|^2 - \frac{p-1}{2(p+1)} |u(t,x)|^{p+1} \right\} dx
	\\
	&\quad - \int_{\mathbb{R}} \frac{1}{R^2} (\partial_x^4 \varphi) \left( \frac{x}{R}\right) |u(t,x)|^2 dx
	\\
	&=8K(u(t)) +A_R(u(t)),
\end{align*}
where we set  
\begin{align*}
	A_R(u(t))&:=
	- 4\int_{|x|>R} \left\{2 -(\partial_x^2 \varphi)\left( \frac{x}{R}\right)\r\} \left\{ |\partial_x u(t,x)|^2 - \frac{p-1}{2(p+1)} |u(t,x)|^{p+1} \right\} dx
	\\
	&\quad - \int_{R<|x|<2R} \frac{1}{R^2} (\partial_x^4 \varphi) \left( \frac{x}{R}\right) |u(t,x)|^2 dx.
\end{align*}

For a function $f \in H^1$, we set 
\begin{align*}
	F_R(f)&:=4\int_{\mathbb{R}} (\partial_x^2 \varphi)\left( \frac{x}{R}\right) \left\{ |\partial_x f|^2 - \frac{p-1}{2(p+1)} |f|^{p+1} \right\} dx
	 - \int_{\mathbb{R}} \frac{1}{R^2} (\partial_x^4 \varphi) \left( \frac{x}{R}\right) |f|^2 dx.
\end{align*}
Then the above equality is written by
\begin{align*}
	A_R(u(t))=F_R(u(t))-8K(u(t)) = F_R(u(t)) - F_{\infty}(u(t)),
\end{align*}
where we note that $F_{\infty}(f)=8K(f)$. 

\begin{lemma}
\label{lem2.16}
We have $K(e^{i\theta}\mathcal{T}_{y}Q)=0$ and thus $8^{-1}F_{\infty}(e^{i\theta}\mathcal{R}_{y}Q)=K(e^{i\theta}\mathcal{R}_{y}Q)=O((1+y)e^{-2y})$ for any $\theta \in \mathbb{R}$ and $y>0$.
\end{lemma}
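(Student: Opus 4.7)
The first assertion $K(e^{i\theta}\mathcal{T}_y Q)=0$ is immediate: $K$ depends only on $|u|$, $|\partial_x u|$, and $\|u\|_{L^{p+1}}$, so it is invariant under multiplication by $e^{i\theta}$ and under translation, whereas the Pohozaev identity gives
\begin{align*}
K(Q)=\|\partial_x Q\|_{L^2}^2-\frac{p-1}{2(p+1)}\|Q\|_{L^{p+1}}^{p+1}=\frac{p-1}{p+3}\|Q\|_{L^2}^2-\frac{p-1}{p+3}\|Q\|_{L^2}^2=0.
\end{align*}
Moreover, since $F_{\infty}(f)=8K(f)$ and $K$ is phase-invariant, the second equality $8^{-1}F_{\infty}(e^{i\theta}\mathcal{R}_yQ)=K(e^{i\theta}\mathcal{R}_yQ)=K(\mathcal{R}_yQ)$ is free, and the task reduces to bounding $K(\mathcal{R}_y Q)$.

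Set $\phi_1:=\mathcal{T}_y Q$ and $\phi_2:=\mathcal{T}_{-y}Q$, both real and positive. Using $K(\phi_1)=K(\phi_2)=0$ and expanding, one has
\begin{align*}
K(\mathcal{R}_y Q)=-2\langle \partial_x \phi_1,\partial_x \phi_2\rangle-\frac{p-1}{2(p+1)}\Bigl(\|\phi_1-\phi_2\|_{L^{p+1}}^{p+1}-\|\phi_1\|_{L^{p+1}}^{p+1}-\|\phi_2\|_{L^{p+1}}^{p+1}\Bigr).
\end{align*}
So it suffices to estimate the two interaction terms on the right.

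For the kinetic cross term, the explicit formula \eqref{exp} yields $|\partial_xQ(x)|\lesssim e^{-|x|}$, hence
\begin{align*}
|\langle \partial_x\phi_1,\partial_x\phi_2\rangle|\lesssim \int_{\mathbb{R}} e^{-|x-y|}e^{-|x+y|}dx=\int_{\mathbb{R}}e^{-2\max(|x|,y)}dx\approx (1+y)e^{-2y},
\end{align*}
which is exactly of the form given by Lemma \ref{lem2.14} with $\alpha=\beta=1$.

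For the nonlinear part, use that on $x>0$ we have $\phi_1\geq\phi_2\geq 0$ (and symmetrically on $x<0$), so the integrand $\phi_1^{p+1}+\phi_2^{p+1}-|\phi_1-\phi_2|^{p+1}$ is symmetric under $x\mapsto -x$. Hence
\begin{align*}
\|\phi_1\|_{L^{p+1}}^{p+1}+\|\phi_2\|_{L^{p+1}}^{p+1}-\|\phi_1-\phi_2\|_{L^{p+1}}^{p+1}=2\int_0^{\infty}\bigl[\phi_1^{p+1}+\phi_2^{p+1}-(\phi_1-\phi_2)^{p+1}\bigr]dx.
\end{align*}
By convexity of $s\mapsto s^{p+1}$, for $a\geq b\geq 0$ we have $a^{p+1}-(a-b)^{p+1}\leq (p+1)a^{p}b$, so the integrand is bounded by $(p+1)\phi_1^{p}\phi_2+\phi_2^{p+1}$. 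Lemma \ref{lem2.15} (with $\alpha=p$, $\beta=1$, and with $\alpha=0$, $\beta=p+1$) then gives $\int_0^{\infty}\phi_1^p\phi_2 \,dx\approx e^{-2y}$ and $\int_0^{\infty}\phi_2^{p+1}dx\approx e^{-(p+1)y}$, so the whole nonlinear defect is $O(e^{-2y})$.

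Combining the two bounds yields $K(\mathcal{R}_yQ)=O((1+y)e^{-2y})$, as desired. The only technical point that requires care is the convexity-based pointwise bound of the nonlinear integrand together with the correct application of Lemma \ref{lem2.15}; everything else is bookkeeping.
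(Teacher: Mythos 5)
Your proof is correct and follows essentially the same route as the paper: the Pohozaev identity gives $K(e^{i\theta}\mathcal{T}_yQ)=0$, and the remainder is controlled by estimating the kinetic and nonlinear cross terms via Lemmas \ref{lem2.14} and \ref{lem2.15}. The paper states this in one line; your write-up simply supplies the details (the convexity bound $a^{p+1}-(a-b)^{p+1}\leq(p+1)a^pb$ together with the nonnegativity of the integrand on $x>0$ is exactly the right way to make the nonlinear defect estimate rigorous).
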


\begin{proof}
$K(e^{i\theta}\mathcal{T}_{y}Q)=0$ follows from the Pohozaev identity. The latter statement follows from the first statement and  the estimate of cross terms by Lemma \ref{lem2.14}.
\end{proof}

\begin{lemma}
We have $F_R(e^{i\theta}\mathcal{T}_{y}Q)=0$ and thus $F_{R}(e^{i\theta}\mathcal{R}_{y}Q)=O((1+y)e^{-2y})$ for any $\theta \in \mathbb{R}$ and $y>0$.
\end{lemma}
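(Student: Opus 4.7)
The approach is to reduce to the first assertion $F_R(\mathcal{T}_y Q) = 0$ and then treat $\mathcal{R}_y Q$ as two far-separated solitons via expansion. Since $F_R$ depends on $f$ only through $|f|^2$, $|\partial_x f|^2$, and $|f|^{p+1}$, we have $F_R(e^{i\theta}f) = F_R(f)$, so it suffices to treat $\theta = 0$. For the first equality, the plan is to use the decomposition $F_R = F_\infty + A_R$ implicit in the discussion preceding Lemma \ref{lem2.16}: translation invariance of $K$ together with the Pohozaev identity gives $F_\infty(\mathcal{T}_y Q) = 8 K(\mathcal{T}_y Q) = 0$, and the boundary piece $A_R(\mathcal{T}_y Q)$ vanishes (or is absorbed into the error) by a direct cutoff computation.

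To bound $F_R(\mathcal{R}_y Q)$, I would substitute $\mathcal{R}_y Q = \mathcal{T}_y Q - \mathcal{T}_{-y} Q$ into the three integrands of $F_R$. The quadratic terms split cleanly:
\begin{align*}
|\partial_x \mathcal{R}_y Q|^2 &= |\partial_x \mathcal{T}_y Q|^2 + |\partial_x \mathcal{T}_{-y} Q|^2 - 2\, \partial_x \mathcal{T}_y Q \cdot \partial_x \mathcal{T}_{-y} Q, \\
|\mathcal{R}_y Q|^2 &= |\mathcal{T}_y Q|^2 + |\mathcal{T}_{-y} Q|^2 - 2\, \mathcal{T}_y Q \cdot \mathcal{T}_{-y} Q,
\end{align*}
while the nonlinear term is handled via the elementary pointwise inequality
\begin{align*}
\bigl| |A-B|^{p+1} - A^{p+1} - B^{p+1} \bigr| \lesssim A^p B + A B^p \qquad (A, B \geq 0).
\end{align*}
The diagonal contributions regroup as $F_R(\mathcal{T}_y Q) + F_R(\mathcal{T}_{-y} Q) = 0$ by the first assertion. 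The remaining cross contributions are sums of integrals of the form $\int |\varphi''(x/R)|\, Q(x-y)^\alpha Q(x+y)^\beta\, dx$ (plus the $R^{-2}\varphi^{(4)}$ analog and the $\partial_x$ analog). Using $|Q'| \lesssim Q$ pointwise and applying Lemma \ref{lem2.14}, the quadratic cross terms (with $\alpha = \beta = 1$) contribute $O((1+y) e^{-2y})$, and the $L^{p+1}$ cross terms (with $\{\alpha,\beta\} = \{1,p\}$) contribute $O(e^{-2y})$. Summing yields $F_R(\mathcal{R}_y Q) = O((1+y) e^{-2y})$.

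The main obstacle will be proving the first assertion $F_R(\mathcal{T}_y Q) = 0$ cleanly, since $\varphi(x/R)$ breaks translation invariance and the natural identity $F_\infty(\mathcal{T}_y Q) = 0$ only handles the interior part. Either a tailored Pohozaev-type computation adapted to the cutoff $\varphi$, or a verification that $A_R(\mathcal{T}_y Q)$ is exponentially small enough to be absorbed into the $O((1+y) e^{-2y})$ bound of the second assertion, should close the gap. Once this is in hand, the rest of the proof is routine bookkeeping of cross terms via Lemma \ref{lem2.14}.
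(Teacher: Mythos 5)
Your outline matches the paper's (very terse) proof: the second assertion is obtained exactly as you describe, by expanding $\mathcal{R}_yQ=\mathcal{T}_yQ-\mathcal{T}_{-y}Q$, killing the diagonal terms with the first assertion, and bounding the cross terms via Lemma \ref{lem2.14} (the paper says only ``as in Lemma \ref{lem2.16}''), and your reduction to $\theta=0$ and the pointwise inequality for the $L^{p+1}$ term are fine. For the first assertion the paper simply cites \cite[Lemma 2.9]{MMZ21}, so the burden you correctly identify as ``the main obstacle'' is discharged there by reference.

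One warning about the two routes you offer for closing that gap. The first route (a Pohozaev-type computation with the weight $a(x)=R^2\varphi(x/R)$) is the right one and does give \emph{exact} vanishing: multiplying $-w''+w-w^p=0$ by $2a'w'+a''w$ and integrating by parts yields $4\int a''(w')^2-\int a''''w^2-\tfrac{2(p-1)}{p+1}\int a''w^{p+1}=0$, i.e.\ $F_R(\mathcal{T}_yQ)=0$ for every $y$ (equivalently: $e^{it}\mathcal{T}_yQ$ solves \eqref{NLS} with time-independent modulus, so $J_R''\equiv 0$). The second route --- showing $A_R(\mathcal{T}_yQ)$ is ``exponentially small enough to be absorbed'' --- would fail. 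When $y$ is comparable to $R$ (and in the application $y$ ranges over all of $(R,\infty)$), the soliton sits in the transition region $R<|x|<2R$, where $2-(\partial_x^2\varphi)(x/R)$ and $(\partial_x^4\varphi)(x/R)$ are $O(1)$; the individual pieces of $A_R(\mathcal{T}_yQ)$ are then only $O(R^{-2})$ or even $O(1)$, vastly larger than the required $(1+y)e^{-2y}\sim e^{-2R}$. Their sum vanishes only through the exact cancellation encoded in the elliptic equation, so you must use the identity, not a crude tail estimate. With that route chosen, the rest of your bookkeeping is correct.
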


\begin{proof}
The first statement follows from \cite[Lemma 2.9]{MMZ21}. The latter statement holds as in Lemma \ref{lem2.16}. 
\end{proof}


\subsubsection{Strichartz estimates and long time perturbations}
We use the following Lebesgue exponents:
\begin{align*}
	r:=p+1, \quad a:=\frac{2(p-1)(p+1)}{p+3}
	\quad \text{and}\quad 
	b:=\frac{2(p-1)(p+1)}{(p-1)^{2}-(p-1)-4}.
\end{align*}
We have the following Strichartz estimates (see \cite{CW92,Caz03,FXC11}): 
\begin{align*}
	&\|e^{it\partial_x^2}f\|_{L^{a}_{t}L^{r}_{x}}\lesssim \|f\|_{H^{1}},
	\\
	&\|\int^{t}_{0}e^{i(t-s)\partial_x^2}g(s)ds\|_{L^{a}_{t}L^{r}_{x}}\lesssim \|g\|_{L^{b^{\prime}}_{t}L^{r^{\prime}}_{x}}.
\end{align*}

In the following result we have a sufficient condition for scattering.

\begin{proposition}
\label{prop}
Let $u_{0}\in H^{1}(\mathbb{R})$ and $u$ be the corresponding solution to \eqref{NLS} with initial data $u(0)=u_{0}$.
If $u\in L^{a}_{t}L^{r}_{x}((0,\infty)\times\mathbb{R})$, then $u$ scatters in the positive time direction.
\end{proposition}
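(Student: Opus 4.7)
The plan is to first upgrade the $L^{a}_{t}L^{r}_{x}$ integrability into a full derivative-level Strichartz control on $[0,\infty)$, and then run the standard Cauchy-sequence argument in $H^{1}$ to produce a scattering state $u_{+}$. The key algebraic identities are $pr'=r$ and $pb'=a$ (which follow from the definitions of $r,a,b$); together with the chain rule and H\"older, these yield the nonlinear estimates
\begin{equation*}
\bigl\||u|^{p-1}u\bigr\|_{L^{b'}_{t}L^{r'}_{x}} \cleq \|u\|_{L^{a}_{t}L^{r}_{x}}^{p}, \qquad
\bigl\|\partial_{x}(|u|^{p-1}u)\bigr\|_{L^{b'}_{t}L^{r'}_{x}} \cleq \|u\|_{L^{a}_{t}L^{r}_{x}}^{p-1}\|\partial_{x}u\|_{L^{a}_{t}L^{r}_{x}},
\end{equation*}
on any time interval. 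These are the only nonlinear ingredients I need beyond the stated Strichartz inequalities (which I would also use with $L^{\infty}_{t}L^{2}_{x}$ on the left, as the endpoint pair $(\infty,2)$ is admissible).

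For step one, fix a small $\delta>0$ (to be chosen) and partition $(0,\infty)=\bigcup_{j=0}^{N-1}I_{j}$ into finitely many subintervals $I_{j}=[t_{j},t_{j+1}]$ on each of which $\|u\|_{L^{a}_{t}L^{r}_{x}(I_{j})}\leq\delta$; this is possible since $u\in L^{a}_{t}L^{r}_{x}((0,\infty))$. Applying the Duhamel formula based at $t_{j}$ and the Strichartz estimates to $\partial_{x}u$ gives
\begin{equation*}
\|\partial_{x}u\|_{L^{a}_{t}L^{r}_{x}(I_{j})} + \|\partial_{x}u\|_{L^{\infty}_{t}L^{2}_{x}(I_{j})} \cleq \|u(t_{j})\|_{H^{1}} + \delta^{p-1}\|\partial_{x}u\|_{L^{a}_{t}L^{r}_{x}(I_{j})}.
\end{equation*}
For $\delta$ small the last term is absorbed, and together with the mass conservation $\|u(t)\|_{L^{2}}=\|u_{0}\|_{L^{2}}$ this yields $\|u(t_{j+1})\|_{H^{1}}\leq C\|u(t_{j})\|_{H^{1}}$. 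Iterating over the $N$ intervals produces a (finite, $N$-dependent) bound $\sup_{t>0}\|u(t)\|_{H^{1}}\leq C^{N}\|u_{0}\|_{H^{1}}$ and, summing the subinterval estimates, the finiteness of $\|\partial_{x}u\|_{L^{a}_{t}L^{r}_{x}((0,\infty))}$.

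For step two, I would check that $\{e^{-it\partial_{x}^{2}}u(t)\}_{t>0}$ is Cauchy in $H^{1}$. By Duhamel,
\begin{equation*}
e^{-iT_{2}\partial_{x}^{2}}u(T_{2})-e^{-iT_{1}\partial_{x}^{2}}u(T_{1}) = i\int_{T_{1}}^{T_{2}}e^{-is\partial_{x}^{2}}(|u|^{p-1}u)(s)\,ds,
\end{equation*}
and applying the dual Strichartz inequality (at both the $L^{2}$ and $\dot{H}^{1}$ levels) bounds the $H^{1}$-norm by $\|u\|_{L^{a}_{t}L^{r}_{x}(T_{1},T_{2})}^{p}+\|u\|_{L^{a}_{t}L^{r}_{x}(T_{1},T_{2})}^{p-1}\|\partial_{x}u\|_{L^{a}_{t}L^{r}_{x}(T_{1},T_{2})}$, which tends to $0$ as $T_{1},T_{2}\to\infty$ by the global Strichartz bounds from step one. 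Hence $u_{+}:=\lim_{t\to\infty}e^{-it\partial_{x}^{2}}u(t)$ exists in $H^{1}(\mathbb{R})$, and unitarity of $e^{it\partial_{x}^{2}}$ on $H^{1}$ gives $\|u(t)-e^{it\partial_{x}^{2}}u_{+}\|_{H^{1}}\to 0$.

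The main obstacle is the first step: the bootstrap forces the final $H^{1}$ bound to depend on $N$, hence on $\|u\|_{L^{a}_{t}L^{r}_{x}}$. This is harmless here (we need only finiteness, not a quantitative bound independent of the Strichartz norm), but it is the place where one must carefully use the smallness of $\delta$, the algebraic identities $pr'=r$ and $pb'=a$, and conservation of mass to close the argument.
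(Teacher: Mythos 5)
The paper does not actually prove this proposition --- it is quoted as a standard fact from \cite{CW92,Caz03,FXC11} --- so I am judging your argument on its own terms. Your overall strategy (partition $(0,\infty)$ into finitely many intervals of small $L^a_tL^r_x$ norm, bootstrap a derivative-level Strichartz bound, then show $e^{-it\partial_x^2}u(t)$ is Cauchy in $H^1$) is exactly the standard one, and your exponent identities $pr'=r$, $pb'=a$ and the resulting H\"older estimates are correct.

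However, Step 1 has a genuine gap: you cannot place $\partial_x u$ in $L^a_tL^r_x$. For $p>5$ the pair $(a,r)$ is not $L^2$-admissible (it would be only at $p=5$); it is $\dot H^{s_c}$-admissible with $s_c=\frac12-\frac{2}{p-1}>0$, which is precisely why the paper's homogeneous estimate carries $\|f\|_{H^1}$ rather than $\|f\|_{L^2}$ on the right. Your displayed inequality on $I_j$ needs $\|e^{i(t-t_j)\partial_x^2}\partial_x u(t_j)\|_{L^a_tL^r_x}\lesssim \|u(t_j)\|_{H^1}$, i.e.\ the false estimate $\|e^{it\partial_x^2}g\|_{L^a_tL^r_x}\lesssim\|g\|_{L^2}$ applied to $g=\partial_xu(t_j)$; the true bound requires $\|u(t_j)\|_{\dot H^{1+s_c}}$, which is not available for $H^1$ solutions, and H\"older in time cannot rescue this since $a$ is \emph{larger} than the admissible exponent. (The companion claim that the inhomogeneous estimate with $L^{b'}_tL^{r'}_x$ on the right controls $L^\infty_tL^2_x$ is likewise not among the stated Strichartz estimates, since $(b,r)$ is not admissible for $p>5$.) The fix is standard: run the bootstrap for $\partial_xu$ in the $L^2$-admissible space $L^{\gamma}_tL^{\rho}_x$ with $\rho=p+1$ and $\gamma=\frac{4(p+1)}{p-1}$, using the H\"older splitting $\frac{1}{\rho'}=\frac{p-1}{r}+\frac{1}{\rho}$ and $\frac{1}{\gamma'}=\frac{p-1}{a}+\frac{1}{\gamma}$ (which you can check is exact) to get $\||u|^{p-1}\partial_xu\|_{L^{\gamma'}_tL^{\rho'}_x(I_j)}\lesssim\delta^{p-1}\|\partial_xu\|_{L^{\gamma}_tL^{\rho}_x(I_j)}$; the homogeneous admissible Strichartz then closes the absorption with only $\|u(t_j)\|_{H^1}$ on the right. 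With that substitution (and the same admissible dual estimate in your Cauchy step, bounding the tail by $\|u\|_{L^a_tL^r_x(T_1,T_2)}^{p-1}\|u\|_{L^{\gamma}_tW^{1,\rho}_x(T_1,T_2)}$), the rest of your argument goes through.
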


It is known that the Cauchy problem is well-posed. We also see that the  final value problem is well-posed, that is there exists a wave operator. 

\begin{proposition}[{\cite[Theorem 3]{Str81}}]
Let $\psi \in H^{1}(\mathbb{R})$. There exists $T \in \mathbb{R}$ and the solution $u(t)$ to \eqref{NLS} on $(T,\infty)$ such that 
\begin{align*}
	\|u(t) - e^{it\partial_x^2} \psi\|_{H^1} \to 0 \text{ as } t \to \infty.
\end{align*}
\end{proposition}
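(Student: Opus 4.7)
The plan is to solve the final-value problem via a contraction mapping argument applied to the Duhamel formulation
\begin{align*}
\Phi(u)(t) := e^{it\partial_x^2}\psi - i\int_t^{\infty} e^{i(t-s)\partial_x^2}\bigl(|u|^{p-1}u\bigr)(s)\, ds
\end{align*}
on a tail interval $[T,\infty)$, with $T$ taken sufficiently large. The ambient space is a closed ball
\begin{align*}
X_T := \bigl\{ u \in C([T,\infty); H^1(\mathbb{R})) : \|u\|_{L^a_tL^r_x} + \|\partial_x u\|_{L^a_tL^r_x} \leq 2\eta \bigr\}
\end{align*}
equipped with the $L^a_tL^r_x$ metric, for some small $\eta > 0$ to be chosen below.

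First I would note that the Strichartz estimate in the excerpt and the fact that $\partial_x$ commutes with $e^{it\partial_x^2}$ give $e^{it\partial_x^2}\psi, \, e^{it\partial_x^2}\partial_x\psi \in L^a_tL^r_x(\mathbb{R}\times\mathbb{R})$, so by dominated convergence
\begin{align*}
\lim_{T \to \infty} \bigl( \|e^{it\partial_x^2}\psi\|_{L^a_tL^r_x([T,\infty)\times\mathbb{R})} + \|e^{it\partial_x^2}\partial_x\psi\|_{L^a_tL^r_x([T,\infty)\times\mathbb{R})} \bigr) = 0,
\end{align*}
so the inhomogeneous term of $\Phi$ can be made arbitrarily small. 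Next, since $r = p+1$, the identity $\||u|^{p-1}u\|_{L^{r'}_x} = \|u\|_{L^r_x}^p$ holds pointwise in $t$, and a direct check with the exponents in the excerpt gives the crucial relation $p b' = a$. Hence
\begin{align*}
\bigl\||u|^{p-1}u\bigr\|_{L^{b'}_tL^{r'}_x([T,\infty)\times\mathbb{R})} = \|u\|_{L^a_tL^r_x([T,\infty)\times\mathbb{R})}^p,
\end{align*}
and the pointwise bound $|\partial_x(|u|^{p-1}u)| \lesssim |u|^{p-1}|\partial_x u|$ (valid since $p-1>4$) combined with Hölder yields $\|\partial_x(|u|^{p-1}u)\|_{L^{b'}_tL^{r'}_x} \lesssim \|u\|_{L^a_tL^r_x}^{p-1}\|\partial_x u\|_{L^a_tL^r_x}$. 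Feeding these into the inhomogeneous Strichartz estimate shows that $\Phi$ maps $X_T$ into itself and is a contraction on $X_T$ once $\eta$ is small and $T$ is large.

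Finally, the fixed point $u=\Phi(u)$ is the desired solution on $[T,\infty)$, and repeating the estimate on the shrinking tail $[t,\infty)$ gives
\begin{align*}
\|u(t) - e^{it\partial_x^2}\psi\|_{H^1} \lesssim \|u\|_{L^a_tL^r_x([t,\infty)\times\mathbb{R})}^{p-1}\bigl(\|u\|_{L^a_tL^r_x([t,\infty)\times\mathbb{R})} + \|\partial_x u\|_{L^a_tL^r_x([t,\infty)\times\mathbb{R})}\bigr),
\end{align*}
which tends to $0$ as $t\to\infty$ since the relevant $L^a_tL^r_x$ tails vanish. The only delicate point I anticipate is the derivative estimate for the nonlinearity, which requires the chain-rule-type bound for $|u|^{p-1}u$; the assumption $p>5$ (in particular $p-1>1$) makes this map $C^1$ and keeps the Hölder bookkeeping clean, so no real obstacle should arise beyond careful exponent arithmetic.
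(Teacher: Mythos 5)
Your argument is correct, and the exponent arithmetic checks out: with $b=\frac{2(p-1)(p+1)}{(p-1)^2-(p-1)-4}$ one computes $b'=\frac{2(p-1)(p+1)}{p(p+3)}$, so indeed $pb'=a$ and $pr'=r$, which is exactly what makes the H\"older bookkeeping close. The paper, however, does not prove this statement at all --- it is quoted verbatim from Strauss \cite{Str81} --- so your contraction-mapping construction of the wave operator via the Duhamel formula from infinity is a self-contained substitute rather than a variant of the paper's proof. Two points deserve care if you write this out in full. First, the inhomogeneous Strichartz estimate displayed in the paper controls only the $L^a_tL^r_x$ norm of the Duhamel term; your final step, where you conclude $\|u(t)-e^{it\partial_x^2}\psi\|_{H^1}\to 0$, needs the companion estimate with $L^\infty_tL^2_x$ (applied to $u$ and $\partial_x u$) on the left-hand side, i.e. $\|\int_t^{\infty}e^{i(t-s)\partial_x^2}g\,ds\|_{L^\infty_tL^2_x}\lesssim\|g\|_{L^{b'}_tL^{r'}_x}$. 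Since $(b,r)$ is not an admissible pair for $p>5$, this is one of the non-admissible (Kato-type) inhomogeneous estimates of Cazenave--Weissler; it is standard in this literature (and is implicitly behind Proposition \ref{prop} as well), but it is not among the estimates the paper displays, so it should be cited. Second, the retarded estimate must be invoked for the operator $g\mapsto\int_t^{\infty}e^{i(t-s)\partial_x^2}g(s)\,ds$ rather than $\int_0^t$; this is again a routine variant. Neither point is a gap in the mathematics, only in the bookkeeping of which Strichartz-type inequalities are being used.
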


We will use this long time perturbation result (see \cite{FXC11} and \cite{AkNa13} for the proof). 

\begin{lemma}[Long time perturbation]
\label{perturb}
For any $M>0$, there exist $\varepsilon=\varepsilon(M)>0$ and a positive constant $C=C(M)$ such that the following occurs. 
Let $v: I\times \mathbb{R}\to \mathbb{C}$ be a solution of the integral equation with source term $e$:
\[ v(t)=e^{it\partial_x^2}\varphi +i\int_{0}^{t} e^{i(t-s)\partial_x^2} (|v(s)|^{p-1}v(s))ds +e (t)\]
with $\|v\|_{L_t^a L_x^r(I\times \R)}<M$ and $\|e\|_{L_t^{a}L_x^r(I\times \R)}<\varepsilon$. Assume moreover that $u_0 \in H^1(\mathbb{R})$ is such that $\|u_0-\varphi\|_{H^{1}} < \varepsilon$, then the solution $u: I\times \R\to \C$ to \eqref{NLS} with initial data $u_0$:
\[ u(t)=e^{it\partial_x^2}u_{0} +i \int_{0}^{t} e^{i(t-s)\partial_x^2}(|u(s)|^{p-1}u(s)) ds,\]
satisfies $u \in L_t^{a}L_x^r(I\times \R)$ and moreover $\|u-v\|_{L_t^{a}L_x^r(I\times \R)}<C\varepsilon$. 
\end{lemma}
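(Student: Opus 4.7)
The plan is to run the standard stability/perturbation argument for NLS: decompose the time interval into finitely many pieces on which the comparison solution $v$ has small Strichartz norm, and on each piece close a contraction/bootstrap for the difference $w := u - v$ using the Strichartz estimates stated just before the lemma.

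First I would set $w = u - v$ and write the integral equation
\begin{equation*}
w(t) = e^{it\partial_x^2}(u_0-\varphi) + i\int_0^t e^{i(t-s)\partial_x^2}\bigl\{|v+w|^{p-1}(v+w) - |v|^{p-1}v\bigr\}\,ds - e(t).
\end{equation*}
Using the pointwise nonlinear bound
\begin{equation*}
\bigl||v+w|^{p-1}(v+w) - |v|^{p-1}v\bigr| \lesssim \bigl(|v|^{p-1}+|w|^{p-1}\bigr)|w|,
\end{equation*}
together with Hölder in space-time and the dual Strichartz estimate on $L^{b'}_tL^{r'}_x$, one obtains on any subinterval $J$ of $I$
\begin{equation*}
\|w\|_{L^a_t L^r_x(J)} \leq C_0\Bigl(\|u_0 - \varphi\|_{H^1} + \|e\|_{L^a_t L^r_x(J)}\Bigr) + C_0\Bigl(\|v\|_{L^a_t L^r_x(J)}^{p-1} + \|w\|_{L^a_t L^r_x(J)}^{p-1}\Bigr)\|w\|_{L^a_t L^r_x(J)},
\end{equation*}
after checking that the triples $(a,r)$ and $(b,r)$ are admissible in one dimension and that $(p-1)\cdot\frac{1}{a} + \frac{1}{a} = \frac{1}{b'}$ and $(p-1)\cdot\frac{1}{r} + \frac{1}{r} = \frac{1}{r'}$ (this is exactly what the choice of $a,b,r$ was designed for, as in \cite{FXC11}).

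Next, fix a small universal $\eta > 0$ (depending only on $C_0$ and $p$). Since $\|v\|_{L^a_tL^r_x(I)} < M$, I can partition $I = \bigcup_{j=1}^N J_j$ into $N = N(M,\eta)$ consecutive subintervals such that $\|v\|_{L^a_tL^r_x(J_j)} \leq \eta$ for each $j$. On $J_1$, the bootstrap closes provided $\|u_0-\varphi\|_{H^1} + \|e\|_{L^aL^r} \leq \varepsilon$ is small enough, yielding $\|w\|_{L^a_tL^r_x(J_1)} \leq 2C_0\varepsilon$. To iterate, I control the $H^1$ difference at the endpoint $t_1 = \sup J_1$ by applying the Strichartz estimate to $\partial_x w$ as well, which gives $\|u(t_1)-v(t_1)\|_{H^1} \leq C_1 \varepsilon$; then on $J_2$ the same bootstrap runs with $\varepsilon$ replaced by $C_1\varepsilon$. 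After $N$ iterations the accumulated constant is $C_1^N$, so setting $C(M) = C_1^{N(M,\eta)}$ and choosing the final $\varepsilon(M)$ small enough so that $C(M)\varepsilon \leq \eta$ (to keep each bootstrap in its small regime) completes the argument.

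The main technical point — and the only place one has to be careful — is verifying that the bootstrap actually closes at each step, i.e. that one can absorb $\|w\|_{L^aL^r(J_j)}^{p-1}\|w\|_{L^aL^r(J_j)}$ into the left-hand side; this relies on choosing $\eta$ first (so that the $\|v\|^{p-1}$ factor is small) and then $\varepsilon(M)$ last (so that even at the final step the inductive bound stays in the small-data regime). The control on $\|\partial_x w\|$ at the interval endpoints, needed to pass from step $j$ to step $j+1$, is obtained by the same nonlinear estimate applied to $\partial_x$ of the Duhamel formula, using the fractional chain rule (or the fact that $|z|^{p-1}z$ is $C^1$ since $p > 5 > 1$) so no extra regularity issue arises. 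I do not expect any genuine obstacle; this is a direct adaptation of the arguments in \cite{FXC11,AkNa13} to the present notation.
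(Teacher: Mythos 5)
The paper does not prove this lemma; it only cites \cite{FXC11} and \cite{AkNa13}. Your proposal is the standard stability argument from those references: split $I$ into $N(M,\eta)$ subintervals on which $\|v\|_{L^a_tL^r_x}\leq\eta$, close a bootstrap for $w=u-v$ on each piece using the Duhamel formula, the pointwise bound $\bigl||v+w|^{p-1}(v+w)-|v|^{p-1}v\bigr|\lesssim(|v|^{p-1}+|w|^{p-1})|w|$, and the H\"older relations $\frac{p}{a}=\frac{1}{b'}$, $\frac{p}{r}=\frac{1}{r'}$ (which do check out for the exponents defined in the paper). That skeleton is correct. Two remarks: the pairs $(a,r)$ and $(b,r)$ are \emph{not} Schr\"odinger-admissible in one dimension (that is precisely why the homogeneous estimate is stated with the $H^1$ norm on the right); this is cosmetic and does not affect the argument.

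There is, however, one genuine gap in your iteration step. You propose to pass from $J_j$ to $J_{j+1}$ by bounding $\|u(t_j)-v(t_j)\|_{H^1}\leq C_1\varepsilon$. From the stated hypotheses this cannot be done: the difference $u(t_j)-v(t_j)$ contains the term $e(t_j)$, and $e$ is controlled only in the space-time norm $L^a_tL^r_x$, so $e(t_j)$ need not belong to $H^1$ (nor even be well defined pointwise in time); moreover, estimating the $\dot H^1$ part of the Duhamel integral requires $\partial_x u,\partial_x v$ in Strichartz norms, and no $H^1$-level bound on $v$ is assumed. The standard repair is to never take a fixed-time norm of $w$: for $t\in J_{j+1}$ write
\begin{align*}
w(t)+e(t)=e^{it\partial_x^2}(u_0-\varphi)+i\int_0^{t_j}e^{i(t-s)\partial_x^2}\bigl\{|u|^{p-1}u-|v|^{p-1}v\bigr\}\,ds+i\int_{t_j}^{t}e^{i(t-s)\partial_x^2}\bigl\{|u|^{p-1}u-|v|^{p-1}v\bigr\}\,ds,
\end{align*}
bound the middle term in $L^a_tL^r_x(J_{j+1})$ by the retarded Strichartz estimate applied to the source truncated to $[0,t_j]$, i.e.\ by $\|\,|u|^{p-1}u-|v|^{p-1}v\,\|_{L^{b'}_tL^{r'}_x([0,t_j]\times\mathbb{R})}$, which is already controlled by the previous steps, and then absorb the last term by the bootstrap on $J_{j+1}$; finally use $\|w\|_{L^aL^r}\leq\|w+e\|_{L^aL^r}+\varepsilon$. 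With this modification the constants still accumulate geometrically in $N$, and the rest of your argument (choose $\eta$ first, then $\varepsilon(M)$ last) goes through unchanged.
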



\subsubsection{Linear profile decomposition}

We have a linear profile decomposition for odd functions. 

\begin{proposition}[Linear profile decomposition for odd functions] 
\label{LPD}
Let $\{ \varphi_n\}_{n\in \N}$ be a bounded sequence in $H_{\odd}^1(\mathbb{R})$. Then, up to subsequence, we can write 
\begin{align}
\label{lpd}
	\varphi_n=\sum_{j=1}^{J} e^{-i t_{n}^j \partial_x^2} \mathcal{R}_{x_n^j} \psi^j +R_n^J, \quad \forall J \in \N, 
\end{align}
where 
$t_n^j\in \mathbb{R}$, $x_n^j\geq 0$, $\psi^j \in H^1(\mathbb{R})$, $R_{n}^J \in H_{\odd}^{1}(\mathbb{R})$, and the following hold:
\begin{itemize}
\item for any fixed $j$, we have :
\begin{align*}
&\text{either } t_n^j=0 \text{ for any } n \in \N, \text{ or } t_n^j \to \pm \infty \text{ as } n\to \infty,
\\
&\text{either } x_n^j=0 \text{ for any } n \in \N, \text{ or } x_n^j \to \infty \text{ as } n\to \infty.
\end{align*}
\item orthogonality of the parameters:
\[ |t_n^j -t_n^k|+|x_n^j-x_n^k| \to \infty \text{ as } n \to \infty, \quad \forall j\neq k. \]
\item smallness of the remainder:
\[ \forall \eps >0, \exists J=J(\eps) \in \N \text{ such that } \limsup_{n\to \infty} \| e^{it\partial_x^2}R_n^J\|_{L_{t,x}^\infty} <\eps. \]
\item orthogonality in norms: for any $ J\in \N$
\begin{align*} 
\| \varphi_n\|_{L^2}^2 &=\sum_{j=1}^J \|\mathcal{R}_{x_n^j}\psi^j\|_{L^2}^2 +\| R_n^J\|_{L^2}^2 +o_n(1),
\\
\| \partial_x \varphi_n\|_{L^2}^2 &=\sum_{j=1}^J \|\partial_x \mathcal{R}_{x_n^j}\psi^j \|_{L^2}^2 +\|\partial_x R_n^J\|_{L^2}^2 +o_n(1).
\end{align*}
Moreover, we have 
\[ \| \varphi_n\|_{L^q}^q =\sum_{j=1}^J \| e^{-i t_n^j \partial_x^2} \mathcal{R}_{x_n^j} \psi^j \|_{L^q}^q +\| R_n^J\|_{L^q}^q +o_n(1), \quad q\in (2,\infty),   \quad \forall J\in \N. \]
\end{itemize}
\end{proposition}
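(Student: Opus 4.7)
The plan is to derive this decomposition from the standard (non-symmetric) Bahouri--G\'erard-type linear profile decomposition for the 1D free Schr\"odinger evolution, and then symmetrize using the oddness of $\varphi_n$.

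First I would apply the standard LPD in $H^1(\mathbb{R})$ to the bounded sequence $\{\varphi_n\}$, obtaining
\begin{equation*}
\varphi_n = \sum_{j=1}^J e^{-it_n^j \partial_x^2} \mathcal{T}_{y_n^j} \phi^j + r_n^J,
\end{equation*}
with $\phi^j \in H^1(\mathbb{R})$, $(t_n^j, y_n^j) \in \mathbb{R}^2$, pairwise orthogonal parameters $|t_n^j - t_n^k| + |y_n^j - y_n^k| \to \infty$ for $j \neq k$, the usual smallness $\limsup_n \|e^{it\partial_x^2} r_n^J\|_{L^\infty_{t,x}} \to 0$ as $J \to \infty$, and $L^2$, $\dot H^1$, $L^q$ decoupling.

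Since $\varphi_n$ is odd, I then apply the identity $\varphi_n = \tfrac{1}{2}(\varphi_n - \varphi_n(-\cdot))$ term-by-term. Because $e^{-it\partial_x^2}$ commutes with reflection and a direct computation gives $[\mathcal{T}_y \phi](x) - [\mathcal{T}_y \phi](-x) = \mathcal{R}_y \phi(x)$, each profile becomes an odd $\mathcal{R}$-type profile. Setting $\tilde\psi^j := \tfrac{1}{2} \phi^j$ and $R_n^J := \tfrac{1}{2}(r_n^J - r_n^J(-\cdot))$, I obtain
\begin{equation*}
\varphi_n = \sum_{j=1}^J e^{-it_n^j \partial_x^2} \mathcal{R}_{y_n^j} \tilde\psi^j + R_n^J.
\end{equation*}
The identity $\mathcal{R}_{-y} f = \mathcal{R}_y[-f(-\cdot)]$ lets me force $y_n^j \geq 0$ after replacing $\tilde\psi^j$ by $-\tilde\psi^j(-\cdot)$. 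Passing to subsequences and absorbing finite limits of $y_n^j$ and $t_n^j$ into the profiles (with $H^1$ corrections of size $o_n(1)$ added to $R_n^J$), I reach the required dichotomies $x_n^j := y_n^j \in \{0\}$ or $x_n^j \to \infty$, and $t_n^j \in \{0\}$ or $t_n^j \to \pm\infty$.

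Smallness of $R_n^J$ transfers from the standard LPD because $R_n^J$ is the odd projection of $r_n^J$ (plus the vanishing corrections), the odd projection commutes with $e^{it\partial_x^2}$, and it is contractive in $L^\infty_{t,x}$. The norm orthogonalities reduce to those for the standard LPD: for each $j$ with $x_n^j \to \infty$, the cross term $\langle \tilde\psi^j(\cdot - y_n^j), \tilde\psi^j(-\cdot - y_n^j) \rangle$ inside $\|\mathcal{R}_{y_n^j}\tilde\psi^j\|^2$ vanishes as $y_n^j \to \infty$ by translation-orthogonality in $H^1$, and analogously for $L^q$ via approximation of $\tilde\psi^j$ by compactly supported functions.

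The main difficulty is establishing the pairwise parameter orthogonality $|t_n^j - t_n^k| + |x_n^j - x_n^k| \to \infty$ for $j \neq k$, since two original profiles with $y_n^j \to +\infty$ and $y_n^k \to -\infty$ could satisfy $|y_n^j| \approx |y_n^k|$, and after taking absolute values $x_n^j \approx x_n^k$. To resolve this, I would exploit the oddness of $\varphi_n$: applying the same standard LPD to $-\varphi_n(-\cdot) = \varphi_n$ yields a second decomposition with parameters $(t_n^j, -y_n^j)$ and profiles $-\phi^j(-\cdot)$. By a uniqueness-up-to-reordering argument for the LPD (matching parameter asymptotic classes and weak limits), each profile at $y_n^j \to -\infty$ must be the reflected-negated partner of one at $-y_n^j \to +\infty$, so after symmetrization they merge into a single odd profile, and the remaining $x_n^j$'s are asymptotically distinct. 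A conceptually cleaner alternative would be to redo the Bahouri--G\'erard extraction directly within $H^1_{\odd}$ using a concentration function centered at $x \geq 0$, which avoids the pairing issue entirely.
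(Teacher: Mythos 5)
Your proposal is correct and follows essentially the same route as the paper: apply the standard linear profile decomposition to the bounded sequence and antisymmetrize using the oddness of $\varphi_n$, taking $\psi^j=\tilde\psi^j/2$ and the odd part of the remainder, with the stated properties inherited from the usual decomposition. You in fact go further than the paper's sketch by explicitly addressing the pairing of profiles travelling to $+\infty$ and $-\infty$ that is needed to recover orthogonality of the nonnegative parameters $x_n^j$ after reflection --- a genuine subtlety that the paper passes over by deferring to the cited direct proofs.
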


\begin{proof}
We give a sketch of the proof. See also \cite[Theorem 3.5]{IkIn17}. By the linear profile decomposition, e.g. \cite[Theorem 5.1.]{FXC11}, we have
\begin{align*}
	\varphi_n=\sum_{j=1}^{J} e^{-i t_{n}^j \partial_x^2} \mathcal{T}_{x_n^j} \tilde{\psi}^j +\tilde{R}_n^J.
\end{align*}
Since $\varphi_n$ is odd, we have
\begin{align*}
	\varphi_n(x) 
	&=\frac{\varphi_n(x)+\varphi_n (-x)}{2}
	\\
	&= \sum_{j=1}^{J} e^{-i t_{n}^j \partial_x^2}  \frac{(\mathcal{T}_{x_n^j} \tilde{\psi}^j)(x) + (\mathcal{T}_{x_n^j} \tilde{\psi}^j)(-x)}{2}
	 +\frac{\tilde{R}_n^J(x)+\tilde{R}_n^J(-x)}{2}.
\end{align*}
Setting $\psi^j:=\tilde{\psi}^j/2$ and $R_n^J:=(\tilde{R}_n^J(x)+\tilde{R}_n^J(-x))/2$, we get \eqref{lpd} and the other statements follows from the linear profile decomposition. 
See \cite{Inui17} for a direct proof. 
\end{proof}

\begin{remark}
\label{rmk1}
The $L_t^a L_x^r$-norm of $e^{it\partial_x^2}R_n^J$ is controlled by its $L_{t,x}^{\infty}$-norm. See \cite[Remark 2.3]{ArIn21}. 
\end{remark}


%
%
%
%

\section{Coercivity of the linearized operator}

In this section, we give the coercivity of a linearized operator, which is used in the following modulation argument. Before stating our coercivity, we recall the coercivity statement of \cite{CFR20}
for the linearized equation. 

We set 
\begin{align*}
	\Phi(f,g)
	&=\re \int_{\mathbb{R}} \partial_x f(x)  \overline{\partial_x g(x)}  
	+ f(x) \overline{g(x)}
	\\
	&\quad -|Q|^{p-1}(pf_1(x)g_1(x) + f_2(x)g_2(x)) dx,
\end{align*}
where $f=f_1+if_2$ and $g=g_1+i g_2$ and we also set $\Phi(f) :=\Phi(f,f) $. 

\begin{lemma}[{\cite[Lemma 3.5]{CFR20}}]
\label{lem3.1}
Let $h \in H^{1}(\mathbb{R})$ satisfy the following orthogonality conditions: 
\begin{align*}
	\im \int h \mathcal{T}_yQ dx 
	= \re \int h \partial_x (\mathcal{T}_yQ) dx 
	=\re \int h (\mathcal{T}_yQ)^{p} dx
	=0. 
\end{align*}
Then, there exists a positive constant $c$ such that
\begin{align*}
	\Phi(\mathcal{T}_{-y}h) \geq c \|h\|_{H^1}^2.
\end{align*}
\end{lemma}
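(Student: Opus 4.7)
The plan is to reduce the inequality to a coercivity statement for the linearized operators centered at the origin, and then split into real and imaginary parts. Set $\tilde h := \mathcal{T}_{-y} h$, so $\tilde h(x) = h(x+y)$, and decompose $\tilde h = h_1 + i h_2$ with $h_1, h_2$ real-valued. Since $Q$ is real and even, the change of variables $x \mapsto x+y$ converts the three hypotheses into
\begin{align*}
\int_{\mathbb{R}} h_2\, Q\, dx = 0,\quad \int_{\mathbb{R}} h_1\, \partial_x Q\, dx = 0,\quad \int_{\mathbb{R}} h_1\, Q^{p}\, dx = 0,
\end{align*}
and a direct expansion of $\Phi$ gives the decoupling $\Phi(\mathcal{T}_{-y}h) = \Phi(\tilde h) = \langle L_+ h_1, h_1\rangle + \langle L_- h_2, h_2\rangle$ with
\begin{align*}
L_+ := -\partial_x^2 + 1 - pQ^{p-1},\qquad L_- := -\partial_x^2 + 1 - Q^{p-1}.
\end{align*}
Since $\|h\|_{H^1} = \|\tilde h\|_{H^1}$ by translation invariance, the lemma reduces to proving coercivity of each summand.

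The $L_-$ piece is routine: $\ker L_- = \Span(Q)$ with a positive spectral gap above zero (and continuous spectrum starting at $1$), so the condition $h_2 \perp Q$ yields $\langle L_- h_2, h_2\rangle \geq c\|h_2\|_{H^1}^2$. The substantive work lies with $L_+$, which has exactly one simple negative eigenvalue $-\lambda_0 < 0$ with strictly positive ground-state eigenfunction $\chi_0$, kernel $\Span(\partial_x Q)$, and positive spectrum above $0$. The condition $h_1 \perp \partial_x Q$ removes the kernel; to eliminate the negative direction I would invoke the algebraic identity
\begin{align*}
L_+ Q = -(p-1)\, Q^{p},
\end{align*}
which follows immediately from $-Q'' + Q - Q^p = 0$. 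Pairing with $\chi_0$ gives
\begin{align*}
\langle \chi_0, Q^{p}\rangle = -\tfrac{1}{p-1}\,\langle \chi_0, L_+ Q\rangle = \tfrac{\lambda_0}{p-1}\,\langle \chi_0, Q\rangle > 0,
\end{align*}
because $\chi_0$ and $Q$ are both strictly positive on $\mathbb{R}$. Consequently the two orthogonality conditions $h_1 \perp \partial_x Q$ and $h_1 \perp Q^{p}$ cut out a codimension-$2$ subspace transverse to $\Span\{\partial_x Q, \chi_0\}$, on which the spectral gap of $L_+$ yields $\langle L_+ h_1, h_1\rangle \geq c\|h_1\|_{H^1}^2$.

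The main obstacle will be upgrading the spectral picture into a uniform quantitative coercivity bound on all of $\mathbb{R}$, where weak $H^1$-limits can lose mass at infinity. I would argue by contradiction along a normalized sequence $f_n \in H^1(\mathbb{R})$ with $\|f_n\|_{H^1} = 1$, $f_n \perp \partial_x Q$, $f_n \perp Q^{p}$, and $\langle L_+ f_n, f_n\rangle \to 0$: the compactness of multiplication by the exponentially decaying weight $Q^{p-1}$ as a map $H^1 \to H^{-1}$ allows the extraction of a weak limit $f_\infty$ inheriting both orthogonality conditions, with $\langle L_+ f_\infty, f_\infty\rangle \leq 0$; the spectral information on $L_+$ then forces $f_\infty \in \Span(\chi_0)$, contradicting $\langle \chi_0, Q^{p}\rangle \neq 0$ unless $f_\infty = 0$, and the loss-of-compactness alternative is incompatible with $\langle L_+ f_n, f_n\rangle \to 0$ via the positive bottom of the essential spectrum. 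Combining the $L_+$ and $L_-$ estimates and recalling $\|h\|_{H^1} = \|\tilde h\|_{H^1}$ delivers the claim.
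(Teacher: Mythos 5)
First, a remark on the comparison itself: the paper does not prove this lemma — it is quoted verbatim from \cite[Lemma 3.5]{CFR20} (with pointers to \cite{Wei89} and \cite{DuRo10}) — so there is no in-paper argument to measure against. Your reduction to the origin by translation, the decoupling $\Phi(\mathcal{T}_{-y}h)=\langle L_+h_1,h_1\rangle+\langle L_-h_2,h_2\rangle$, the treatment of $L_-$ via $L_-Q=0$ and $h_2\perp Q$, and the final compactness upgrade from $L^2$- to $H^1$-coercivity are all the standard route and are fine.

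The gap is in the positivity of $\langle L_+h_1,h_1\rangle$ under $h_1\perp\partial_xQ$ and $h_1\perp Q^{p}$. Knowing $\langle\chi_0,Q^{p}\rangle\neq 0$, hence that the constraint subspace is transverse to $\Span\{\chi_0,\partial_xQ\}$, does \emph{not} imply the quadratic form is nonnegative there: already for $L=\diag(-1,1)$ on $\mathbb{R}^2$ the line spanned by $(1,\varepsilon)$ is transverse to the negative direction yet carries a negative form for small $\varepsilon$. For the same reason, the step in your contradiction argument asserting that $\langle L_+f_\infty,f_\infty\rangle\le 0$ together with the spectral picture ``forces $f_\infty\in\Span(\chi_0)$'' is false — the set where the form is nonpositive is a cone, not the negative eigenspace. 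What is actually needed is the index-counting lemma: since $L_+$ has exactly one (simple) negative eigenvalue and $L_+Q=-(p-1)Q^{p}$ gives $\langle L_+Q,Q\rangle=-(p-1)\|Q\|_{L^{p+1}}^{p+1}<0$, any $f\perp Q^{p}$ with $\langle L_+f,f\rangle<0$ would also satisfy $\langle L_+f,Q\rangle=\langle f,L_+Q\rangle=-(p-1)\langle f,Q^{p}\rangle=0$, so $L_+$ would be negative definite on the two-dimensional space $\Span\{f,Q\}$ (two-dimensional because $\langle Q,Q^{p}\rangle\neq0$ prevents $f$ from being a multiple of $Q$), contradicting that the Morse index is one. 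This yields $L_+\ge 0$ on $\{Q^{p}\}^{\perp}$, and only with this nonnegativity in hand does your compactness argument close: the weak limit is then a constrained minimizer with zero energy, solves $L_+f_\infty=\alpha\,\partial_xQ+\beta\,Q^{p}$, and is eliminated by the orthogonality conditions and $\ker L_+=\Span(\partial_xQ)$. You already wrote down the identity $L_+Q=-(p-1)Q^{p}$; it should be used to run this index argument rather than merely to compute the sign of $\langle\chi_0,Q^{p}\rangle$.
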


See also \cite{Wei89} and \cite{DuRo10}. 

Using Lemma \ref{lem3.1}, we obtain the following coercivity
statement: 

\begin{lemma}[Coercivity]
\label{coercivity}
Let $h \in H_{\odd}^{1}(\mathbb{R})$ satisfy the orthogonality conditions: 
\begin{align*}
	\im \int h \chi_{R}^{+}  \mathcal{T}_yQ dx 
	= \re \int h \partial_x (\chi_{R}^{+} \mathcal{T}_yQ) dx 
	=\re \int h \chi_{R}^{+} (\mathcal{T}_yQ )^{p} dx
	=0.
\end{align*}
Then, there exists a positive constant $c$ such that
\begin{align*}
	\Phi(\mathcal{T}_{-y}(\chi_{R}^{+}h)) \geq c \|\chi_{R} h\|_{H^1}^2  - \frac{1}{R}\|h\|_{H^1}^2
\end{align*}
for $R>1$. 
\end{lemma}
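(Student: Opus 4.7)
The plan is to reduce the claim to Lemma \ref{lem3.1} applied to a small correction of $k := \chi_R^+ h$. Note that $k \in H^1(\mathbb{R})$ is supported in $\{x > R/2\}$ and is no longer odd, but this is harmless since Lemma \ref{lem3.1} has no symmetry hypothesis.

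First, I would check how closely $k$ satisfies the three orthogonalities required in Lemma \ref{lem3.1}. Since $\chi_R^+$ is real and can be freely moved between factors in the complex pairings, the orthogonalities against $\mathcal{T}_y Q$ and against $(\mathcal{T}_y Q)^p$ carry over from the hypothesis verbatim. The translation orthogonality is off only by the Leibniz defect
\[ \varepsilon_2 := \re \int h\,(\partial_x \chi_R^+)\, \mathcal{T}_y Q\, dx, \]
and since $\|\partial_x \chi_R^+\|_{L^\infty} \lesssim R^{-1}$ while $\|\mathcal{T}_y Q\|_{L^2} = \|Q\|_{L^2}$ is uniform in $y$, we obtain $|\varepsilon_2| \lesssim R^{-1} \|h\|_{L^2}$.

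Next, I would set $\tilde k := k + \beta\,\partial_x \mathcal{T}_y Q$ with $\beta := \varepsilon_2 / \|\partial_x Q\|_{L^2}^2$, chosen to restore the translation orthogonality exactly. The two other orthogonalities are preserved automatically: $\partial_x \mathcal{T}_y Q$ is real, is $L^2$-orthogonal to $\mathcal{T}_y Q$ by parity about $x = y$, and is $L^2$-orthogonal to $(\mathcal{T}_y Q)^p$ by integration by parts on a perfect derivative. Lemma \ref{lem3.1} applied to $\tilde k$ then delivers $\Phi(\mathcal{T}_{-y}\tilde k) \geq c_0 \|\tilde k\|_{H^1}^2$ for some fixed $c_0 > 0$.

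Finally, to transfer this bound back to $k$, I would expand
\[ \Phi(\mathcal{T}_{-y} k) = \Phi(\mathcal{T}_{-y}\tilde k) - 2\beta\,\Phi(\mathcal{T}_{-y}\tilde k,\, \partial_x Q) + \beta^2\,\Phi(\partial_x Q), \]
and invoke two facts: $\Phi(\partial_x Q) = 0$, since $\partial_x Q$ lies in the kernel of the self-adjoint operator underlying $\Phi$ (translation invariance of the elliptic equation satisfied by $Q$); and the continuity bound $|\Phi(f,g)| \lesssim \|f\|_{H^1}\|g\|_{H^1}$. A routine AM--GM absorbing step combined with $\|\tilde k\|_{H^1}^2 \geq \tfrac12 \|k\|_{H^1}^2 - C\beta^2$ produces $\Phi(\mathcal{T}_{-y} k) \geq c\|k\|_{H^1}^2 - C'\beta^2$. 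Inserting $\beta^2 \lesssim R^{-2}\|h\|_{H^1}^2 \leq R^{-1}\|h\|_{H^1}^2$ (for $R \geq 1$) together with the identity $\|\chi_R h\|_{H^1}^2 = 2\|k\|_{H^1}^2$ (from oddness of $h$ and evenness of $\chi_R$) yields the assertion. The main obstacle here is the correction term: a priori the quadratic piece $\beta^2 \Phi(\partial_x Q)$ would spoil the bound because $\Phi$ is not positive definite on $H^1$, and what saves the argument is precisely the kernel identity $\Phi(\partial_x Q) = 0$, leaving only the linear cross term in $\beta$ to be absorbed into the coercive $\|\tilde k\|_{H^1}^2$ supplied by Lemma \ref{lem3.1}.
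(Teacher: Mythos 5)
Your proposal is correct and follows essentially the same route as the paper: both reduce to Lemma \ref{lem3.1} by splitting $\chi_R^+h$ into a piece satisfying the exact orthogonality conditions plus a correction of size $O(R^{-1}\|h\|_{H^1})$ in the $\partial_x\mathcal{T}_yQ$ direction (the paper projects onto $\Span\{iQ,Q^p,\partial_xQ\}$ and checks that the first two coefficients vanish exactly, which is precisely your observation that only the translation orthogonality incurs a Leibniz defect), and then both control the remainder via bilinearity and the continuity of $\Phi$. The only cosmetic difference is that you add the correction explicitly rather than phrasing it as a projection, and you invoke $\Phi(\partial_xQ)=0$, which is true but not needed since $\beta^2\Phi(\partial_xQ)=O(R^{-2}\|h\|_{H^1}^2)$ is already acceptable.
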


\begin{proof}
We set
\begin{align*}
	\mathscr{A}&:=\{f \in H_{\odd}^{1}(\mathbb{R}): \re \int f Q ^{p} dx =  \im \int f Q dx = \re \int f \partial_x Q dx = 0\},
	\\
	\mathscr{B}&:= \Span_{\mathbb{R}}\{ iQ, Q^{p}, \partial_x Q\}.
\end{align*}
Then, we write $\mathcal{T}_{-y}(\chi_{R}^{+}h) = \mathcal{T}_{-y}\tilde{h} + \mathcal{T}_{-y}r$ such that $ \mathcal{T}_{-y}\tilde{h} \in \mathscr{A}$ and $ \mathcal{T}_{-y}r \in \mathscr{B}$. 
Since $ \mathcal{T}_{-y}r \in \mathscr{B}$, we write
\begin{equation}
\label{eq5.1}
	 \mathcal{T}_{-y}r =\alpha \partial_x Q + \beta i Q+\gamma Q^{p},
\end{equation}
where $\alpha,\beta,\gamma \in \mathbb{R}$. 
By multiplying \eqref{eq5.1} by $-iQ$ and taking real part and integral, we get
\begin{align*}
	\beta = \frac{1}{\|Q\|_{L^2}^2} \langle \mathcal{T}_{-y}r, iQ\rangle
	= \frac{1}{\|Q\|_{L^2}^2} \langle  \mathcal{T}_{-y}(\chi_{R}^{+}h) - \mathcal{T}_{-y}\tilde{h}  ,  iQ\rangle.
\end{align*}
We have $\langle \mathcal{T}_{-y}\tilde{h}, iQ \rangle=0$ since $\mathcal{T}_{-y}\tilde{h} \in \mathscr{A}$ and we also have $\langle  \mathcal{T}_{-y}(\chi_{R}^{+}h) ,  iQ \rangle=0$ by the orthogonality assumption. Thus, $\beta=0$. By multiplying \eqref{eq5.1} by $Q_{\omega}^{p}$ and taking integral and real part, we get $\gamma=0$ in a similar way. By multiplying \eqref{eq5.1} by $\partial_x Q$ and taking integral and imaginary part, we get
\begin{align*}
	\alpha = \frac{1}{\|\partial_x Q \|_{L^2}^2 } 
	\langle  \mathcal{T}_{-y}r ,  \partial_x Q \rangle
	= \frac{1}{\|\partial_x Q\|_{L^2}^2 } 
	\langle   \mathcal{T}_{-y}(\chi_{R}^{+}h) ,  \partial_x Q \rangle
\end{align*}
since $\mathcal{T}_{-y}\tilde{h} \in \mathscr{A}$. Now, we have
\begin{align*}
	\langle   \mathcal{T}_{-y}(\chi_{R}^{+}h) ,  \partial_x Q \rangle
	&=\re \int  \chi_{R}^{+}h \partial_x \mathcal{T}_{y}Q dx
	\\
	&=\re \int  \chi_{R}^{+}h \partial_x \mathcal{T}_{y}Q dx
	-\re \int  h\partial_x (\chi_{R}^{+} \mathcal{T}_{y}Q )dx,
\end{align*}
where we use $\re \int  h \partial_x (\chi_{R}^{+} \mathcal{T}_{y}Q)dx=0$ by the orthogonality assumption. By a direct calculation, it holds that
\begin{align*}
	\left| \re \int  \chi_{R}^{+}h \partial_x \mathcal{T}_{y}Q dx
	-\re \int  h\partial_x (\chi_{R}^{+} \mathcal{T}_{y}Q )dx\right|
	&=\int |h| |\partial_x\chi_{R}^{+}| \mathcal{T}_{y}Q dx
	\\
	&\lesssim R^{-1}\|h\|_{L^2}.
\end{align*}
Therefore, we obtain
\begin{align*}
	\|r\|_{H^1}=\|\mathcal{T}_{-y}r\|_{H^1} = |\alpha|\|\partial_x Q \|_{H^1}\lesssim \frac{1}{R}\|h\|_{L^2}
\end{align*}
and thus
\begin{align}
\label{eq3.2}
	|\Phi(\mathcal{T}_{-y}r)| \lesssim \frac{1}{R^2} \|h\|_{L^2}^2.
\end{align}
Now, since $\Phi$ is bilinear, we get
\begin{align}
\label{eq3.3}
	\Phi(\mathcal{T}_{-y}(\chi_{R}^{+}h))=\Phi(\mathcal{T}_{-y}\tilde{h}+\mathcal{T}_{-y}r)= \Phi(\mathcal{T}_{-y}\tilde{h})+\Phi(\mathcal{T}_{-y}r)+2\Phi(\mathcal{T}_{-y}\tilde{h},\mathcal{T}_{-y}r).
\end{align}
Then, by Lemma \ref{lem3.1}, we have
\begin{align}
\label{eq3.4}
	\Phi( \mathcal{T}_{-y}\tilde{h} ) \gtrsim \| \mathcal{T}_{-y}\tilde{h}\|_{H^1}^2.
\end{align}
Moreover, we have
\begin{align}
\label{eq3.5}
	\| \mathcal{T}_{-y}\tilde{h}\|_{H^1}^2
	\gtrsim \|\mathcal{T}_{-y}(\chi_{R}^{+} h)\|_{H^1}^2 - \|\mathcal{T}_{-y}r\|_{H^1}^2
	\gtrsim \|\chi_{R}^{+}h\|_{H^1}^2 -\frac{1}{R^2}\|h\|_{H^1}^2. 
\end{align}
Now, we have
\begin{align}
\label{eq3.6}
	|\Phi(\mathcal{T}_{-y}\tilde{h},\mathcal{T}_{-y}r)|
	\lesssim \frac{1}{R}\|\tilde{h}\|_{H^1}\|h\|_{H^1}
	\lesssim \frac{1}{R}\|h\|_{H^1}^2.
\end{align}
Combining \eqref{eq3.2}--\eqref{eq3.6}, we obtain
\begin{align*}
	\Phi(\mathcal{T}_{-y}(\chi_{R}^{+}h)) 
	\gtrsim \|\chi_{R}^{+}h\|_{H^1}^2 - \frac{1}{R}\|h\|_{H^1}^2.
\end{align*}
By the symmetry, we have $2 \|\chi_{R}^{+}h\|_{H^1} = \|\chi_{R}h\|_{H^1}$. 
This completes the proof. 
\end{proof}

\section{Modulation}

We start with a simple version of modulation. 


\begin{lemma}
\label{lem4.1}
There exists $\mu_{0}>0$ and a function $\varepsilon:(0,\mu_{0}) \to (0,\infty)$ with $\varepsilon(\mu) \to 0$ as $\mu \to 0$ such that the following holds. For any $\mu < \mu_0$ and for all $f\in H_{\odd}^{1}(\mathbb{R})$ satisfying $E(f)=2E(Q)$, $M(f)=2M(Q)$ and $\mu(f)<\mu$,  there exist $(\theta, y)\in \mathbb{R} \times [0,\infty)$ such that
\begin{align}
	\label{eq4.1.1}
	\| f-e^{i\theta}\mathcal{R}_{y}Q\|_{H^{1}}\leq \varepsilon(\mu).
\end{align} 
\end{lemma}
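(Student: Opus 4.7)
I will argue by contradiction. Suppose the lemma fails; then by a diagonal extraction there is a sequence $\{f_n\}\subset H^1_{\odd}$ with $E(f_n)=2E(Q)$, $M(f_n)=2M(Q)$, $\mu(f_n)\to 0$, but
\[
\inf_{\theta\in\mathbb{R},\,y\ge 0}\|f_n-e^{i\theta}\mathcal{R}_y Q\|_{H^1}\ge \varepsilon_0>0.
\]
By Lemma \ref{lem2.11}, $K(f_n)=\tfrac{p-5}{4}\mu(f_n)\to 0$, and $S(f_n)=E(f_n)+\tfrac12 M(f_n)=2S(Q)$, so $\{f_n\}$ is an almost-minimizing sequence for $l^{\odd}$ and is bounded in $H^1_{\odd}$. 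The plan is to apply the odd linear profile decomposition (Proposition \ref{LPD}) and show that a single profile, of the form $e^{i\theta}Q(\cdot-y_0)$ with a separation parameter $x_n\to\infty$, captures $f_n$ up to $o_{H^1}(1)$, yielding a contradiction.

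Writing $\tilde\psi^j_n:=e^{-it_n^j\partial_x^2}\mathcal{R}_{x_n^j}\psi^j$ (each is odd by construction), profiles with $t_n^j\to\pm\infty$ satisfy $\|\tilde\psi^j_n\|_{L^{p+1}}\to 0$ by linear dispersion, and interpolating the $L^\infty$-smallness of the remainder with $H^1$-boundedness shows $\|R_n^J\|_{L^{p+1}}^{p+1}\to 0$ as $J\to\infty$. Let $m_j,k_j,p_j$ be the limits of $M(\tilde\psi^j_n)$, $\|\partial_x\tilde\psi^j_n\|_{L^2}^2$, $\|\tilde\psi^j_n\|_{L^{p+1}}^{p+1}$, and $A$ the non-dispersing indices. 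Orthogonality gives $\sum_{j\in A}p_j=2\|Q\|_{L^{p+1}}^{p+1}$, $\sum_j m_j\le 2M(Q)$, $\sum_j k_j\le 2\|\partial_x Q\|_{L^2}^2$. Applying Lemma \ref{GN} to each odd $\tilde\psi^j_n$ and using the Pohozaev identity together with $C_{GN}^{\odd}=2^{-(p-1)/2}C_{GN}$ produces the chain
\begin{align*}
2\|Q\|_{L^{p+1}}^{p+1}&=\sum_{j\in A} p_j \le C_{GN}^{\odd}\sum_{j\in A} m_j^{(p+3)/4}k_j^{(p-1)/4}\\
&\le C_{GN}^{\odd}\bigl(2M(Q)\bigr)^{(p+3)/4}\bigl(2\|\partial_x Q\|_{L^2}^2\bigr)^{(p-1)/4}=2\|Q\|_{L^{p+1}}^{p+1}.
\end{align*}
Since $p>5$, both exponents $(p+3)/4$ and $(p-1)/4$ exceed $1$, so $(a+b)^\gamma\ge a^\gamma+b^\gamma$ yields strict super-additivity of $(m,k)\mapsto m^{(p+3)/4}k^{(p-1)/4}$ unless one summand vanishes. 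Equality throughout therefore forces exactly one non-dispersing profile, say $j=1$, with $m_1=2M(Q)$, $k_1=2\|\partial_x Q\|_{L^2}^2$, $p_1=2\|Q\|_{L^{p+1}}^{p+1}$; all other $\psi^j$ vanishing; and $R_n^J\to 0$ in $H^1$.

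To identify $\psi^1$, I split on the behaviour of $x_n^1$. If $x_n^1\equiv 0$ along a subsequence, then $\mathcal{R}_0\psi^1$ is a fixed non-trivial odd function with $S(\mathcal{R}_0\psi^1)=l^{\odd}$ and $K(\mathcal{R}_0\psi^1)=0$ (both computed from $m_1,k_1,p_1$), i.e.\ a minimizer of $l^{\odd}$, contradicting the non-existence result of Section \ref{sec2.1}. Hence $x_n^1\to\infty$, and asymptotic orthogonality of the two bumps gives $M(\psi^1)=M(Q)$, $\|\partial_x\psi^1\|_{L^2}^2=\|\partial_x Q\|_{L^2}^2$, $\|\psi^1\|_{L^{p+1}}^{p+1}=\|Q\|_{L^{p+1}}^{p+1}$. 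Then $\psi^1$ saturates the classical Gagliardo--Nirenberg inequality at the $Q$-values, so by the rigidity of its extremizers $\psi^1=e^{i\theta}Q(\cdot-y_0)$ for some $\theta, y_0\in\mathbb{R}$. Evenness of $Q$ gives $\mathcal{R}_{x_n^1}\psi^1=e^{i\theta}\mathcal{R}_{x_n^1+y_0}Q$, and combined with $R_n^J\to 0$ in $H^1$ this contradicts the separation hypothesis.

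The main obstacle is the tight equality analysis in the central chain: strict super-additivity of $m^\alpha k^\beta$ for $\alpha,\beta>1$ is crucial to collapse to a single profile, while the non-attainment of $C_{GN}^{\odd}$ combined with the rigidity of classical GN extremizers is needed to identify $\psi^1$ with a translate of $Q$. Both uses of oddness---the gain of $2^{-(p-1)/2}$ in the Gagliardo--Nirenberg constant and the non-existence of an odd minimizer---are essential to the argument.
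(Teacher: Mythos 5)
Your proof is correct, but it takes a genuinely different route from the paper's. The paper's argument is much shorter: from $\mu(f_n)\to 0$ and the conservation constraints it deduces $S(\mathbbm{1}_{(0,\infty)}f_n)\to S(Q)$ and $K(\mathbbm{1}_{(0,\infty)}f_n)\to 0$, so the half-line restrictions form a minimizing sequence for the \emph{unrestricted} problem $l_\omega$; it then invokes the known compactness of such minimizing sequences (convergence to $e^{i\theta}\mathcal{T}_yQ$ modulo symmetries) and reassembles $f_n$ by odd reflection. You instead run the odd linear profile decomposition (Proposition \ref{LPD}) directly on $f_n$ and isolate a single profile via an equality analysis in the odd Gagliardo--Nirenberg inequality: the chain
$2\|Q\|_{L^{p+1}}^{p+1}=\sum_j p_j\le C_{GN}^{\odd}\sum_j m_j^{(p+3)/4}k_j^{(p-1)/4}\le C_{GN}^{\odd}\bigl(\sum m_j\bigr)^{(p+3)/4}\bigl(\sum k_j\bigr)^{(p-1)/4}\le 2\|Q\|_{L^{p+1}}^{p+1}$
is saturated, and strict super-additivity of $(m,k)\mapsto m^{(p+3)/4}k^{(p-1)/4}$ (valid since $p>5$ makes both exponents exceed $1$) kills all but one profile and the remainder; the surviving profile is then identified either as an odd minimizer of $l^{\odd}$ (impossible) or, after the translation parameter escapes to infinity, as a classical GN extremizer pinned to $e^{i\theta}Q(\cdot-y_0)$ by the mass and gradient constraints. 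In effect you are reproving, in the odd setting, the concentration-compactness result that the paper cites as ``the characterization of the ground state.'' What your approach buys is self-containedness (you need only Proposition \ref{LPD}, Lemma \ref{GN} and Weinstein's rigidity for the classical extremizers, rather than the black-boxed compactness of minimizing sequences); what the paper's approach buys is brevity and reuse of the half-line reflection trick already set up in Section \ref{sec2.1}. The only points to tighten in a write-up are routine: pass to subsequences so that the limits $m_j,k_j,p_j$ exist, handle the double limit $J\to\infty$, $n\to\infty$ in the $L^{p+1}$ orthogonality, and note that nonzero profiles automatically have $m_j,k_j>0$ so the equality case of super-additivity really does force a single profile.
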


\begin{proof}
We use a contradiction argument. We suppose that the statement fails. 
%
Then there exists $\varepsilon_0>0$ such that for any $n\in \mathbb{N}$, there exist $\mu_n$ with $\mu_n \to 0$ and $f_n \in H_{\odd}^{1}(\mathbb{R})$ satisfying $E(f_n)=2E(Q)$, $M(f_n)=2M(Q)$ and $\mu(f_n)<\mu_n$ such that $ \inf_{\theta \in \mathbb{R}}\inf_{y\geq 0}\| f_n-e^{i\theta}\mathcal{R}_{y}Q\|_{H^{1}} > \varepsilon_0$. 
Since $\mu(f_n) \to 0$ as $n \to \infty$ and $E(f_n)=2E(Q)$, we have $\|f_n\|_{L^{p+1}}^{p+1} \to 2\|Q\|_{L^{p+1}}^{p+1}$ and thus
\begin{align*}
	S(f_n)  \to 2S(Q) \text{ and } K(f_n)\to 2K(Q)=0.
\end{align*}
Since $f$ is odd, we have $\1_{(0,\infty)}f \in H^1(\mathbb{R})$ and it holds that 
\begin{align*}
	S(\1_{(0,\infty)}f_n)  \to S(Q) \text{ and } K(\1_{(0,\infty)}f_n)\to K(Q)=0.
\end{align*}
Therefore, $\{ \1_{(0,\infty)}f_n\}$ is a minimizing sequence, and we obtain $(\theta,y) \in \mathbb{R}^2$ such that $\1_{(0,\infty)}f_n \to e^{i\theta}\mathcal{T}_{y}Q$ in $H^1$ by the characterization of the ground state. By the symmetry, we also have $\1_{(-\infty,0)}f_n \to -e^{i\theta}\mathcal{T}_{-y}Q$ in $H^1$. Therefore it holds that
\begin{align*}
	\| f_n-e^{i\theta}\mathcal{R}_{y}Q\|_{H^{1}} \to 0.
\end{align*}
This is a contradiction. 
\end{proof}

We have the following modulation with orthogonality conditions. 

\begin{lemma}[Modulation]
\label{lem4.2}
Let $R>0$ be sufficiently large. 
There exist $\mu_0>0$ and a function $\varepsilon:(0,\mu_{0}) \to (0,\infty)$ with $\varepsilon(\mu) \to 0$ as $\mu \to 0$ such that the following holds. For any $\mu < \mu_0$ and for all $f\in H_{\odd}^{1}(\mathbb{R})$ satisfying $E(f)=2E(Q)$, $M(f)=2M(Q)$ and $\mu(f)<\mu$,  there exist $(\tilde{\theta}, y)\in \mathbb{R} \times (R,\infty)$ such that
\begin{align*}
	\| e^{-i\tilde{\theta}}f- \mathcal{R}_{y}Q\|_{H^1} < \varepsilon(\mu)
\end{align*}
and
\begin{align}
\label{eq4.2}
	\im \int_{\mathbb{R}} g \chi_{R}^{+}\mathcal{T}_{y}Qdx=0,
	\quad \re \int_{\mathbb{R}} g \partial_x(\chi_{R}^{+} \mathcal{T}_{y}Q)dx=0
\end{align}
where $g=e^{-i\tilde{\theta}}f-\mathcal{R}_{y}Q$. 
\end{lemma}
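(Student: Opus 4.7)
The strategy is to improve the coarse approximation supplied by Lemma \ref{lem4.1} by applying the implicit function theorem in the two parameters $(\theta, y)$ to enforce the orthogonality conditions \eqref{eq4.2}. First I would invoke Lemma \ref{lem4.1} to obtain $(\theta_0, y_0) \in \mathbb{R} \times [0, \infty)$ with $\|f - e^{i\theta_0}\mathcal{R}_{y_0}Q\|_{H^1} \leq \varepsilon(\mu)$, and then argue that $y_0 \to \infty$ as $\mu \to 0$, so that by shrinking $\mu_0$ we may assume $y_0 > R + 1$. This step rests on local Lipschitz continuity of $\mu(\cdot)$ on $H^1$-bounded sets, which gives $\mu(\mathcal{R}_{y_0}Q) \to 0$, combined with the identity $\mu(\mathcal{R}_y Q) = 2\langle \partial_x \mathcal{T}_y Q, \partial_x \mathcal{T}_{-y}Q\rangle$ and Lemma \ref{lem2.14}, showing the right-hand side tends to zero only as $y \to \infty$.

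Next I would introduce the map $\Psi : H_{\odd}^1(\mathbb{R}) \times \mathbb{R} \times (R, \infty) \to \mathbb{R}^2$ whose two components are the left-hand sides of \eqref{eq4.2}, and use the implicit function theorem to solve $\Psi(f, \tilde\theta, y) = 0$ near $(\theta_0, y_0)$. At the model configuration $f = e^{i\theta_0}\mathcal{R}_{y_0}Q$, $(\theta, y) = (\theta_0, y_0)$, both components vanish; the real task is to show that the $(\theta, y)$-Jacobian at this configuration is invertible, with a uniform bound on the inverse as $y_0 \to \infty$.

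A direct calculation, using $\partial_y \mathcal{T}_{\pm y} Q = \mp \partial_x \mathcal{T}_{\pm y} Q$ and the reality of $\mathcal{R}_{y_0}Q$, shows that both off-diagonal entries of the Jacobian vanish at the model point, while the diagonal entries equal
\begin{align*}
\partial_\theta \Psi_1 = -\|Q\|_{L^2}^2 + o(1), \qquad \partial_y \Psi_2 = \|\partial_x Q\|_{L^2}^2 + o(1),
\end{align*}
where the $o(1)$ corrections are exponentially small in $y_0$ (cross terms between $\mathcal{T}_{y_0}Q$ and $\mathcal{T}_{-y_0}Q$, estimated by Lemma \ref{lem2.14}) and in $y_0 - R$ (terms supported on $\supp \partial_x \chi_R^+$). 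Hence the Jacobian is invertible with uniformly bounded inverse. The quantitative implicit function theorem then produces $(\tilde\theta, y)$ near $(\theta_0, y_0)$ with $|\tilde\theta - \theta_0| + |y - y_0| \lesssim \varepsilon(\mu)$, solving $\Psi(f, \tilde\theta, y) = 0$. The triangle inequality yields $\|e^{-i\tilde\theta}f - \mathcal{R}_y Q\|_{H^1} \lesssim \varepsilon(\mu)$, and $y > R$ for $\mu$ small enough, completing the proof after renaming $\varepsilon$.

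The main obstacle is the uniform nondegeneracy of the Jacobian as $y_0 \to \infty$. The cutoff $\chi_R^+$ is essential here: it localizes each orthogonality condition to the right half-line so that only the ``right'' bump $\mathcal{T}_{y_0}Q$ contributes at leading order, while the interaction between $\mathcal{T}_{y_0}Q$ and $\mathcal{T}_{-y_0}Q$ -- which, were it not suppressed, would threaten to cancel the diagonal -- is reduced to an exponentially small error by the decay of $Q$. This is the odd-symmetry analogue of the single-soliton modulation argument employed in \cite{MMZ21, ArIn21}.
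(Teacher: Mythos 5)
Your proposal is correct and follows essentially the same route as the paper: the paper introduces the same two-component map $J(\tilde{\theta},y,v)$ built from the left-hand sides of \eqref{eq4.2}, checks that it vanishes at $(0,y,\mathcal{R}_{y}Q)$, computes the same diagonal Jacobian with entries $-\|Q\|_{L^2}^2+O(e^{-2y})$ and $\|\partial_x Q\|_{L^2}^2+O(R^{-1}+e^{-2y})$ (the off-diagonal entries vanishing by reality of $\mathcal{R}_yQ$), and concludes by the implicit function theorem. The one caveat concerns your justification that $y_0\to\infty$ (a point the paper leaves implicit): the kinetic cross term $\langle \partial_x\mathcal{T}_{y}Q,\partial_x\mathcal{T}_{-y}Q\rangle$ is not sign-definite and is not directly covered by Lemma \ref{lem2.14}, so it is safer to use the mass constraint $M(f)=2M(Q)$ instead, which forces $\int\mathcal{T}_{y_0}Q\,\mathcal{T}_{-y_0}Q\,dx\to 0$, and this integral is strictly positive and comparable to $(1+y_0)e^{-2y_0}$ by Lemma \ref{lem2.14}, hence bounded below on compact sets of $y_0$.
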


\begin{proof}
We define 
\begin{align*}
	J(\tilde{\theta},y,v)=
	\begin{pmatrix}
	J_1(\tilde{\theta},y,v)
	\\
	J_2(\tilde{\theta},y,v)
	\end{pmatrix}:=
	\begin{pmatrix}
	\im \int_{\mathbb{R}} (e^{-i\tilde{\theta}}v - \mathcal{R}_{y}Q ) \chi_{R}^{+}\mathcal{T}_{y}Qdx
	\\
	\re \int_{\mathbb{R}} (e^{-i\tilde{\theta}}v - \mathcal{R}_{y}Q ) \partial_x(\chi_{R}^{+} \mathcal{T}_{y}Q)dx
	\end{pmatrix}
\end{align*}
for $\tilde{\theta} \in \mathbb{R}$, $y\gg R$, $v \in H_{\odd}^{1}$. Then $J(0,y,\mathcal{R}_{y}Q)=0$. We have
\begin{align*}
	\frac{\partial J_1}{\partial \tilde{\theta}}(0,y,\mathcal{R}_{y}Q)
	&=- \int_{\mathbb{R}} \mathcal{R}_{y}Q  \chi_{R}^{+}\mathcal{T}_{y}Qdx
	\\
	&=-\int_{\mathbb{R}} |\mathcal{T}_{y}Q |^2 dx+O(e^{-2y})
	\\
	&=-\|Q \|_{L^2}^2+O(e^{-2y})
\end{align*}
and 
\begin{align*}
	\frac{\partial J_2}{\partial y}(0,y,\mathcal{R}_{y}Q)
	&
	=- \int_{\mathbb{R}}\partial_x (\mathcal{T}_{y}Q +\mathcal{T}_{-y}Q ) \partial_x(\chi_{R}^{+} \mathcal{T}_{y}Q)dx
	\\
	&=\|\partial_x Q \|_{L^2}^2 +O(R^{-1} + e^{-2y})
\end{align*}
Therefore we have
\begin{align*}
	\frac{\partial J(\tilde{\theta},y,v)}{\partial(\tilde{\theta},y)}(0,y,\mathcal{R}_{y}Q)
	&=\begin{pmatrix}
	- \int_{\mathbb{R}} \mathcal{R}_{y}Q \chi_{R}^{+}\mathcal{T}_{y}Qdx
	&
	0
	\\
	0
	&
	- \int_{\mathbb{R}}\partial_x (\mathcal{T}_{y}Q +\mathcal{T}_{-y}Q ) \partial_x(\chi_{R}^{+} \mathcal{T}_{y}Q)dx
	\end{pmatrix}
	\\
	&=
	\begin{pmatrix}
	-\|Q \|_{L^2}^2+O(e^{-2y})
	&
	0
	\\
	0
	&
	\|\partial_x Q \|_{L^2}^2 +O(R^{-1} + e^{-2y})
	\end{pmatrix}
\end{align*}
This is invertible for large $R$ and $y$. By the implicit function theorem, we get a function $(\tilde{\theta},y): H_{\odd}^{1} \to \mathbb{R} \times \mathbb{R}$ such that 
\begin{align*}
	&\im \int_{\mathbb{R}} (e^{-i\tilde{\theta}(v)}v - \mathcal{R}_{y(v)}Q ) \chi_{R}^{+}\mathcal{T}_{y(v)}Qdx
	\\&=
	\re \int_{\mathbb{R}} (e^{-i\tilde{\theta}(v)}v - \mathcal{R}_{y(v)}Q ) \partial_x(\chi_{R}^{+} \mathcal{T}_{y(v)}Q)dx
	\\
	&=0.
\end{align*}
This completes the proof. 
\end{proof}

Let $u$ be an odd solution satisfying 
\begin{align}
\label{ME}
	M(u(t))=2M(Q) \text{ and } E(u(t))=2E(Q).
\end{align}
We set $I_{\mu_0}:=\{ t \in I_{\max} :  \mu(t) < \mu_0\}$, where $I_{\max}$ denotes the maximal existence time interval of the solution. 
By Lemma \ref{lem4.2}, we have $C^1$ functions $\tilde{\theta}=\tilde{\theta}(t)$ and $y=y(t)$ for $t \in I_{\mu_0}$. We set $\theta:=\tilde{\theta}-1$. We also have orthogonality conditions \eqref{eq4.2}.
We set
\begin{align}
	\label{eq4.3}
	u(t,x)
	&=e^{i\theta(t) + i t}(\mathcal{R}_{y(t)}Q(x) + g(t,x))
	\\ \notag
	&=e^{i\theta(t) + i t}(\mathcal{R}_{y(t)}Q(x) + \rho(t)\mathcal{G}_{R,y(t)}Q(x) + h(t,x)),
\end{align}
where 
\begin{align}
\label{eq4.4}
	\rho(t):=\frac{\re \int g \chi_{R}^{+} (\mathcal{T}_{y(t)}Q )^p dx}{\int (\chi_{R}^{+})^2(\mathcal{T}_{y(t)}Q )^{p+1}dx}.
\end{align}
Then it follows from \eqref{eq4.2}, \eqref{eq4.3} and \eqref{eq4.4} that
\begin{align}
\label{ortho}
	\im \int_{\mathbb{R}} h \chi_{R}^{+}\mathcal{T}_{y(t)}Qdx
	=\re \int_{\mathbb{R}} h \partial_x(\chi_{R}^{+} \mathcal{T}_{y(t)}Q)dx
	=\re \int_{\mathbb{R}} h \chi_{R}^{+} (\mathcal{T}_{y(t)}Q_{\omega})^{p}dx
	=0.
\end{align}

\subsection{Estimate of the parameters}
We give estimates of the parameters.

\begin{lemma}
We have
\begin{align*}
	&|\rho| \lesssim \|g\|_{L^2},
	\\
	&\|g\|_{H^1} \lesssim |\rho| + \|h\|_{H^1},
	\\
	&\|h\|_{H^1} \lesssim |\rho| + \|g\|_{H^1} \lesssim \|g\|_{H^1}.
\end{align*}
\end{lemma}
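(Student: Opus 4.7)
All three estimates follow from (i) an $H^1$ bound on $\mathcal{G}_{R,y}Q$ that is uniform in $y>R$, (ii) a Cauchy--Schwarz estimate of the numerator in the definition of $\rho$, and (iii) a lower bound on the denominator. I will fix $R$ large enough so that Lemma \ref{lem4.2} applies, and let all implicit constants possibly depend on this fixed $R$.

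\textbf{Step 1: Uniform $H^1$ bound on $\mathcal{G}_{R,y}Q$.} Because $\chi_R(x)=\chi_1(x/R)$ vanishes on $|x|<R/2$, the product $\chi_R^{+}=\mathbf{1}_{(0,\infty)}\chi_R$ is actually smooth, with $|\partial_x\chi_R^{+}|\lesssim 1/R$. Then for any $y>R$,
\[
\|\chi_R^{+}\mathcal{T}_y Q\|_{L^2}\le\|Q\|_{L^2},\qquad \|\partial_x(\chi_R^{+}\mathcal{T}_y Q)\|_{L^2}\le \tfrac{1}{R}\|Q\|_{L^2}+\|\partial_xQ\|_{L^2},
\]
and similarly for $\chi_R^{-}\mathcal{T}_{-y}Q$. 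Thus $\|\mathcal{G}_{R,y}Q\|_{H^1}\lesssim 1$ uniformly.

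\textbf{Step 2: The bound $|\rho|\lesssim\|g\|_{L^2}$.} Cauchy--Schwarz gives
\[
\Bigl|\re\!\int g\,\chi_R^{+}(\mathcal{T}_yQ)^p\,dx\Bigr|\le \|g\|_{L^2}\,\|\chi_R^{+}(\mathcal{T}_yQ)^p\|_{L^2}\lesssim \|g\|_{L^2},
\]
since $\|Q\|_{L^{2p}}\lesssim 1$. For the denominator, a change of variable $u=x-y$ and the fact that $\chi_R^{+}(u+y)=1$ whenever $u>R-y$, combined with the exponential decay $Q(u)\lesssim e^{-|u|}$, shows that for $y>R$ with $R$ sufficiently large,
\[
\int (\chi_R^{+})^2(\mathcal{T}_yQ)^{p+1}\,dx\ge \tfrac12 \|Q\|_{L^{p+1}}^{p+1}>0.
\]
Dividing yields $|\rho|\lesssim\|g\|_{L^2}$.

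\textbf{Step 3: The remaining two inequalities.} By the defining identity $g=\rho\mathcal{G}_{R,y}Q+h$ and Step 1,
\[
\|g\|_{H^1}\le |\rho|\,\|\mathcal{G}_{R,y}Q\|_{H^1}+\|h\|_{H^1}\lesssim |\rho|+\|h\|_{H^1},
\]
which is the middle inequality. Reversing, $h=g-\rho\mathcal{G}_{R,y}Q$, so again by Step 1,
\[
\|h\|_{H^1}\le \|g\|_{H^1}+|\rho|\,\|\mathcal{G}_{R,y}Q\|_{H^1}\lesssim \|g\|_{H^1}+|\rho|,
\]
and combining with Step 2 gives $\|h\|_{H^1}\lesssim \|g\|_{H^1}+\|g\|_{L^2}\lesssim \|g\|_{H^1}$.

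\textbf{Main obstacle.} The only place where care is required is the uniform lower bound on the denominator of $\rho$ in Step 2: one must exploit that $y>R$ (guaranteed by Lemma \ref{lem4.2}) to ensure that the bulk of the mass of $(\mathcal{T}_yQ)^{p+1}$ sits in the region $\{\chi_R^{+}=1\}$, up to an exponentially small tail. Everything else is routine Cauchy--Schwarz and triangle inequalities once Step 1 is in hand.
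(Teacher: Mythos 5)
Your proof is correct and follows essentially the same route as the paper, which simply asserts that all three bounds "follow from $g=\rho\,\mathcal{G}_{R,y}Q+h$ and the definition of $\rho$"; you have merely supplied the routine details (the uniform $H^1$ bound on $\mathcal{G}_{R,y}Q$, Cauchy--Schwarz on the numerator, and the lower bound on the denominator using $y>R$). Everything checks out.
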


\begin{proof}
These follow from $g=\rho \mathcal{G}_{R,y}Q + h$ and the definition of $\rho$.  
\end{proof}

By the above lemma and $\|g\|_{H^1} \leq \mu_0$, taking $\mu_0$ sufficiently small, we may assume that $\|h\|_{H^1}$ and $|\rho|$ are sufficiently small.

\begin{lemma}
We have
\begin{align*}
	|\rho| \lesssim  \|h\|_{H^1}+(1+y)e^{-2y}
\end{align*}
\end{lemma}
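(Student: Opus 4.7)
The plan is to extract the bound on $\rho$ directly from mass conservation $M(u(t))=2M(Q)$. Since the phase factor $e^{i(\theta+t)}$ is unimodular, this becomes $M(\mathcal{R}_y Q+g)=2M(Q)$; expanding and using that $\mathcal{R}_y Q$ is real gives
\begin{equation*}
  2\re\int \mathcal{R}_y Q\cdot\bar g\,dx+\|g\|_{L^2}^2
  =2M(Q)-M(\mathcal{R}_y Q)
  =2\int \mathcal{T}_y Q\cdot\mathcal{T}_{-y}Q\,dx.
\end{equation*}
By Lemma \ref{lem2.14} with $\alpha=\beta=1$, the right-hand side is $O((1+y)e^{-2y})$, which is precisely the exponential error that appears in the target estimate. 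So mass conservation already contributes the $(1+y)e^{-2y}$ term for free.

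Next, I insert the decomposition $g=\rho\mathcal{G}_{R,y}Q+h$ and exploit the oddness of $\mathcal{R}_y Q$ and $\mathcal{G}_{R,y}Q$ to rewrite the coupling integral as
\begin{equation*}
  \int \mathcal{R}_y Q\cdot \mathcal{G}_{R,y}Q\,dx
  =2\int_0^\infty \chi_R^{+}(\mathcal{T}_y Q)^2\,dx
  -2\int_0^\infty \chi_R^{+}\,\mathcal{T}_y Q\cdot\mathcal{T}_{-y}Q\,dx.
\end{equation*}
After the substitution $z=x-y$, the first integral equals $\|Q\|_{L^2}^2+O(e^{-2y})$ for $y$ large relative to $R$ (using $Q(z)^2\lesssim e^{-2|z|}$), while the second is $O((1+y)e^{-2y})$ by Lemma \ref{lem2.15}. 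Thus the coupling constant equals $2\|Q\|_{L^2}^2+o(1)$ and is bounded below by a positive constant uniformly for $y>R$ with $R$ sufficiently large. The remaining piece $\re\int \mathcal{R}_y Q\cdot\bar h\,dx$ is controlled by Cauchy--Schwarz, $|\re\int \mathcal{R}_y Q\cdot\bar h\,dx|\leq \|\mathcal{R}_y Q\|_{L^2}\|h\|_{L^2}\lesssim\|h\|_{H^1}$, and expanding gives $\|g\|_{L^2}^2\lesssim \rho^2+\|h\|_{H^1}^2$.

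Assembling the pieces, mass conservation rearranges to
\begin{equation*}
  4\|Q\|_{L^2}^2\,\rho
  =-2\re\int \mathcal{R}_y Q\cdot\bar h\,dx
  +O((1+y)e^{-2y})
  +O\!\left(\rho^2+\rho\|h\|_{H^1}+\|h\|_{H^1}^2\right).
\end{equation*}
By Lemma \ref{lem4.1} and the preceding lemma, both $|\rho|$ and $\|h\|_{H^1}$ are bounded by $\|g\|_{H^1}\lesssim\varepsilon(\mu_0)$, so choosing $\mu_0$ small enough allows the quadratic remainders $\rho^2$ and $\rho\|h\|_{H^1}$ to be absorbed into the left-hand side, producing $|\rho|\lesssim \|h\|_{H^1}+(1+y)e^{-2y}$. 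The main technical point is the bookkeeping for the coupling constant $\int\mathcal{R}_y Q\cdot\mathcal{G}_{R,y}Q\,dx$ --- one must confirm that both the truncation error from $\chi_R^{+}$ and the cross term $\int \mathcal{T}_y Q\cdot\mathcal{T}_{-y}Q$ are of smaller order than $\|Q\|_{L^2}^2$, so that this coupling is bounded below. Once that positivity is secured, the rest is algebra.
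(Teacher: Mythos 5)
Your proof is correct and takes essentially the same route as the paper: expand the mass conservation identity $M(\mathcal{R}_{y}Q+\rho\,\mathcal{G}_{R,y}Q+h)=2M(Q)$, observe that the coupling $\int\mathcal{R}_{y}Q\,\mathcal{G}_{R,y}Q\,dx$ is close to $2\|Q\|_{L^2}^2$ and hence bounded below for $y>R$ large, and absorb the quadratic remainders using the smallness of $|\rho|$ and $\|h\|_{H^1}$. The paper's proof stops at the identity \eqref{eq6.4} and says ``this implies'' the bound; you have simply supplied the bookkeeping it leaves implicit.
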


\begin{proof}
We have
\begin{align*}
	M(u(t))
	&=M(\mathcal{R}_{y}Q +\rho \mathcal{G}_{R,y}Q +h)
	\\
	&=2\|Q\|_{L^2}^2 
	-2 \int \mathcal{T}_{y}Q\mathcal{T}_{-y}Q dx
	\\
	&\quad +2\re \int \mathcal{R}_{y}Q (\rho \mathcal{G}_{R,y}Q +h) dx + \|\rho \mathcal{G}_{R,y}Q +h\|_{L^2}^2.
\end{align*}
It holds from $M(u(t))=2M(Q )$ that
\begin{align}
\label{eq6.4}
	-2 \int \mathcal{T}_{y}Q \mathcal{T}_{-y}Q dx
	+2\re \int \mathcal{R}_{y}Q (\rho \mathcal{G}_{R,y}Q+h) dx + \|\rho \mathcal{G}_{R,y}Q +h\|_{L^2}^2=0.
\end{align}
This implies that
\begin{align*}
	|\rho| \lesssim  \|h\|_{H^1}+(1+y)e^{-2y}
\end{align*}
\end{proof}

\begin{lemma}
\label{lem4.5}
We have
\begin{align*}
	|\rho| \approx  |\mu(t)| +O(e^{-2y} +\|h\|_{H^1}^2).
\end{align*}
\end{lemma}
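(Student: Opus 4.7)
The plan is to derive the explicit expansion
\[
 \mu(t) = -4\rho\bigl(\|\partial_x Q\|_{L^2}^2 + \|Q\|_{L^2}^2\bigr) + O\bigl(e^{-2y} + \|h\|_{H^1}^2\bigr)
\]
by expanding $\|\partial_x u\|_{L^2}^2$ around $\mathcal{R}_y Q$, reducing every integral against $h$ to quantities controlled either by the mass identity \eqref{eq6.4} or by the orthogonality conditions \eqref{ortho}, and finally invoking the a priori bound from the previous lemma to absorb mixed remainders. Once the displayed formula is obtained, the comparability $|\rho| \approx |\mu(t)| + O(e^{-2y} + \|h\|_{H^1}^2)$ follows immediately from the positivity of $\|\partial_x Q\|_{L^2}^2 + \|Q\|_{L^2}^2$.

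Starting from $\mu(t) = 2\|\partial_x Q\|_{L^2}^2 - \|\partial_x u\|_{L^2}^2$ and \eqref{eq4.3}, I first expand the square; Lemma \ref{lem2.14} gives $\|\partial_x \mathcal{R}_y Q\|_{L^2}^2 = 2\|\partial_x Q\|_{L^2}^2 + O(e^{-2y})$ and the remaining purely quadratic part is $O(\rho^2 + \|h\|_{H^1}^2)$. For the two cross inner products I integrate by parts and use the elliptic equation $-\partial_x^2 \mathcal{T}_{\pm y} Q + \mathcal{T}_{\pm y} Q = (\mathcal{T}_{\pm y} Q)^p$ to obtain, for $f \in \{\mathcal{G}_{R,y}Q, h\}$,
\[
 \re\langle \partial_x \mathcal{R}_y Q, \partial_x f\rangle = -\re\langle \mathcal{R}_y Q, f\rangle + \re\langle (\mathcal{T}_y Q)^p - (\mathcal{T}_{-y} Q)^p, f\rangle.
\]
For $f = \mathcal{G}_{R,y}Q$, a direct computation based on Lemmas \ref{lem2.14}--\ref{lem2.15} together with the Pohozaev identity $\|Q\|_{L^{p+1}}^{p+1} - \|Q\|_{L^2}^2 = \|\partial_x Q\|_{L^2}^2$ produces $\langle \partial_x \mathcal{R}_y Q, \partial_x \mathcal{G}_{R,y}Q\rangle = 2\|\partial_x Q\|_{L^2}^2 + O(e^{-2y})$.

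For $f = h$ the term $\re\langle \mathcal{R}_y Q, h\rangle$ is extracted from the mass identity \eqref{eq6.4} of the preceding lemma, after substituting $\langle \mathcal{R}_y Q, \mathcal{G}_{R,y}Q\rangle = 2\|Q\|_{L^2}^2 + O(e^{-2y})$ and the a priori bound $|\rho| \lesssim \|h\|_{H^1} + (1+y)e^{-2y}$; this gives $2\re\langle \mathcal{R}_y Q, h\rangle = -4\rho\|Q\|_{L^2}^2 + O(e^{-2y} + \|h\|_{H^1}^2)$. The remaining nonlinear integral $\re\langle (\mathcal{T}_y Q)^p - (\mathcal{T}_{-y} Q)^p, h\rangle$ is handled in two steps: first, the oddness of $h$ combined with the symmetry $(\mathcal{T}_{-y}Q)^p(x) = (\mathcal{T}_y Q)^p(-x)$ reduces it to $2\re\int (\mathcal{T}_y Q)^p h\,dx$; second, the orthogonality $\re\int h\chi_R^+ (\mathcal{T}_y Q)^p\,dx = 0$ localizes the integration to $\{1 - \chi_R^+ \neq 0\} \subset (-\infty, R]$, on which $(\mathcal{T}_y Q)^p \lesssim e^{-p(y-R)}$ pointwise, so that Young's inequality turns the resulting $e^{-py}\|h\|_{L^2}$ into $O(e^{-2y} + \|h\|_{H^1}^2)$ (recall $p > 5$).

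Collecting these computations, the leading $2\|\partial_x Q\|_{L^2}^2$ contributions cancel, and the displayed expansion for $\mu(t)$ emerges. The main obstacle is essentially bookkeeping: one has to apply Young's inequality repeatedly, together with the a priori estimate from the previous lemma, to absorb each mixed remainder (of the form $|\rho|e^{-2y}$, $e^{-py}\|h\|_{L^2}$, $\rho \|h\|_{H^1}$, or $\rho^2$) into the homogeneous error $O(e^{-2y} + \|h\|_{H^1}^2)$ stated in the lemma, and to verify that every exponentially small cross integral involving $\mathcal{T}_y Q$ and $\mathcal{T}_{-y} Q$ actually decays at the permitted rate $e^{-2y}$ (up to harmless polynomial-in-$y$ factors).
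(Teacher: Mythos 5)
Your proposal is correct and follows essentially the same route as the paper's proof: expand $\mu(t)=2\|\partial_x Q\|_{L^2}^2-\|\partial_x u\|_{L^2}^2$ around $\mathcal{R}_yQ$, integrate by parts using the elliptic equation \eqref{EEQ}, recover $\re\langle\mathcal{R}_yQ,h\rangle$ from the mass identity \eqref{eq6.4}, kill the nonlinear pairing with $h$ via the orthogonality \eqref{ortho}, and identify the coefficient of $\rho$ (your $4(\|\partial_xQ\|_{L^2}^2+\|Q\|_{L^2}^2)$ equals the paper's $4\|Q\|_{L^{p+1}}^{p+1}$ by the elliptic identity). The only cosmetic difference is that the paper combines the kinetic and mass cross terms into $\int(\mathcal{T}_yQ)^p\mathcal{T}_{-y}Q\,dx\approx e^{-2y}$ via \eqref{EEQ}, which removes the $(1+y)$ factor from Lemma \ref{lem2.14} that appears when you estimate the two cross terms separately; as you note, this polynomial factor is harmless for the way the lemma is used.
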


\begin{proof}
By a direct calculation, we have
\begin{align}
\label{eq4.8.0}
	\mu(t)
	&=2 \int \partial_x \mathcal{T}_{y}Q  \partial_x \mathcal{T}_{-y}Q dx 
	-2\rho \re \int \partial_x (\mathcal{R}_{y}Q) \partial_x  (\mathcal{G}_{R,y}Q )dx
	\\ \notag
	&\quad -2 \re \int \partial_x (\mathcal{R}_{y}Q ) \partial_x hdx
	-\| \partial_x (\rho \mathcal{G}_{R,y}Q+ h )\|_{L^2}^2.
\end{align}
We note that
\begin{align}
\label{EEQ}
	-\partial_x^2 (\mathcal{R}_{y}Q) = - \mathcal{R}_{y}Q + (\mathcal{T}_{y}Q )^p -  (\mathcal{T}_{-y}Q )^p
\end{align}
since $Q$ satisfies the elliptic equation. 
It holds from integration by parts and \eqref{EEQ} that
\begin{align}
\label{eq4.10}
	\re \int \partial_x (\mathcal{R}_{y}Q) \partial_x hdx
	=- \re \int \mathcal{R}_{y}Q hdx 
	+ \re \int \{(\mathcal{T}_{y}Q )^p -  (\mathcal{T}_{-y}Q)^p\}hdx.
\end{align}
The second term of \eqref{eq4.10} is estimated as follows.
\begin{align}
\label{eq4.11}
	\left| \re \int \{(\mathcal{T}_{y}Q)^p -  (\mathcal{T}_{-y}Q )^p\}hdx \right| 
	\lesssim e^{-py} \|h\|_{H^1}.
\end{align}
Indeed, we have
\begin{align}
\label{eq6.5.0}
	&\re \int \{(\mathcal{T}_{y}Q )^p -  (\mathcal{T}_{-y}Q )^p\}hdx
	\\ \notag
	&=\re \int \{ \chi_{R}^{+}(\mathcal{T}_{y}Q )^p -  \chi_{R}^{-}(\mathcal{T}_{-y}Q )^p\}hdx
	\\ \notag
	&\quad +\re \int \{ (1-\chi_{R}^{+})(\mathcal{T}_{y}Q )^p - ( 1-\chi_{R}^{-})(\mathcal{T}_{-y}Q )^p\}hdx.
\end{align}
The first term in the right hand side is $0$ by \eqref{ortho} and the symmetry. It follows from Lemma \ref{lem2.15} that the second term is estimated by
\begin{align}
\label{eq6.6.0}
	\left| \re \int \{ (1-\chi_{R}^{+})(\mathcal{T}_{y}Q )^p - ( 1-\chi_{R}^{-})(\mathcal{T}_{-y}Q )^p\}hdx\right|
	\lesssim_R e^{-py} \|h\|_{H^1}. 
\end{align}
Thus we get
\begin{align}
\label{eq4.15}
	\re \int \partial_x (\mathcal{R}_{y}Q) \partial_x hdx
	=- \re \int \mathcal{R}_{y}Q hdx 
	+O(e^{-py}\|h\|_{H^1}).
\end{align}
Now we recall \eqref{eq6.4}. We have
\begin{align}
\label{eq4.16}
	- 2 \re \int \mathcal{R}_{y}Q hdx 
	&=
	-2 \int \mathcal{T}_{y}Q \mathcal{T}_{-y}Q dx
	\\ \notag
	&\quad +2 \rho  \re \int \mathcal{R}_{y}Q \mathcal{G}_{R,y}Q  dx 
	+\|\rho \mathcal{G}_{R,y}Q+h\|_{L^2}^2.
\end{align}
Combining \eqref{eq4.8.0}, \eqref{eq4.15} and \eqref{eq4.16},  we get
\begin{align*}
	\mu(t)
	&=2 \int \partial_x \mathcal{T}_{y}Q \partial_x \mathcal{T}_{-y}Q  dx 
	-2\rho \re \int \partial_x (\mathcal{R}_{y}Q ) \partial_x  (\mathcal{G}_{R,y}Q )dx
	\\
	&\quad +2 \int \mathcal{T}_{y}Q \mathcal{T}_{-y}Q dx
	-2 \rho \re \int \mathcal{R}_{y}Q  \mathcal{G}_{R,y}Q dx 
	\\
	&\quad +O(|\rho|^2 + \|h\|_{H^1}^2+e^{-py} \|h\|_{H^1}).
\end{align*}
where we used $\|\rho \mathcal{G}_{R,y}Q+h\|_{H^1}^2\lesssim |\rho|^2 + \|h\|_{H^1}^2$.
By \eqref{elliptic}, we have
\begin{align*}
	\int \partial_x \mathcal{T}_{y}Q \partial_x \mathcal{T}_{-y}Q  dx 
	+ \int \mathcal{T}_{y}Q \mathcal{T}_{-y}Q dx
	=\int (\mathcal{T}_{y}Q )^{p}\mathcal{T}_{-y}Q 
	\approx e^{-2y}.
\end{align*}
Thus we get
\begin{align*}
	&2\rho \left(\int \partial_x (\mathcal{R}_{y}Q ) \partial_x  (\mathcal{G}_{R,y}Q )dx + \re \int \mathcal{R}_{y}Q \mathcal{G}_{R,y}Q dx  \right)
	\\
	&\approx - \mu(t) + e^{-2y} + O(|\rho|^2 + \|h\|_{H^1}^2+e^{-py} \|h\|_{H^1}). 
\end{align*}
By integration by parts and \eqref{EEQ} again, we get
\begin{align*}
	&\int \partial_x (\mathcal{R}_{y}Q ) \partial_x  (\mathcal{G}_{R,y}Q )dx + \re \int \mathcal{R}_{y}Q \mathcal{G}_{R,y}Q dx
	\\
	&=\int \{(\mathcal{T}_{y}Q )^p -  (\mathcal{T}_{-y}Q )^p\}  \mathcal{G}_{R,y}Q dx 
	\\
	&=2 \int \chi_{R}^{+}(\mathcal{T}_{y}Q )^{p+1} dx + O(e^{-2 y})
	\\
	&=2 \int (\mathcal{T}_{y}Q )^{p+1} dx + O(e^{-2y}),
\end{align*}
where we used Lemma \ref{lem2.15}. 
Therefore we get
\begin{align*}
	|\rho| \approx  | \mu(t)| +e^{-2y} + O(|\rho|^2 + \|h\|_{H^1}^2+e^{-py} \|h\|_{H^1}).
\end{align*}
This means that
\begin{align*}
	|\rho| \approx  \mu(t) + O(e^{-2y} +\|h\|_{H^1}^2).
\end{align*}
This completes the proof.
\end{proof}

Next we consider the estimate of $\mu$ by using $S(u(t))=2S(Q)$. 
The following is a key in our proof. 

\begin{lemma}
\label{key}
Let $y$ be a positive number.
For sufficiently large $y>0$, it holds that
\begin{align*}
	S(\mathcal{R}_{y}Q) -2 S(Q)
	\approx e^{-2y}.
\end{align*}
\end{lemma}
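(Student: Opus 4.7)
The plan is to compute $S(\mathcal{R}_y Q)$ explicitly, handling the quadratic part by integrating by parts against the elliptic equation and the nonlinear $L^{p+1}$ part by a second-order Taylor expansion, then comparing with $2S(Q)$ via the Pohozaev identity.

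For the quadratic part, the evenness of $Q$ gives $\|\partial_x \mathcal{R}_y Q\|_{L^2}^2 = 2\|\partial_x Q\|_{L^2}^2 - 2\int \partial_x \mathcal{T}_y Q \, \partial_x \mathcal{T}_{-y}Q\, dx$ and similarly for the $L^2$ norm. Integration by parts against $-\partial_x^2 Q + Q = Q^p$ collapses the cross terms into $\int[\partial_x \mathcal{T}_y Q \, \partial_x \mathcal{T}_{-y}Q + \mathcal{T}_y Q \, \mathcal{T}_{-y}Q]\, dx = \int (\mathcal{T}_y Q)^p \mathcal{T}_{-y}Q\, dx$. Combining this with the Pohozaev-based identity $2S(Q) = \|\partial_x Q\|_{L^2}^2 + \|Q\|_{L^2}^2 - \tfrac{2}{p+1}\|Q\|_{L^{p+1}}^{p+1}$ yields
\begin{align*}
S(\mathcal{R}_y Q) - 2S(Q) = \frac{1}{p+1}\bigl[2\|Q\|_{L^{p+1}}^{p+1} - \|\mathcal{R}_y Q\|_{L^{p+1}}^{p+1}\bigr] - \int_\R (\mathcal{T}_y Q)^p \mathcal{T}_{-y}Q\, dx.
\end{align*}

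For the bracket, using the oddness of $\mathcal{R}_y Q$ and the evenness of $Q$ reduces it to $2\int_0^\infty [a^{p+1} + b^{p+1} - (a-b)^{p+1}]\, dx$ with $a := Q(x-y)$, $b := Q(x+y)$, where $a \geq b \geq 0$ on $[0,\infty)$. The Taylor remainder identity $(1-t)^{p+1} = 1 - (p+1)t + p(p+1)\int_0^t (t-s)(1-s)^{p-1}\, ds$ yields the two-sided bound $0 \leq (a-b)^{p+1} - a^{p+1} + (p+1)a^p b \leq \tfrac{p(p+1)}{2} a^{p-1} b^2$, valid for all $p > 1$ (not only integer $p$) on $\{0 \leq b \leq a\}$. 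Applying Lemma \ref{lem2.15} to each piece, namely $(\alpha,\beta) = (p,1)$ giving $\approx e^{-2y}$ and $(\alpha,\beta) = (p-1,2)$ giving $\approx e^{-4y}$ (since $p > 5 > 3$ ensures $\min(p-1,2) = 2$), together with $\int_0^\infty b^{p+1}\,dx = \int_y^\infty Q^{p+1}\,ds \lesssim e^{-(p+1)y}$, produces
\begin{align*}
2\|Q\|_{L^{p+1}}^{p+1} - \|\mathcal{R}_y Q\|_{L^{p+1}}^{p+1} = 2(p+1)\int_0^\infty (\mathcal{T}_y Q)^p \mathcal{T}_{-y}Q\, dx + O(e^{-4y}).
\end{align*}

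Plugging this back, the factors of $(p+1)$ cancel, and splitting $\int_\R = \int_0^\infty + \int_{-\infty}^0$, then using $x \mapsto -x$ and the evenness of $Q$ on the negative half-line, gives
\begin{align*}
S(\mathcal{R}_y Q) - 2S(Q) = \int_0^\infty (\mathcal{T}_y Q)^p \mathcal{T}_{-y}Q\, dx - \int_0^\infty (\mathcal{T}_{-y}Q)^p \mathcal{T}_y Q\, dx + O(e^{-4y}).
\end{align*}
By Lemma \ref{lem2.15} the first integral is $\approx e^{-2y}$ while the second is $\approx e^{-(p+1)y}$; since $p > 5$, the latter and the $O(e^{-4y})$ error are both $o(e^{-2y})$, giving $S(\mathcal{R}_y Q) - 2S(Q) \approx e^{-2y}$. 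The main delicacy is the Taylor bookkeeping for non-integer $p$ and verifying that after the cancellations the leading $e^{-2y}$ term is sign-definite (positive) rather than collapsing to a higher-order estimate; the two-sided form of the Taylor remainder together with the sharp asymptotics of Lemma \ref{lem2.15} is precisely what guarantees this.
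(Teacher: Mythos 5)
Your proof is correct and follows essentially the same route as the paper's: reduce the quadratic cross terms to $\int (\mathcal{T}_yQ)^p\mathcal{T}_{-y}Q\,dx$ via the elliptic equation, Taylor-expand the $L^{p+1}$ term to extract the matching $(p+1)\int_0^\infty (\mathcal{T}_yQ)^p\mathcal{T}_{-y}Q\,dx$ contribution, and invoke Lemma \ref{lem2.15} to identify the surviving $e^{-2y}$ term and control the remainders. Your version is in fact slightly tidier — the exact reduction of the $L^{p+1}$ bracket to $2\int_0^\infty[a^{p+1}+b^{p+1}-(a-b)^{p+1}]\,dx$ and the two-sided integral-remainder bound valid for non-integer $p$ make the sign-definiteness of the leading term fully explicit, with a sharper $O(e^{-4y})$ error than the paper's $O(e^{-3y})$.
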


\begin{proof}
By a direct calculation, we have
\begin{align*}
	S(\mathcal{R}_{y}Q) -2 S(Q)
	&=- \int \partial_x \mathcal{T}_{y}Q \partial_x \mathcal{T}_{-y}Q_ dx  
	- \int \mathcal{T}_{y}Q \mathcal{T}_{-y} Q dx 
	\\
	&\quad -\frac{1}{p+1}\left( \|\mathcal{R}_{y}Q\|_{L^{p+1}}^{p+1} -2 \|Q \|_{L^{p+1}}^{p+1}\right).
\end{align*}
By \eqref{elliptic}, we get
\begin{align*}
	- \int \partial_x \mathcal{T}_{y}Q \partial_x \mathcal{T}_{-y}Q dx 
	 - \int \mathcal{T}_{y}Q \mathcal{T}_{-y} Q dx
	&=-\int (\mathcal{T}_{y}Q)^{p} \mathcal{T}_{-y}Qdx
	\\
	&=-\int_{0}^{\infty} (\mathcal{T}_{y}Q)^{p} \mathcal{T}_{-y}Qdx 
	+O(e^{-py})
\end{align*}
where we used Lemma \ref{lem2.15}. 
Since $|\mathcal{R}_{y}Q |^{p+1}$ is even, it holds from Lemma \ref{lem2.15} that
\begin{align*}
	\|\mathcal{R}_{y}Q \|_{L^{p+1}}^{p+1} -2 \|Q \|_{L^{p+1}}^{p+1}
	=2\int_{0}^{\infty} (|\mathcal{R}_{y}Q |^{p+1} -|\mathcal{T}_{y}Q |^{p+1})dx + O(e^{-(p+1) y}).
\end{align*}
By the Taylor expansion, it holds that
\begin{align*}
	&\int_{0}^{\infty} (|\mathcal{R}_{y}Q |^{p+1} -|\mathcal{T}_{y}Q |^{p+1})dx
	\\
	&= -\int_{0}^{\infty} (p+1)(\mathcal{T}_{y}Q )^{p}\mathcal{T}_{-y}Q  + p(p+1)(\mathcal{T}_{y}Q )^{p-1}(\mathcal{T}_{-y}Q )^2  + O((\mathcal{T}_{-y}Q )^3)dx 
	\\
	&= -\int_{0}^{\infty} (p+1)(\mathcal{T}_{y}Q )^{p}\mathcal{T}_{-y}Q dx + O(e^{-3y}).
\end{align*}
Therefore, we obtain
\begin{align*}
	S (\mathcal{R}_{y}Q ) -2 S (Q )
	&=\int_{0}^{\infty} (\mathcal{T}_{y}Q )^{p} \mathcal{T}_{-y}Q dx +O(e^{-3 y}) 
	\\
	&\approx e^{-2y}+ O(e^{-3y}).
\end{align*}
For large $y$, we obtain the result. 
\end{proof}

Since $y(t) > R$, the assumption in the above lemma is satisfied by taking  $R$ large enough. 

\begin{lemma}
We have
\begin{align*}
	e^{-2y} + \|h\|_{H^1}^2 \lesssim |\rho|^2.
\end{align*}
\end{lemma}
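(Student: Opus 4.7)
The plan is to exploit the action identity $S(u(t)) = 2S(Q)$ (equivalent to the energy and mass conditions) via Taylor expansion of $S$ about $\mathcal{R}_y Q$. Writing $v := \rho\mathcal{G}_{R,y}Q + h$ so that $u = e^{i(\theta+t)}(\mathcal{R}_y Q + v)$, the identity $S(u) = 2S(Q)$ becomes
\begin{align*}
0 = [S(\mathcal{R}_y Q) - 2S(Q)] + \langle S'(\mathcal{R}_y Q), v\rangle + \tfrac{1}{2}\langle S''(\mathcal{R}_y Q)v, v\rangle + O(\|v\|_{H^1}^3).
\end{align*}
The first bracket is $c_0 e^{-2y}(1+o(1))$ with $c_0>0$ by Lemma \ref{key}; this is the positivity driving the estimate, and depends essentially on the oddness (the symmetric configuration $\mathcal{T}_y Q + \mathcal{T}_{-y}Q$ would give the opposite sign).

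For the linear term, $S'(\mathcal{R}_y Q) = (\mathcal{T}_y Q)^p - (\mathcal{T}_{-y}Q)^p - |\mathcal{R}_y Q|^{p-1}\mathcal{R}_y Q$; Taylor-expanding the nonlinear piece on each half-line (where one of $\mathcal{T}_{\pm y}Q$ dominates) and using Lemma \ref{lem2.15} gives $\|S'(\mathcal{R}_y Q)\|_{L^2} \lesssim e^{-2y}$, so this contributes at most $O(e^{-2y}(|\rho|+\|h\|_{H^1}))$. For the Hessian, decompose
\begin{align*}
\langle S''(\mathcal{R}_y Q) v, v\rangle = \rho^2 Q_1 + 2\rho Q_{12} + Q_2
\end{align*}
with $Q_1 = \langle S''\mathcal{G}_{R,y}Q, \mathcal{G}_{R,y}Q\rangle$, $Q_{12} = \langle S''\mathcal{G}_{R,y}Q, h\rangle$, $Q_2 = \langle S''h, h\rangle$. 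A direct calculation using the Pohozaev identity (the two half-line integrals decouple for large $R,y$) gives $Q_1 \to -2(p-1)\|Q\|_{L^{p+1}}^{p+1} < 0$, hence $Q_1 \leq -c$. Approximating $|\mathcal{R}_y Q|^{p-1}$ by $(\mathcal{T}_{\pm y} Q)^{p-1}$ on $\mathbb{R}^\pm$ (the induced quadratic-form error is $O(e^{-2y}\|h\|_{H^1}^2)$ by Lemma \ref{lem2.15}), applying the coercivity Lemma \ref{coercivity} to $\chi_R^+ h$, using oddness on the mirror side, and noting that the central region $|x|<R$ sees a negligible potential (since $y>R$), one obtains $Q_2 \gtrsim \|h\|_{H^1}^2 - O(R^{-1})\|h\|_{H^1}^2$.

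The cross term $Q_{12}$ is the subtle one. Computing $(1-\partial_x^2)\mathcal{G}_{R,y}Q - p|\mathcal{R}_y Q|^{p-1}\mathcal{G}_{R,y}Q$: on $\mathbb{R}^+$ one has $(1-\partial_x^2)(\chi_R^+\mathcal{T}_y Q) = \chi_R^+(\mathcal{T}_y Q)^p$ modulo a commutator supported in $R/2<|x|<R$, and $p|\mathcal{R}_y Q|^{p-1}\chi_R^+\mathcal{T}_y Q = p\chi_R^+(\mathcal{T}_y Q)^p$ modulo an $O((\mathcal{T}_y Q)^{p-1}\mathcal{T}_{-y}Q)$ correction; the leading piece is therefore $-(p-1)\chi_R^+(\mathcal{T}_y Q)^p$ on $\mathbb{R}^+$ (and the analogous piece on $\mathbb{R}^-$). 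The pairing of this leading piece with $h$ vanishes identically by the third orthogonality in \eqref{ortho} together with the oddness of $h$, leaving only commutator and approximation errors: $|Q_{12}| \lesssim (R^{-1}+e^{-2y})\|h\|_{H^1}$. Assembling everything and using Young's inequality to absorb $e^{-2y}|\rho|, e^{-2y}\|h\|, |\rho|\|h\|(R^{-1}+e^{-2y})$ into small multiples of $\rho^2, \|h\|_{H^1}^2, e^{-4y}$, together with smallness of $\|v\|_{H^1}$ to dominate the cubic remainder, yields $c\rho^2 \geq c_0 e^{-2y}/2 + c\|h\|_{H^1}^2/2$ for $R,y$ large enough. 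The principal difficulty is the cross term $Q_{12}$: the orthogonality conditions in \eqref{ortho} were chosen precisely so that its leading-order contribution cancels, allowing Young's inequality to close the estimate.
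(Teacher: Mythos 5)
Your proposal is correct and follows essentially the same route as the paper: Taylor expansion of $S$ about $\mathcal{R}_yQ$, the sign information from Lemma \ref{key}, coercivity (Lemma \ref{coercivity}) for the $\langle S''h,h\rangle$ block, the orthogonality conditions \eqref{ortho} to kill the leading part of the cross term, and Young's inequality to absorb the errors. The only (harmless) deviation is your computation of the sign of $Q_1=\langle S''\mathcal{G}_{R,y}Q,\mathcal{G}_{R,y}Q\rangle$; the paper only needs $|Q_1|\lesssim 1$, since the quadratic-in-$\rho$ term ends up on the $|\rho|^2$ side of the inequality regardless of its sign.
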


\begin{proof}
We have
\begin{align*}
	0=S(u(t))-2S(Q)
	=S(\mathcal{R}_{y}Q + g)- S (\mathcal{R}_{y}Q) + S(\mathcal{R}_{y}Q )-2S(Q).
\end{align*}
By Lemma \ref{key}, the second term is estimated by 
\begin{align*}
	S(\mathcal{R}_{y}Q)-2S(Q) \gtrsim e^{-2y}.
\end{align*}
By Taylor expansion, we have
\begin{align*}
	S(\mathcal{R}_{y}Q + g)- S(\mathcal{R}_{y}Q) 
	=\langle S'(\mathcal{R}_{y}Q) , g \rangle
	+\frac{1}{2}\langle S''(\mathcal{R}_{y}Q) g, g \rangle
	+o(\|g\|_{H^1}^2),
\end{align*}
where we note that 
\begin{align*}
	S'(\mathcal{R}_{y}Q)
	&= -\partial_x^2 \mathcal{R}_{y}Q+ \mathcal{R}_{y}Q -|\mathcal{R}_{y}Q|^{p-1}\mathcal{R}_{y}Q,
	\\
	\langle S''(\mathcal{R}_{y}Q) \phi, \psi \rangle
	&=
	\re\int \nabla \phi \overline{\nabla \psi} + \omega \phi \overline{\psi} dx
	-\re \int |\mathcal{R}_{y}Q|^{p-1} (p\phi_1 + i \phi_2) \overline{\psi}dx
\end{align*}
for $\phi=\phi_1+i\phi_2, \psi\in H^{1}$.

First we give an estimate of $\langle S'(\mathcal{R}_{y}Q) , g \rangle$. By \eqref{EEQ}, we have
\begin{align*}
	\langle S'(\mathcal{R}_{y}Q) , g \rangle
	=\langle (\mathcal{T}_{y}Q)^{p} -(\mathcal{T}_{-y}Q)^{p}  -|\mathcal{R}_{y}Q|^{p-1}\mathcal{R}_{y}Q, g \rangle.
\end{align*}
By a nonlinear estimate, we obtain
\begin{align*}
	|(\mathcal{T}_{y}Q)^{p} -(\mathcal{T}_{-y}Q)^{p}  -|\mathcal{R}_{y}Q|^{p-1}\mathcal{R}_{y}Q|
	\lesssim  \{(\mathcal{T}_{y}Q)^{p-2}+ (\mathcal{T}_{y}Q)^{p-2}\} \mathcal{T}_{y}Q\mathcal{T}_{-y}Q.
\end{align*}
Thus we get
\begin{align}
\label{eq6.5}
	&\int |(\mathcal{T}_{y}Q)^{p} -(\mathcal{T}_{-y}Q)^{p}  -|\mathcal{R}_{y}Q|^{p-1}\mathcal{R}_{y}Q||g|dx
	\\ \notag
	&\lesssim \int \{(\mathcal{T}_{y}Q)^{p-2}+ (\mathcal{T}_{y}Q)^{p-2}\} \mathcal{T}_{y}Q\mathcal{T}_{-y}Q|g|dx
	\\ \notag
	&\lesssim e^{-2y}\|g\|_{L^2}.
\end{align}
By the Young inequality and $\|g\|_{L^2}^2 \lesssim |\rho|^2 + \|h\|_{H^1}^2$, we get
\begin{align*}
	|\langle S'(\mathcal{R}_{y}Q) , g \rangle|
	\lesssim e^{-2y}\|g\|_{L^2}
	\lesssim C_{\varepsilon}e^{-4y} + |\rho|^2 + \varepsilon \|h\|_{H^1}^2
\end{align*}
for arbitrary small $\varepsilon>0$. 
Next we consider $\langle S''(\mathcal{R}_{y}Q) g, g \rangle$. 
We set
\begin{align*}
	B(\phi, \psi):=\langle S''(\mathcal{R}_{y}Q) \phi, \psi \rangle.
\end{align*}
Since $B$ is bilinear and $g=\rho \mathcal{G}_{R,y}Q +h$, we have
\begin{align*}
	B(g,g)= |\rho|^2 B(\mathcal{G}_{R,y}Q,\mathcal{G}_{R,y}Q) + 2\rho B(\mathcal{G}_{R,y}Q,h)+B(h,h).
\end{align*}
It is obvious that
\begin{align*}
	|\rho|^2 |B(\mathcal{G}_{R,y}Q,\mathcal{G}_{R,y}Q)| \lesssim |\rho|^2.
\end{align*}
Now we have
\begin{align}
\label{eq4.18}
	B(\mathcal{G}_{R,y}Q,h)
	=B(\mathcal{R}_{y}Q,h) + B(\mathcal{G}_{R,y}Q-\mathcal{R}_{y}Q,h).
\end{align}
The second term of \eqref{eq4.18} is estimated by 
\begin{align}
\label{eq4.19}
	 |B(\mathcal{G}_{R,y}Q-\mathcal{R}_{y}Q,h)|
	 \lesssim e^{-y}\|h\|_{H^1} + R^{-1}\|h\|_{H^1}
\end{align}
since we have
\begin{align*}
	\int |\mathcal{G}_{R,y}Q -\mathcal{R}_{y}Q ||h|dx
	=\int  |\chi_{R}^{+}-1| \mathcal{T}_{y}Q|h| +  |1-\chi_{R}^{-}|\mathcal{T}_{-y}Q|h| dx
	\lesssim e^{-y}\|h\|_{L^2}
\end{align*}
and 
\begin{align*}
	&\int |\partial_x(\mathcal{G}_{R,y}Q -\mathcal{R}_{y}Q  )||\partial_x h|dx
	\\
	&=\int |\chi_{R}^{+}-1||\partial_x\mathcal{T}_{y}Q ||\partial_x h| +  |1-\chi_{R}^{-}||\partial_x \mathcal{T}_{-y}Q ||\partial_x h| dx 
	+O(R^{-1}\|h\|_{L^2})
	\\
	&\lesssim e^{-y}\|\partial_x h\|_{L^2}+O(R^{-1}\|h\|_{L^2}).
\end{align*}
The first term of \eqref{eq4.18}  is estimated as follows. By \eqref{EEQ} and the fact that $\mathcal{R}_{y}Q $ is real-valued, we get
\begin{align*}
	B(\mathcal{R}_{y}Q,h)
	&=\int_{\mathbb{R}} \{(\mathcal{T}_{y}Q)^{p} -(\mathcal{T}_{-y}Q )^{p}  -p|\mathcal{R}_{y}Q |^{p-1}\mathcal{R}_{y}Q \} h dx
	\\
	&=p\int_{\mathbb{R}} \{(\mathcal{T}_{y}Q )^{p} -(\mathcal{T}_{-y}Q )^{p}  -|\mathcal{R}_{y}Q |^{p-1}\mathcal{R}_{y}Q \} h dx
	\\
	& \quad +(1-p)\int_{\mathbb{R}} \{(\mathcal{T}_{y}Q )^{p} -(\mathcal{T}_{-y}Q )^{p}\} h dx.
\end{align*}
This first term is estimated by 
\begin{align*}
	\left|\int \{(\mathcal{T}_{y}Q )^{p} -(\mathcal{T}_{-y}Q )^{p}  -|\mathcal{R}_{y}Q |^{p-1}\mathcal{R}_{y}Q \} h dx \right|
	\lesssim e^{-2y} \|h\|_{H^1}
\end{align*}
as in \eqref{eq6.5}. We recall \eqref{eq4.11} and thus the second term is estimated by
\begin{align*}
	\left|\int  \{(\mathcal{T}_{y}Q )^{p} -(\mathcal{T}_{-y}Q )^{p}\} h dx \right|
	\lesssim e^{-py}\|h\|_{H^1}.
\end{align*}
Thus we have
\begin{align}
\label{eq4.20}
	|B(\mathcal{R}_{y}Q,h)| \lesssim e^{-2y} \|h\|_{H^1}.
\end{align}
Combining \eqref{eq4.18}, \eqref{eq4.19} and \eqref{eq4.20}, we obtain
\begin{align*}
	|\rho||B(\mathcal{G}_{R,y}Q,h)|
	&\lesssim e^{-y}|\rho|\|h\|_{H^1} + R^{-1}|\rho|\|h\|_{H^1}
	\\
	&\lesssim |\rho|^2 + e^{-2y}\|h\|_{H^1}^2+R^{-2}\|h\|_{H^1}^2
\end{align*}
by the Young inequality. 
At last, we calculate $B(h,h)$. We divide it into three terms as follows. 
We have
\begin{align*}
	B(h,h)=B(\chi_{R}h,\chi_{R}h)+2B(\chi_{R}h,\chi_{R}^{c}h)+B(\chi_{R}^{c}h,\chi_{R}^{c}h).
\end{align*}
We consider the middle term $B(\chi_{R}h,\chi_{R}^{c}h)$. We have
\begin{align*}
	B(\chi_{R}h,\chi_{R}^{c}h)
	&=\langle \partial_x (\chi_{R}h) , \partial_x(\chi_{R}^{c}h) \rangle
	+ \langle \chi_{R}h , \chi_{R}^{c}h \rangle
	\\
	&-p\langle |\mathcal{R}_{y}Q |^{p-1} \chi_{R}h_1 , \chi_{R}^{c}h_1 \rangle
	-\langle |\mathcal{R}_{y}Q |^{p-1} \chi_{R}h_2 , \chi_{R}^{c}h_2 \rangle.
\end{align*}
It holds that
\begin{align*}
	\langle \partial_x (\chi_{R}h) , \partial_x(\chi_{R}^{c}h) \rangle
	&= \int  \chi_{R} \chi_{R}^{c}|\partial_x h|^2 dx + O(R^{-1}\|h\|_{H^1}^{2})
\end{align*}
and 
\begin{align*}
	\langle  \chi_{R}h ,  \chi_{R}^{c}h \rangle
	=\int  \chi_{R} \chi_{R}^{c}| h|^2 dx.
\end{align*}
Moreover, by Lemma \ref{lem2.15}, we have
\begin{align*}
	&|p\langle |\mathcal{R}_{y}Q |^{p-1} \chi_{R}h_1 , \chi_{R}^{c}h_1 \rangle
	+\langle |\mathcal{R}_{y}Q |^{p-1} \chi_{R}h_2 , \chi_{R}^{c}h_2 \rangle|
	\\
	&\lesssim  \int |\mathcal{R}_{y}Q |^{p-1}  \chi_{R}\chi_{R}^{c} |h|^2dx
	\\
	&\lesssim   e^{-(p-1) y}\|h\|_{L^2}^2.
\end{align*}
Thus, we can estimate the middle term as follows. 
\begin{align*}
	B(\chi_{R}h,\chi_{R}^{c}h)
	\approx \int  \chi_{R} \chi_{R}^{c}(|h|^2+|\partial_x h|^2) dx +O(R^{-1}\|h\|_{H^1}^2+e^{-(p-1)y}\|h\|_{H^1}^2).
\end{align*}
In the similar way, we can estimate the third term $B(\chi_{R}^{c}h,\chi_{R}^{c}h)$ as follows. 
\begin{align*}
	B(\chi_{R}^{c}h,\chi_{R}^{c}h)
	\approx \int   (\chi_{R}^{c})^2 (|h|^2+|\partial_x h|^2) dx +O(R^{-1}\|h\|_{H^1}^2+e^{-(p-1)y}\|h\|_{H^1}^2).
\end{align*}
Next we consider the first term $B(\chi_{R}h,\chi_{R}h)$. 
By the symmetry, we have
\begin{align*}
	&B(\chi_{R}h,\chi_{R}h)
	\\
	&=2\left[ \int_{0}^{\infty} |\partial_x (\chi_{R}^{+}h)|^2 
	+ |\chi_{R}^{+}h|^2 dx
	-\int_{0}^{\infty} |\mathcal{R}_{y}Q |^{p-1}\{ p(\chi_{R}^{+}h_1)^2+(\chi_{R}^{+}h_2)^2\}dx\right]
	\\
	&=2\Phi(\mathcal{T}_{-y}(\chi_{R}^{+}h))
	+2\int_{0}^{\infty} (|\mathcal{T}_{y}Q |^{p-1} - |\mathcal{R}_{y}Q |^{p-1})\{ p(\chi_{R}h_1)^2+(\chi_{R}h_2)^2\}dx.
\end{align*}
Since it holds that 
\begin{align*}
	&\left| \int_{0}^{\infty} (|\mathcal{T}_{y}Q|^{p-1} - |\mathcal{R}_{y}Q|^{p-1})\{ p(\chi_{R}h_1)^2+(\chi_{R}h_2)^2\}dx \right|
	\\
	&\lesssim \int_{0}^{\infty} \{(\mathcal{T}_{y}Q)^{p-2}\mathcal{T}_{-y}Q+(\mathcal{T}_{-y}Q)^{p-1}\} |h|^2dx 
	\\
	&\lesssim e^{-2y}\|h\|_{H^1}^2.
\end{align*}
For $\Phi$, by Lemma \ref{coercivity}, we have
\begin{align*}
	 \Phi(\mathcal{T}_{-y}(\chi_{R}^{+}h)) \gtrsim  \|\chi_{R}h\|_{H^1}^2  -R^{-1}\|h\|_{H^1}^2.
\end{align*}
Therefore, combining all estimates above, we get
\begin{align*}
	e^{-2y} +  \|h\|_{H^1}^2  + 
	\lesssim |\rho|^2 + R^{-1}\|h\|_{H^1}^2 + e^{-2y}\|h\|_{H^1}^2
	+C_{\varepsilon}e^{-4y} + \varepsilon \|h\|_{H^1}^2.
\end{align*}
where we used 
\begin{align*}
	&\|\chi_{R}h\|_{H^1}^2  + \int  \chi_{R} \chi_{R}^{c}(|h|^2+|\partial_x h|^2) dx + \int  (\chi_{R}^{c})^2(|h|^2+|\partial_x h|^2) dx
	\\
	&\gtrsim \|h\|_{H^1}^2 - R^{-1}\|h\|_{H^1}^2.
\end{align*}
This implies 
\begin{align*}
	e^{-2y} +  \|h\|_{H^1}^2  
	\lesssim |\rho|^2
\end{align*}
by taking small $\varepsilon>0$ and sufficiently large $R$. 
\end{proof}

\begin{corollary}
\label{cor4.8}
We have 
\begin{align*}
	e^{-2y} +  \|h\|_{H^1}^2 
	\lesssim \mu(t)^2.
\end{align*}
\end{corollary}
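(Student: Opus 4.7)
The plan is to chain together the previous lemma (which gives $e^{-2y}+\|h\|_{H^1}^2\lesssim |\rho|^2$) with Lemma \ref{lem4.5} (which gives $|\rho|\lesssim |\mu(t)|+e^{-2y}+\|h\|_{H^1}^2$), using an absorption argument that relies on the smallness coming from the modulation regime $\mu(f)<\mu_0$.

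First I would square the bound in Lemma \ref{lem4.5} to obtain
\begin{align*}
|\rho|^2 \lesssim \mu(t)^2 + \bigl(e^{-2y}+\|h\|_{H^1}^2\bigr)^2.
\end{align*}
Combining with the previous lemma yields
\begin{align*}
e^{-2y}+\|h\|_{H^1}^2 \lesssim |\rho|^2 \lesssim \mu(t)^2 + \bigl(e^{-2y}+\|h\|_{H^1}^2\bigr)^2.
\end{align*}

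Next I would exploit smallness: because $f$ satisfies the hypotheses of Lemma \ref{lem4.2} with $\mu(f)<\mu_0$ small and $y>R$ large, the quantity $e^{-2y}+\|h\|_{H^1}^2$ is itself small (one can make it less than any prescribed constant by shrinking $\mu_0$ and enlarging $R$). Therefore $(e^{-2y}+\|h\|_{H^1}^2)^2$ can be absorbed into the left-hand side, giving the desired
\begin{align*}
e^{-2y}+\|h\|_{H^1}^2 \lesssim \mu(t)^2.
\end{align*}

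There is essentially no obstacle here: the result is a soft consequence of the two lemmas immediately preceding it. The only point requiring a word of care is verifying that the smallness needed for absorption is already available from the modulation setup, but this follows directly from the $C^1$ character of the parameters $(\tilde\theta,y)$ produced by Lemma \ref{lem4.2} together with the assumption $t\in I_{\mu_0}$ built into the definition of the modulation. No additional calculation is required beyond this absorption step.
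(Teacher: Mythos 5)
Your proposal is correct and is essentially the paper's own argument: square the estimate of Lemma \ref{lem4.5}, combine with the preceding lemma's bound $e^{-2y}+\|h\|_{H^1}^2\lesssim|\rho|^2$, and absorb the quadratic term using the smallness of $\|h\|_{H^1}$ (from taking $\mu_0$ small) and of $e^{-2y}$ (from $y>R$). The only cosmetic difference is that the smallness is more directly justified by $\|h\|_{H^1}\lesssim\|g\|_{H^1}\leq\varepsilon(\mu)$ from the modulation lemma rather than by the $C^1$ regularity of the parameters.
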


\begin{proof}
By Lemma \ref{lem4.5}, we have
\begin{align*}
	|\rho|^2 \lesssim  \mu(t)^2 +e^{-4y} +\|h\|_{H^1}^4.
\end{align*}
This implies the result by taking sufficiently small $\mu_0$, which ensures the smallness of $\|h\|_{H^1}$. 
\end{proof}

\begin{corollary}
We have
\begin{align*}
	(1+y)e^{-2y} \lesssim \mu(t)^{2-\delta} 
	\lesssim \mu_0^{1-\delta} \mu(t),
\end{align*}
where $\delta>0$ is arbitrary small. 
\end{corollary}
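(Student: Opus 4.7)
The plan is to deduce both inequalities directly from Corollary \ref{cor4.8}, using only the elementary fact that a logarithm is dominated by any positive power at small arguments.

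First, I would start from Corollary \ref{cor4.8}, which gives $e^{-2y(t)} \lesssim \mu(t)^2$, i.e., $e^{-2y(t)} \leq C\mu(t)^2$ for some constant $C>0$. Taking the logarithm yields
\begin{align*}
	2y(t) \leq 2\log\frac{1}{\mu(t)} + \log C,
\end{align*}
so there exists a constant $C'>0$ such that
\begin{align*}
	1 + y(t) \leq C'\left(1 + \log\frac{1}{\mu(t)}\right).
\end{align*}

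Next, for any fixed $\delta > 0$, the elementary inequality $1 + \log(1/s) \lesssim s^{-\delta}$ for $s \in (0,\mu_0)$ (with $\mu_0$ small) gives $1 + y(t) \lesssim \mu(t)^{-\delta}$. Multiplying by $e^{-2y(t)} \lesssim \mu(t)^{2}$ from Corollary \ref{cor4.8}, I obtain
\begin{align*}
	(1+y(t))\, e^{-2y(t)} \lesssim \mu(t)^{-\delta}\cdot \mu(t)^{2} = \mu(t)^{2-\delta},
\end{align*}
which is the first inequality. The second inequality is immediate from $\mu(t) < \mu_0$: writing $\mu(t)^{2-\delta} = \mu(t)^{1-\delta}\,\mu(t) \leq \mu_0^{1-\delta}\,\mu(t)$ (taking $\mu_0$ small enough that $\mu(t)>0$ and $1-\delta>0$) completes the argument.

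There is no real obstacle here; the only subtle point is the implicit assumption that $\mu(t)$ is positive (which is the relevant regime in the modulation framework of this section, since $K>0$ corresponds to $\mu>0$ by Lemma \ref{lem2.11}) and that $\mu_0$ is chosen small enough so both the bound $e^{-2y(t)} \leq C\mu(t)^2$ puts $y(t)$ into the range where the logarithm is non-negative and the inequality $1 + \log(1/s) \lesssim s^{-\delta}$ applies.
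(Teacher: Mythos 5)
There is a direction error in your logarithm step. From Corollary \ref{cor4.8} you have $e^{-2y(t)} \leq C\mu(t)^2$; taking logarithms gives $-2y(t) \leq \log C + 2\log\mu(t)$, i.e.
\begin{align*}
	2y(t) \ \geq\ 2\log\frac{1}{\mu(t)} - \log C,
\end{align*}
which is a \emph{lower} bound on $y(t)$, not the upper bound $2y(t) \leq 2\log\frac{1}{\mu(t)} + \log C$ that you wrote. Nothing in Corollary \ref{cor4.8} (or elsewhere in the modulation section) rules out $y(t)$ being much larger than $\log(1/\mu(t))$ — the corollary only says $e^{-2y}$ is at most $\mu^2$, not that it is comparable to it — so the bound $1+y(t) \lesssim \mu(t)^{-\delta}$ is not established by your argument, and the multiplication step that follows rests on it.

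The conclusion is nevertheless true and the repair is short. Either observe that $t \mapsto (1+t)e^{-2t}$ is decreasing on $[0,\infty)$, so the \emph{lower} bound $y \geq \log(1/\mu) - \tfrac12\log C$ yields $(1+y)e^{-2y} \lesssim \bigl(1+\log(1/\mu)\bigr)\mu^2 \lesssim \mu^{2-\delta}$; or, more directly (this is the paper's route), absorb the polynomial factor at the level of $y$ via $(1+y)e^{-2y} \lesssim e^{-(2-\delta)y} = \bigl(e^{-2y}\bigr)^{1-\delta/2} \lesssim \mu(t)^{2-\delta}$, which uses only the upper bound on $e^{-2y}$ and avoids any bound on $y$ itself. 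Your final step $\mu(t)^{2-\delta} = \mu(t)^{1-\delta}\mu(t) \leq \mu_0^{1-\delta}\mu(t)$ is correct and identical to the paper's.
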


\begin{proof}
It follows from Corollary \ref{cor4.8} and $\mu(t)<\mu_0$ that
\begin{align*}
	(1+y)e^{-2y} \lesssim e^{-(2-\delta)y}
	\lesssim \mu(t)^{2-\delta} \lesssim \mu_0^{1-\delta} \mu(t)
\end{align*}
for any small $\delta$. 
\end{proof}


\subsection{Estimate of the derivatives of the parameters}

The derivatives of the parameters are estimated as follows. 

\begin{lemma}
\label{lem4.10}
We have
\begin{align*}
	|\theta'(t)|+|\rho'(t)|+|y'(t)| \lesssim \mu(t). 
\end{align*}
\end{lemma}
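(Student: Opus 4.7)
The plan is to differentiate the three orthogonality conditions in \eqref{ortho} with respect to $t$, substitute the evolution equation for $h$ coming from \eqref{NLS}, and thereby extract a $3\times 3$ linear system for $(\theta'(t),y'(t),\rho'(t))$ whose matrix is approximately diagonal and whose right-hand side is $O(\mu(t))$. Writing $v := e^{-i(\theta+t)} u = \mathcal{R}_{y}Q + \rho\,\mathcal{G}_{R,y}Q + h$, the equation \eqref{NLS} gives
\begin{equation*}
i\partial_t h = \theta'\,v + iy'\,\partial_y\!\bigl(\mathcal{R}_{y}Q + \rho\,\mathcal{G}_{R,y}Q\bigr) - i\rho'\,\mathcal{G}_{R,y}Q + \bigl(v-\partial_x^2 v - |v|^{p-1}v\bigr).
\end{equation*}
Using the identity $-\partial_x^2 Q + Q = Q^p$ and Taylor-expanding the nonlinearity around $\mathcal{R}_{y}Q$, the last bracket can be rewritten as a linear operator in $h$ plus an error of size $O(|\rho| + \|h\|_{H^1}^2 + (1+y)e^{-2y})$, which is $O(\mu(t))$ by Lemma~\ref{lem4.5} and Corollary~\ref{cor4.8}.

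Differentiating each identity in \eqref{ortho} in $t$ and substituting the expression for $\partial_t h$ yields a linear system
\begin{equation*}
A(t)\begin{pmatrix}\theta'(t)\\ y'(t)\\ \rho'(t)\end{pmatrix}=F(t),
\end{equation*}
whose diagonal entries arise from the leading pairings
\begin{equation*}
\int \mathcal{R}_{y}Q\cdot\chi_R^+\mathcal{T}_{y}Q\,dx\approx\|Q\|_{L^2}^2,\quad \int |\partial_x\mathcal{T}_{y}Q|^2\,\chi_R^+\,dx\approx\|\partial_x Q\|_{L^2}^2,\quad \int(\chi_R^+)^2(\mathcal{T}_{y}Q)^{p+1}\,dx\approx\|Q\|_{L^{p+1}}^{p+1},
\end{equation*}
of the three test functions in \eqref{ortho} against the generators dual to the three modulation directions. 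The off-diagonal entries are $O(R^{-1}+(1+y)e^{-2y})$ by Lemmas~\ref{lem2.14}--\ref{lem2.15}, so $A(t)$ is uniformly invertible for $R$ and $y(t)$ large. The forcing $F(t)$ collects the remaining contributions; each entry is bounded by a sum of terms of the form $R^{-1}\|h\|_{H^1}$, $e^{-y}\|h\|_{H^1}$, $|\rho|\,\|h\|_{H^1}$, $|\rho|^2$, and $(1+y)e^{-2y}$. By Corollary~\ref{cor4.8} all these are $O(\mu(t))$, and inverting $A(t)$ gives the stated bound.

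The main obstacle is the bookkeeping needed to verify that the naive $O(\|h\|_{H^1})$ pieces of $F(t)$ actually collapse to the sharper cross-term and cutoff quantities listed above. This uses the orthogonality conditions \eqref{ortho}, which were designed precisely to annihilate the leading $h$-pairings with the linearized operator around $\mathcal{T}_{y}Q$; the sharp estimates of Lemma~\ref{lem2.15} for the interaction between $\mathcal{T}_{y}Q$ and $\mathcal{T}_{-y}Q$, which reduce the mixed integrals to the required exponentially small sizes; and the crucial bound $(1+y)e^{-2y}\lesssim \mu(t)$ inherited from Lemma~\ref{key} via Corollary~\ref{cor4.8}. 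The contribution coming from differentiating $\rho\,\mathcal{G}_{R,y}Q$ in $y$ is harmless since it carries the extra small factor $\rho$.
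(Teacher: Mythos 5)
Your proposal is correct and follows essentially the same route as the paper: differentiate the orthogonality conditions \eqref{ortho}, substitute the equation for $h$ obtained from \eqref{NLS} and \eqref{EEQ}, and pair against the three test functions to get a nearly-diagonal system for $(\theta',\rho',y')$ whose forcing is controlled by $\|h\|_{H^1}+\|g\|_{H^1}+|\rho|+e^{-2y}\lesssim\mu(t)$ via Lemma \ref{lem4.5} and Corollary \ref{cor4.8}. The only cosmetic differences are that the paper closes the system sequentially (the terms involving $\theta'$, $\rho'$, $y'$ on the right carry small factors $\|g\|_{H^1}$ or $e^{-2y}$) rather than by a single matrix inversion, and it does not need your sharper claim that the $O(\|h\|_{H^1})$ pieces of the forcing acquire extra small prefactors, since $\|h\|_{H^1}\lesssim\mu(t)$ already suffices.
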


\begin{proof}
Since we have
\begin{align*}
	h=e^{-i\theta(t) - it}u(t) -(\mathcal{R}_{y(t)}Q+ \rho(t)\mathcal{G}_{R,y(t)}Q ),
\end{align*}
we get
\begin{align*}
	h'
	&=-i(\theta'(t) +1)e^{-i\theta(t) - i t}u(t) + e^{-i\theta(t) - i t}u'(t) 
	\\
	&\quad-(-y'(t)\mathcal{R}_{y(t)}^{+}\partial_x Q + \rho'(t)\mathcal{G}_{R,y(t)}Q - \rho(t)y'(t)\mathcal{G}_{R,y(t)}^{+}\partial_x Q ),
\end{align*}
where we define
\begin{align*}
	\mathcal{R}_{y}^{+}f(x)&:=\mathcal{T}_{y}f(x) +\mathcal{T}_{-y}f(x),
	\\
	 \mathcal{G}_{R,y(t)}^{+}f(x)&:=\chi_{R}^{+}(x)\mathcal{T}_{y}f(x) 
	 +\chi_{R}^{-}(x)\mathcal{T}_{-y}f(x).
\end{align*}
From this, we have
\begin{align*}
	ih'+\partial_x^2 h
	&=\theta'(t)(\mathcal{R}_{y(t)}Q+g)
	 +h
	\\
	&\quad  - |\mathcal{R}_{y(t)}Q+g|^{p-1}(\mathcal{R}_{y(t)}Q+g) + |\mathcal{R}_{y(t)}Q|^{p}
	\\
	&\quad-|\mathcal{R}_{y(t)}Q |^{p} + |\mathcal{T}_{y}Q|^{p}-|\mathcal{T}_{-y}Q |^{p}
	\\
	&\quad-i(-y'(t)\mathcal{R}_{y(t)}^{+}\partial_x Q + \rho'(t)\mathcal{G}_{R,y(t)}Q - \rho(t)y'(t)\mathcal{G}_{R,y(t)}^{+}\partial_x Q )
	\\
	&\quad  -\rho(t)\{(\partial_x^2 \chi_{R}^{+})\mathcal{T}_{y}Q -(\partial_x^2 \chi_{R}^{-})\mathcal{T}_{-y}Q \}
	\\
	&\quad  -\rho(t)\{(\partial_x \chi_{R}^{+})\partial_x \mathcal{T}_{y}Q -(\partial_x \chi_{R}^{-})\partial_x \mathcal{T}_{-y}Q \}
	\\
	&\quad +\rho(t)\mathcal{G}_{R,y(t)}(Q^{p})
\end{align*}
where we use \eqref{elliptic} (or \eqref{EEQ}). 

Since we have
\begin{align*}
	 \im \int h' \chi_{R}^{+} \mathcal{T}_{y}Q dx =0
\end{align*}
by the orthogonality condition \eqref{ortho}, multiplying $\chi_{R}^{+} \mathcal{T}_{y}Q$ into this equation and taking the integral and the real part, we obtain
\begin{align*}
	\theta'(t)
	= O(\|h\|_{H^1}+\|g\|_{H^1}+|\rho| +e^{-2y}).
\end{align*}

Similarly, since we have
\begin{align*}
	 \re \int h' \chi_{R}^{+} (\mathcal{T}_{y}Q)^{p} dx =0
\end{align*}
by the orthogonality condition \eqref{ortho}, multiplying $\chi_{R}^{+} (\mathcal{T}_{y}Q_{\omega})^{p}$ into this equation and taking the integral and the imaginary part, we obtain
\begin{align*}
	\rho'(t)
	= O(\|h\|_{H^1}+|\theta'(t) |\|g\|_{H^1}+\|g\|_{H^1}+e^{-2y}+e^{-2y}|y'(t)|).
\end{align*}

In the same way, since we have
\begin{align*}
	 \re \int h' \partial_x(\chi_{R}^{+} \mathcal{T}_{y}Q ) dx =0
\end{align*}
by the orthogonality condition \eqref{ortho}, multiplying $\partial_x (\chi_{R}^{+} \mathcal{T}_{y}Q )$ into this equation and taking the integral and the imaginary part, we obtain
\begin{align*}
	y'(t)
	= O(\|h\|_{H^1}+|\theta'(t)|\|g\|_{H^1}+e^{-2 y}|\rho'(t)|+\|g\|_{H^1}+|\rho| +e^{-2 y}).
\end{align*}
These estimates imply the result.
\end{proof}

\section{Proof of scattering}

\subsection{Compactness of a critical element}

Suppose that Proposition \ref{prop2.13} (1) fails. 
Then there exists an odd solution $u$ with
\begin{align*}
	E(u)=2E(Q),\ 
	M(u)=2M(Q)
	\text{ and }
	K(u(t))> 0,
\end{align*}
where the solution is global by the variational argument, such that 
\begin{align*}
	\|u \|_{L_{t}^{a}L^{r}_{x}(\mathbb{R} \times\mathbb{R})}=\infty.
\end{align*}
We call the solution a critical element. We may consider only the positive time direction by time reversibility. Thus we may suppose that 
\begin{align*}
	\|u \|_{L_{t}^{a}L^{r}_{x}((0,\infty) \times\mathbb{R})}=\infty.
\end{align*}

\begin{proposition}[Compactness of a critical element]
\label{prop5.1}
Let $u$ be an odd solution with
\begin{align}
\label{eq5.1.0}
	E(u)=2E(Q),\ 
	M(u)=2M(Q)
	\text{ and }
	K(u(t))> 0,
\end{align}
such that 
\begin{align*}
	\|u \|_{L_{t}^{a}L^{r}_{x}((0, \infty)\times\mathbb{R})}=\infty.
\end{align*}
Then the solution $u$ satisfies the following compactness property: 
There exists a function $x:[0,\infty) \to [0,\infty)$ such that for any $\varepsilon>0$ there exists $R=R(\varepsilon)>0$ such that
\begin{align*}
	\int_{\{|x-x(t)|>R\}\cap \{|x+x(t)|>R\}} |\partial_x u(t,x)|^2 + |u(t,x)|^2 dx <\varepsilon
\end{align*}
for all $t \in [0,\infty)$.
\end{proposition}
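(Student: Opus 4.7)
The plan is to run the standard Kenig--Merle concentration--compactness argument, adapted to the odd-symmetric threshold setting. I would argue by contradiction: suppose the compactness property fails. By continuity of the flow on bounded time intervals, one can produce a sequence $t_n \to \infty$ with $\varepsilon_0 > 0$ such that for every choice of $x_n \geq 0$,
\begin{equation*}
\int_{\{|x-x_n|>R\}\cap\{|x+x_n|>R\}} |\partial_x u(t_n,x)|^2 + |u(t_n,x)|^2 \, dx \geq \varepsilon_0
\end{equation*}
for all large $n$. Since $\{u(t)\}$ is bounded in $H^1_{\odd}$ (Proposition \ref{prop2.13} plus the remark after it), I would apply the odd linear profile decomposition (Proposition \ref{LPD}) to $\{u(t_n)\}$, obtaining profiles $\psi^j$, parameters $(t_n^j, x_n^j)$, and a remainder $R_n^J$ whose free evolution has small $L^\infty_{t,x}$-norm.

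For each profile I would build a nonlinear profile $U^j$: if $t_n^j = 0$ and $x_n^j = 0$, take $U^j$ to be the NLS solution with datum $\mathcal{R}_0 \psi^j$ (an odd function, so WLOG $\psi^j$ is odd); if $t_n^j \to \pm\infty$, use the wave operator (the Strauss result stated above); if $t_n^j=0$ but $x_n^j \to \infty$, let $V^j$ solve NLS with datum $\psi^j$, then set $U^j_n(t,x):=V^j(t,x-x_n^j)-V^j(t,-x-x_n^j)$, whose approximate independence is justified by the spatial separation of the two bumps. By Pythagoras in $L^2$, $\dot H^1$ and $L^{p+1}$ (the decoupling in Proposition \ref{LPD}), and using that $\mathcal{R}_{x_n^j}\psi^j$ with $x_n^j\to\infty$ has asymptotic mass and energy equal to $2M(\psi^j)$ and $2E(\psi^j)$ respectively, the masses/energies of the profiles add to the threshold $2M(Q)$, $2E(Q)$.

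Now comes the dichotomy. If more than one profile is non-trivial (or if $R_n^J$ carries a positive share of $H^1$-norm), then each profile sits strictly below the odd threshold $2^{1+\sigma}M(Q)E(Q)^\sigma$: for $x_n^j\to\infty$ profiles, $V^j$ lives below the (non-odd) ground-state threshold and scatters by the Akahori--Nawa theorem, while for $x_n^j=0$ profiles one invokes Inui's sub-threshold scattering theorem. Combining the scattering nonlinear profiles via long-time perturbation (Lemma \ref{perturb}) then produces a bound $\|u\|_{L^a_t L^r_x((0,\infty)\times\mathbb{R})}<\infty$, contradicting criticality. Hence $J=1$ and $R_n^1\to 0$ in $H^1$, so $u(t_n) = e^{-it_n^1\partial_x^2}\mathcal{R}_{x_n^1}\psi^1 + o_{H^1}(1)$. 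The time-shift $t_n^1$ is ruled out by Strichartz: if $t_n^1\to+\infty$, the free evolution $e^{it\partial_x^2}\mathcal{R}_{x_n^1}\psi^1$ has small $L^a_tL^r_x([0,\infty))$-norm, and Lemma \ref{perturb} again forces scattering past $t_n$, a contradiction; the case $t_n^1\to-\infty$ is symmetric via the wave operator. Thus $t_n^1=0$, and setting $x(t_n)=x_n^1$ contradicts the assumed failure of compactness. A global $x(t)$ is then defined, for instance, as a near-optimizer of the concentration functional at each time.

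The main obstacle is the dichotomy: one must exclude the scenario in which a single profile with $x_n^j\to\infty$ carries exactly the full threshold mass-energy, so that $\psi^j$ itself sits on the (non-odd) ground-state threshold. In that borderline case, the Campos--Farah--Roudenko classification is needed to see that $V^j$ is either scattering or equal to $Q^{\pm}$ up to symmetries; the latter alternative would produce an odd nonlinear profile asymptotic to $\mathcal{R}_{x_n^j} Q$, which (by the variational non-existence of a minimizer of $l_\omega^{\odd}$ and the strict positivity of $K$) cannot equal our critical element. A careful handling of this borderline scenario, together with a clean treatment of the two-bump nonlinear-profile approximation, is where most of the work lies.
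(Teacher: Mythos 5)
Your argument follows the paper's proof of Proposition \ref{prop5.1} essentially step for step: boundedness in $H^1$ from Propositions \ref{prop4.8} and \ref{prop2.13}, the odd linear profile decomposition of Proposition \ref{LPD}, the construction of nonlinear profiles in the four cases, the observation that $J\geq 2$ forces every profile strictly below the odd threshold so that the sub-threshold results of \cite{Inui17} (for $x_n^j=0$) and \cite{AkNa13,FXC11} (for $x_n^j\to\infty$) apply, long-time perturbation to contradict non-scattering, the reduction to $J=1$ with vanishing remainder and $t_n^1=0$, and finally the passage from sequential compactness to a uniform $x(t)$ via a concentration-function argument (the paper's Lemma \ref{lemC.1} and Proposition \ref{prop2.21}). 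Up to that point the proposal is correct and is the same proof.

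Where you go astray is the final paragraph. The ``borderline scenario'' --- a single profile with $x_n^1\to\infty$ whose datum $\psi^1$ sits exactly on the non-odd ground-state threshold --- does not need to be excluded in this proposition, and indeed cannot be excluded at this stage: it is precisely the situation the compactness statement is designed to accommodate (the conclusion allows $x(t)\to\infty$, i.e.\ two separating bumps). Once $J=1$, you do not need the single nonlinear profile to scatter; you only need the decomposition $u(\tau_n)=\mathcal{R}_{x_n}\psi+o_{H^1}(1)$, which is already the desired conclusion. The Campos--Farah--Roudenko classification enters the paper only later, in Lemma \ref{lem5.6}, to show that $X(t_n)\to\infty$ forces $\mu(t_n)\to 0$; the actual exclusion of a two-bump compact critical element is the content of the rest of Section 5, resting on the modulation analysis and the key coercivity $S(\mathcal{R}_yQ)-2S(Q)\approx e^{-2y}$ of Lemma \ref{key}. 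Your proposed shortcut --- non-existence of a minimizer of $l_\omega^{\odd}$ plus $K>0$ --- would not suffice to dispose of this case, as the Nguy\~{\^e}n two-peak solution shows that asymptotic two-soliton behavior at this exact mass-energy level is not ruled out by soft variational considerations alone; it is the oddness-driven sign of $S(\mathcal{R}_yQ)-2S(Q)$ that does the work. So the proof of the proposition itself is fine, but ``most of the work'' of the paper lies after it, not inside it.
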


\begin{proof}
Let $\{\tau_n\}$ be an arbitrary time sequence such that $\tau_n \to \infty$. The sequence $\{u(\tau_n)\}$ is bounded in $H^1(\mathbb{R})$ by  \eqref{eq5.1.0} and Proposition \ref{prop4.8}. By the linear profile decomposition (Lemma \ref{LPD}), we have, up to subsequence,
\begin{align*}
	u(\tau_n) = \sum_{j=1}^{J} e^{-i t_{n}^j \partial_x^2} \mathcal{R}_{x_n^j} \psi^j +R_n^J
\end{align*}
and the properties in the statement hold. We set $\psi_n^j := e^{-i t_{n}^j \partial_x^2} \mathcal{R}_{x_n^j} \psi^j$ for simplicity. 

It is easy to show that $J=0$ does not occur. Indeed, by the linear profile decomposition, if $J=0$, then $\|e^{it\partial_x^2 } u(\tau_n)\|_{L_{t}^{a}L_x^r((0,\infty)\times\mathbb{R})} \to 0$ as $n \to \infty$ (see Remark \ref{rmk1}). Then, by the long time perturbation (Lemma \ref{perturb}), we get $\|u\|_{L_t^aL_x^r((\tau_n,\infty)\times\R)} \lesssim 1$ for large $n \in \mathbb{N}$. This contradicts that $u$ does not scatter. 

Thus we have $J \geq 1$. We will prove that $J=1$ by contradiction. Suppose that $J\geq 2$ and thus we have at least two non-zero functions $\psi^1$ and $\psi^2$. 
First, by the linear profile decomposition, we have
\begin{align*}
	&\lim_{n \to \infty} \left(\sum_{j=1}^{J} M(\psi_n^j) + M(R_n^J)\right) = \lim_{n \to \infty} M(u(\tau_n)) =M(u_0) =2M(Q),
	\\
	&\lim_{n \to \infty} \left(\sum_{j=1}^{J} \|\partial_x \psi_n^j\|_{L^2}^2 + \|\partial_x R_n^J\|_{L^2}^2\right) = \lim_{n \to \infty} \|u(\tau_n)\|_{L^2}^2 \leq2 \|\partial_x Q\|_{L^2}^2.
\end{align*}
It holds that, for any $j$ and for large $n$,
\begin{align*}
	&\|\psi_n^j\|_{L^2}^{\sigma} \|\partial_x\psi_n^j\|_{L^2} 
	< 2^{p-1} \| Q\|_{L^2}^{\sigma}  \|\partial_x Q\|_{L^2},
	\\
	&\|R_n^J\|_{L^2}^{\sigma} \|\partial_xR_n^J\|_{L^2} 
	< 2^{p-1} \| Q\|_{L^2}^{\sigma}  \|\partial_x Q\|_{L^2}.
\end{align*}
From these and Lemma \ref{lem4.9}, we get $K(\psi_n^j),K(R_n^J) > 0$. It follows from $E\geq \frac{p-1}{2}K$ that $E(\psi_n^j),E(R_n^J)\geq 0$.
Now, since we also have 
\begin{align*}
	\lim_{n \to \infty} \left(\sum_{j=1}^{J} E(\psi_n^j) + E(R_n^J)\right) = \lim_{n \to \infty} E(u(\tau_n)) =E(u_0) =2E(Q),
\end{align*}
we obtain $E(\psi_n^j), E(R_n^J) \leq 2E(Q)$.

Since $J \geq 2$, there exists $\delta>0$ such that $M(\psi_n^j)^{\sigma}E(\psi_n^j) <2^{1+\sigma}M(Q)^{\sigma}E(Q) -\delta$ for all $j$. 
By reordering, we may choose $J_1, \cdots, J_4$ such that 
\begin{align}
\tag{Case 1}
	&1 \leq j \leq J_1 \Rightarrow t_n^j =0\ (\forall n \in \mathbb{N}) \text{ and } x_n^j =0 \ (\forall n \in \mathbb{N}),
	\\
\tag{Case 2}
	&J_1+1 \leq j \leq J_2 \Rightarrow t_n^j =0\ (\forall n \in \mathbb{N}) \text{ and } x_n^j \to \infty \ (n \to \infty),
	\\
\tag{Case 3}
	&J_2+1 \leq j \leq J_3 \Rightarrow |t_n^j|\to \infty \ (n \to \infty) \text{ and } x_n^j =0 \ (\forall n \in \mathbb{N}),
	\\
\tag{Case 4}
	&J_3+1 \leq j \leq J_4 \Rightarrow |t_n^j|\to \infty \ (n \to \infty) \text{ and } x_n^j \to \infty \ (n \to \infty),
\end{align}
where we are assuming that there is no $j$ such that $a \leq j \leq b$ if $a>b$. We will define nonlinear profiles associated with $\psi_n^j$. If there is no $j$ such that $J_k+1\leq j \leq J_{k+1}$ for some $k\in \{0,1,2,3\}$, where $J_0=0$, then skip the construction of nonlinear profiles in the following steps. 

{\bf Case 1.} 
We first consider the case of $1 \leq j \leq J_1$. By the orthogonality of the parameter $t_n^j$ and $x_n^j$, we note that $J_1=0$ or $1$. (Skip this step if $J_1=0$.) 
We define a solution $N$ to \eqref{NLS} with the initial data $N(0,x)=\mathcal{R}_{0}\psi^1(x)=\psi^1(x)-\psi^1(-x)$. Then, the solution $N$ is global and satisfies $\|N\|_{L_t^aL_x^r(\mathbb{R}\times \mathbb{R})} \lesssim 1$ by \cite{Inui17}, where the global dynamics for below the threshold for odd solutions is obtained, since $M(\mathcal{R}_{0}\psi^1)^{\sigma} E(\mathcal{R}_{0}\psi^1) <2^{1+\sigma}M(Q)^{\sigma} E(Q) -\delta$ and $K(\mathcal{R}_{0}\psi^1)\geq 0$. 

{\bf Case 2.} 
We consider the case of $J_1+1\leq j \leq J_2$. We define a solution $U^j$ to \eqref{NLS} with the initial data $\psi^j$. 
Now, we have
\begin{align*}
	\lim_{n \to \infty} M(\psi_n^j) =2M(\psi^j) \text{ and }
	\lim_{n \to \infty} E(\psi_n^j) =2E(\psi^j).
\end{align*}
Thus we get $M(\psi^j)^{\sigma} E(\psi^j) < M(Q)^{\sigma} E(Q) -\delta/2^{1+\sigma}$ and we also obtain $K(\psi^j)\geq 0$. 
It holds that $\|U^j\|_{L_t^aL_x^r(\mathbb{R}\times \mathbb{R})} \lesssim 1$ by \cite{FXC11} and \cite{AkNa13}. We set $U_n^j(t,x):=\mathcal{R}_{x_n^j}U^j(t)$. Then we also have $\|U_n^j\|_{L_t^aL_x^r(\mathbb{R}\times \mathbb{R})} \leq 2\|U^j\|_{L_t^aL_x^r(\mathbb{R}\times \mathbb{R})} \lesssim 1$.

{\bf Case 3.} 
We consider the case of $J_2+1 \leq j \leq J_3$. If $j$ satisfies $t_n^j \to -\infty$, then we define a solution $W^j$ to \eqref{NLS} that scatters to $\mathcal{R}_{0}\psi^j$ as $t \to + \infty$. Now, we have
\begin{align*}
	\| \psi_n^j \|_{L^{p+1}}^{p+1} = \|e^{-it_n^j\partial_x^2} \psi^j \|_{L^{p+1}}^{p+1} \to 0
\end{align*}
by the dispersive estimate. Since we have
\begin{align*}
	\lim_{n \to \infty} M(\psi_n^j)^{\sigma}E(\psi_n^j) 
	=M(\psi^j)^{\sigma} \frac{ \|\psi^j \|_{\dot{H}^1}^2 }{2}
	=M(W^j)^{\sigma}E(W^j)
\end{align*}
we obtain $M(W^j)^{\sigma}E(W^j) < 2^{1+\sigma}M(Q)^{\sigma} E(Q) -\delta$. We also have $K(W^j)\geq 0$ since $ \lim_{n \to \infty} (K(W^j(t_n^j)) - K(\psi_n^j))=0$ and $ \lim_{n \to \infty}K(\psi_n^j) \approx \|\psi^j\|_{\dot{H}^1}^2 >0$. Therefore, by \cite{Inui17}, we find that $W^j$ is global and satisfies $\|W^j \|_{L_t^aL_x^r(\mathbb{R}\times \mathbb{R})} \lesssim 1$.
If $j$ satisfies $t_n^j \to +\infty$, then we define a solution $W^j$ to \eqref{NLS} that scatters to $\mathcal{R}_{0}\psi^j$ as $t \to - \infty$. In the same way, we find that $W^j$ is global in both time directions and $\|W^j \|_{L_t^aL_x^r(\mathbb{R}\times \mathbb{R})} \lesssim 1$. We set $W_{n}^j (t,x):=W^j (t-t_n^j,x)$. 

{\bf Case 4.} 
We consider the case of $J_3 +1 \leq j \leq J_4$. If $j$ satisfies $t_n^j \to -\infty$, then we define a solution $V^j$ to \eqref{NLS} that scatters to $\psi^j$ as $t \to + \infty$. Now, we have
\begin{align*}
	\| \psi_n^j \|_{L^{p+1}}^{p+1} 
	= \|e^{-it_n^j\partial_x^2} \mathcal{R}_{x_n^j}\psi^j \|_{L^{p+1}}^{p+1}
	\leq 2 \|e^{-it_n^j\partial_x^2} \psi^j \|_{L^{p+1}}^{p+1} 
	\to 0
\end{align*}
by the dispersive estimate. Since we have
\begin{align*}
	\lim_{n \to \infty} M(\psi_n^j)^{\sigma}E(\psi_n^j) 
	=(2M(\psi^j))^{\sigma} \|\psi^j \|_{\dot{H}^1}
	=2^{1+\sigma}M(V^j)^{\sigma}E(V^j)
\end{align*}
we obtain $M(V^j)^{\sigma}E(V^j) < M(Q)^{\sigma} E(Q) -\delta/2^{1+\sigma}$. 
We also have $K(V^j)\geq 0$ since $ \lim_{n \to \infty} (K(V^j(t_n^j)) - K( e^{-it_n^j\partial_x^2} \psi^j))=0$ and $ \lim_{n \to \infty} K( e^{-it_n^j\partial_x^2} \psi^j)) \approx \|\psi^j\|_{\dot{H}^1}^2> 0$. Therefore, by \cite{FXC11} and \cite{AkNa13}, we find that $V^j$ is global and satisfies $\|V^j \|_{L_t^aL_x^r(\mathbb{R}\times \mathbb{R})} \lesssim 1$.
If $j$ satisfies $t_n^j \to +\infty$, then we define a solution $V^j$ to \eqref{NLS} that scatters to $\psi^j$ as $t \to - \infty$. Then, in the same way,  $V^j$ is global and $\|V^j \|_{L_t^aL_x^r(\mathbb{R}\times \mathbb{R})} \lesssim 1$. We set $V_{n}^j (t,x):=\mathcal{R}_{x_n^j}V^j (t-t_n^j,x)$. Then we have $\|V_n^j \|_{L_t^aL_x^r(\mathbb{R}\times \mathbb{R})} \leq 2\|V^j \|_{L_t^aL_x^r(\mathbb{R}\times \mathbb{R})} \lesssim 1$.

We denote all functions $N,U_n^j, W_n^j,V_n^j$ by $v_n^j$ and we define a nonlinear profile by $Z_n^J:=\sum_{j=1}^{J} v_n^j$.
We have
\begin{align*}
	Z_n^J=e^{it\partial_x^2 }u(\tau_n) +i\int_{0}^{t}  e^{i(t-s)\partial_x^2} |Z_n^J|^{p-1}Z_n^J ds -e^{it\partial_x^2} R_n^J+ s_n^J
\end{align*}
with $\|s_n^J\|_{L_t^aL_x^r} \to 0$ as $n \to \infty$ and $\limsup_{n \to \infty}\|e^{it\partial_x^2} R_n^J\|_{L_t^aL_x^r} < \varepsilon$ for large $J$. Moreover, $\limsup_{n \to \infty} \|Z_n^J\|_{L_t^aL_x^r}$ is bounded independently on $J$. By the long time perturbation, we obtain $\|u(\tau_n)\|_{L_t^aL_x^r} \lesssim  1$. This is a contradiction. Therefore, we get $J=1$. 
Thus, we get a parameter $(t_n,x_n)$ such that
\begin{align*}
	u(\tau_n) =e^{-it_n \partial_x^2} \mathcal{R}_{x_n} \psi +R_n
\end{align*}
and $\lim_{n \to \infty}\|R_n\|_{H^1} =0$. Assuming $|t_n| \to \infty$, we derive a contradiction to the non-scattering of $u$ by the same argument as in Case 3 and 4. Therefore, we have
\begin{align*}
	u(\tau_n) =\mathcal{R}_{x_n} \psi +R_n.
\end{align*}
By Proposition \ref{prop2.21} below, we get the statement. 
\end{proof}

\begin{proposition}
\label{prop2.21}
Let $u$ be an odd solution to \eqref{NLS} such that $M(u_0)=2M(Q)$, $E(u_0)=2E(Q)$ and $K(u_0)>0$. Assume that for any time sequence $\{t_n\} \subset [0,\infty)$ satisfying $t_n \to \infty$ as $n \to \infty$, there exist a subsequence of $\{t_n\}$, which is also denoted by $\{t_n\}$, $\{x_n\} \subset \mathbb{R}$ and $\psi \in H^1(\mathbb{R})\setminus\{0\}$ such that 
\begin{align*}
	\lim_{n \to \infty} \|u(t_n) - \mathcal{R}_{x_n}\psi \|_{H^1} =0.
\end{align*}
Then, there exists a function $x:[0,\infty) \to [0,\infty)$ such that for any $\varepsilon>0$ there exists $R=R(\varepsilon)>0$ such that
\begin{align*}
	\int_{\{|x-x(t)|>R\}\cap \{|x+x(t)|>R\}} |\partial_x u(t,x)|^2 + |u(t,x)|^2 dx <\varepsilon
\end{align*}
for all $t \in [0,\infty)$.
\end{proposition}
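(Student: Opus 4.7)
The plan is to reduce the statement to a uniform tightness property modulo the odd-translation action, and then produce $x(t)$ as an approximate minimizer at a fixed scale. Throughout, write
\[
F_R(t,y):=\int_{\{|x-y|>R\}\cap\{|x+y|>R\}}\!\!\!\!\bigl(|\partial_x u(t,x)|^2+|u(t,x)|^2\bigr)\,dx,
\]
so that the conclusion of the proposition is $\sup_t F_{R(\eps)}(t,x(t))<\eps$.

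First, I would prove the auxiliary statement $\sup_{t\geq 0}\inf_{y\geq 0} F_R(t,y)\to 0$ as $R\to\infty$. Suppose this fails: there exist $\eps_0>0$, $R_n\to\infty$ and $t_n\geq 0$ with $\inf_y F_{R_n}(t_n,y)\geq\eps_0$. If $\{t_n\}$ is bounded, extract $t_n\to t_\ast<\infty$; by $H^1$-continuity of the NLS flow, $u(t_n)\to u(t_\ast)$ in $H^1$, and since $u(t_\ast)\in H^1$ is tight, for $R$ large $F_R(t_\ast,0)<\eps_0/2$, so $F_{R_n}(t_n,0)<\eps_0$ for large $n$, contradiction. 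If $t_n\to\infty$, apply the hypothesis to get a subsequence with $u(t_n)-\mathcal{R}_{x_n}\psi\to 0$ in $H^1$ for some $x_n\geq 0$ and $\psi\in H^1\setminus\{0\}$. The function $\mathcal{R}_{x_n}\psi(x)=\psi(x-x_n)-\psi(-x-x_n)$ satisfies $F_R(\mathcal{R}_{x_n}\psi,x_n)\leq 2\int_{|z|>R}(|\psi'|^2+|\psi|^2)\,dz$, which vanishes uniformly in $n$ as $R\to\infty$. Combining with the $H^1$ approximation gives $F_{R_n}(t_n,x_n)<\eps_0$ for large $n$, a contradiction. Next, fix any small $\eta_0>0$; by what was just proved, there is $R_0$ with $\inf_y F_{R_0}(t,y)<\eta_0$ for all $t$. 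Define
\[
x(t):=\inf\{y\geq 0:F_{R_0}(t,y)\leq 2\eta_0\},
\]
which is finite for every $t$.

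It remains to verify that $x(t)$ has the desired tightness. Given $\eps>0$, suppose for contradiction that there are $R_n\to\infty$ and $t_n$ with $F_{R_n}(t_n,x(t_n))\geq\eps$. In the bounded case $t_n\to t_\ast$: by oddness, if $x(t_n)\to\infty$ then the mass $\geq\tfrac12\|u(t_n)\|_{H^1}^2-\eta_0$ in $B_{R_0}(\pm x(t_n))$ escapes any fixed $[-M,M]$, contradicting $H^1$-tightness of $u(t_\ast)=\lim u(t_n)$; hence $x(t_n)$ stays bounded, and an $H^1$ convergence argument then shows $F_{R_n}(t_n,x(t_n))\to 0$. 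In the case $t_n\to\infty$, write $u(t_n)=\mathcal{R}_{y_n}\psi+r_n$ with $\|r_n\|_{H^1}\to 0$. Both $B_{R_0}(\pm x(t_n))$ and $B_{R}(\pm y_n)$ (for $R$ capturing most of $\psi$'s $H^1$-mass) contain all but a small fraction of $\|u(t_n)\|_{H^1}^2$. Using oddness — the mass on $[0,\infty)$ is half of the total, and the two positive-side balls $B_{R_0}(x(t_n))$, $B_R(y_n)$ each carry nearly that half — these two balls cannot be disjoint; hence $|x(t_n)-y_n|\leq R_0+R$. Therefore $B_{R_n-R_0-R}(\pm y_n)\subset B_{R_n}(\pm x(t_n))$, which forces
\[
F_{R_n}(t_n,x(t_n))\leq F_{R_n-R_0-R}(t_n,y_n)\leq 2\!\!\int_{|z|>R_n-R_0-R}\!\!(|\psi'|^2+|\psi|^2)\,dz+o(1)\to 0,
\]
contradicting $F_{R_n}(t_n,x(t_n))\geq\eps$.

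The main obstacle is the step in the last case where one must force $|x(t_n)-y_n|$ to be uniformly bounded. The odd symmetry is essential: it is what converts a single-bump profile $\mathcal{R}_{y_n}\psi$ into a two-bump object whose positive-side mass is comparable to $\tfrac12\|u\|_{H^1}^2$, so that two different candidate centers on $[0,\infty)$ with widely separated neighborhoods cannot both absorb nearly all of this mass. All other steps are routine consequences of $H^1$-continuity of the flow, $H^1$-tightness of a single function, and the compactness hypothesis.
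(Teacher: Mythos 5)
Your proof is correct and follows essentially the same route as the paper's: derive from the sequential compactness hypothesis (plus flow continuity for bounded times) a uniform-in-$t$ concentration statement, use it to define $x(t)$, show $|x(t_n)-x_n|$ stays bounded by an overlap-of-mass argument that exploits oddness, and conclude by contradiction. The only cosmetic difference is that you define $x(t)$ as a near-minimizer of the exterior $H^1$ mass at a fixed large scale, whereas the paper uses a near-maximizer of the unit-scale local $L^2$ mass; these are dual formulations of the same concentration property.
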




\begin{lemma}
\label{lemC.1}
Let $u(t)$ satisfy the assumption in Proposition \ref{prop2.21}. 
Then there exists $c>0$ such that
\begin{align*}
	\sup_{x_0 \geq 0 }\|u(t)\|_{L^2(\{|x-x_0|<1\}\cup\{|x+x_0|<1\})}\geq c
\end{align*}
for all $t \in \mathbb{R}$. 
\end{lemma}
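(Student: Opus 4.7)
The plan is an argument by contradiction. Suppose the conclusion fails, so that some sequence $\{t_n\} \subset [0,\infty)$ satisfies
\[
\sup_{x_0 \geq 0}\|u(t_n)\|_{L^2(\{|x-x_0|<1\}\cup\{|x+x_0|<1\})} \longrightarrow 0.
\]
The first step is to rule out the case that $\{t_n\}$ stays bounded. If $t_n \to t_\ast \in [0,\infty)$ along a subsequence, then since the flow is continuous into $H^1$ and the functional $v \mapsto \sup_{x_0 \geq 0}\|v\|_{L^2(\{|x-x_0|<1\}\cup\{|x+x_0|<1\})}$ is $1$-Lipschitz on $L^2$, the limit supremum equals $0$, forcing $u(t_\ast) \equiv 0$ on every symmetric pair of unit intervals around $\pm x_0$ with $x_0 \geq 0$ --- hence on all of $\mathbb{R}$. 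This contradicts $M(u(t_\ast)) = 2M(Q) > 0$, so I may assume $t_n \to \infty$.

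I then apply the compactness-modulo-symmetry hypothesis of Proposition \ref{prop2.21}: after a further subsequence there exist $x_n \in \mathbb{R}$ and $\psi \in H^1(\mathbb{R}) \setminus \{0\}$ with $\|u(t_n) - \mathcal{R}_{x_n}\psi\|_{H^1} \to 0$. The same Lipschitz estimate transfers the vanishing to the profile:
\[
\sup_{x_0 \geq 0}\|\mathcal{R}_{x_n}\psi\|_{L^2(\{|x-x_0|<1\}\cup\{|x+x_0|<1\})} \longrightarrow 0.
\]
Replacing $\psi$ by $\psi(-\,\cdot\,)$ if necessary (which only negates $\mathcal{R}_{x_n}\psi$) I may assume $x_n \geq 0$. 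Also, by the $L^2$-closeness $\|\mathcal{R}_{x_n}\psi\|_{L^2}^2 \to \lim \|u(t_n)\|_{L^2}^2 = 2M(Q)$.

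The final contradiction splits on whether $\{x_n\}$ is bounded. If $x_n \to x_\infty \in [0,\infty)$ along a subsequence, then $\mathcal{R}_{x_n}\psi \to \mathcal{R}_{x_\infty}\psi$ in $L^2$, and this limit is nonzero since its $L^2$-norm squared equals $2M(Q) > 0$. Choosing any $x_0 \geq 0$ on which $\mathcal{R}_{x_\infty}\psi$ carries positive localized $L^2$ mass contradicts the preceding display. If instead $x_n \to +\infty$, I fix $a \in \mathbb{R}$ and $c_0 > 0$ with $\|\psi\|_{L^2((a,a+1))} \geq c_0$ (possible since $\psi \not\equiv 0$) and set $x_0^{(n)} := x_n + a + \tfrac12$, which is nonnegative for large $n$. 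On $\{|x - x_0^{(n)}| < 1\}$ the principal term $\psi(\,\cdot\, - x_n)$ has $L^2$-norm at least $c_0$, while the reflected term $\psi(-\,\cdot\, - x_n)$ is concentrated near $-x_n$ and contributes $o(1)$ by the vanishing tails of $\psi \in L^2$. Thus $\|\mathcal{R}_{x_n}\psi\|_{L^2(\{|x - x_0^{(n)}| < 1\})} \geq c_0/2$ for large $n$, again contradicting the display.

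The main obstacle is the translation case $x_n \to +\infty$: one must arrange the localization center $x_0^{(n)}$ so that the translated profile and its odd reflection fall into well-separated regions, and simultaneously ensure $x_0^{(n)} \geq 0$ as required by the supremum in the statement. The preliminary normalization $x_n \geq 0$ via the reflection $\psi \mapsto \psi(-\,\cdot\,)$ is precisely what makes this simultaneous choice possible; once that is in place, the interference of the two translates is controlled by the $L^2$-integrability of $\psi$.
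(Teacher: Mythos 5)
Your proof is correct and follows essentially the same route as the paper's: argue by contradiction, pass to the profile $\mathcal{R}_{x_n}\psi$ via the compactness hypothesis, and split on whether $\{x_n\}$ is bounded, using a window translated by $x_n$ to extract nonzero localized mass in the unbounded case. The only differences are cosmetic — you explicitly rule out bounded $\{t_n\}$ by continuity of the flow (a point the paper glosses over) and you test a single well-chosen window rather than computing the limit of the localized norm for every $x_0$, but the underlying mechanism is identical.
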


\begin{proof}
Let $B_{y}:=\{|x-y|<1\}$ and $A_{y}:=B_{y}\cup B_{-y}$ for $y\geq 0$. 
By the symmetry, it is enough to show that there exist $c>0$ such that $\sup_{x_0 \in \mathbb{R} }\|u(t)\|_{L^2(A_{x_0})}\geq c$ for all $t$. 
If not, there exists $\{t_n\}$ satisfying $t_n \to \infty$ such that \begin{align*}
	\sup_{x_0 \in \mathbb{R} }\|u(t_n)\|_{L^2(A_{x_0})}\to 0.
\end{align*}
By the assumption, we get $\{x_n\}$ and $\psi$ such that $u(t_n) - \mathcal{R}_{x_n} \psi \to 0$ strongly in $H^1$. 
If $\{x_n\}$ is bounded, we may assume that $\{x_n\}$ converges to $x_{\infty} \in [0,\infty)$ by taking a subsequence and thus we may take $x_n \equiv 0$ by replacing $\mathcal{R}_{x_{\infty}}\psi$ by $\psi$. If $\{x_n\}$ is unbounded, we may assume that $x_n \to \infty$ as $n \to \infty$ by taking a subsequence. 
If $x_n\equiv 0$ (i.e. $u(t_n) \to \psi$ in $H^1$ strongly), then we have
\begin{align*}
	 \|\psi\|_{L^2(A_{x_0})}
	\leq \|u(t_n) - \psi\|_{L^2(A_{x_0})} 
	+\sup_{x_0\in \mathbb{R} }\|u(t_n)\|_{L^2(A_{x_0})}
\end{align*}
for any $x_0 \in \mathbb{R}$. The right hand side goes to zero. This means that $\psi= 0$. This contradicts $\psi\neq 0$. 

Next we consider the case of $x_n \to \infty$. 
Letting $\psi^\dagger(x)=\psi(-x)$, 
we have
\begin{align*}
	\|\mathcal{R}_{x_n}\psi\|_{L^2(A_{x_0+x_n})}^2
	&=\| \psi(x-x_n) - \psi^{\dagger}(x+x_n)\|_{L^2(A_{x_0+x_n})}^2
	\\
	&=\int_{A_{x_0+x_n}} |\psi(x-x_n)|^2 +|\psi^{\dagger}(x+x_n)|^2dx
	\\
	&\quad - \int_{A_{x_0+x_n}} 2\re \psi(x-x_n) \overline{\psi^{\dagger}(x+x_n)} dx
	\\
	&=\int_{\{|x-x_0|<1\}\cup\{|x+x_0+2x_n|<1\}} |\psi(x)|^2 dx
	\\
	&\quad  +\int_{\{|x-x_0-2x_n|<1\}\cup\{|x+x_0|<1\}} |\psi^{\dagger}(x)|^2 dx
	\\ 
	&\quad -2\re \int_{A_{x_0+x_n}} \psi(x-x_n)\overline{\psi^{\dagger}(x+x_n)} dx.
\end{align*}
When $x_n \to \infty$, we have
\begin{align*}
	& \int_{A_{x_0+x_n}} \psi(x-x_n)\overline{\psi^{\dagger}(x+x_n)} dx \to 0,
	 \\
	& \int_{\{|x+x_0+2x_n|<1\}} |\psi(x)|^2 dx + \int_{\{|x-x_0-2x_n|<1\}} |\psi^{\dagger}(x)|^2 dx \to 0. 
\end{align*}
Therefore, we obtain
\begin{align*}
	\|\mathcal{R}_{x_n}\psi\|_{L^2(A_{x_0+x_n})}^2 
	\to \|\psi\|_{L^2(B_{x_0})} + \|\psi^\dagger\|_{L^2(B_{-x_0})} 
	=2\|\psi\|_{L^2(B_{x_0})}
\end{align*}
On the other hand, we have
\begin{align*}
	\|\mathcal{R}_{x_n}\psi\|_{L^2(A_{x_0+x_n})} 
	&\leq \|u(t_n)-\mathcal{R}_{x_n}\psi\|_{L^2(A_{x_0+x_n})}
	 +\|u(t_n)\|_{L^2(A_{x_0+x_n})} 
	\\
	&\leq \|u(t_n)-\mathcal{R}_{x_n}\psi\|_{L^2(\mathbb{R})}
	 +\sup_{x_0 \in \mathbb{R}}\|u(t_n)\|_{L^2(A_{x_0})} 
\end{align*}
for any $n$. 
This implies that $\|\mathcal{R}_{x_n}\psi\|_{L^2(A_{x_0+x_n})}  \to 0$ as $n \to \infty$. These estimates mean that $\|\psi\|_{L^2(B_{x_0})}=0$ for any $x_0\in \mathbb{R}$ and thus $\psi(x)= 0$. 
This is a contradiction.  
\end{proof}

\begin{proof}[Proof of Proposition \ref{prop2.21}]

By Lemma \ref{lemC.1}, there exists a function $x: [0,\infty) \to [0,\infty)$ satisfying 
\begin{align}
\label{eqC.1}
	\|u(t)\|_{L^2(\{|x-x(t)|<1\}\cup\{|x+x(t)|<1\})}\geq c
\end{align}
for all $t \in \mathbb{R}$. Now we have the following. 

{\bf Claim.} $|x(t_n)-x_n|$ is bounded. 

\begin{proof}[Proof of Claim]
Since, for any sequence $\{t_n\}$, there exists $\{x_n\}$ and $\psi$ such that
\begin{align*}
	u(t_n) = \mathcal{R}_{x_n} \psi +o_n(1)
\end{align*}
by the assumption and 
\begin{align*}
	\|u(t_n)\|_{L^2(\{|x-x(t_n)|<1\}\cup\{|x+x(t_n)|<1\})}\geq c
\end{align*}
by \eqref{eqC.1},
we obtain
\begin{align*}
	 \|\mathcal{R}_{x_n}\psi\|_{L^2(\{|x-x(t_n)|<1\}\cup\{|x+x(t_n)|<1\})}\geq c/2
\end{align*}
for sufficiently large $n$. 
If $|x(t_n)-x_n|$ is unbounded, a simple calculation gives us $\|\mathcal{R}_{x_n}\psi\|_{L^2(\{|x-x(t_n)|<1\}\cup\{|x+x(t_n)|<1\})} \to 0$ as $n \to \infty$. This is a contradiction.
\end{proof}

We suppose that the result fails. 
Then there exists $\varepsilon>0$ and a time sequence $\{t_n\}$ such that
\begin{align*}
	\int_{\{|x-x(t_n)|>n\} \cap \{|x+x(t_n)|>n\}} |\partial_x u(t_n,x)|^2 + |u(t_n,x)|^2 dx \geq \varepsilon.
\end{align*}
By the assumption, there exist $\{x_n\}$ and $\psi$ such that
\begin{align*}
	u(t_n) = \mathcal{R}_{x_n} \psi +o_n(1).
\end{align*}
Thus we also have
\begin{align*}
	\int_{\{|x-x(t_n)|>n\} \cap \{|x+x(t_n)|>n\}} |\partial_x \mathcal{R}_{x_n} \psi(x)|^2 + |\mathcal{R}_{x_n} \psi(x)|^2 dx \geq \frac{\varepsilon}{2}
\end{align*}
for large $n$. 
This is a contradiction since $|x(t_n)-x_n|$ is bounded. 
\end{proof}

Next we will show that we can replace $x(t)$ in the compactness by the modulation parameter $y(t)$ for $t \in I_{\mu_0}$. 

\begin{lemma}
\label{lem5.2}
Let $\mu_0$ be sufficiently small. Then there exists $C>0$ such that
\begin{align*}
	|x(t)-y(t)|<C
\end{align*}
for $t \in I_{\mu_0}$.
\end{lemma}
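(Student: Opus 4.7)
The plan is to argue by contradiction. Suppose no such $C$ exists; then there is a sequence $\{t_n\} \subset I_{\mu_0}$ with $\ell_n := |x(t_n) - y(t_n)| \to \infty$. Because $y(t_n) > R$ (the large fixed constant appearing in Lemma~\ref{lem4.2}) and $x(t_n), y(t_n) \geq 0$, the failure of the bound forces, after passing to a subsequence, $y(t_n) + x(t_n) \to \infty$ as well: otherwise both $y(t_n)$ and $x(t_n)$ would be bounded, and so would $\ell_n$.

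Next, I would fix a small $\varepsilon > 0$ (to be chosen at the end) and apply Proposition~\ref{prop5.1} to obtain $R_\varepsilon > 0$ such that $\|u(t_n)\|_{L^2(A_n)} \leq \sqrt{\varepsilon}$, where
\[
A_n := \{|x - x(t_n)| > R_\varepsilon\} \cap \{|x + x(t_n)| > R_\varepsilon\}.
\]
After shrinking $\mu_0$ if needed, the modulation in Lemma~\ref{lem4.2} yields $\|u(t_n) - e^{i(\theta(t_n)+t_n)}\mathcal{R}_{y(t_n)}Q\|_{H^1} \leq \sqrt{\varepsilon}$, so the triangle inequality gives
\[
\|\mathcal{R}_{y(t_n)}Q\|_{L^2(A_n)} \leq 2\sqrt{\varepsilon}.
\]

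The heart of the argument is to show, conversely, that $\|\mathcal{R}_{y(t_n)}Q\|_{L^2(A_n^c)}^2 \to 0$, so that the $L^2$-mass of $\mathcal{R}_{y(t_n)}Q$ concentrates entirely inside $A_n$. Writing $\mathcal{R}_{y(t_n)}Q = Q(\cdot - y(t_n)) - Q(\cdot + y(t_n))$ and using $|\mathcal{R}_{y(t_n)}Q|^2 \leq 2|Q(\cdot - y(t_n))|^2 + 2|Q(\cdot + y(t_n))|^2$, this reduces to four integrals of $|Q|^2$ over intervals of length $2R_\varepsilon$ whose centers have magnitude $\ell_n$ or $x(t_n) + y(t_n)$, both tending to infinity. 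The exponential decay of $Q$ then gives the desired vanishing. Combining with the lower bound $\|\mathcal{R}_{y(t_n)}Q\|_{L^2}^2 \geq \|Q\|_{L^2}^2$, valid for $y(t_n) > R$ with $R$ large (by Lemma~\ref{lem2.14}), one obtains $\|\mathcal{R}_{y(t_n)}Q\|_{L^2(A_n)}^2 \geq \|Q\|_{L^2}^2 - o(1)$, which contradicts the bound $\leq 4\varepsilon$ once $\varepsilon < \|Q\|_{L^2}^2/16$. The main technical point is the symmetry bookkeeping: tracking the two bumps of $\mathcal{R}_{y(t_n)}Q$ at $\pm y(t_n)$ against the two pieces of $A_n^c$ near $\pm x(t_n)$, where the sign conventions $x(t_n), y(t_n) \geq 0$ are essential to exclude the spurious matching $x(t_n) \approx -y(t_n)$.
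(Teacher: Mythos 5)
Your proof is correct and follows essentially the same strategy as the paper: argue by contradiction from $|x(t_n)-y(t_n)|\to\infty$ (noting $x(t_n)+y(t_n)\to\infty$ too, since both are nonnegative), and exploit the mismatch between the concentration points $\pm x(t_n)$ of $u$ and the soliton centers $\pm y(t_n)$ of $\mathcal{R}_{y(t_n)}Q$. The only (harmless) difference is dual bookkeeping: the paper localizes \emph{near} $\pm x(t_n)$, where the concentration lower bound $\|u(t_n)\|_{L^2}\geq c$ of Lemma \ref{lemC.1} forces $\|g(t_n)\|_{L^2}^2\gtrsim c$, contradicting $\|g\|_{L^2}^2\lesssim \mu_0$, whereas you localize \emph{away from} $\pm x(t_n)$ and conclude that $\|\mathcal{R}_{y(t_n)}Q\|_{L^2}$ would have to be small, contradicting $\|\mathcal{R}_{y(t_n)}Q\|_{L^2}^2\geq \|Q\|_{L^2}^2$.
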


\begin{proof}
If not, there exists a sequence $\{t_n\} \subset I_{\mu_0}$ such that 
\begin{align*}
	|x(t_n)-y(t_n)| \to \infty.
\end{align*}
Since $t_n \in I_{\mu_0}$, by the modulation, we write $u(t_n)=\mathcal{R}_{y(t_n)}Q + g$. Then
\begin{align*}
	c 
	&\leq \int_{\{|x-x(t_n)|<1\} \cup \{|x+x(t_n)|<1\}} |u(t_n,x)|^2 dx 
	\\
	&\lesssim \int_{\{|x-x(t_n)|<1\} \cup \{|x+x(t_n)|<1\}} |\mathcal{R}_{y(t_n)}Q|^2 dx  + \int_{\mathbb{R}} |g(t_n) |^2 dx.
\end{align*}
Since the first term goes to zero as $n \to \infty$,
we have
\begin{align*}
	\frac{c}{2} 
	\lesssim \|g(t_n) \|_{L^2}^2
\end{align*}
for sufficiently large $n$. On the other hand, we have $\|g(t_n) \|_{L^2}^2 \lesssim \mu(t_n) < \mu_0$. Thus, we get a contradiction since $\mu_0\ll 1$. 
\end{proof}

We define new function
\begin{align}
\label{eqX}
	X(t):=
	\begin{cases}
	x(t), &t\in [0,\infty)\setminus I_{\mu_0},
	\\
	y(t), &t\in I_{\mu_0}.
	\end{cases}
\end{align}
Then it is easy to check that
for any $\varepsilon>0$, there exists $R>0$ such that
\begin{align*}
	\int_{\{|x-X(t)|>R\} \cap \{|x+X(t)|>R\}} |\partial_x u(t,x)|^2 + |u(t,x)|^2 dx <\varepsilon.
\end{align*}

\begin{lemma}
\label{lem5.3.0}
Let $\{t_n\} \subset [0,\infty)$ be an arbitrary time sequence. 
If $|X(t_n)|$ is unbounded, taking a subsequence of $\{t_n\}$, which is also denoted by $\{t_n\}$, we have $\psi \in H^1$ such that
\begin{align*}
	u(t_n) - \mathcal{R}_{X(t_n)}\psi \to 0.
\end{align*}
If $|X(t_n)|$ is bounded, taking a subsequence of $\{t_n\}$, which is also denoted by $\{t_n\}$, we have $\psi \in H^1$ such that
\begin{align*}
	u(t_n) \to \psi.
\end{align*}
\end{lemma}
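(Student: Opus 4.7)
The plan is to dichotomize on whether $\{t_n\}$ has a bounded subsequence, and in the latter case to transfer the single-profile conclusion from Proposition \ref{prop5.1} onto the concentration parameter $X$ using the relation $|X(t) - x(t)| = O(1)$ given by Lemma \ref{lem5.2} and the definition \eqref{eqX}.

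First, if $\{t_n\}$ has a bounded subsequence, I would extract $t_n \to t_\infty \in [0,\infty)$ and invoke $H^1$-continuity of the flow to get $u(t_n) \to u(t_\infty)$ strongly in $H^1$, with limit $\psi := u(t_\infty)$. I would then observe that $X(t_n)$ must be bounded in this sub-case: by the compactness stated for $X$ just after \eqref{eqX}, a fraction of the $H^1$-mass of $u(t_n)$ sits near $\pm X(t_n)$; if $|X(t_n)| \to \infty$, that mass would escape to infinity, contradicting the $H^1$-convergence $u(t_n) \to u(t_\infty)$. Hence this sub-case is entirely within the ``bounded $X(t_n)$'' alternative of the statement.

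Otherwise, along a subsequence, $t_n \to \infty$. Then the proof of Proposition \ref{prop5.1} delivers a non-zero $\psi \in H^1$ and $x_n \geq 0$ with
\begin{equation*}
    u(t_n) = \mathcal{R}_{x_n}\psi + R_n, \qquad \|R_n\|_{H^1} \to 0.
\end{equation*}
The Claim inside the proof of Proposition \ref{prop2.21} gives $|x_n - x(t_n)| = O(1)$, and Lemma \ref{lem5.2} together with the definition \eqref{eqX} gives $|x(t) - X(t)| = O(1)$. Combining these, $s_n := x_n - X(t_n)$ is bounded. Now, if $X(t_n)$ is bounded, so is $x_n$; extracting $x_n \to x_\infty \in [0,\infty)$ and using continuity of translation in $H^1$, I get $\mathcal{R}_{x_n}\psi \to \mathcal{R}_{x_\infty}\psi$ in $H^1$, hence $u(t_n) \to \mathcal{R}_{x_\infty}\psi$, which is of the required form with limit $\mathcal{R}_{x_\infty}\psi$.

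The main case to handle carefully is $X(t_n) \to \infty$. Here I extract a further subsequence so that $s_n \to \delta \in \mathbb{R}$ and set $\tilde\psi(x) := \psi(x - \delta)$. A direct expansion gives
\begin{equation*}
    \mathcal{R}_{x_n}\psi(x) - \mathcal{R}_{X(t_n)}\tilde\psi(x) = \bigl[\psi(x - x_n) - \psi(x - X(t_n) - \delta)\bigr] - \bigl[\psi(-x - x_n) - \psi(-x - X(t_n) - \delta)\bigr],
\end{equation*}
and, after the substitutions $y = x - X(t_n)$ resp.\ $y = -x - X(t_n)$, each bracket becomes $\psi(y - s_n) - \psi(y - \delta)$ in $L^2_y$; this tends to $0$ in $H^1$ as $s_n \to \delta$ by continuity of translation on $H^1$. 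Adding $\|R_n\|_{H^1} \to 0$ yields $u(t_n) - \mathcal{R}_{X(t_n)}\tilde\psi \to 0$ in $H^1$, completing the unbounded case. The only real care-point is this last matching of translated-and-reflected profiles, which is routine once continuity of translation in $H^1$ is used; all the substantive work has been done upstream in Propositions \ref{prop5.1} and \ref{prop2.21} and in Lemma \ref{lem5.2}.
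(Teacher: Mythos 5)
Your proof is correct, and in the key step it takes a genuinely different (and cleaner) route than the paper. Both arguments start the same way: extract the single-profile decomposition $u(t_n)=\mathcal{R}_{x_n}\psi+R_n$ from Proposition \ref{prop5.1}, and control $|x_n-X(t_n)|$ via the Claim in Proposition \ref{prop2.21} together with Lemma \ref{lem5.2}. The divergence is in how the profile is recentered at $X(t_n)$ when $X(t_n)\to\infty$: the paper keeps the original $\psi$, writes $u(t_n)-\mathcal{R}_{X(t_n)}\psi$ as a telescoping sum converging to some $\tilde\psi$, and then rules out $\tilde\psi\neq 0$ by a long-time-perturbation contradiction with the non-scattering of the critical element; you instead absorb the limiting shift $\delta=\lim(x_n-X(t_n))$ into the profile, replacing $\psi$ by $\tilde\psi=\psi(\cdot-\delta)$, and verify directly via continuity of translation in $H^1$ that $\mathcal{R}_{x_n}\psi-\mathcal{R}_{X(t_n)}\tilde\psi\to 0$. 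Since the lemma only asserts the existence of \emph{some} $\psi$, your substitution is legitimate, and it buys two things: it avoids any appeal to the dynamics (long-time perturbation and non-scattering), so the conclusion holds for any solution with the stated compactness, and it sidesteps the somewhat delicate claim in the paper that the telescoped difference actually converges to zero with the original profile. You also explicitly handle the case of a bounded time subsequence via $H^1$-continuity of the flow and a mass-escape argument showing $X(t_n)$ is then bounded; the paper's proof implicitly assumes $t_n\to\infty$ so as to invoke Proposition \ref{prop5.1}, so your treatment is the more complete one (and is relevant, since the lemma is later applied to possibly bounded sequences in the proof of Lemma \ref{lem5.8}).
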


\begin{proof}
If $|X(t_n)|$ is unbounded, we may assume that $X(t_n) \to \infty$ by taking a subsequence. 
By Proposition \ref{prop5.1}, we have $\{x_n\}$ and $\psi \in H^1$ such that  
\begin{align*}
	u(t_n) - \mathcal{R}_{x_n}\psi  \to 0. 
\end{align*}
Since $|x(t_n) - X(t_n)|$ and $|x_n -x(t_n)|$ are bounded (see Lemma \ref{lem5.2} and the claim in the proof of Proposition \ref{prop2.21}), taking a subsequence, we may assume that $x(t_n) - X(t_n) \to x'$ and $x_n -x(t_n)\to x''$. 
Then we have
\begin{align*}
	u(t_n) - \mathcal{R}_{X(t_n)}\psi 
	&=u(t_n) - \mathcal{R}_{x_n}\psi  + \mathcal{R}_{x_n}\psi  - \mathcal{R}_{x(t_n)}\psi + \mathcal{R}_{x(t_n)}\psi  - \mathcal{R}_{X(t_n)}\psi 
	\\
	&\to \mathcal{R}_{x'}\psi + \mathcal{R}_{x''}\psi =: \tilde{\psi}.
\end{align*}
Then, if $\tilde{\psi}\neq 0$, by the long time perturbation, we get a contradiction to non-scattering of $u$. 

If $|X(t_n)|$ is bounded, we may assume that $X(t_n)$ converges to $X_{\infty}$ by taking a subsequence. 
By Proposition \ref{prop5.1}, we have $\{x_n\}$ and $\psi \in H^1$ such that  
\begin{align*}
	u(t_n) - \mathcal{R}_{x_n}\psi  \to 0. 
\end{align*}
Since $|x(t_n) - X(t_n)|$ is bounded, $|x(t_n)|$ is bounded and thus $\{x_n\}$ is bounded by boundedness of $|x_n -x(t_n)|$. Thus we can take $x_n =0$ and $u(t_n) \to \mathcal{R}_{0}\psi$. 
\end{proof}

\subsection{Extinction of the critical element}

We derive a contradiction. First we show that $X$ must be bounded. Second we will prove that the boundedness of $X$ implies a contradiction.


\subsubsection{$X(t)$ is bounded}

We show $X(t)$ is bounded by contradiction. 

\begin{proposition}
\label{prop5.3}
Let $u$ be an odd solution with $E(u)=2E(Q)$, $M(u)=2M(Q)$ and $K(u(t))\geq 0$ such that any $\varepsilon>0$ there exists $R=R(\varepsilon)>0$ such that
\begin{align*}
	\int_{\{|x-X(t)|>R\}\cap \{|x+X(t)|>R\}} |\partial_x u(t,x)|^2 + |u(t,x)|^2 dx <\varepsilon
\end{align*}
for all $t \in [0,\infty)$, where $X$ is defined in \eqref{eqX}.
Then $X(t)$ is bounded.
\end{proposition}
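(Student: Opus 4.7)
The plan is a contradiction argument based on the localized virial identity $J_R''(u(t))=8K(u(t))+A_R(u(t))$, combined with the slow-motion bound on the modulation parameter inside $I_{\mu_0}$. Suppose $X(t)$ is unbounded; pick a sequence $t_n\to\infty$ with $X(t_n)\to\infty$, and apply Lemma \ref{lem5.3.0} to extract a profile: along a subsequence, $u(t_n) - \mathcal{R}_{X(t_n)}\psi\to 0$ in $H^1$ for some nontrivial $\psi$ with $M(\psi)=M(Q)$, $E(\psi)=E(Q)$, and $K(\psi)\geq 0$.

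I would apply the virial identity on a window $[t_n-T_n, t_n+T_n]$ with a well-chosen radius $R_n$. The a priori estimate $|J_{R_n}'(u(t))|\lesssim R_n\|u\|_{L^2}\|\partial_x u\|_{L^2}\lesssim R_n$ integrates to $\left|\int_{t_n-T_n}^{t_n+T_n}(8K+A_{R_n})\,dt\right|\lesssim R_n$. The compactness hypothesis lets me pick $R_n$ larger than $\sup_{t\in[t_n-T_n,t_n+T_n]}X(t)+R(\varepsilon)$ so that $|A_{R_n}(u(t))|\leq \varepsilon$ uniformly on the window. To build such a window, I split according to $\mu(t)$: outside $I_{\mu_0}$, Lemma \ref{lem2.11} gives $K(u(t))\gtrsim \mu_0>0$; inside $I_{\mu_0}$, $X(t)=y(t)$ moves slowly since $|y'(t)|\lesssim \mu(t)\leq \mu_0$ by Lemma \ref{lem4.10}. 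Choosing $T_n\sim X(t_n)/\mu_0$, the parameter $X(t)$ stays comparable to $X(t_n)$ on the window, so $R_n\sim X(t_n)$ suffices, giving $|J_{R_n}'|\lesssim X(t_n)$ while $T_n\gg X(t_n)$. A matching lower bound $\int K\,dt \gg R_n$ would then close the contradiction.

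The main obstacle is the pure modulation regime, where $K(u(t))\approx \mu(t)$ may degenerate to zero. Here the plan is to combine Corollary \ref{cor4.8} ($e^{-2y}+\|h\|_{H^1}^2\lesssim \mu^2$) with the strict positivity from Lemma \ref{key} ($S(\mathcal{R}_y Q)-2S(Q)\approx e^{-2y}>0$) to produce a quantitative lower bound on the time average of $K$ along the window, sufficient to outrun the $O(R_n)$ upper bound. The coercivity of the linearized operator (Lemma \ref{coercivity}) enters through the modulation estimates. The delicate point is the bookkeeping of how small $\mu(t)$ can become while $y(t)$ is still near $X(t_n)$, and how frequently the solution may leave $I_{\mu_0}$ (where $K$ is automatically bounded below); this interplay between the variational gap and the slow evolution of $y$ constitutes the technical heart of the argument.
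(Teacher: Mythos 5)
There is a genuine gap, and it sits exactly where you flagged it: your plan to close the contradiction requires a lower bound $\int_{t_n-T_n}^{t_n+T_n}K(u(t))\,dt\gg R_n$, but no such bound is available. By Lemma \ref{lem5.6}, $X(t_n)\to\infty$ forces $\mu(t_n)\to 0$, and on $I_{\mu_0}$ one has $K(u(t))\approx\mu(t)<\mu_0$; a window of length $T_n\sim X(t_n)/\mu_0$ centered at $t_n$ can lie entirely in the modulation regime with $\mu$ small throughout, so the time average of $K$ need not exceed any fixed multiple of $\mu_0$, let alone produce $\int K\gg X(t_n)$. Invoking Corollary \ref{cor4.8} and Lemma \ref{key} does not rescue this: those give $e^{-2y}\lesssim\mu^2$ and the sign of the action gap, i.e.\ they bound things \emph{above} by powers of $\mu$, not below. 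So the strategy of outrunning the crude estimate $|J_{R_n}'|\lesssim R_n$ by a large positive virial contribution cannot work.

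The paper closes the argument by running the virial inequality in the opposite direction. The key point you are missing is the refined momentum bound $|J_R'(t)|\lesssim R\,\mu(t)$: since $e^{i\theta(t)+it}\mathcal{R}_{y(t)}Q$ is a constant phase times a real-valued function, its momentum density vanishes, so $J_R'$ is controlled by the distance of $u(t)$ to the modulated profile, which is $O(\mu(t))$ (and for $\mu(t)>\mu_0$ the trivial bound is already $\lesssim R\mu(t)/\mu_0$). Combined with $|A_R|\leq\tilde\delta\mu(t)$ this yields
\begin{align*}
\int_{t_1}^{t_2}\mu(t)\,dt\ \lesssim\ \Bigl(C_\varepsilon+\sup_{t\in[t_1,t_2]}|X(t)|\Bigr)\bigl(\mu(t_1)+\mu(t_2)\bigr),
\end{align*}
an \emph{upper} bound on $\int\mu$ in terms of the endpoint values of $\mu$. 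The second ingredient is the travel estimate $|X(\tau_2)-X(\tau_1)|\lesssim\int_{\tau_1}^{\tau_2}\mu(t)\,dt$ (Lemma \ref{lem5.8}, built from $|y'|\lesssim\mu$ in Lemma \ref{lem4.10} and the rough continuity of $X$). Choosing $t_n$ so that $|X(t_n)|=\sup_{[0,t_n]}|X|\to\infty$, Lemma \ref{lem5.6} gives $\mu(t_n)\to 0$, and the two estimates chain into $X(t_n)-X(t_N)\leq C(1+X(t_n))(\mu(t_N)+\mu(t_n))\leq\tfrac{2}{100}(1+X(t_n))$, which self-improves to $X(t_n)\lesssim 1+X(t_N)$, a contradiction. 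Your sketch contains neither the refined bound $|J_R'|\lesssim R\mu$ nor the integrated travel bound, and without them the window argument does not close.
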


To prove this, we prepare some lemmas. 
%
%

\begin{lemma}
There exists $C_{\varepsilon}>0$  such that
\begin{align*}
	\int_{t_1}^{t_2} \mu(t)dt \lesssim C_\varepsilon(1 + \sup_{t \in [t_1,t_2]}|X(t)|) (\mu(t_1)+\mu(t_2))
\end{align*}
for any $t_1,t_2>0$
\end{lemma}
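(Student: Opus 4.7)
My strategy is to apply a truncated virial identity, tuned so that the boundary terms $J_R'(u(t_i))$ are controlled by $\mu(t_i)$ (rather than merely by $R$), which is possible because $\mathcal{R}_yQ$ is real-valued. The key inputs are the virial identity from Section~2.2.2, the relation $K=\tfrac{p-5}{4}\mu$ under $E(u)=2E(Q)$ from Lemma~\ref{lem2.11}, the compactness property with core at $\pm X(t)$, and the fine estimates on $g$ derived in Section~4.1.

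First, the virial identity gives $J_R''(u(t))=8K(u(t))+A_R(u(t))=2(p-5)\mu(t)+A_R(u(t))$. Choose the scale
\[
R \;:=\; R_\varepsilon + \sup_{t\in[t_1,t_2]}|X(t)|,
\]
where $R_\varepsilon$ is large. Because $X(t)\geq 0$, the region $\{|x|>R\}$ lies inside $\{|x-X(t)|>R_\varepsilon\}\cap\{|x+X(t)|>R_\varepsilon\}$ for every $t\in[t_1,t_2]$, so by the compactness of $u$ with profile $X$ (Proposition~\ref{prop5.1} together with the definition \eqref{eqX} and Lemma~\ref{lem5.2}) we obtain $\|u(t)\|_{H^1(|x|>R)}\le \varepsilon$. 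Combined with Sobolev embedding and the explicit form of $A_R$, this yields $|A_R(u(t))|\leq C\varepsilon$ uniformly in $t\in[t_1,t_2]$.

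Second, I bound the endpoint terms as $|J_R'(u(t_i))|\lesssim_{\mu_0} R\,\mu(t_i)$. For $t\in I_{\mu_0}$, write the modulation decomposition $u=e^{i(\theta+t)}(\mathcal{R}_{y}Q+g)$; since $\mathcal{R}_yQ$ is real, $\im(\bar u\,\partial_x u)$ consists only of terms linear or quadratic in $g$. Hence
\[
|J_R'(u(t))| \;\lesssim\; R\,\|g(t)\|_{H^1}\bigl(\|\mathcal{R}_yQ\|_{H^1}+\|g(t)\|_{H^1}\bigr) \;\lesssim\; R\,\|g(t)\|_{H^1} \;\lesssim\; R\,\mu(t),
\]
where the last step uses $\|g\|_{H^1}\lesssim |\rho|+\|h\|_{H^1}\lesssim\mu(t)$ from Lemma~\ref{lem4.5} and Corollary~\ref{cor4.8}. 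For $t\notin I_{\mu_0}$, the crude Cauchy--Schwarz bound $|J_R'(u(t))|\lesssim R\|u\|_{L^2}\|\partial_x u\|_{L^2}\lesssim R$, together with $\mu(t)\geq \mu_0$, gives the same estimate up to a $\mu_0$-dependent constant.

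Third, integrating the virial identity on $[t_1,t_2]$ produces
\[
2(p-5)\int_{t_1}^{t_2}\mu(t)\,dt \;\leq\; |J_R'(u(t_2))|+|J_R'(u(t_1))|+\int_{t_1}^{t_2}|A_R(u(t))|\,dt.
\]
The first two terms contribute $\lesssim R(\mu(t_1)+\mu(t_2))\asymp (1+\sup|X|)(\mu(t_1)+\mu(t_2))$, which is the desired form. The remaining error is bounded by $C\varepsilon(t_2-t_1)$, which is of different dimensional character from the RHS of the statement and must be absorbed; this is accomplished by partitioning $[t_1,t_2]$ according to whether $t\in I_{\mu_0}$ or not: on the complement, $\mu\geq\mu_0$ provides a lower bound on $\int\mu\,dt$ that absorbs $C\varepsilon(t_2-t_1)$ once $\varepsilon$ is chosen small compared to $\mu_0$, while the contributions from subintervals contained in $I_{\mu_0}$ close by a bootstrap.

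The main obstacle is precisely this absorption step. The compactness gives pointwise smallness of $A_R$, producing an $\varepsilon(t_2-t_1)$ term that is not of the product form $(1+\sup|X|)(\mu(t_1)+\mu(t_2))$. Reconciling these requires carefully exploiting the dichotomy provided by $\mu_0$: where $\mu$ is small one must use the full modulation machinery (real-valuedness of $\mathcal{R}_yQ$ and the fine bound $\|g\|_{H^1}\lesssim\mu$), and where $\mu$ is not small one must use that very lower bound to convert time-length into an integral of $\mu$.
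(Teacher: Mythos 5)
Your treatment of the boundary terms $J_R'(u(t_i))$ and your choice of scale $R = R_\varepsilon + \sup_{[t_1,t_2]}|X|$ match the paper's argument exactly (including the dichotomy between $t\in I_{\mu_0}$, where the real-valuedness of $\mathcal{R}_yQ$ and $\|g\|_{H^1}\lesssim\mu(t)$ give $|J_R'|\lesssim R\mu(t)$, and $t\notin I_{\mu_0}$, where the crude bound plus $\mu\geq\mu_0$ suffices). The problem is your handling of the error term $A_R$. You stop at the uniform bound $|A_R(u(t))|\leq C\varepsilon$ and then try to absorb the resulting $C\varepsilon(t_2-t_1)$ by partitioning. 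On the part of $[t_1,t_2]$ outside $I_{\mu_0}$ this works, since $\varepsilon\cdot|\{t:\mu(t)\geq\mu_0\}|\leq\frac{\varepsilon}{\mu_0}\int\mu$. But on the subintervals inside $I_{\mu_0}$ the bound $|A_R|\leq C\varepsilon$ is genuinely insufficient: there $\mu(t)$ can be arbitrarily small compared to $\varepsilon$, the interval can be arbitrarily long, and $\varepsilon$ cannot be sent to zero after the fact because $R_\varepsilon$ (hence the constant $C_\varepsilon$ in the statement) depends on it. ``Close by a bootstrap'' is not an argument here --- there is no quantity to bootstrap on, and this is precisely the regime the lemma is designed to control.

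What is actually needed, and what the paper does, is to upgrade the error bound on $I_{\mu_0}$ to a bound \emph{pointwise proportional to} $\mu(t)$. One writes $A_R(u(t)) = F_R(u(t)) - F_\infty(u(t))$ and subtracts the same quantity evaluated at the modulated profile $e^{i\theta+it}\mathcal{R}_{y}Q$, which is admissible because $F_R(e^{i\theta}\mathcal{T}_yQ)=0$ and hence $F_R(e^{i\theta}\mathcal{R}_yQ)-F_\infty(e^{i\theta}\mathcal{R}_yQ)=O((1+y)e^{-2y})$ (Lemma \ref{lem2.16} and the lemma following it). The difference of the two $F_R-F_\infty$ expressions is then bounded by $(\|u(t)\|_{H^1(|x|>R)}+\|\mathcal{R}_yQ\|_{H^1(|x|>R)})\,\|u(t)-e^{i\theta+it}\mathcal{R}_yQ\|_{H^1}\lesssim\varepsilon\,\mu(t)$, using compactness and $X(t)=y(t)$ on $I_{\mu_0}$, while the remainder $(1+y)e^{-2y}\lesssim\mu_0^{1-\delta}\mu(t)$ by Corollary \ref{cor4.8} and the corollary after it. This yields $|A_R(u(t))|\leq\tilde\delta\,\mu(t)$ for all $t$, so the error is absorbed directly into $8K(u(t))=2(p-5)\mu(t)$ with no partition needed, and integrating $J_R''\gtrsim\mu$ together with your Claim~1 closes the proof. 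Without this refinement your argument does not go through.
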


\begin{proof}
We give estimates of $J_R'$ and $A_R$. 

{\bf Claim 1.} We have $|J_R'(t)|\lesssim R \mu(t)$ for all $t>0$.

\begin{proof}[Proof of Claim 1]
When $\mu(t)>\mu_0$, we have
\begin{align*}
	|J_R'(t)| 
	\lesssim R  \int |u||\partial_x u|dx
	\lesssim R \|u\|_{L_t^{\infty}H^1}^2
	\lesssim R\frac{\mu(t)}{\mu_0}.
\end{align*}
When $\mu(t)\leq \mu_0$, since $\mathcal{R}_{y(t)}Q$ is real-valued, we have
\begin{align*}
	|J_R'(t)|
	&=\left|2R \im \int \{\overline{u} \partial_x u(t,x) -
	\overline{e^{i\theta(t)+i t}\mathcal{R}_{y(t)}Q }\partial_x (e^{i\theta(t)+i t}\mathcal{R}_{y(t)}Q )\} \partial_x \varphi\left( \frac{x}{R}\right) dx\right|
	\\
	&\lesssim R (\|u\|_{H^1}+\|\mathcal{R}_{y(t)}Q \|_{H^1})\|u-e^{i\theta(t)+i  t}\mathcal{R}_{y(t)}Q \|_{H^1}
	\\
	&\lesssim R \mu(t).
\end{align*}
Thus, in any cases, we have $|J_R'(t)|\lesssim R \mu(t)$. 
\end{proof}

{\bf Claim 2.} There exists sufficiently small $\tilde{\delta}>0$ such that $|A_{R}| \leq \tilde{\delta} \mu(t)$ for all $t>0$. 

\begin{proof}
By the compactness, for any $\varepsilon>0$ there exists $C_{\varepsilon}>0$ such that
\begin{align*}
	\int_{\{|x-X(t)|>C_{\varepsilon}\} \cap \{|x+X(t)|>C_{\varepsilon}\}} |\partial_x u(t,x)|^2 + |u(t,x)|^2 dx <\varepsilon.
\end{align*}
We choose 
\begin{align*}
	R= C_{\varepsilon} + \sup_{t \in[t_1,t_2]} |X(t)|
\end{align*}
so that it holds that
\begin{align*}
	\{|x|>R\} \subset \{|x-X(t)|>C_{\varepsilon}\} \cap \{|x+X(t)|>C_{\varepsilon}\}
\end{align*}
for $t \in [t_1,t_2]$. 
By the error estimate, we have
\begin{align*}
	|A_R(t)| \lesssim \varepsilon.
\end{align*}
If $\mu(t)>\mu_0$, we get
\begin{align*}
	|A_R(t)| \lesssim \varepsilon\frac{\mu(t)}{\mu_0}. 
\end{align*}

By Lemma \ref{lem2.16}, we get
\begin{align*}
	A_R(t)&=F_{R}(u(t)) - F_{\infty}(u(t)) 
	\\
	&\quad - F_{R}(e^{i\theta +i \omega t}\mathcal{R}_{y}Q_{\omega}) 
	+ F_{\infty}(e^{i\theta +i \omega t}\mathcal{R}_{y}Q_{\omega})+O((1+y)e^{-2\sqrt{\omega}y}).
\end{align*}
Now we have
\begin{align*}
	&|F_{R}(u(t)) - F_{\infty}(u(t))
	- F_{R}(e^{i\theta +i \omega t}\mathcal{R}_{y}Q_{\omega}) 
	+ F_{\infty}(e^{i\theta +i \omega t}\mathcal{R}_{y}Q_{\omega})|
	\\
	& \lesssim (\|u(t)\|_{H^1(|x|>R)} + \|\mathcal{R}_{y}Q_{\omega}\|_{H^1(|x|>R)} )
	\|u(t)-e^{i\theta +i \omega t}\mathcal{R}_{y}Q_{\omega}\|_{H^1}.
\end{align*}
By the compactness, $R=C_{\varepsilon}+\sup|X(t)|$ and $X(t)=y(t)$ for $t \in I_{\mu_0}$, we get
\begin{align*}
	(\|u(t)\|_{H^1(|x|>R)} + \|\mathcal{R}_{y}Q_{\omega}\|_{H^1(|x|>R)} )
	\|u(t)-e^{i\theta +i \omega t}\mathcal{R}_{y}Q_{\omega}\|_{H^1}
	\lesssim \varepsilon \mu(t).
\end{align*}
Thus we get
\begin{align*}
	|A_R(u(t))| \lesssim \varepsilon \mu(t) + \mu_0^{1-\delta} \mu(t). 
\end{align*}
In both cases, we have
\begin{align*}
	|A_R(u(t))| \lesssim \min\{\varepsilon+ \mu_0^{1-\delta}, \varepsilon \mu_0^{-1} \} \mu(t).
\end{align*}
Since $\mu_0$ is sufficiently small, taking $\varepsilon>0$ small, we get $|A_{R}(u(t))| \leq \tilde{\delta}\mu(t)$. 
\end{proof}

By Claim 2, we obtain
\begin{align*}
	J_R''(t) = 8K(u(t)) +A_R(u(t)) \gtrsim \mu(t) -\tilde{\delta} \mu(t)  \gtrsim \mu(t).
\end{align*}
Integrating this inequality on $[t_1,t_2]$, we get
\begin{align*}
	J_R'(t_2) - J_R'(t_1) \gtrsim \int_{t_1}^{t_2} \mu(t) dt.
\end{align*}
Since it follows from Claim 1 that
\begin{align*}
	|J_R'(t_2) - J_R'(t_1)|\lesssim R (\mu(t_1)+\mu(t_2)) = (C_\varepsilon + \sup_{t \in [t_1,t_2]}|X(t)|) (\mu(t_1)+\mu(t_2)),
\end{align*}
we get
\begin{align*}
	\int_{t_1}^{t_2} \mu(t)dt \lesssim (C_\varepsilon + \sup_{t \in [t_1,t_2]}|X(t)|) (\mu(t_1)+\mu(t_2)).
\end{align*}
This completes the proof. 
\end{proof}

\begin{lemma}
\label{lem5.6}
Let $\{t_n\}$ be a time sequence satisfying $t_n \to \infty$ as $n \to \infty$. Then we have
\begin{align*}
	X(t_n) \to \infty \Leftrightarrow \mu(t_n) \to 0.
\end{align*}
\end{lemma}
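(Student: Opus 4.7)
The plan is to prove the two implications separately. The easy direction ($\mu(t_n)\to 0 \Rightarrow X(t_n)\to\infty$) is immediate from Corollary~\ref{cor4.8}: once $\mu(t_n)<\mu_0$ we have $t_n\in I_{\mu_0}$ and $X(t_n)=y(t_n)$, and the bound $e^{-2y(t_n)}\lesssim \mu(t_n)^2$ forces $y(t_n)\to\infty$.

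For the converse I argue by contradiction. Suppose $X(t_n)\to\infty$ yet along a subsequence $\mu(t_n)\geq c_0>0$. By Lemma~\ref{lem5.3.0}, after extracting, I obtain a nonzero $\psi\in H^1(\mathbb{R})$ with $\|u(t_n)-\mathcal{R}_{X(t_n)}\psi\|_{H^1}\to 0$. As $X(t_n)\to\infty$ the two pieces of $\mathcal{R}_{X(t_n)}\psi$ spatially separate, and the mass, energy and $L^{p+1}$-norm decouple, giving $M(\psi)=M(Q)$, $E(\psi)=E(Q)$ and $\lim_n\mu(t_n)=2(\|\partial_x Q\|_{L^2}^2-\|\partial_x\psi\|_{L^2}^2)$. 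Combined with $\mu(t_n)\geq c_0$ and the identity $K(\psi)=\tfrac{p-5}{4}(\|\partial_x Q\|_{L^2}^2-\|\partial_x\psi\|_{L^2}^2)$ (from $E(\psi)=E(Q)$ plus the Pohozaev identity, as in Lemma~\ref{lem2.11}), this yields $K(\psi)>0$ strictly. Since $\psi$ sits exactly at the CFR threshold $M(\psi)^\sigma E(\psi)=M(Q)^\sigma E(Q)$ with positive virial, the CFR theorem recalled in Section~1 tells us that either the \eqref{NLS}-solution $\Psi$ with $\Psi(0)=\psi$ scatters in both time directions, or $\Psi=Q^-$ up to the symmetries of \eqref{NLS}. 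Since phase rotation, spatial/time translation and Galilean boost all preserve the backward scattering of $Q^-$, while time reversal (with conjugation) swaps the scattering direction, in every case $\Psi$ has finite Strichartz norm on at least one half-line $J_\infty\in\{(-\infty,0],[0,\infty)\}$.

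I then introduce the approximate odd solution $V_n(t,x):=\mathcal{R}_{X(t_n)}\Psi(t-t_n,x)$, whose two shifted pieces $\Psi_\pm$ each solve \eqref{NLS} exactly, so the Duhamel residual $R_n:=i\partial_t V_n+\partial_x^2 V_n+|V_n|^{p-1}V_n$ consists purely of nonlinear cross terms, bounded pointwise by $|\Psi_+|^{p-1}|\Psi_-|+|\Psi_+||\Psi_-|^{p-1}$. Combining the spatial separation $X(t_n)\to\infty$ with Strichartz and dispersive bounds on $\Psi$ over $J_\infty$, one verifies $\|R_n\|_{L^{b'}_tL^{r'}_x(I_n\times\mathbb{R})}\to 0$, where $I_n:=t_n+J_\infty$. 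Together with the uniform bound $\|V_n\|_{L^a_tL^r_x(I_n)}\leq 2\|\Psi\|_{L^aL^r(J_\infty)}$ and $\|u(t_n)-V_n(t_n)\|_{H^1}\to 0$, Lemma~\ref{perturb} (applied forward or backward according to $J_\infty$) delivers a uniform bound $\|u\|_{L^a_tL^r_x(I_n)}\leq C$. This contradicts $\|u\|_{L^aL^r((0,\infty))}=\infty$: if $J_\infty=[0,\infty)$ then $\|u\|_{L^aL^r([t_n,\infty))}=\infty$ for every $n$ directly, and if $J_\infty=(-\infty,0]$ then $\|u\|_{L^aL^r([0,t_n])}\nearrow\infty$ as $t_n\to\infty$.

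The main technical difficulty will be the residual estimate $\|R_n\|_{L^{b'}L^{r'}(I_n)}\to 0$ when $\Psi$ is a Galilean-boosted $Q^-$: the two pieces of $V_n$ then travel with opposite constant velocities and may approach each other during the backward evolution, so controlling the cross term requires interpolating the spatial separation $X(t_n)\to\infty$ against the dispersive decay of $\|\Psi(s)\|_{L^r}$ as $s\to-\infty$ in order to produce smallness on the full interval $[0,t_n]$.
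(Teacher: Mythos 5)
Your proposal is correct and follows essentially the same route as the paper: the easy direction via the modulation bound $e^{-2y}\lesssim\mu^2$, and the converse by contradiction using the compactness property to extract a profile $\psi$ with $M(\psi)=M(Q)$, $E(\psi)=E(Q)$, $\|\partial_x\psi\|_{L^2}<\|\partial_x Q\|_{L^2}$, invoking the threshold classification of \cite{CFR20} to get scattering of the associated solution in at least one time direction, and then transferring this to $u$ via the two-bump approximate solution and Lemma~\ref{perturb}. Your closing remarks on the cross-term residual are a reasonable elaboration of a step the paper leaves implicit, but they do not change the argument.
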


\begin{proof}
If $\mu(t_n) \to 0$, then we have $X(t_n)=y(t_n)$ for large $n$. Thus we have
\begin{align*}
	e^{-2y(t_n)} \lesssim \mu(t_n) \to 0
\end{align*}
and this implies $X(t_n)=y(t_n)\to \infty$. 

Next we consider the case of $X(t_n) \to \infty$. We use a contradiction argument and we suppose that there exists $\{t_n\}$ satisfying $t_n \to \infty$ and $\delta>0$ such that $X(t_n) \to \infty$ and $\mu(t_n) \geq \delta >0$. By the compactness, there exist a subsequence, which is also denoted by $\{t_n\}$, and $\psi \in H^1(\mathbb{R})$ such that 
\begin{align*}
	\|u(t_n) - \mathcal{R}_{X(t_n)} \psi \|_{H^1} \to 0
\end{align*}
as $n \to \infty$. Since $X(t_n) \to \infty$, we have
\begin{align*}
	&2M(Q )=M(u(t_n)) \to 2M(\psi),
	\\
	&2E(Q )=E(u(t_n)) \to 2E(\psi),
\end{align*}
and 
\begin{align*}
	-\delta \geq \|\partial_x u(t_n)\|_{L^2}^2 - 2\|\partial_x Q \|_{L^2}^2
	\to 2\|\partial_x \psi\|_{L^2}^2 -2\|\partial_x Q \|_{L^2}^2
\end{align*}
and thus we get
\begin{align*}
	M(Q)= M(\psi), 
	\quad
	E(Q)= E(\psi), 
	\quad
	\|\partial_x \psi\|_{L^2}^2 < \|\partial_x Q \|_{L^2}^2.
\end{align*}
By the result of \cite{CFR20}, the solution $v(t)$ to \eqref{NLS} with $v(0)=\psi$ scatters in positive or negative time direction. We define the approximate solution $\tilde{v}_n(t,x)=v(t,x-X(t_n)) - v(t,-x-X(t_n))$, which satisfies 
\begin{align*}
	i\partial_ t\tilde{v}_n +\partial_x^2 \tilde{v}_n +|\tilde{v}|^{p-1}\tilde{v}_n =e
\end{align*}
where $e=|\tilde{v}|^{p-1}\tilde{v}_n - (|\mathcal{T}_{X(t_n)}v|^{p-1}\mathcal{T}_{X(t_n)}v -|\mathcal{T}_{-X(t_n)}v|^{p-1}\mathcal{T}_{-X(t_n)}v)$. 
We note that $\|u(t_n) - \tilde{v}_n(0)\|_{H^1} \to 0$ as $n \to \infty$. 

In the case that $v$ scatters in positive time direction, we have $\|\tilde{v}_n\|_{L_{t}^{a}L_{x}^{r}(0,\infty)} \lesssim \|v\|_{L_{t}^{a}L_{x}^{r}(0,\infty)} <\infty$. 
By the long time perturbation, we get
\begin{align*}
	\|u\|_{L_{t}^{a}L_{x}^{r}(t_n,\infty)}
	= \|u(t_n+t)\|_{L_{t}^{a}L_{x}^{r}(0,\infty)} 
	\lesssim C + \|\tilde{v}_n\|_{L_{t}^{a}L_{x}^{r}(0,\infty)}
	\lesssim C 
\end{align*}
for large $n$. This contradicts non-scattering of $u$. 

In the case that $v$ scatters in negative direction, we have $\|\tilde{v}_n\|_{L_{t}^{a}L_{x}^{r}(-\infty,0)} \lesssim \|v\|_{L_{t}^{a}L_{x}^{r}(-\infty,0)} <\infty$. 
By the long time perturbation, we get
\begin{align*}
	\|u\|_{L_{t}^{a}L_{x}^{r}(-\infty,t_n)}
	= \|u(t_n+t)\|_{L_{t}^{a}L_{x}^{r}(-\infty,0)} 
	\lesssim C + \|\tilde{v}_n\|_{L_{t}^{a}L_{x}^{r}(-\infty,0)}
	\lesssim C 
\end{align*}
for large $n$. As $n \to \infty$, this also contradicts non-scattering of $u$. 
Therefore, we get $\mu(t_n) \to 0$. 
\end{proof}

\begin{lemma}
\label{lem5.7}
There exists  $C>0$ such that
\begin{align*}
	|X(t)-X(s)|\leq C
\end{align*}
for all $t,s \in [0,\infty)$ with $|t-s|\leq 1$.
\end{lemma}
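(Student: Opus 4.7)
The plan is to exploit the fact that the $L^2$-mass of $u$ is transported slowly in time, thanks to the uniform $H^1$-bound on $u$ (see the remark after Proposition \ref{prop2.13}). Indeed, for any Lipschitz function $\phi$, the identity $\partial_t |u|^2 = -2\partial_x \im(\bar u \partial_x u)$ gives, after integration by parts,
\begin{align*}
	\left|\frac{d}{dt} \int \phi(x) |u(t,x)|^2 dx \right|
	= 2\left|\int \partial_x \phi \cdot \im(\bar u \partial_x u) dx\right|
	\leq 2 \|\partial_x \phi\|_{L^\infty} \|u\|_{L^2} \|\partial_x u\|_{L^2},
\end{align*}
and the right hand side is bounded by $C\|\partial_x \phi\|_{L^\infty}$ for some absolute constant $C$.

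I would argue by contradiction: if the conclusion fails, there exist sequences $t_n,s_n \in [0,\infty)$ with $|t_n - s_n|\leq 1$ and $|X(t_n) - X(s_n)|\to \infty$. For a large parameter $R>0$ to be chosen, let $\phi_n(x) := \max(0, 1 - |x-X(s_n)|/R)$ be a tent function centered at $X(s_n)$, satisfying $\|\partial_x \phi_n\|_{L^\infty}\leq 1/R$ and $\supp \phi_n \subset \{|x-X(s_n)|\leq R\}$. Combining the compactness property (applied at time $s_n$) with the even symmetry of $|u|^2$ — inherited from the oddness of $u$ — the mass $\|u(s_n)\|_{L^2}^2 = 2M(Q)$ localizes into $\{|x-X(s_n)|\leq R\}\cup\{|x+X(s_n)|\leq R\}$ up to a remainder $\varepsilon(R)\to 0$, and by symmetry at least half of this mass sits on each side, giving
\begin{align*}
	\int \phi_n |u(s_n)|^2 dx \geq c_0
\end{align*}
for some $c_0 > 0$ independent of $n$ and of all sufficiently large $R$.

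On the other hand, if $R$ is taken large enough that both $|X(s_n) - X(t_n)| > 2R$ and $X(s_n) + X(t_n) > 2R$, then $\supp \phi_n \subset \{|x-X(t_n)|>R\}\cap \{|x+X(t_n)|>R\}$, so the compactness property applied at time $t_n$ gives $\int \phi_n |u(t_n)|^2 dx \leq \varepsilon(R)$. Combining this with the mass-transport bound and $|s_n-t_n|\leq 1$ yields
\begin{align*}
	c_0 \leq \varepsilon(R) + \frac{C}{R},
\end{align*}
which is violated once $R$ is chosen sufficiently large. Hence for this fixed $R$, one of the two inequalities $|X(s_n)-X(t_n)|\leq 2R$ or $X(s_n)+X(t_n)\leq 2R$ must hold; and since $X\geq 0$, the second also forces $|X(s_n)-X(t_n)|\leq 2R$. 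This contradicts $|X(t_n)-X(s_n)|\to \infty$ and proves the lemma.

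The main obstacle is securing the uniform positive lower bound $\int \phi_n |u(s_n)|^2 dx \geq c_0$. Lemma \ref{lemC.1} provides such a lower bound for some center $x_0\geq 0$, but not necessarily at $x_0 = X(s_n)$; the required statement rests on the specific compactness property of Proposition \ref{prop5.1} extended to $X$ via Lemma \ref{lem5.2}, together with the even symmetry of $|u|^2$ that equalizes mass between the two concentration regions around $\pm X(s_n)$. Once this is in place, the rest of the argument is a short-time propagation estimate with an explicit trade-off between the localization scale $R$ and the smallness parameter $\varepsilon(R)$.
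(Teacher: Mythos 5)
Your proof is correct, but it follows a genuinely different route from the paper's. The paper argues by contradiction using the crude $L^2$-continuity bound $\|u(t)-u(s)\|_{L^2}\lesssim |t-s|$ (from the Duhamel formula and the uniform $H^1$ bound), then observes that for $|X(t_n)-X(s_n)|\to\infty$ the localized pieces $\1_{B(t_n)}u(t_n)$ and $\1_{B(s_n)}u(s_n)$ have disjoint supports and each carries essentially the full mass $2M(Q)$, forcing their difference to be large in $L^2$; since that argument only yields a contradiction when $|t-s|\leq\eta$ for $\eta$ small, the paper then chains over $O(\eta^{-1})$ subintervals to reach $|t-s|\leq 1$. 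You instead use the local mass conservation law $\partial_t|u|^2=-2\partial_x\im(\bar u\,\partial_x u)$ tested against a tent function of width $R$ and slope $1/R$, so the transported mass over a unit time interval is $O(1/R)$; the trade-off between the slope gain $1/R$ and the compactness remainder $\varepsilon(R)$ then gives the contradiction in a single step, with no chaining and no need for an intermediate small scale $\eta$. Your key lower bound $\int\phi_n|u(s_n)|^2\,dx\geq c_0$ is justified exactly as you indicate: by the evenness of $|u|^2$ and inclusion-exclusion, the ball $\{|x-X(s_n)|\leq R_0\}$ always carries at least half of the mass concentrated in the union of the two balls, whether or not they overlap. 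Both arguments rest on the same compactness input (Proposition \ref{prop5.1} transferred to $X$ via Lemma \ref{lem5.2}); yours is slightly sharper quantitatively, giving $|X(t)-X(s)|\lesssim 1+|t-s|$ directly for any fixed time increment, while the paper's is more elementary in that it avoids the momentum-flux identity.
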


\begin{proof}
We prove by contradiction that there exist $\eta>0$ and $C_0>0$ such that
\begin{align*}
	|X(t)-X(s)|\leq C_0
\end{align*}
for all $t,s \in [0,\infty)$ with $|t-s|\leq \eta$. We suppose that for any $\eta>0$, there exist $t_n$ and $s_n$ such that
\begin{align*}
	|t_n-s_n| \leq \eta \text{ and } |X(t_n) - X(s_n)| \to \infty
\end{align*}
as $n \to \infty$. 

It follows from the Duhamel formula and the Sobolev embedding that
\begin{align*}
	\|u(t) - u(s)\|_{L^2} \leq \int_{s}^{t} \| e^{i(t-\tau)\partial_x^2}|u|^{p-1}u\|_{L^2} d\tau \leq |t-s| \|u\|_{L_t^{\infty}H^1}^{p}    \lesssim  |t-s|
\end{align*}
for $s<t$. By the compactness, for any $\varepsilon>0$, there exists $R>0$ such that 
\begin{align*}
	\sup_{t\geq 0} \int_{\{|x-X(t)|>R\}\cap \{|x+X(t)|>R\}} |u(t,x)|^2 dx <\varepsilon.
\end{align*}
Let $B(t):=\{|x-X(t)|\leq R\}\cup \{|x+X(t)|\leq R\}$. Then we have
\begin{align}
\label{eq5.3}
	\| \1_{B(t)}u(t) - \1_{B(s)}u(s)\|_{L^2}
	&\leq \| \1_{B(t)}u(t) - u(t)\|_{L^2} 
	\\ \notag
	&\quad + \| u(t) - u(s)\|_{L^2} 
	\\ \notag
	&\quad +\| u(s) - \1_{B(s)}u(s)\|_{L^2}
	\\ \notag
	&\lesssim \varepsilon +|t-s|
\end{align}
for all $s<t$. 
On the other hand, since $|X(t_n) - X(s_n)| \to \infty$ and thus $B(t_n) \cap B(s_n) = \emptyset$ for large $n$, we have
\begin{align}
\label{eq5.4}
	\| \1_{B(t_n)}u(t_n) - \1_{B(s_n)}u(s_n)\|_{L^2}
	&=\| \1_{B(t_n)}u(t_n) \|_{L^2} + \|\1_{B(s_n)}u(s_n)\|_{L^2}
	\\ \notag
	&\geq 2 (M(u) - \varepsilon) = 2 (2M(Q) - \varepsilon)
\end{align}
for large $n$. Taking $\varepsilon>0$ and $\eta>0$ sufficiently small, we get a contradiction combining \eqref{eq5.3} and \eqref{eq5.4}. When $\eta \geq 1$, the statement holds obviously. If $\eta<1$, then for $s<t$ with $|t-s|<1$, take $s=t_0<t_1<\cdots < t_l = t$ such that $|t_j-t_{j+1}|<\eta$ for $j=0,\cdots,l-1$. Then $l \leq \eta^{-1}$ and thus we get
\begin{align*}
	|X(t)-X(s)| \leq \sum_{j=0}^{l-1} |X(t_j) - X(t_{j+1})| \leq l C_0 \leq C_0 \eta^{-1}.
\end{align*}
In any way, we obtain the statement by setting $C:=\max\{C_0,C_0 \eta^{-1}\}$. 
\end{proof}

\begin{lemma}
\label{lem5.8}
There exists a constant $C$ such that
\begin{align*}
	|X(\tau_1) - X(\tau_2)| \leq C \int_{\tau_1}^{\tau_2} \mu(t) dt
\end{align*}
for any $\tau_1,\tau_2$ satisfying $\tau_1+1\leq \tau_2$. 
\end{lemma}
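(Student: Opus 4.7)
The plan is to decompose $[\tau_1, \tau_2]$ according to whether $\mu(t) < \mu_0$ (so the modulation applies and $X = y$) or $\mu(t) \geq \mu_0$ (so $\int \mu$ is comparable to the length), and to use a tailored estimate in each region.

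First, I would write the open set $A := \{t \in [\tau_1,\tau_2] : \mu(t) < \mu_0\} \subset I_{\mu_0}$ as a disjoint union $\bigsqcup_k (a_k,b_k)$ of maximal components. On each such interval, definition \eqref{eqX} gives $X \equiv y$, so Lemma~\ref{lem4.10} yields $|X(b_k) - X(a_k)| \leq \int_{a_k}^{b_k}|y'(t)|\,dt \lesssim \int_{a_k}^{b_k}\mu(t)\,dt$. On the complement $B := [\tau_1,\tau_2]\setminus A$, where $\mu \geq \mu_0$, we trivially have $\int_B \mu \geq \mu_0 |B|$.

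Next, I would establish a quantitative uniform continuity modulus for $\mu$: combining the Duhamel argument from the proof of Lemma~\ref{lem5.7} with interpolation via the one-dimensional Sobolev embedding $H^1 \hookrightarrow L^\infty$ and the uniform $H^1$-bound on $u$ (from Propositions~\ref{prop4.8} and~\ref{prop2.13}), together with energy conservation expressing $\mu$ as a linear combination of $\|\partial_x u\|_{L^2}^2$ and $\|u\|_{L^{p+1}}^{p+1}$, should yield $|\mu(t)-\mu(s)| \lesssim |t-s|^\alpha$ for some universal $\alpha > 0$. Consequently there is a universal $\eta > 0$ such that $\mu(t^*) \geq \mu_0$ forces $\mu(t) \geq \mu_0/2$ on $[t^*-\eta,t^*+\eta]$.

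With this in hand, $B$ can be covered by a collection of closed intervals $[c_j, d_j]$ of length at most $1$, each containing a point of $B$ and therefore satisfying $\int_{c_j}^{d_j}\mu \geq \mu_0\eta/2$; Lemma~\ref{lem5.7} bounds the oscillation of $X$ on each such interval by a universal constant. Summing across the $A$-components and the $B$-pieces, and noting that the endpoint jumps between $X = y$ and $X = x$ at the boundary $\mu = \mu_0$ are controlled by Lemma~\ref{lem5.2}, I would obtain $|X(\tau_1) - X(\tau_2)| \lesssim \int_{\tau_1}^{\tau_2}\mu(t)\,dt$. The hypothesis $\tau_1 + 1 \leq \tau_2$ enters precisely in the extreme case $[\tau_1, \tau_2] \subset B$, where Lemma~\ref{lem5.7} applied iteratively in unit steps gives $|X(\tau_1) - X(\tau_2)| \lesssim \tau_2 - \tau_1 \lesssim \mu_0^{-1}\int \mu$, the lower bound $\tau_2 - \tau_1 \geq 1$ being what allows the additive constant from Lemma~\ref{lem5.7} to be absorbed. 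The main technical obstacle I anticipate is the uniform H\"older continuity of $\mu$: its exponent and constant must be independent of $t$, which is a genuine analytic fact about the NLS flow rather than a formality.
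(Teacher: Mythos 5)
Your strategy is sound and reaches the same conclusion, but it replaces the paper's key step with a genuinely different one. The paper does not decompose $[\tau_1,\tau_2]$ according to the level set $\{\mu<\mu_0\}$; instead it cuts $[\tau_1,\tau_2]$ into windows of length between $1$ and $2$ and proves a dichotomy (Claim~1 in the paper's proof): on any such window, either $\mu\geq\mu_1$ throughout (so $\int\mu\gtrsim\mu_1$ and Lemma~\ref{lem5.7} bounds the oscillation of $X$ by a constant) or $\mu<\mu_0$ throughout (so $X=y$ on the whole window and Lemma~\ref{lem4.10} applies). That dichotomy is \emph{not} a continuity statement: it is proved by contradiction using the compactness of the critical element (Lemma~\ref{lem5.3.0}), strong convergence $u(t_n'')\to\psi$, and continuity of the flow, and it rules out intermediate behaviour at the fixed scale $2$. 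Your substitute --- a uniform H\"older modulus for $t\mapsto\mu(t)$ obtained from energy conservation, the Duhamel $L^2$-Lipschitz bound, and interpolation --- is correct and provable exactly as you sketch (one gets $|\mu(t)-\mu(s)|\lesssim|t-s|^{\alpha}$ with $\alpha$ coming from the $L^2$--$H^1$ interpolation of the $L^{p+1}$ norm), and it is more elementary in that it avoids compactness altogether; what it buys is only a \emph{local} lower bound $\int\mu\gtrsim\mu_0\eta$ near each point of $\{\mu\geq\mu_0\}$, which is why your covering of $B$ must be done with bounded overlap and your chaining must be organized so that each covering interval is charged only $O(1)$ transitions. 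On that last point your write-up is imprecise: summing ``across the $A$-components'' literally would charge a constant jump at \emph{every} component endpoint, and the components of $\{\mu<\mu_0\}$ may accumulate, so the jump total is not controlled by $\int\mu$. The fix is to telescope only through the $A$-components lying in the gaps between consecutive $B$-covers and to absorb any $A$-components interior to a $B$-cover into that cover's single oscillation bound from Lemma~\ref{lem5.7} (which, not Lemma~\ref{lem5.2}, is also the right tool for the boundary jumps, since no continuity of $x(t)$ is available). With that bookkeeping repaired, your argument closes, and the role of the hypothesis $\tau_1+1\leq\tau_2$ is exactly as you describe.
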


\begin{proof}
We start with the following claim. 

{\bf Claim 1.} There exists $\mu_1$ such that 
\begin{align*}
	\inf_{t \in[\tau_1,\tau_1+2]} \mu(t) \geq \mu_1 
	\text{ or }
	\sup_{t \in[\tau_1,\tau_1+2]} \mu(t) < \mu_0
\end{align*}
for any $\tau_1>0$. 
\begin{proof}[Proof of Claim 1]
If not, for any $n$ there exists $t_n$ such that
\begin{align*}
	\inf_{t \in[t_n,t_n+2]} \mu(t) < n^{-1} 
	\text{ and }
	\sup_{t \in[t_n,t_n+2]} \mu(t) \geq \mu_0 
\end{align*}
and thus there exists $t_n',t_n'' \in [t_n,t_n+2]$ such that 
\begin{align*}
	\mu(t_n') < n^{-1} 
	\text{ and }
	\mu(t_n'') \geq \mu_0 
\end{align*}
Thus we have
\begin{align*}
	\mu(t_n') \to 0, 
	\quad
	\mu(t_n'') \geq \mu_0
	\text{ and }
	|t_n'-t_n''|\leq 2.
\end{align*}
We may assume that $t_n'-t_n'' \to \tau \in [-2,2]$ as $n \to \infty$ extracting a subsequence if necessary.  

Then we may assume that $X(t_n'')$ converges. Indeed, if $\{t_n''\}$ is bounded, $X(t_n'')$ is bounded by Lemma \ref{lem5.7}. When $X(t_n'')$ is bounded, we may $X(t_n'')$ converges by taking a subsequence. 
If $t_n''$ is unbounded, we may assume that $t_n'' \to \infty$ along a subsequence. If $X(t_n'')\to \infty$, then we must have $\mu(t_n'') \to 0$ by Lemma \ref{lem5.6}. However, this contradicts the definition of $t_n''$. 

By Lemma \ref{lem5.3.0}, we have
\begin{align*}
	u(t_n'') \to \psi \text{ strongly in } H^1.
\end{align*}
Therefore we have
\begin{align*}
	M(\psi) = 2M(Q_{\omega}),
	\quad
	E(\psi) = 2E(Q_{\omega}),
	\quad
	\|\partial_x (\psi) \|_{L^2}^2 < 2\|\partial_x Q_{\omega}\|_{L^2}^2 - \mu_0
\end{align*}
Then the solution $v$ to \eqref{NLS} with $v(0)=\psi$ is global and $\tilde{\mu}(v(t))<0$ for all $t$. By the continuity of the flow, we have
\begin{align*}
	\|u(t_n'' + \tau) - v(\tau)\|_{H^1} \to 0
\end{align*}
since the initial data satisfy $\|u(t_n'') - v(0)\|_{H^1} \to 0$. Thus we also have $\mu(u(t_n'' + \tau)) \to \mu(v(\tau)) > 0$ as $n \to \infty$ and thus $\mu(u(t_n'' + \tau)) > c_0 >0$ for large $n$ and some constant $c_0$. 

On the other hand, we have 
\begin{align}
\label{eq7.1}
	\| u(t_n') - u(t_n''+\tau)\|_{\dot{H}^1} \to 0
\end{align}
which is proved later. Then $\mu(u(t_n''+\tau)) \to 0$ since $\mu(u(t_n'))\to 0$ and $\| u(t_n') - u(t_n''+\tau)\|_{\dot{H}^1} \to 0$. This is a contradiction. 
We will prove \eqref{eq7.1}. Let $I_n=[t_n',t_n''+\tau]$ (or $I_n=[t_n''+\tau,t_n']$). Now we have
\begin{align*}
	\|u(t_n') - u(t_n'' + \tau)\|_{\dot{H}^1} 
	&=\|  \int_{I_n} e^{i(t-s)\partial_x^2} |u(s)|^{p-1}u(s)ds \|_{\dot{H}^1}
	\\
	&\lesssim \int_{I_n} \||u(s)|^{p-1}u(s)\|_{\dot{H}^{1}}ds
	\\
	&\lesssim \int_{I_n} \|u(s)\|_{L^{2(p-1)}}^{p-1}\|u(s)\|_{\dot{H}^{1}}ds
	\\
	&\lesssim |I_n| \|u\|_{L_t^{\infty}H^1}^{p} \to 0
\end{align*}
since $|I_n| \to 0$ by $t_n' - (t_n'' + \tau) \to 0$. 
\end{proof}

{\bf Claim 2.}
There exists a constant $C$ such that
\begin{align*}
	|X(\tau_1) - X(\tau_2)| \leq C \int_{\tau_1}^{\tau_2} \mu(t) dt
\end{align*}
for any $\tau_1,\tau_2$ satisfying $\tau_1+1 \leq \tau_2 \leq \tau_1+2$. 

\begin{proof}[Proof of Claim 2]
In the first case of Claim 1, we have
\begin{align*}
	\int_{\tau_1}^{\tau_2} \mu(t) dt \geq 2 \inf_{t \in[\tau_1,\tau_1+2]} \mu(t) \geq \mu_1
\end{align*}
for $\tau_1+1< \tau_2< \tau_1+2$. By Lemma \ref{lem5.7}, we have
\begin{align*}
	|X(\tau_1) -X(\tau_2)| 
	&\leq |X(\tau_1) -X(\tau_1+1)| +|X(\tau_1+1) -X(\tau_2)| 
	\\
	&\leq 2C 
	\lesssim \frac{2C}{\mu_1} \int_{\tau_1}^{\tau_2} \mu(t) dt. 
\end{align*}

In the second case of Claim 1, we have $[\tau_1,\tau_1+2] \subset I_{\mu_0}$ and thus $X(t)=y(t)$ for $t \in [\tau_1,\tau_1+2]$. We have
\begin{align*}
	|X'(t)|=|y'(t)| \lesssim  \mu(t)
\end{align*}
for $t \in (\tau_1,\tau_1+2)$. Integrating this on $[\tau_1,\tau_2]$, we get
\begin{align*}
	|X(\tau_2) - X(\tau_1)| 
	=\left| \int_{\tau_1}^{\tau_2}X'(t) dt \right| 
	\lesssim  \int_{\tau_1}^{\tau_2}\mu(t)dt .
\end{align*}
We get Claim 2. 
\end{proof}
At last, we show the statement. For arbitrary $\tau_1, \tau_2$ satisfying $\tau_1+1 < \tau_2$, we divide the interval $[\tau_1,\tau_2]$ into the intervals $[\sigma_0,\sigma_1]=[\tau_1,\sigma_1], [\sigma_1,\sigma_2], \cdots [\sigma_{k-1},\tau_2]=[\sigma_{k-1},\sigma_k]$, whose length are larger than 1 and less than 2. By Claim 2, we have
\begin{align*}
	|X(\sigma_j) - X(\sigma_{j+1})| \leq C \int_{\sigma_j}^{\sigma_{j+1}} \mu(t)dt.
\end{align*}
Therefore we obtain
\begin{align*}
	|X(\tau_2) - X(\tau_1)|
	&=|X(\sigma_0) - X(\sigma_{k})|
	\\
	&\leq \sum_{j=0}^{k-1} |X(\sigma_j) - X(\sigma_{j+1})|
	\\
	&\leq C\sum_{j=0}^{k-1} \int_{\sigma_j}^{\sigma_{j+1}} \mu(t)dt
	\\
	&=C \int_{\tau_1}^{\tau_2} \mu(t)dt.
\end{align*}
This completes the proof of Lemma \ref{lem5.8}.
\end{proof}

\begin{proof}[Proof of Proposition \ref{prop5.3}]
We prove the statement by contradiction.  
Suppose that $|X(t)|$ is unbounded. Then we take a sequence $\{t_n\}$ such that $|X(t_n)| \to \infty$ and $\sup_{t\in [0,t_n]}|X(t)|=|X(t_n)|$. By Lemma \ref{lem5.6}, we have $\mu(t_n) \to 0$. Let $N$ such that $C\mu(t_n) < 100^{-1}$ for all $n >N$. By Lemma and , for $n\gg N$, which ensures $t_n > t_N +1$ we have
\begin{align*}
	X(t_n) - X(t_N)
	&\leq C \int_{t_N}^{t_n} \mu(t) dt
	\\
	&\leq C (1+|X(t_n)|) (\mu(t_N)+\mu (t_n))
	\\
	&\leq \frac{2}{100}(1+X(t_n))
\end{align*}
and thus 
\begin{align*}
	X(t_n) \lesssim 1+ X(t_N) <\infty. 
\end{align*}
This is a contradiction.
\end{proof}

\subsubsection{If $X(t)$ is bounded, contradiction}

By the previous section, we see that $X(t)$ is bounded. In this section, we get a contradiction from the boundedness.

Since $X(t)$ is bounded, for any $\varepsilon>0$, there exists $R>0$ such that
\begin{align*}
	\int_{\{|x|>R\}} |\partial_x u(t,x)|^2 + |u(t,x)|^2 dx <\varepsilon.
\end{align*}

\begin{lemma}
It holds that
\begin{align*}
	\lim_{T\to \infty} \frac{1}{T} \int_{0}^{T} \mu(t) dt=0.
\end{align*}
\end{lemma}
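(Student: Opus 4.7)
The plan is to combine the boundedness of $X(t)$ (just established in Proposition \ref{prop5.3}) with the earlier local virial estimate
\[
 \int_{t_1}^{t_2} \mu(t)\,dt \lesssim C_{\varepsilon}\bigl(1 + \sup_{t\in[t_1,t_2]} |X(t)|\bigr)\bigl(\mu(t_1)+\mu(t_2)\bigr),
\]
which was derived from $J_R''(t)\gtrsim \mu(t)$ together with $|J_R'(t)|\lesssim R \mu(t)$, taking $R = C_\varepsilon + \sup_{[t_1,t_2]} |X(t)|$. Since $u$ has $M(u)=2M(Q)$, $E(u)=2E(Q)$ and $K(u(t))>0$, the remark after Proposition \ref{prop2.13} supplies a uniform $H^1$ bound on $u(t)$, hence
\[
 0 \le \mu(t) \;\le\; 2\|\partial_x Q\|_{L^2}^2 \;\lesssim\; 1
\]
for all $t\ge 0$.

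First I would fix $M := \sup_{t\ge 0}|X(t)| < \infty$, which is finite by Proposition \ref{prop5.3}. Applying the local virial estimate above with $t_1=0$ and $t_2=T$, and using the uniform bound on $\mu$, I obtain
\[
 \int_0^T \mu(t)\,dt \;\lesssim\; C_\varepsilon(1+M)\bigl(\mu(0)+\mu(T)\bigr) \;\lesssim\; C_\varepsilon(1+M),
\]
with the implicit constant independent of $T$. Dividing by $T$ and letting $T\to\infty$ yields
\[
 \frac{1}{T}\int_0^T \mu(t)\,dt \;\lesssim\; \frac{C_\varepsilon(1+M)}{T} \;\longrightarrow\; 0,
\]
which is precisely the stated conclusion.

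There is essentially no obstacle: once $X(t)$ is known to be bounded (the real work in Proposition \ref{prop5.3}), the previous virial lemma becomes a \emph{time-independent} bound on $\int_0^T \mu\,dt$, and the Cesàro statement follows immediately. The only mild subtlety is that the constant in the virial lemma depends on $\varepsilon$ (through $C_\varepsilon$), but $\varepsilon$ can be fixed once and for all, and the resulting constant is absorbed harmlessly into the $O(1/T)$ decay.
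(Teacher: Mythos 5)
Your argument is correct, but it takes a different route from the paper's. The paper re-runs the localized virial computation from scratch, using only the crude bounds $|J_R'(t)|\lesssim R$ (from the uniform $H^1$ bound) and $|A_R(u(t))|\lesssim \varepsilon$ (from compactness, now centered on a bounded set since $X$ is bounded); this yields $\int_0^T\mu(t)\,dt\lesssim R+\varepsilon T$, and the Ces\`aro limit follows by taking $\limsup_{T\to\infty}$ and then $\varepsilon\to 0$. You instead recycle the sharper integrated estimate $\int_{t_1}^{t_2}\mu(t)\,dt\lesssim C_\varepsilon\bigl(1+\sup_{[t_1,t_2]}|X|\bigr)\bigl(\mu(t_1)+\mu(t_2)\bigr)$ from the preceding subsection, which was built on the refined bounds $|J_R'(t)|\lesssim R\,\mu(t)$ and $|A_R(u(t))|\lesssim\tilde\delta\,\mu(t)$ coming from the modulation analysis. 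Combined with $\sup_t|X(t)|<\infty$ and $0\le\mu(t)\lesssim 1$ (note $\mu>0$ follows from $K(u(t))>0$ via Lemma \ref{lem2.11}), this gives the strictly stronger conclusion $\int_0^\infty\mu(t)\,dt<\infty$, from which the Ces\`aro statement (and Corollary \ref{cor5.9}) is immediate. The only cosmetic point is that the cited estimate is stated for $t_1,t_2>0$, so take $t_1=1$, say, and absorb $\int_0^1\mu$ into the constant. Your version is shorter and yields more; the paper's version is more self-contained in that it does not lean on the modulation-dependent refinement of $J_R'$.
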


\begin{proof}
We consider the localized virial identity $J_R$. 
By a direct calculation, we have $|J_R'(t)|\lesssim  R$. 
We also have $J_R''(t)=8 \mu(t) + A_R(u(t))$.  
We can estimate the error term by compactness as follows: 
\begin{align*}
	|A_R(u(t))| \lesssim \int_{|x|>R} |\partial_x u|^2 + |u|^{p+1} + \frac{1}{R^2}|u|^2 dx \leq \varepsilon
\end{align*}
for large $R>0$. We have
\begin{align*}
	\int_{0}^{T} 8 \mu(t) + A_R(u(t))dt =\int_{0}^{T}J_R''(t) dt \leq |J_R'(t)|+|J_R'(0)| \lesssim R.
\end{align*}
Therefore we have
\begin{align*}
	\int_{0}^{T} \mu(t) dt \lesssim R +\varepsilon T
\end{align*}
and thus
we get
\begin{align*}
	\frac{1}{T}\int_{0}^{T} \mu(t) dt  \to 0
\end{align*}
as $T \to \infty$. 
\end{proof}

\begin{corollary}
\label{cor5.9}
There exists a time sequence $\{t_n\}$ such that 
\begin{align*}
	\lim_{n \to \infty} \mu(t_n) =0. 
\end{align*}
\end{corollary}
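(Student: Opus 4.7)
The plan is to extract the sequence directly from the Cesàro–mean statement in the preceding lemma, namely $\lim_{T\to\infty}\frac{1}{T}\int_0^T\mu(t)\,dt=0$. The essential point is that $\mu$ is nonnegative on the whole trajectory: since we are in the scattering regime where $K(u(t))>0$ is preserved by the flow (Proposition \ref{prop2.13}) and $M(u(t))=2M(Q)$, $E(u(t))=2E(Q)$, Lemma \ref{lem2.11} gives $\mu(t)=\frac{4}{p-5}K(u(t))>0$. So the Cesàro averages of a nonnegative function tend to $0$.

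From here the argument is a standard liminf extraction. Suppose for contradiction that no sequence $t_n\to\infty$ with $\mu(t_n)\to 0$ exists; then $\liminf_{t\to\infty}\mu(t)=:c>0$, so there is $T_0>0$ with $\mu(t)\ge c/2$ for all $t\ge T_0$. Then for $T>T_0$,
\begin{equation*}
\frac{1}{T}\int_0^T \mu(t)\,dt \;\ge\; \frac{1}{T}\int_{T_0}^T \frac{c}{2}\,dt \;=\; \frac{c}{2}\cdot\frac{T-T_0}{T} \;\longrightarrow\; \frac{c}{2}>0,
\end{equation*}
contradicting the previous lemma. Hence $\liminf_{t\to\infty}\mu(t)=0$, and one selects $\{t_n\}$ realizing this liminf.

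There is no real obstacle here; the only thing to check is that $\mu\geq 0$ on the trajectory, which is immediate from the variational characterization in the scattering branch. The statement could equivalently be phrased as $\liminf_{t\to\infty}\mu(t)=0$, and indeed the sequence produced satisfies $t_n\to\infty$, which is what will be needed in the next step of the argument (combined with the compactness of the critical element and boundedness of $X(t)$) to derive the final contradiction.
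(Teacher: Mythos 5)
Your proof is correct and follows essentially the same route as the paper: both argue by contradiction from the Ces\`aro--mean lemma $\frac{1}{T}\int_0^T\mu(t)\,dt\to 0$, using that $\mu(t)=\frac{4}{p-5}K(u(t))>0$ along the flow. Your version is in fact slightly more careful in that it negates the statement at infinity (via $\liminf_{t\to\infty}\mu(t)>0$) and so delivers a sequence with $t_n\to\infty$, which is what the subsequent application of Lemma \ref{lem5.6} actually requires.
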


\begin{proof}
If not, there exists $\delta>0$ such that $\mu(t)>\delta$ for all $t$. This implies
\begin{align*}
	\frac{1}{T} \int_{0}^{T} \mu(t) dt\geq \delta.
\end{align*}
This is a contradiction. 
\end{proof}

Since we have a time sequence $\{t_n\}$ such that $\lim_{n \to \infty} \mu(t_n) =0$ by Corollary \ref{cor5.9}, Lemma \ref{lem5.6} implies that $X(t_n)$ must tend to $\infty$. This contradicts that $X$ is bounded. 
As a conclusion, we get Proposition \ref{prop2.13} (1).


\section{Proof of blow-up}

In this section, we prove the blow-up result, Theorem \ref{thm1.2} (2) through Theorem \ref{thm4.7} (2).

\begin{lemma}
\label{lem2.4}
Let $\varphi \in C^1(\mathbb{R};\mathbb{R})$ be even and $f \in H_{\odd}^1(\mathbb{R})$. Assume that they satisfy $\int_{\mathbb{R}} |\partial_x \varphi |^2 |f|dx < \infty$, $M(f)=2M(Q)$, and $E(f)=2E(Q)$. Then we have the following inequality.
\begin{align*}
	\left| \Im \int_{\mathbb{R}} \partial_x \varphi(x) \partial_x f(x) \overline{f(x)} dx\right| \lesssim \mu(f)^2 \int_{\mathbb{R}} |\partial_x \varphi(x)|^2 |f(x)|^2 dx. 
\end{align*}
\end{lemma}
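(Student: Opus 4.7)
My plan is to prove this via a gauge-transformation argument in the spirit of Duyckaerts--Roudenko. For $\lambda\in\mathbb{R}$, set $f_\lambda(x):=e^{i\lambda\varphi(x)}f(x)$. Since $\varphi$ is even and $f$ is odd, $f_\lambda$ is odd; and since $|f_\lambda|=|f|$ pointwise, $M(f_\lambda)=M(f)=2M(Q)$ and $\|f_\lambda\|_{L^{p+1}}=\|f\|_{L^{p+1}}$. Abbreviating $J$ for the left-hand side of the claim and $A:=\int|\partial_x\varphi|^2|f|^2\,dx$, a direct computation expanding $\partial_x f_\lambda=(i\lambda\partial_x\varphi)f+e^{i\lambda\varphi}\partial_x f$ yields
\[
\|\partial_x f_\lambda\|_{L^2}^2=\|\partial_x f\|_{L^2}^2+2\lambda J+\lambda^2 A,
\qquad
E(f_\lambda)=2E(Q)+\lambda J+\tfrac{\lambda^2}{2}A.
\]
Picking $\lambda_\star:=-J/A$ (the case $A=0$ forces $J=0$) produces an odd function with the same mass $2M(Q)$ and strictly below-threshold energy $E(f_{\lambda_\star})=2E(Q)-J^2/(2A)$.

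Next I feed $f_{\lambda_\star}$ into the odd-function variational machinery of Section~\ref{sec2.1}. Since $(M(f_{\lambda_\star}),E(f_{\lambda_\star}))$ sits strictly below the odd threshold, Lemma~\ref{lem4.9} (together with the Pohozaev identity for $Q$) gives the quantitative kinetic bound
\[
\|\partial_x f_{\lambda_\star}\|_{L^2}^2\;\le\;2\|\partial_x Q\|_{L^2}^2-c_p\,\frac{J^2}{A},
\]
with a constant $c_p>0$ depending only on $p$. Combining this with the identity $\|\partial_x f_{\lambda_\star}\|_{L^2}^2=\|\partial_x f\|_{L^2}^2-J^2/A$ and with $\mu(f)=2\|\partial_x Q\|_{L^2}^2-\|\partial_x f\|_{L^2}^2$, a one-line rearrangement gives the baseline Cauchy--Schwarz type bound $J^2\lesssim \mu(f)\cdot A$, which is the standard output of this gauge argument.

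To upgrade this baseline to the stronger estimate $|J|\lesssim \mu(f)^2\cdot A$ asserted in the lemma, my plan is a two-regime refinement. For $\mu(f)$ not small, the baseline bound $|J|\lesssim \mu(f)^{1/2}A^{1/2}$ together with the uniform $H^1$ control coming from $E(f)+M(f)=2S(Q)$ and $K(f)\ge 0$ (or the trivial Cauchy--Schwarz bound $|J|\le A^{1/2}\|\partial_x f\|_{L^2}$) can be re-expressed as $|J|\lesssim \mu(f)^2 A$ up to constants. For $\mu(f)\ll 1$, I would use the modulation decomposition $f=e^{i\theta+it}(\mathcal{R}_y Q+g)$ with $\|g\|_{H^1}\lesssim \mu(f)$ (Corollary~\ref{cor4.8}). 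The leading piece $\Im\int\partial_x\varphi\,\partial_x\mathcal{R}_y Q\,\mathcal{R}_y Q\,dx$ vanishes because the integrand is real, the quadratic-in-$g$ piece $\Im\int\partial_x\varphi\,\partial_x g\,\overline{g}\,dx$ is manifestly $O(\|g\|_{H^1}^2)\lesssim \mu(f)^2$, and the linear-in-$g$ piece can be integrated by parts and controlled using the orthogonality conditions~\eqref{ortho} together with the sharp smallness $(1+y)e^{-2y}+\|h\|_{H^1}^2\lesssim\mu(f)^2$.

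The main obstacle will be the linear-in-$g$ piece. After integration by parts one obtains an integral of $g_2$ against a weight roughly of the form $2\partial_x\varphi\,\partial_x\mathcal{R}_y Q+\partial_x^2\varphi\,\mathcal{R}_y Q$, which has no direct orthogonality with the three directions $iQ,\partial_x Q,Q^p$ fixed by \eqref{ortho}. Extracting the extra factor of $\mu(f)$ beyond the baseline $\mu(f)^{1/2}$ will require either a Cauchy--Schwarz with the coercivity Lemma~\ref{coercivity} applied to $h$ and the kinetic weight $A$, or a sharper gauge choice that couples $\varphi$ to the modulation parameters so that the resulting projection onto the spectral directions of the linearized operator is quadratic rather than linear in the defect; this is where I expect the technical heart of the argument.
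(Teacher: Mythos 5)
Your starting point—the gauge transformation $f_\lambda = e^{i\lambda\varphi}f$ and the expansion $\|\partial_x f_\lambda\|_{L^2}^2 = \|\partial_x f\|_{L^2}^2 + 2\lambda J + \lambda^2 A$—is exactly the paper's, but you then diverge in a way that loses the essential cancellation. The paper does not minimize the energy over $\lambda$ and invoke the below-threshold dichotomy; it applies the sharp odd Gagliardo--Nirenberg inequality (Lemma \ref{GN}) to $f_\lambda$, observes that the resulting expression is a nonnegative quadratic in $\lambda$, and reads off the discriminant condition
\begin{align*}
J^2 \leq A\,\Bigl\{\|\partial_x f\|_{L^2}^2 - \bigl(\|f\|_{L^{p+1}}^{p+1}/(C_{GN}^{\odd}\|f\|_{L^2}^{(p+3)/2})\bigr)^{4/(p-1)}\Bigr\}.
\end{align*}
The heart of the proof is that at the exact threshold $M(f)=2M(Q)$, $E(f)=2E(Q)$, the bracketed Gagliardo--Nirenberg deficit, viewed as a function of $\mu=\mu(f)$ via $\|\partial_x f\|_{L^2}^2 = 2\|\partial_x Q\|_{L^2}^2-\mu$ and $\|f\|_{L^{p+1}}^{p+1}=2\|Q\|_{L^{p+1}}^{p+1}-\tfrac{p+1}{2}\mu$, vanishes to \emph{second} order: both its value and its first derivative at $\mu=0$ cancel, by the sharpness of $C_{GN}^{\odd}$ and the Pohozaev identity. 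This immediately yields $J^2\lesssim \mu(f)^2 A$ for either sign of $\mu(f)$. Your route through Lemma \ref{lem4.9} replaces the sharp GN inequality by the non-sharp inequality $E\geq\frac{p-5}{2(p-1)}\|\partial_x\cdot\|_{L^2}^2$ (valid only when $K\geq 0$), which captures only the first-order information and gives the strictly weaker $J^2\lesssim\mu(f)A$; worse, it requires $K(f_{\lambda_\star})\geq 0$ and implicitly $\mu(f)\geq 0$, whereas the lemma is applied precisely in the blow-up regime $K(f)<0$, $\mu(f)<0$ (Corollary \ref{cor2.5}), where your baseline inequality has a negative right-hand side and the derivation collapses.

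Two further remarks. First, the "upgrade" you propose is aimed at the inequality as literally printed, $|J|\lesssim\mu(f)^2 A$, but what the paper's proof establishes—and what is actually used in Proposition \ref{prop2.7}, where $|J'(t)|^2\lesssim (J''(t))^2 J(t)$ is quoted—is the squared version $J^2\lesssim\mu(f)^2 A$, i.e. $|J|\lesssim|\mu(f)|\,A^{1/2}$; the missing square on the left-hand side of the statement appears to be a typographical slip, and chasing the unsquared form is not necessary. Second, even granting that goal, your two-regime modulation argument is not a proof: the linear-in-$g$ term has no orthogonality against the directions fixed by \eqref{ortho}, as you yourself note, and no mechanism is supplied to produce the extra power of $\mu$. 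The single idea you are missing is the second-order vanishing of the sharp GN deficit at the threshold; with it, the discriminant argument finishes the proof in a few lines and no modulation theory is needed.
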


\begin{proof}
Let $\lambda \in \mathbb{R}$. 
By the Gagliardo-Nirenberg ineqaulity (Lemma \ref{GN}), we have
\begin{align*}
	\|f\|_{L^{p+1}}^{p+1} 
	=\|e^{i\lambda \varphi} f\|_{L^{p+1}}^{p+1}
	\leq C_{GN}^{\odd} \| f\|_{L^2}^{\frac{p+3}{2}}( \| (e^{i\lambda \varphi} f)'\|_{L^2}^2 )^{\frac{p-1}{4}}.
\end{align*}
Now, we have
\begin{align*}
	\| (e^{i\lambda \varphi} f)'\|_{L^2}^2 
	&=\lambda^2 \|\varphi' f\|_{L^2}^2 +2 \lambda \Im \int_{\mathbb{R}} \varphi'f' \overline{f} dx+ \|f'\|_{L^2}^2.
\end{align*}
Therefore, we obtain 
\begin{align*}
	\|f\|_{L^{p+1}}^{p+1} 
	\leq	C_{GN}^{\odd} \| f\|_{L^2}^{\frac{p+3}{2}}\left( \lambda^2 \|\varphi' f\|_{L^2}^2 +2 \lambda \Im \int_{\mathbb{R}} \varphi'f' \overline{f} dx+ \|f'\|_{L^2}^2 \right)^{\frac{p-1}{4}}.
\end{align*}
This means that
\begin{align*}
	\lambda^2 \|\varphi' f\|_{L^2}^2 +2 \lambda \Im \int_{\mathbb{R}} \varphi'f' \overline{f} dx+ \|f'\|_{L^2}^2 -\left( \frac{\|f\|_{L^{p+1}}^{p+1} }{C_{GN}^{\odd} \| f\|_{L^2}^{\frac{p+3}{2}}}\right)^{\frac{4}{p-1}} \geq 0
\end{align*}
This is the inequality related to the quadratic equation for $\lambda$. Thus, we obtain
\begin{align*}
	\left|  \Im \int_{\mathbb{R}} \varphi'f' \overline{f} dx \right|^2 
	\leq  \|\varphi' f\|_{L^2}^2\left\{ \|f'\|_{L^2}^2 -\left( \frac{\|f\|_{L^{p+1}}^{p+1} }{C_{GN}^{\odd} \| f\|_{L^2}^{\frac{p+3}{2}}}\right)^{\frac{4}{p-1}}\right\}.
\end{align*}
Now, we have $ \|f'\|_{L^2}^2  = 2\|\partial_x Q\|_{L^2}^2 - \mu(f)$ and it holds from $E(f)=2E(Q)$ that 
$\|f\|_{L^{p+1}}^{p+1} =2\|Q\|_{L^{p+1}}^{p+1} - \frac{p+1}{2} \mu(f) $
Therefore, we obtain
\begin{align*}
	&\|f'\|_{L^2}^2 -\left( \frac{\|f\|_{L^{p+1}}^{p+1} }{C_{GN}^{\odd} \| f\|_{L^2}^{\frac{p+3}{2}}}\right)^{\frac{4}{p-1}}
	\\
	&=2\|\partial_x Q \|_{L^2}^2 - \mu(f)  -\left( \frac{\|f\|_{L^{p+1}}^{p+1} }{C_{GN}^{\odd} \| f\|_{L^2}^{\frac{p+3}{2}}}\right)^{\frac{4}{p-1}}
	\\
	&= 2\|\partial_x Q \|_{L^2}^2 - \mu(f) - \left(2\|Q \|_{L^{p+1}}^{p+1} - \frac{p+1}{2} \mu(f)\right)^{\frac{4}{p-1}}(C_{GN} 2^{-\frac{p-5}{4}}\| Q \|_{L^2}^{\frac{p+3}{2}})^{-\frac{4}{p-1}},
\end{align*}
where we also used $\|f\|_{L^2}^2 = 2\|Q\|_{L^2}^2$ and $C_{GN}^{\odd} =2^{-\frac{p-1}{2}} C_{GN}$. 
Then, by the Taylor expansion and the Pohozaev identity
\begin{align*}
	(C_{GN} \| Q \|_{L^2}^{\frac{p+3}{2}})^{-\frac{4}{p-1}}
	= \frac{p-1}{2(p+1)} \|Q \|_{L^{p+1}}^{\frac{(p-5)(p+1)}{p-1}},
\end{align*}
we obtain 
\begin{align*}
	 &2\|\partial_x Q \|_{L^2}^2 - \mu (f) - \left(2\|Q \|_{L^{p+1}}^{p+1} - \frac{p+1}{2} \mu(f)\right)^{\frac{4}{p-1}}(C_{GN}2^{-\frac{p-5}{4}} \| Q \|_{L^2}^{\frac{p+3}{2}})^{-\frac{4}{p-1}}
	 \\
	 & \leq 2\|\partial_x Q \|_{L^2}^2 - \mu(f) 
	 \\
	 &\quad -\left(2^{\frac{4}{p-1}} \|Q \|_{L^{p+1}} ^{\frac{4(p+1)}{p-1}} - \frac{2(p+1)}{p-1} 2^{-\frac{p-5}{p-1}} \|Q \|_{L^{p+1}}^{-\frac{(p-5)(p+1)}{p-1}} \mu (f)-C| \mu(f)|^2\right)
	 \\
	 &\quad \quad \quad \times (C_{GN}2^{-\frac{p-5}{4}} \| Q \|_{L^2}^{\frac{p+3}{2}})^{-\frac{4}{p-1}}
	 \\
	 &= C'| \mu(f)|^2. 
\end{align*}
Combining these estimates, we get
\begin{align*}
	\left|  \Im \int_{\mathbb{R}} \varphi'f' \overline{f} dx \right|^2 
	&\leq  \|\varphi' f\|_{L^2}^2\left\{ \|f\|_{H}^2 -\left( \frac{\|f\|_{L^{p+1}}^{p+1} }{C_{GN}^{odd} \| f\|_{L^2}^{\frac{p+3}{2}}}\right)^{\frac{4}{p-1}}\right\}
	\\
	&\lesssim  \|\varphi' f\|_{L^2}^2 |\mu(f)|^2.
\end{align*}
This completes the proof.
\end{proof}

\begin{corollary}
\label{cor2.5}
Under the assumption of Lemma \ref{lem2.4} with $K(f)<0$, we have
\begin{align*}
	\left| \Im \int_{\mathbb{R}} \partial_x \varphi(x) \partial_x f(x) \overline{f(x)} dx\right| \lesssim |K(f)|^2 \int_{\mathbb{R}} |\partial_x \varphi(x)|^2 |f(x)|^2 dx. 
\end{align*}
\end{corollary}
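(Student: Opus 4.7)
The plan is to deduce Corollary \ref{cor2.5} directly from Lemma \ref{lem2.4} by means of the identity relating $K$ and $\mu$ that is available under the energy normalization $E(f)=2E(Q)$. Indeed, Lemma \ref{lem2.11} asserts that whenever $E(f)=2E(Q)$ one has
\begin{equation*}
    K(f) \;=\; \frac{p-5}{4}\,\mu(f),
\end{equation*}
and in particular $|\mu(f)| = \frac{4}{p-5}\,|K(f)|$, so $\mu(f)^2 \approx K(f)^2$ with constants depending only on $p>5$.

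Thus my proof would proceed in two short steps. First, verify that the hypotheses of Lemma \ref{lem2.4} are satisfied: $\varphi\in C^1(\mathbb{R};\mathbb{R})$ is even, $f\in H^1_{\odd}(\mathbb{R})$, $\int_{\mathbb{R}}|\partial_x\varphi|^2|f|\,dx<\infty$, $M(f)=2M(Q)$, and $E(f)=2E(Q)$. Applying Lemma \ref{lem2.4} yields
\begin{equation*}
    \left|\Im \int_{\mathbb{R}} \partial_x\varphi(x)\,\partial_x f(x)\,\overline{f(x)}\,dx\right|
    \;\lesssim\; \mu(f)^2 \int_{\mathbb{R}} |\partial_x\varphi(x)|^2\,|f(x)|^2\,dx.
\end{equation*}
Second, invoke Lemma \ref{lem2.11} to replace $\mu(f)^2$ by $\bigl(\tfrac{4}{p-5}\bigr)^2 K(f)^2 \lesssim |K(f)|^2$, which gives exactly the claimed inequality.

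There is essentially no obstacle here; the sign assumption $K(f)<0$ is not actually needed for the inequality itself (since only $|K(f)|^2$ appears), and it is stated presumably because the corollary is invoked later on solutions in the blow-up regime where $K$ is negative. If one wanted to state things without assuming $K(f)<0$, the same argument works verbatim, as the conversion between $\mu$ and $K$ under $E(f)=2E(Q)$ is an equality of real numbers with a fixed nonzero multiplicative constant. Thus the corollary is really a direct translation of Lemma \ref{lem2.4} into the language of the virial functional $K$, made possible by the energy normalization.
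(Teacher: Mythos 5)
Your proof is correct and is exactly the paper's argument: the paper's own proof of Corollary \ref{cor2.5} simply cites Lemmas \ref{lem2.11} and \ref{lem2.4}, i.e.\ it uses the identity $K(f)=\frac{p-5}{4}\mu(f)$ under $E(f)=2E(Q)$ to replace $\mu(f)^2$ by a constant multiple of $|K(f)|^2$ in the conclusion of Lemma \ref{lem2.4}. Your observation that the sign hypothesis $K(f)<0$ is not needed for the inequality itself is also accurate.
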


\begin{proof}
This follows from Lemmas \ref{lem2.11} and \ref{lem2.4}. 
\end{proof}

\begin{proposition}
\label{prop2.7}
We assume the same assumption in Proposition \ref{prop2.13} (2). 
Suppose that $u$ is global in positive time direction. 
Then we have
\begin{align*}
	\im \int_{\mathbb{R}} x \partial_x u(t,x) \overline{u(t,x)} dx >0
\end{align*}
for all existence time $t$.  
Moreover, there exists $c>0$ such that
\begin{align*}
	\int_{t}^{\infty} |\mu(s)|ds \lesssim e^{-ct}
\end{align*}
for any $t>0$. 
\end{proposition}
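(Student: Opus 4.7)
The plan is to combine the classical concavity/virial identity approach for the variance $J(u(t))=\int x^2|u|^2\,dx$ with the sharp estimate from Corollary \ref{cor2.5}. First, by Proposition \ref{prop2.13}~(2) we have $K(u(t))<0$ throughout the lifespan, so $J''(u(t))=8K(u(t))<0$, i.e.\ $t\mapsto J(u(t))$ is strictly concave. If $J'(u(t_0))\le 0$ at some $t_0\ge 0$, strict monotonicity of $J'$ forces $J'(u(t))<J'(u(t_0))\le 0$ for all $t>t_0$, hence $J(u(t))\to-\infty$, contradicting $J\ge 0$; the boundary case $J'(u(t_0))=0$ is ruled out by $J''(u(t_0))<0$. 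This proves the first claim. Integrating $J''=8K$ and using $J'(u(t))>0$ then yields
\[ \int_0^\infty |K(u(s))|\,ds \;\le\; \tfrac{1}{8}J'(u(0)) \;<\;\infty, \]
which via Lemma \ref{lem2.11} gives $\int_0^\infty|\mu(s)|\,ds<\infty$.

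To upgrade integrability to exponential decay, I plan to apply Corollary \ref{cor2.5} with the quadratic weight $\varphi(x)=x^2/2$, so that $|\partial_x\varphi|^2=x^2$ and the right-hand side becomes a constant times $J(u(t))$. This produces the sharp pointwise bound $|J'(u(t))|^2 \lesssim |K(u(t))|^2 J(u(t))$, and since $J'(u(t))>0$,
\[ \bigl(\sqrt{J(u(t))}\bigr)' \;\lesssim\; |K(u(t))|. \]
Integrating on $[0,t]$ and using the $L^1_t$ bound from the previous paragraph shows that $J(u(t))$ remains bounded uniformly in $t$. Now set $h(t):=\int_t^\infty|\mu(s)|\,ds$; then $h'(t)=-|\mu(t)|$, and since $J'(u(t))$ is decreasing and positive it tends to some limit $\ell\ge 0$, giving $J'(u(t))-\ell = 8\int_t^\infty |K(u(s))|\,ds \approx h(t)$. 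Combining this with the pointwise estimate $J'(u(t))\lesssim |K(u(t))|\sqrt{J(u(t))}\lesssim|\mu(t)|$ yields $h(t)\lesssim -h'(t)$, and Gr\"onwall's lemma closes the argument, $h(t)\lesssim h(0)e^{-ct}$.

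The main obstacle I anticipate is the technical matter of invoking Corollary \ref{cor2.5} with $\varphi(x)=x^2/2$: the hypothesis $\int|\partial_x\varphi|^2|f|\,dx<\infty$ becomes $\int x^2|u|\,dx<\infty$, which is not automatic from the finite-variance assumption $\|xu_0\|_{L^2}<\infty$ alone. I would handle this by first applying Lemma \ref{lem2.4} to the truncated quadratic weight $\varphi_R(x)=R^2\varphi(x/R)$ used in the localized virial setup (for which the hypothesis is trivial since the weight is compactly supported and $u\in L^\infty$ in 1D), and then passing $R\to\infty$ using $xu(t)\in L^2$ (propagated from $\|xu_0\|_{L^2}<\infty$ via the identity $J'=2\im\int x\bar u\partial_x u\,dx$). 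Once this approximation step is dispensed with, everything reduces to a standard concavity-plus-Gr\"onwall scheme; the genuine analytic content is the pointwise comparison $|J'|\lesssim|\mu|\sqrt{J}$ coming from the sharp Gagliardo--Nirenberg inequality for odd functions.
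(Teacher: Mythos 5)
Your proposal is correct and follows essentially the same route as the paper: concavity of $J$ from $J''=8K<0$ forces $J'>0$, Corollary \ref{cor2.5} with a quadratic weight gives $|J'|^2\lesssim |K|^2 J$, whence $J$ is bounded and a Gr\"onwall-type differential inequality yields the exponential decay of $\int_t^\infty|\mu|$ (the paper runs Gr\"onwall on $J'$ via $J'\lesssim -J''$ rather than on $h(t)=\int_t^\infty|\mu|$, but this is only a difference in bookkeeping). Your extra truncation step to verify the hypothesis of Lemma \ref{lem2.4} is reasonable caution, though the intended hypothesis there is $\varphi' f\in L^2$, which the finite-variance assumption supplies directly for $\varphi(x)=x^2$.
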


\begin{proof}
Recall $J(t)=\int_{\mathbb{R}} x^2|u(t,x)|^2 dx$. 
We will first show that $J'(t)>0$ for all existence time. If not, there exists $t_1$ such that $J'(t_1) \leq 0$. Since $y''<0$ for all existence time, we have
\begin{align*}
	J'(t_2)- J'(t_1)=\int_{t_1}^{t_2} J''(s) ds = c_2 \int_{t_1}^{t_2} K(u(s))ds<0
\end{align*}
for $t_2 >t_1$. 
Thus, we have $J'(t_2)< J'(t_1)$. By the similar argument as above, it holds that $J'(t) \leq J'(t_2)<0$ for any $t > t_2$. This means there exists $t^*$ such that $J(t^*)=0$. This is a contradiction to that $u$ is a non-zero forward global solution.

Next, we will show that $J'(t) \lesssim e^{-ct}$ for $t \geq 0$. 
By Corollary \ref{cor2.5} as $\varphi(x)=x^2$ and $f(x)=u(t,x)$, we obtain
\begin{align*}
	|J'(t)|^2 \lesssim (J''(t))^2 J(t)
\end{align*}
for all existence time $t$. Since $J>0$, $J'>0$, and $J''<0$, we obtain
\begin{align}
\label{eqA.2}
	\frac{J'(t)}{\sqrt{J(t)}} \lesssim -J''(t).
\end{align}
Integrating this on $(0,t)$, we get
\begin{align*}
	\sqrt{J(t)} - \sqrt{J(0)} \lesssim -J'(t)+J'(0) \lesssim J'(0).
\end{align*}
This means that $y$ is bounded on $(0,\infty)$. Using this boundedness and \eqref{eqA.2} again, we have $J'(t) \lesssim -J''(t)$.
This implies $J'(t) \lesssim e^{-ct}$ for $t \geq 0$. 
We obtain
\begin{align*}
	0 &\leq - \int_{t}^{\infty} \mu(s)ds \lesssim -\int_{t}^{\infty} K(u(s))ds 
	\\ 
	& \approx  -\int_{t}^{\infty}J''(s)ds = -[J'(s)]_{s=t}^{s=\infty} = J'(t) \lesssim e^{-ct}.
\end{align*}
This completes the proof. 
\end{proof}

\begin{corollary}
\label{cor2.8}
We assume the assumption of Proposition \ref{prop2.13} (2) and that the solution is global in positive time, then $u$ blows up in negative time. 
\end{corollary}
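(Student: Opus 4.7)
My strategy is a short contradiction argument using the time--reversal symmetry $u(t,x) \mapsto \overline{u(-t,x)}$ of \eqref{NLS}. Suppose for contradiction that $u$ is also global in negative time. I will show that the quantity
\[
J'(u)(0) = 2\im \int_{\mathbb{R}} x\, \partial_x u_0(x)\, \overline{u_0(x)}\, dx
\]
is forced to be both strictly positive and strictly negative, which is absurd.

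Forward in time, Proposition \ref{prop2.7} applies directly to $u$ and gives $J'(u)(t) > 0$ for all $t \geq 0$; in particular $J'(u)(0) > 0$. For the backward direction, I would introduce the time--reversed, conjugated profile $v(t,x) := \overline{u(-t,x)}$. A direct check shows that $v$ is again a solution of \eqref{NLS}, and the conserved/virial functionals are invariant under this transformation: $v(0) \in H_{\odd}^1(\mathbb{R})$, $M(v(0))=M(u_0)=2M(Q)$, $E(v(0))=E(u_0)=2E(Q)$, $K(v(0))=K(u_0)<0$, and $\|x v(0)\|_{L^2}=\|x u_0\|_{L^2}<\infty$. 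By hypothesis $u$ is global in negative time, so $v$ is global in positive time. Thus Proposition \ref{prop2.7} applied to $v$ yields $J'(v)(0) > 0$.

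To close the argument I would compute the sign relation
\[
J'(v)(0) = 2\im \int_{\mathbb{R}} x\, \partial_x \overline{u_0(x)}\, u_0(x)\, dx = -\, 2\im \int_{\mathbb{R}} x\, \partial_x u_0(x)\, \overline{u_0(x)}\, dx = -\, J'(u)(0),
\]
so $J'(v)(0) > 0$ is equivalent to $J'(u)(0) < 0$, contradicting the forward estimate. I do not foresee any real obstacle: the only verifications are the (routine) invariance of \eqref{NLS} under $u(t,x) \mapsto \overline{u(-t,x)}$, the invariance of $M$, $E$, $K$, the variance, and the oddness condition under this map (all of which hold because these functionals depend only on $|u|$, $|\partial_x u|$, $|u|^{p+1}$, and complex conjugation commutes with the reflection $x \mapsto -x$), together with the one-line sign flip in the display above.
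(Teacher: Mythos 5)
Your proposal is correct and follows essentially the same route as the paper: the paper also sets $v(t,x)=\overline{u(-t,x)}$, applies Proposition \ref{prop2.7} to both $u$ and $v$, and derives the contradiction from the sign flip $\im\int x\,\partial_x v\,\overline{v}\,dx=-\im\int x\,\partial_x u\,\overline{u}\,dx$. The only (immaterial) difference is that you evaluate at $t=0$ while the paper states the contradiction for general $t$.
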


\begin{proof}
Suppose that $u$ is global in negative time. Set $v(t,x)=\overline{u(-t,x)}$. Then, $v$ is a solution of \eqref{NLS} satisfying the above assumption. Thus, it holds that
\begin{align*}
	\im \int_{\mathbb{R}} x \partial_x v(t,x) \overline{v(t,x)} dx >0
\end{align*}
for all $t$. We get
\begin{align*}
	0<\im \int_{\mathbb{R}} x \partial_x v(-t,x) \overline{v(-t,x)} dx
	&=\im \int_{\mathbb{R}} x \overline{\partial_x u(t,x)} u(t,x) dx
	\\
	&=-\im \int_{\mathbb{R}} x \partial_x u(t,x) \overline{u(t,x)} dx
	<0
\end{align*}
This is a contradiction. 
\end{proof}

\begin{proof}[Proof of Proposition \ref{prop2.13} (2)]
Suppose that $u$ is global in positive time direction. Then, by Corollary \ref{cor2.8}, the solution blows up in negative time. 
By Proposition \ref{prop2.7}, we have $\liminf_{t \to \infty} \mu(t)=0$. Thus, there exists a sequence $\{t_n\}$ such that $t_n \to \infty$ and $\mu(t_n) \to 0$ as $n \to \infty$.
We will prove that $\mu(t) \to 0$ as $t \to \infty$.  
If not, there exists $\varepsilon_1 \in (0,\mu_0)$ and $\{t_n'\}$ such that $-\mu(t_n') >\varepsilon_1$. We can take a sequence $\{t_n''\}$ such that 
\begin{align*}
	t_n < t_n'', \quad -\mu(t_n'')= \varepsilon_1, 
	\quad -\mu(t) < \varepsilon_1 \text{ for all } t \in [t_n,t_n'').
\end{align*}
On the interval $[t_n,t_n'']$, the parameter $\rho$ is well defined. 
By the estimate of the modulation parameter Lemma \ref{lem4.10}, we have
\begin{align*}
	|\rho (t_n'') -\rho(t_n)| \leq \int_{t_n}^{t_n''} |\rho'(t)|dt \lesssim e^{-ct_n} \to 0
\end{align*}
as $n \to \infty$. By the definition of $t_n$, we have $|\rho(t_n)|\sim |\mu(t_n)| \to 0$. However, we have $|\rho(t_n'')| \sim |\mu(t_n'')| =\varepsilon_1 > 0$ by the estimate of the modulation parameter (see Lemma \ref{lem4.5} and Corollary \ref{cor4.8}) and the definition of $t_n''$. This is a contradiction. This means that $\mu(t) \to 0$ as $t \to \infty$. 

Therefore, it follows from the estimate of the modulation parameter Lemma \ref{lem4.10} that
\begin{align*}
	|y(t_2) - y(t_1)|=\int_{t_1}^{t_2}  |y'(t)|dt \lesssim \int_{t_1}^{t_2} |\mu(t)|dt \lesssim e^{-ct_1}
\end{align*}
for large $t_2>t_1$. This implies that $y(t)$ converges to $y_{\infty} \in \mathbb{R}$ as $t \to \infty$. However, this means that
\begin{align*}
	e^{-2y(t)} \to e^{-2y_{\infty}}>0.
\end{align*}
This contradicts $e^{-2y(t)}\lesssim  |\mu(t)| \to 0$ as $t \to \infty$. As a consequence, the solution is not global in positive time direction. 
\end{proof}

\appendix
\section{Cut of odd functions}

\begin{lemma}
Let $f \in H_{\odd}^{1}(\mathbb{R})$. Then $F^{+}(x):=\1_{(0,\infty)}(x)f(x)$ also belongs to $ H^{1}(\mathbb{R})$. Moreover, we have
\begin{align*}
	\frac{d}{dx} F^{+}(x) = \1_{(0,\infty)}(x) \frac{d}{dx}f(x).
\end{align*}
\end{lemma}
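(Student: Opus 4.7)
The plan is to work directly from the definition of weak derivative. Since $f \in H^1(\mathbb{R})$ in one dimension, $f$ has an absolutely continuous representative, and the oddness of $f$ together with continuity forces $f(0) = 0$. This vanishing at the origin is precisely what prevents the indicator cutoff from producing a delta-function contribution in the distributional derivative.

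First I would fix the continuous representative of $f$ and record that $f(0) = 0$. Then for an arbitrary test function $\varphi \in C_c^\infty(\mathbb{R})$, I compute
\begin{align*}
	\int_{\mathbb{R}} F^{+}(x)\varphi'(x)\,dx
	= \int_{0}^{\infty} f(x)\varphi'(x)\,dx.
\end{align*}
Since $f$ is absolutely continuous on $[0,\infty)$ with distributional derivative $f'\in L^2$, integration by parts on the half-line gives
\begin{align*}
	\int_{0}^{\infty} f(x)\varphi'(x)\,dx
	= \bigl[f(x)\varphi(x)\bigr]_{0}^{\infty} - \int_{0}^{\infty} f'(x)\varphi(x)\,dx
	= -\int_{\mathbb{R}} \1_{(0,\infty)}(x)f'(x)\varphi(x)\,dx,
\end{align*}
using $f(0)=0$ and that $\varphi$ has compact support. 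This identifies $\1_{(0,\infty)}f'$ as the weak derivative of $F^{+}$.

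It then remains to check integrability: $F^{+}\in L^2(\mathbb{R})$ because $|F^{+}|\leq|f|$, and $\1_{(0,\infty)}f'\in L^2(\mathbb{R})$ because $|\1_{(0,\infty)}f'|\leq|f'|$. Combining these gives $F^{+}\in H^1(\mathbb{R})$ together with the claimed formula for its derivative.

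The only nontrivial point is justifying the boundary evaluation in the integration by parts, i.e.\ that the continuous representative really satisfies $f(0)=0$; this follows from the one-dimensional Sobolev embedding $H^1(\mathbb{R})\hookrightarrow C(\mathbb{R})$ combined with the odd symmetry $f(-x)=-f(x)$ evaluated at $x=0$. Everything else is bookkeeping.
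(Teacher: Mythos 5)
Your proof is correct and follows essentially the same route as the paper's: verify $F^+\in L^2$ by domination, identify the weak derivative via integration by parts against a test function using $f(0)=0$, and bound its $L^2$ norm. You are in fact slightly more careful than the paper in explicitly justifying $f(0)=0$ via the Sobolev embedding and odd symmetry.
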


\begin{proof}
We have
\begin{align*}
	\|F^{+}\|_{L^2(\mathbb{R})} 
	=\|f\|_{L^2(0,\infty)}
	\leq \|f\|_{L^2(\mathbb{R})} < \infty
\end{align*}
The weak derivative of $F^{+}$ is
\begin{align*}
	\frac{d}{dx}F^{+}(x) = \1_{(0,\infty)}(x)\frac{df}{dx}(x)
\end{align*}
Indeed, for any $\varphi \in C_{0}^{\infty}(\mathbb{R})$, we have
\begin{align*}
	\int_{\mathbb{R}} F^{+}(x) \frac{d}{dx} \varphi(x)dx
	&=\int_{0}^{\infty} f(x) \frac{d}{dx} \varphi(x)dx
	\\
	&=- \int_{0}^{\infty} \frac{d}{dx}f(x) \varphi(x)dx
	\\
	&=- \int_{\mathbb{R}}\1_{(0,\infty)} \frac{d}{dx}f(x) \varphi(x)dx
\end{align*}
by the integration by parts and $f(0)=0$. 
Moreover, 
\begin{align*}
	\|\frac{d}{dx}F^{+}\|_{L^2(\mathbb{R})}
	=\|\frac{d}{dx}f\|_{L^2(0,\infty)}
	\leq \|\frac{d}{dx}f\|_{L^2(\mathbb{R})}<\infty.
\end{align*}
Thus, we get $F^{+} \in H^1(\mathbb{R})$. 
\end{proof}

\section*{Acknowledgement}

The second author deeply appreciates the support by JSPS Overseas Research Fellowship. He also thanks OU and UBC for giving him the chance to study abroad. 
Research of the first author is partially supported
by an NSERC Discovery Grant.


\end{document}